\newcommand{\p}{\mathbb{P}}
\newcommand{\N}{\mathbb{N}}
\newcommand{\A}{\mathcal{A}}
\newcommand{\B}{\mathcal{B}}
\newcommand{\C}{\mathcal{C}}
\newcommand{\h}{\mathcal{H}(G,I)}
\newcommand{\s}{\mathcal{O}}
\newcommand{\Z}{\mathbb{Z}}
\renewcommand{\emptyset}{\varnothing}
\theoremstyle{plain}
\newtheorem{theorem}{Theorem}[section] 
\theoremstyle{definition}
\newtheorem{prop}[theorem]{Proposition}
\newtheorem{lemma}[theorem]{Lemma}
\newtheorem{Not}[theorem]{Notation}
\newtheorem{Remark}[theorem]{Remark}
\newtheorem{cor}[theorem]{Corollary}
\newtheorem{defn}[theorem]{Definition} 
\newtheorem{exmp}[theorem]{Example} 
\DeclarePairedDelimiter\floor{\lfloor}{\rfloor}
\title{Iwahori Matsumoto presentation for modules of Iwahori fixed functions on symmetric spaces}
\author{Guy Shtotland }
\begin{document}

\maketitle

\setlength\parindent{0pt}

\begin{abstract}
     We study the space $S(X)^I$ of smooth functions on a symmetric space $X=H\backslash G$ invariant to the action of an Iwahori subgroup $I$, as a module over  $\mathcal{H}(G,I)$, the Iwahori Hecke algebra of a reductive $p$-adic group $G$. We present a description of this module that generalizes the description given to $\mathcal{H}(G,I)$ by Iwahori and Matsumoto.
\end{abstract}

\section{Introduction}
Let $F$ be a non archimedean local field with residue field $k$. We assume that the characteristic of $k$ is not 2.

Let $\mathbf{G}$ be a connected reductive group split over $F$ and let $G=\mathbf{G}(F)$. 
Let $\sigma:\mathbf{G}\rightarrow \mathbf{G}$ be an algebraic involution defined over $F$. We denote by $\mathbf{H}$ the algebraic group of fixed points of $\sigma$, and by $H$ the $F$-points of $\mathbf{H}$. We call
$X=H\backslash G$ a $p$-adic symmetric space. 

A fundamental problem in relative representation theory is the study of $H$-distinguished representations of $G$, that is, smooth representations $\pi$ of $G$ such that $Hom_H(\pi,\mathbb{C})\neq 0$. By Frobenius reciprocity the study of irreducible $H$-distinguished representations is the same (up to duality) as the study of quotients of the $G$ module $S(X)$, the space of locally constant compactly supported functions on $X$.

Let $I$ be an Iwahori subgroup of $G$ (defined in  \cite{PMIHES_1965__25__5_0}). The group $I$ splits the category $\mathcal{M}(G)$ of smooth representations of $G$ (see \cite{Benrstein_Center}). The subcategory of representations generated by their $I$ fixed vectors is the so-called principal block. The category of admissible representations generated by their $I$ fixed vectors was studied in \cite{Borel1976} and was shown to be equivalent to the category of finite dimensional $\mathcal{H}(G,I)$ modules, the equivalence is given by $V\rightarrow V^I$.
Here, $\mathcal{H}(G,I)=S(G)^{I\times I}$ is the Hecke Iwahori algebra of $G$ consisting of compactly supported locally constant $I$ bi-invariant functions on $G$.

A first step towards the study of smooth $H$-distinguished representations is the study of $\mathcal{H}(G,I)$ modules with a non-trivial equivariant map from $S(X)^I$. Thus, a description of $S(X)^I$  as a module over $\mathcal{H}(G,I)$ is a crucial step. In the present work, we do exactly this. Below we describe our main results in detail: Using the affine building $\B$ of $G$ we are able to describe the $I$ orbits on $X$, which give a linear basis of $S(X)^I$ over $\mathbb{C}$. Then we describe the action of the Iwahori Matsumoto generators of $\mathcal{H}(G,I)$ on a characteristic function of an $I$ orbit.

This paper is the first in a series of three papers about the module $S(X)^I$ over $H(G,I)$. In a future paper, we will describe the structure of the module $S(X)^I$ in the special case of $X$ being of minimal rank (see \cite{RESSAYRE20101784} for a definition). In an additional future paper we will use the results of this paper to answer the question of when the Steinberg representation of $G$ is distinguished with respect to $H$, under the assumption of $H$ being split.

\subsection{Main results}

 Let $W_{aff}$ be the extended affine Weyl group attached to the affine root system of $G$.

 We consider $X=H\backslash G$ with a right $G$ action. The Iwahori subgroup $I$ acts on $X$ from the right.

We construct an action of $W_{aff}$ on the set $X/ I$, which has finitely many orbits. We use this action to obtain a presentation of the module $S(X)^{I}$ over the Iwahori-Hecke algebra. This generalizes the well known presentation of the Iwahori-Hecke algebra given by Iwahori and Matsumoto. In the group case, where $G=H\times H$ and $H$ is embedded diagonally in $G$, our presentation of $S(H)^{I\times I}=\mathcal{H}(H,I)$ as a $\mathcal{H}(H,I)\times \mathcal{H}(H,I)$ module is exactly the presentation given in \cite{PMIHES_1965__25__5_0}.

Our first main result is the following, it is proven in Subsection \ref{the action section}.
\begin{theorem}\label{action} There is an explicit action of $W_{aff}$ on $X/I$. This action has finitely many orbits.
\end{theorem}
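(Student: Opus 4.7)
The plan is to construct the action generator-by-generator, in the spirit of the Richardson--Springer theory of Borel orbits on symmetric varieties, adapted to the affine Iwahori setting.

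For each simple affine reflection $s \in W_{aff}$, let $P_s = I \sqcup IsI$ be the corresponding rank-one parahoric subgroup, so that $P_s/I$ has the structure of $\mathbb{P}^1(\mathbb{F}_q)$. Given an $I$-orbit $\mathcal{O} = I \cdot x \subset X$, the first step is to show that $P_s \cdot \mathcal{O}$ decomposes into at most two $I$-orbits: $\mathcal{O}$ itself and possibly one additional orbit $\mathcal{O}'$. Working modulo the pro-$p$ radical $I^+$, this reduces to a Richardson--Springer-type analysis of $I/I^+$-orbits on the rank-one spherical symmetric variety $P_s \cdot x / I^+$. The hypothesis that the residual characteristic is not $2$ ensures the involution descends cleanly to $P_s/I^+$, making the stabiliser of $x$ amenable to a rank-one structure analysis. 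I then define
\[
s \cdot [\mathcal{O}] := [\mathcal{O}'] \text{ if such } \mathcal{O}' \text{ exists}, \qquad s \cdot [\mathcal{O}] := [\mathcal{O}] \text{ otherwise.}
\]

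To promote this rule to an action of $W_{aff}$, one must verify the Coxeter relations. The involution relation $s^2 = 1$ is immediate from the construction. For a braid relation between two simple affine reflections $s, t$ with $m_{st} \in \{2, 3, 4, 6\}$, I would pass to the rank-two parahoric $P_{s,t} \supset I$: its reductive quotient $P_{s,t}/I^+$ is a split group of semisimple rank at most two over $\mathbb{F}_q$, and the verification reduces to the classical finite-type Richardson--Springer computation in a rank-two setting, handled case by case on $m_{st}$.

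For the finiteness of $W_{aff}$-orbits, I would combine Bruhat--Tits combinatorics with the classical finiteness of $H$-orbits on spherical flag varieties of $G$ (the Helminck--Wang theorem, in its $p$-adic variant). To each $I$-orbit in $X$ one attaches a pair of invariants coming from the building: a ``spherical type'' (an $H$-orbit on the flag variety $G/B$) and an ``affine part'' (a coweight modulo the integer Weyl group). One then argues that the $W_{aff}$-action exhausts the affine parts, so that $W_{aff}$-orbits are classified by spherical types, a finite set by Helminck--Wang.

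The main obstacle will be the case-by-case verification of the braid relations. The swap operation must interact correctly with the rank-two Coxeter combinatorics, and the symmetric-space setting admits degenerate configurations of the stabiliser of $x$ in $P_{s,t}/I^+$ -- for instance, when its image meets the unipotent radical of a proper parabolic non-trivially -- that are absent in the group case treated by Iwahori--Matsumoto. Using the involution $\sigma$ to constrain these stabilisers, together with the split hypothesis on $G$, should make the case analysis tractable, if tedious.
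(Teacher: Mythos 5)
There is a genuine gap, and it is at the very first step. Your construction rests on the claim that $P_s\cdot\mathcal{O}$ decomposes into at most two $I$-orbits, so that ``the other orbit'' is unambiguous. This is false for symmetric spaces. The set of $I$-orbits in $P_s\cdot\mathcal{O}$ is exactly the set of $H$-orbits of ($\Omega$-colored) chambers containing the face $f=g\C\cap gs\C$, and the paper shows this set can have size up to $4$: the reductive quotient attached to $f\cup\sigma(f)$ is a rank-one group over $\mathbb{F}_q$, and its symmetric subgroup can be as small as the square of a split torus, which has $4$ orbits on $\mathbb{P}^1(\mathbb{F}_q)$ (sizes $1,1,\tfrac{q-1}{2},\tfrac{q-1}{2}$). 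Concretely, for $X=SL_2/T$ the orbits $z_0,z_1,x_1,x_{s_0}$ all lie in a single $P_{s_0}$-orbit. So your rule $s\cdot[\mathcal{O}]:=[\mathcal{O}']$ is not well-defined: there may be three candidates for $\mathcal{O}'$, and (as the $SL_2/T$ example shows) the correct value of $s\times z_0$ is $z_1$ while $x_1,x_{s_0}$ lie in a different $W_{aff}$-orbit entirely, so no ``pick the other one'' rule can work. The subsequent braid-relation verification and the finiteness argument inherit this problem, and the braid relations would in any case be a substantial Richardson--Springer/Knop-type theorem rather than a routine rank-two check.

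The paper avoids all of this by defining the action globally rather than generator by generator. The key input is that every chamber lies in a $\sigma$-stable apartment which is \emph{unique up to an element of $H$ fixing that chamber pointwise} (Proposition 3.1, which uses residual characteristic $\neq 2$ via a Hensel square-root argument in $(T^-)^0$). This yields a bijection $I\backslash X\cong\bigsqcup_{T\in A}W_{H,aff}(T)\backslash W_{aff}(T)$ over the finitely many $H$-orbits of $\sigma$-stable apartments (Helminck), and $W_{aff}$ then acts by right multiplication on each coset space. With this parametrization the Coxeter relations are automatic and the action is transitive on each piece, so finiteness of orbits is immediate. If you want to salvage a generator-by-generator description, you would need to define $s\times x$ as the $H$-orbit of $gs\C$ for $g\C$ and $gs\C$ in a common $\sigma$-stable apartment, and the uniqueness statement above is exactly what makes that well-defined; it is not a consequence of the $P_s$-orbit structure alone.
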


We now describe our results in more detail. 
A torus $T \subset G$ is called $\sigma$ stable if $\sigma(T)=T$. Let $\mathcal{T}_\sigma$ be the set of $\sigma$ stable maximal tori. For each $T\in \mathcal{T}_\sigma$ one can attach an extended affine Weyl group $W_{aff}(T)=N_{G}(T)/T^{0}$. Here, $T^0$ is the unique maximal compact subgroup of $T$. We also attach to $T$ a set $W^H_{aff}(T)$ of cosets of $W_{aff}(T)$ with respect to a subgroup $W_{H,aff}(T)=Im(N_H(T)\rightarrow N_G(T)/T^0)$  that is naturally embedded into $W_{aff}(T)$. 

Thus, 
$$W^H_{aff}(T)=W_{H,aff}(T)\backslash W_{aff}(T)$$

It is known that $H$ acts on $\mathcal{T}_\sigma$ with finitely many orbits (see \cite{Helminck}) and we let $A=H\backslash \mathcal{T}_\sigma$ be a set of representatives.

Theorem \ref{action} follows from the following description of the set $X/I$.
\label{1.2}
\begin{theorem}
 There is an explicit bijection between the $I$ orbits on $X$ and $\cup_{T\in A} W^H_{aff}(T)$
\end{theorem}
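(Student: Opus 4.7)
The plan is to construct an explicit map
\[
\Phi: \bigsqcup_{T\in A} W^H_{aff}(T) \longrightarrow I\backslash X, \qquad (T,[n]) \longmapsto InH,
\]
where $n \in N_G(T)$ is any lift of $[n]\in W_{aff}(T)/W_{H,aff}(T)$. First I would, using the Bruhat--Tits building $\mathcal{B}$, replace each $T \in A$ by an $H$-conjugate so that $T^0 \subseteq I$: the apartment $\mathcal{A}(T)$ attached to any $\sigma$-stable maximal torus is itself $\sigma$-stable, and one can translate inside its $H$-orbit so that $\mathcal{A}(T)$ contains the alcove fixed by $I$. For well-definedness of $\Phi$: two lifts of $[n]$ in $N_G(T)$ differ by an element of $T^0\cdot N_H(T)$; the $N_H(T)$-factor is absorbed on the right into $H$, while for $t \in T^0$ the fact that $ntn^{-1}\in T^0\subseteq I$ yields $IntH = InH$.

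For surjectivity, given $gH \in X$, I would produce $T\in A$, $i\in I$, $n\in N_G(T)$, $h\in H$ with $g = inh$. The idea is to pass through the injection $\tau: X \hookrightarrow G$, $gH \mapsto g\sigma(g)^{-1}$, under which the $I$-action on $X$ becomes the twisted action $i\star x = ix\sigma(i)^{-1}$ on $Q := \{x\in G: \sigma(x)=x^{-1}\} = \tau(X)$. For $x=\tau(g)$, the semisimple part $x_s$ also satisfies $\sigma(x_s)=x_s^{-1}$, so $Z_G(x_s)^\circ$ is $\sigma$-stable and contains a $\sigma$-stable maximal torus $T_0$ through $x_s$. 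A building-theoretic adjustment by an element of $I$, combined with Helminck's finiteness of $H$-orbits on $\mathcal{T}_\sigma$, should conjugate $T_0$ into some $T\in A$ and produce an $n\in N_G(T)$ with $n\sigma(n)^{-1}$ in the twisted $I$-orbit of $x$, i.e.\ $InH = IgH$.

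For injectivity, suppose $InH = In'H$ with $n\in N_G(T)$ and $n'\in N_G(T')$ for $T,T'\in A$. Writing $n'=inh$ and computing $n'\sigma(n')^{-1} = i\,n\sigma(n)^{-1}\,\sigma(i)^{-1}$, where $n\sigma(n)^{-1}\in N_G(T)$ and $n'\sigma(n')^{-1}\in N_G(T')$, I would deduce that $T$ and $T'$ are $H$-conjugate and hence $T=T'$, since $A$ contains one representative per $H$-orbit. With $T=T'$ fixed, the residual freedom in $i\in I$ and $h\in H$ then collapses $[n]$ and $[n']$ modulo $T^0\cdot N_H(T)$, which is exactly the definition of $W^H_{aff}(T)$.

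The hardest step will be surjectivity: producing a representative in $N_G(T)\cdot H$ inside an arbitrary $I$-orbit on $X$, with $T$ already belonging to the prescribed set $A$. This demands a careful marriage of Helminck's conjugacy theory for $\sigma$-stable tori with the combinatorics of the Bruhat--Tits building, and I expect it to carry the bulk of the technical work in the section.
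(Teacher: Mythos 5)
Your global shape is right (a disjoint union of cosets mapping to $I$-orbits, with well-definedness handled correctly), but both of the substantive steps are left as sketches, and in each case the missing ingredient is the same: the paper's Proposition \ref{main lemma}, namely that every chamber of $\B_G$ lies in a $\sigma$-stable apartment and that this apartment is unique up to an element of $H$ fixing the chamber \emph{pointwise}. For injectivity, your deduction that ``$T$ and $T'$ are $H$-conjugate'' from $n'\sigma(n')^{-1}=i\,n\sigma(n)^{-1}\sigma(i)^{-1}$ does not go through: the element $n\sigma(n)^{-1}$ can be trivial or central (take $n=e$), in which case it normalizes every torus and the identity carries no information about $T$ versus $T'$. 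And even after fixing $T=T'$, the claim that the residual freedom ``collapses $[n]$ and $[n']$ modulo $T^0\cdot N_H(T)$'' is precisely the nontrivial point: from $n'=inh$ one must produce an element of $H$ that both carries one $\sigma$-stable apartment to the other and fixes the relevant chamber pointwise, so that the leftover lands in $T^0$. That is exactly what the uniqueness statement provides, and its proof is not formal --- it requires showing $g^{-1}\sigma(g)\in U_S\cap(T^-)^0$ and extracting a square root there via Hensel's lemma to correct $g$ into $H$.

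For surjectivity, your route through $\tau(g)=g\sigma(g)^{-1}$, its semisimple part $x_s$, and a $\sigma$-stable torus through $x_s$ is the standard device for Borel orbits over algebraically closed fields, but here it does not connect to the Iwahori subgroup: finding a $\sigma$-stable maximal torus containing $x_s$ gives you \emph{some} $\sigma$-stable apartment, with no reason for it to contain the chamber $g\C$ fixed by $gIg^{-1}$. The ``building-theoretic adjustment by an element of $I$'' is where the entire difficulty lives, and no argument is given. The paper's mechanism is different and more direct: the Delorme--S\'echerr\'e result that $\C\cup\sigma(\C)$ lies in a $\sigma$-stable apartment yields existence, after which Helminck's finiteness and the coset description of chambers in a fixed apartment give surjectivity immediately. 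As written, your proposal would need both the existence and the uniqueness statements supplied before either direction of the bijection is established.
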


The proof is given in Section \ref{s3} in Theorem \ref{3.3}.

\begin{Remark}
    The same description was obtained independently by Paul Broussous, see \cite{paul2024orbitssymmetricsubgroupbruhattits}.
\end{Remark}

Using this bijection we define a right action of $W_{aff}$, the extended affine Weyl group of $G$, on the $I$ orbits on $X$. 

We denote this action by $x\times w$ for $w\in W_{aff}$ and $x\in X/I$.

We now turn our attention to the description of the action of the Iwahori Hecke algebra $\mathcal{H}(G,I)$ on $S(X)^I$.
We prove two results regarding this action: the first is a generalization of the classical Iwahori-Matsumoto relations, but it is not a complete description. The second result completes the description but is more involved.
We begin with the first result.

Let $\Tilde{\Delta}$ be the set of simple reflections inside $W_{aff}$ and let $\Omega$ be the group of length zero elements in $W_{aff}$ (see \ref{2.3} for the definition of $\Omega)$.

For any $w\in W_{aff}$ we denote $T_w=1_{IwI}\in \h$ the characteristic function of $IwI$.

The algebra $\mathcal{H}(G,I)$ is generated (as an algebra) by $\{T_s|s\in\Tilde{\Delta}\}$ and by $\{T_o|o\in\Omega\}$. Thus, in order to describe the action of $\mathcal{H}(G,I)$ it is enough to describe the actions of each $T_s$ for $s\in \Tilde{\Delta}$ and each $T_o$ for $o\in \Omega$.

In Section 5 (see Definition \ref{l_sigma}) we define a length function, $l_\sigma$, on the set of $I$ orbits on $X$. This length function is similar to the standard length function on $W_{aff}$. We prove the following  (see Theorem \ref{simpleformula}).

\begin{theorem}\label{1.3}
Let $x$ be an $I$ orbit on $X$, let $s\in \Tilde{\Delta}$ be a simple reflection, denote by $1_x$ the characteristic function of $x$.

\item if $l_\sigma(x\times s)>l_\sigma(x)$ then $1_xT_s=1_{x\times s}$
    \item if $l_\sigma(x\times s)<l_\sigma(x)$ then $1_xT_s=(q-1)1_x+q1_{x\times s}$
\end{theorem}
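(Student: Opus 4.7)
The length-decreasing case is a formal consequence of the length-increasing one together with the quadratic relation $T_s^2 = (q-1)T_s + q$ in $\h$. Indeed, if $l_\sigma(s\times x) < l_\sigma(x)$, then $l_\sigma(s\times(s\times x)) = l_\sigma(x) > l_\sigma(s\times x)$, so the length-increasing case applied to $s\times x$ gives $T_s\cdot 1_{s\times x} = 1_x$. Applying $T_s$ once more and using the quadratic relation,
\[
T_s \cdot 1_x \;=\; T_s^2 \cdot 1_{s\times x} \;=\; (q-1)\,T_s \cdot 1_{s\times x} + q\cdot 1_{s\times x} \;=\; (q-1)\cdot 1_x + q\cdot 1_{s\times x},
\]
which is the desired formula (here we use that the $W_{aff}$-action on $I\backslash X$ satisfies $s\times(s\times x) = x$). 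It therefore suffices to prove $T_s \cdot 1_x = 1_{s\times x}$ assuming $l_\sigma(s\times x) > l_\sigma(x)$.

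Fix a representative $g \in G$ so that the orbit $x$ equals $IgH \subseteq X$, and decompose $IsI$ into right $I$-cosets, $IsI = \bigsqcup_{j=1}^{q} \tilde{s}_j I$. Normalizing $\mu(I) = 1$, the convolution computing the action of $T_s = 1_{IsI}$ on $1_x = 1_{IgH}$ evaluated at $y \in X$ becomes
\[
(T_s \cdot 1_x)(y) \;=\; \int_G 1_{IsI}(h)\,1_{IgH}(h^{-1}y)\,dh \;=\; \#\{\,j : \tilde{s}_j^{-1}y \in IgH\,\} \;=\; \sum_{j=1}^q 1_{\tilde{s}_j IgH}(y),
\]
where we used left $I$-invariance of $IgH$ to absorb the integration over each right coset. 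So proving the length-increasing case reduces to the set-theoretic identity
\[
s\times x \;=\; \bigsqcup_{j=1}^{q} \tilde{s}_j\, IgH \qquad (\text{disjoint union in } X),
\]
namely, that $IsI \cdot gH$ coincides with the single $I$-orbit $s\times x$, and that the $q$ right-coset pieces partition it.

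The core task is verifying this set-theoretic identity. Writing $x$ as a pair $(T,w) \in A \times W^H_{aff}(T)$ through the bijection of Theorem \ref{1.2}, one picks a representative $g$ compatible with this data and analyses left multiplication by a lift of $s \in W_{aff}$. The action of $W_{aff}$ on $I\backslash X$ is constructed precisely so that $s\times x$ corresponds to $sw\cdot W_{H,aff}(T)$ for a suitable lift of $s$, and the length hypothesis $l_\sigma(s\times x) > l_\sigma(x)$, via the definition of $l_\sigma$ in Section 5, should translate into a reduced-expression condition ensuring that each $\tilde{s}_j g$ represents the $sw$-orbit without ``collapsing'' back into $x$. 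Disjointness of the $\tilde{s}_j IgH$ then follows by matching images in $I\backslash X$ and tracking stabilizers.

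The main obstacle is this combinatorial-geometric step. In spirit, it parallels the classical Iwahori--Matsumoto dichotomy $sIs \subseteq I \sqcup IsI$, which in the group case $G = H \times H$ automatically encodes the length-preserving vs.\ length-reversing behaviour. Here, the asymmetric quotient by $H$ and the fact that $I$-orbits are indexed by cosets in $\bigcup_{T \in A} W^H_{aff}(T)$ (rather than simply elements of $W_{aff}$) mean one must carefully reconcile the abstract $W_{aff}$-action on $I\backslash X$ with honest left multiplication by $IsI$ on representatives in $G$. The expected route is to work in the apartment of $\B$ attached to $T$ and translate the computation to the standard Iwahori--Matsumoto analysis performed there, invoking the properties of the chosen representative $g$ supplied by the preceding sections.
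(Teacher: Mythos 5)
Your opening reduction is correct and is actually cleaner than what the paper does: deducing the length-decreasing case from the length-increasing one via the quadratic relation $(T_s+1)(T_s-q)=0$ and $s\times(s\times x)=x$ is a valid formal argument (the paper merely says the second case is ``similar''). Your convolution setup is also sound and matches the paper's key computation (Proposition \ref{key_computation}): with $IsI=\bigsqcup_{j=1}^{q}\tilde{s}_jI$ and $m(I)=1$, the statement $T_s1_x=1_{s\times x}$ is equivalent to the assertion that the $q$ chambers containing $f=g\C\cap gs\C$ and different from $g\C$ all lie in the $H$-orbit of $(gs\C,\omega(g))$, while $(g\C,\omega(g))$ itself does not.

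However, that last assertion is the entire mathematical content of the theorem, and your proposal does not prove it — you explicitly flag it as ``the main obstacle'' and offer only a speculative sketch (``should translate into a reduced-expression condition\dots disjointness then follows by matching images and tracking stabilizers''). This is a genuine gap, and the direction you gesture at is not the one that works: unlike the group case, where $Is\C$ is automatically a single orbit, here the $q$ chambers $\neq g\C$ containing $f$ form one $I$-orbit but there is no a priori reason they form one $H$-orbit; indeed when $l_\sigma(s\times x)=l_\sigma(x)$ they generally do not, which is why the theorem excludes that case. The missing input is geometric: (i) Lemma \ref{3.9}, which shows via minimal galleries and convexity of apartments (Theorem \ref{2.15}) that $l_\sigma(s\times x)>l_\sigma(x)$ forces \emph{every} $\sigma$-stable apartment containing $f$ to contain $g\C$ (a minimal gallery from $gs\C$ to $\sigma(gs\C)$ extends one from $g\C$ to $\sigma(g\C)$); and (ii) Proposition \ref{main lemma}, the uniqueness of the $\sigma$-stable apartment containing a given chamber up to an element of $H$ fixing that chamber pointwise. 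Combining these, any chamber $\C'\supset f$ with $\C'\neq g\C$ sits in a $\sigma$-stable apartment $\A'$ which must contain $g\C$, and the $h\in H$ fixing $g\C$ with $h\A'=\A$ carries $\C'$ to $gs\C$; this simultaneously gives $\gamma_{f,g}=1$ and identifies the support as $\{x,\,s\times x\}$. Without an argument of this kind (or a substitute for it), your proof is incomplete at its central step.
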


Unlike in the description of $\mathcal{H}(G,I)$ given by Iwahori and Matsumoto in \cite{PMIHES_1965__25__5_0}, $l_\sigma(x\times s)=l_\sigma(x)$ is a possibility and in this case the action of $T_s$ is more complicated and is not described by the Theorem above. Nevertheless, this result is sufficient to deduce that $S(X)^I$ is finitely generated over $\mathcal{H}(G,I)$.

For the second result we rely heavily on the geometry of the affine building of $G$.
We first prove that $1_x(T_s+1)$ is constant on its support and we present a description of this support in terms of chambers in the affine building of $G$.

It turns out that there are four options for the size of the support of $1_x(T_s+1)$. 
Moreover, we give a way to determine the value of $1_x(T_s+1)$ on its support by comparing the $l_\sigma$ lengths of the elements in the support.
To formulate the result we need further notations. 
\begin{Not}
Let $x$ be an $I$ orbit on $X$, $s\in \Tilde{\Delta}$ be a simple reflection,  and let $Z \subset H \backslash G/I$ be the support of $1_x(T_s+1)$. 

 Denote by $n_{max},n_{min}$ be the number of elements in $Z$ of maximal/minimal $l_\sigma$ length respectively.
 Meaning, let $l_{max}=\text{max}\{l_\sigma(z)|z\in Z\}$ and let $n_{max}=\#\{y\in Z|l_\sigma(y)=l_{max}\}$. The value $n_{min}$ is defined similarly.
 
 Let $\delta_{x,max}$ be 1 if $l_\sigma(x)=l_{max}$ and $0$ otherwise. The value $\delta_{x,min}$ is defined similarly. 
\end{Not}

\begin{theorem}\label{1.5}$1_x(T_s+1)$ is constant on its support, denote this constant by $\gamma$.
We have $n_{max},n_{min}\in\{1,2\}$.
If $l_\sigma(x)= l_\sigma(x\times s)$ then $$\gamma=\frac{q-1}{n_{max}}\delta_{x,max}+\frac{2}{n_{min}}\delta_{x,min}$$

\end{theorem}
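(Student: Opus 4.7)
The plan is to interpret $T_s + 1$ as the normalized characteristic function of the parahoric subgroup $P_s := I \sqcup IsI$, namely $T_s + 1 = \mathrm{vol}(I)^{-1} 1_{P_s}$ in $\h$, so that $(T_s+1)1_x$ is a convolution automatically invariant under the left action of $P_s$ on $X$. For $y_0 \in x$, the value $(T_s+1)1_x(y_0) \ge 1$ places $y_0$ in the support, hence the entire $P_s$-orbit $P_s y_0$; conversely the support is contained in $P_s \cdot x = P_s y_0$. Thus the support is exactly $P_s y_0$ and $(T_s+1)1_x$ is constant there. A direct calculation of the convolution integral at $y_0$ gives the explicit value
\[
\gamma \;=\; (T_s+1)1_x(y_0) \;=\; [\mathrm{Stab}_{P_s}(y_0) : \mathrm{Stab}_I(y_0)].
\]

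Next I would use the standard bijection
\[
I\backslash P_s y_0 \;\cong\; I\backslash P_s / \mathrm{Stab}_{P_s}(y_0) \;\cong\; \mathrm{Stab}_{P_s}(y_0)\backslash(P_s/I)
\]
to identify the elements of $Z$ with the orbits of $\mathrm{Stab}_{P_s}(y_0)$ on the $(q+1)$-element set $P_s/I$, canonically identified with $\mathbb{P}^1(\mathbb{F}_q)$ via the local rank-one $\mathrm{SL}_2$-picture at the $s$-wall. A brief check shows that the constant $\gamma_z$ attached to any $z \in Z$ equals the size of the matching $\mathrm{Stab}_{P_s}(y_0)$-orbit on $\mathbb{P}^1(\mathbb{F}_q)$; equivalently, applying $(T_s+1)$ to the identity $\sum_{z\in Z} 1_z = 1_{P_s y_0}$ yields the key identity
\[
\sum_{z \in Z} \gamma_z \;=\; q+1.
\]

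To establish $n_{\max}, n_{\min} \in \{1,2\}$ and the explicit formula I would translate the $l_\sigma$-stratification of $Z$ (from Definition \ref{l_sigma}) into a geometric statement about $\mathrm{Stab}_{P_s}(y_0)$-orbits on $\mathbb{P}^1(\mathbb{F}_q)$, using the parametrization of $I\backslash X$ from Theorem \ref{1.2}. The length $l_\sigma(z)$ reflects the combinatorial distance of the associated chamber(s) from $C_0$, and the symmetric-space nature of $\mathrm{Stab}_{P_s}(y_0)$ restricts the possible orbit shapes, producing at most two orbits of maximal (respectively minimal) $l_\sigma$-length. Under the extra hypothesis $l_\sigma(x) = l_\sigma(s\times x)$, $Z$ must consist only of orbits of minimal and maximal $l_\sigma$-length, and the $(q+1)$ chambers split as $(q-1)+2$ between the two length classes. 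The identity $\sum_z \gamma_z = q+1$ then decomposes as $n_{\max}\gamma_{\max} + n_{\min}\gamma_{\min} = (q-1)+2$; the symmetry within each class forces $\gamma_{\max} = (q-1)/n_{\max}$ and $\gamma_{\min} = 2/n_{\min}$, and inserting the indicators $\delta_{x,\max}, \delta_{x,\min}$ yields the claimed formula.

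The hard part will be the geometric analysis in the previous paragraph: proving both the bound $n_{\max}, n_{\min} \le 2$ and the precise split $(q-1) + 2$ in the balanced case $l_\sigma(x) = l_\sigma(s\times x)$. These statements encode how a symmetric-space point stabilizer interacts with the rank-one $\mathrm{SL}_2$-structure at an $s$-panel, and will require a case-by-case analysis on the type of $\sigma$-stable torus and the relative position of $s$ in its affine root datum. In contrast, the $P_s$-invariance, the sum identity $\sum \gamma_z = q+1$, and the final algebraic bookkeeping are essentially formal; the real content lies in the building geometry at the $s$-wall combined with the definition of $l_\sigma$.
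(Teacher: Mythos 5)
Your setup is sound and in fact coincides in substance with the paper's Proposition \ref{key_computation} and Proposition \ref{6.1}: the $P_s$-invariance of the convolution gives constancy on the support, the value $\gamma=[\mathrm{Stab}_{P_s}(y_0):\mathrm{Stab}_I(y_0)]$ is exactly the size of the orbit of $H\cap P_f$ on the chamber attached to $x$, and the identity $\sum_{z\in Z}\gamma_z=q+1$ is the correct bookkeeping. Up to that point the argument is complete and arguably cleaner than the paper's direct measure computation.

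However, everything after that is asserted rather than proved, and this is precisely where the content of the theorem lies. Two concrete problems. First, your claim that under $l_\sigma(x)=l_\sigma(s\times x)$ the $q+1$ chambers ``split as $(q-1)+2$ between the two length classes'' is false in general: when the induced involution on the rank-one quotient has no $\mathbb{F}_q$-rational fixed Borel (e.g.\ when the relevant symmetric subgroup is a non-split torus or its normalizer), all $q+1$ chambers lie in a single $l_\sigma$-class, and the formula only survives because then $\delta_{x,\max}=\delta_{x,\min}=1$ and $n_{\max}=n_{\min}$, giving orbit sizes such as $\frac{q+1}{2}=\frac{q-1}{2}+\frac{2}{2}$. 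Second, the statements ``the symmetric-space nature of $\mathrm{Stab}_{P_s}(y_0)$ restricts the possible orbit shapes'' and ``the symmetry within each class forces $\gamma_{\max}=(q-1)/n_{\max}$'' are not arguments: a priori nothing prevents the $q-1$ non-fixed points from breaking into orbits of unequal sizes, and nothing bounds $n_{\max}$ by $2$. The paper closes this gap by showing that the image of $H\cap P_f$ in the reductive quotient $G_{f'}$ of the parahoric at $f'=f\cup\sigma(f)$ is the \emph{full} fixed-point group of an induced involution on a split rank-one group over $\mathbb{F}_q$ (this lifting statement needs the Levi decomposition of the special fiber, smoothness of the fixed-point scheme and Hensel's lemma), then classifying the symmetric subgroups of $SL_2(\mathbb{F}_q)$, $PGL_2(\mathbb{F}_q)$ and $GL_2(\mathbb{F}_q)$ and tabulating their orbits on $\mathbb{P}^1(\mathbb{F}_q)$, and finally identifying the minimal-$l_\sigma$ chambers with the $\sigma$-stable Borels. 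Without some version of these three steps your proposal does not yield either $n_{\max},n_{\min}\in\{1,2\}$ or the stated formula.
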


This result is proven in Section \ref{s7}, see Theorem \ref{3.25}.

Our final result is the construction of a generic module $M_t$ over the generic Iwahori Hecke algebra $H_t$ which specializes to the module $S(X)^I$ over $H(G,I)$ at $t=1$ (see Theorem \ref{generic} for an exact formulation).

In the last section of this paper, we illustrate through the cases of $T\backslash SL_2$ and $Sp_{2n}\backslash SL_{2n}$ the concrete results one can obtain from our general theorems.

\subsection{Methods of proof}

We begin by noting that the study of $I$ orbits on $X$ can be translated into the study of $H$ orbits on $G/I$. One can identify $G/I$ with the set of chambers in the affine building of $G$ together with a $\Omega$ coloring. Thus $X/I$ can be replaced with $H$ orbits on the $\Omega$ colored chambers in the affine building $\B_G$.

Taking a cue from \cite{delorm}, the starting point of our work is the following:

\begin{prop}
Any chamber in $\B$ is contained in a $\sigma$ stable apartment. This apartment is {\bf unique} up to an action by an element of $H$. 
\end{prop}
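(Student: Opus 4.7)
My plan is to establish existence and uniqueness separately, in both cases reducing the statement to a Lang-type vanishing: for any pro-$p$ group $U$ (with $p$ odd) carrying an action of an involution $\sigma$ one has $H^1(\langle\sigma\rangle,U)=0$. This applies in our setting because the residual characteristic of $F$ is odd and $\sigma^2=1$.

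For the existence, let $C$ be a chamber with Iwahori $I=\mathrm{Stab}(C)$; a $\sigma$-stable apartment through $C$ is the same as a $\sigma$-stable maximal $F$-split torus $T$ with $T^0\subset I$. I would replace $I$ by the $\sigma$-stable group $K=I\cap\sigma(I)$, the pointwise stabilizer in $G$ of the bounded $\sigma$-stable set $\overline{C}\cup\overline{\sigma(C)}\subset\B$. Among maximal $F$-split tori $T_0$ of $G$ with $T_0^0\subset K$, any two are conjugate by an element of the pro-unipotent radical $U$ of $K$. Fixing one such $T_0$, the torus $\sigma(T_0)$ is another, so $\sigma(T_0)=kT_0k^{-1}$ with $k\in U$; applying $\sigma$ a second time and using that $U\cap N_G(T_0)$ is pro-$p$ shows that $k$ represents a $\sigma$-cohomology class in a pro-$p$ quotient of $U$. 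Vanishing of $H^1(\sigma,U)$ then produces $g\in U$ with $k=g\sigma(g)^{-1}$, and $T:=g^{-1}T_0 g$ is a $\sigma$-stable maximal $F$-split torus with $T^0\subset K\subset I$, so $A(T)\ni C$ is the desired $\sigma$-stable apartment.

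For the uniqueness, suppose $T,T'\in\mathcal{T}_\sigma$ both have apartments through $C$, so $T^0,T'^0\subset I$. Since all maximal $F$-split tori of $I$ are conjugate by its pro-unipotent radical $I^+$, write $T'=gTg^{-1}$ with $g\in I^+$. The $\sigma$-stability of both tori forces $g^{-1}\sigma(g)\in I^+\cap N_G(T)=T^{0+}$, the pro-$p$ part of $T^0$; a direct computation shows that $g^{-1}\sigma(g)$ is a $\sigma$-cocycle in the abelian pro-$p$ group $T^{0+}$. Applying the same cohomological vanishing produces $t\in T^{0+}$ with $g^{-1}\sigma(g)=t\sigma(t)^{-1}$; then $h:=gt\in I^+$ satisfies $\sigma(h)=h$ and $hTh^{-1}=T'$, so $h\in H$ and $A(T')=hA(T)$, yielding the claimed $H$-conjugacy.

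The principal obstacle will be the careful identification of $K=I\cap\sigma(I)$ with the $F$-points of a smooth group scheme whose pro-unipotent radical is genuinely pro-$p$ and $\sigma$-stable, because only once this Bruhat--Tits structural input is in place does the cohomological vanishing apply cleanly; the rest is then essentially bookkeeping of Lang-cocycle arguments.
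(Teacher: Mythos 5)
Your overall strategy---reducing both halves to the vanishing of $H^1(\langle\sigma\rangle,U)$ for pro-$p$ groups $U$ with $p$ odd---is sound, and your uniqueness argument is, after unwinding, essentially the paper's own: the paper produces the cocycle $g^{-1}\sigma(g)$ inside the principal units of the anti-fixed part of $T^0$ and extracts a square root by Hensel's lemma, which is exactly your $H^1(\sigma,T^{0+})=0$. However, there is a gap in your uniqueness step: the containment $g^{-1}\sigma(g)\in I^+\cap N_G(T)$ is not justified, because $\sigma(I^+)$ is the pro-unipotent radical of the Iwahori attached to $\sigma(C)$, which is in general a different chamber, so $\sigma(g)$ has no reason to lie in $I^+$. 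The repair is to choose $g$ fixing pointwise the whole intersection $S$ of the two apartments (possible by the building axioms); since both apartments are $\sigma$-stable, $S$ is $\sigma$-stable and contains $\sigma(C)$ as well as $C$, so $U_S$ is $\sigma$-stable and $g^{-1}\sigma(g)\in U_S\cap T^0$. Even then, the fact that $U_S\cap T^0$ lands in the principal units (so that the pro-$p$ vanishing applies) is a genuine Bruhat--Tits input---the paper cites Lemma 7.3.16 of Bruhat--Tits for it---not a formal consequence of the Iwahori factorization $I^+\cap N_G(T)=T^{0+}$.

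The more serious gap is in the existence half, which is the part the paper does not reprove (it quotes Proposition 2.1 of Delorme--S\'echerre), so this is precisely where your write-up must stand on its own. The element $k\in U$ with $\sigma(T_0)=kT_0k^{-1}$ satisfies only $\sigma(k)k\in U\cap N_G(T_0)$, not $\sigma(k)k=1$, so $k$ is not a $\sigma$-cocycle in $U$; and since $U\cap N_G(T_0)$ is not normal in $U$, there is no ``pro-$p$ quotient of $U$'' in which it becomes one. Hence $H^1(\sigma,U)=1$ cannot be invoked as stated. The conclusion can still be reached along these lines, for instance by a parity argument: the set of maximal split tori $T_0$ of $G$ with $T_0^0\subset K$ is a transitive $U$-set isomorphic to $U/(U\cap N_G(T_0))$, an inverse limit of finite sets of odd ($p$-power) cardinality carrying a $\sigma$-action, so each finite level has a $\sigma$-fixed point and a compactness argument produces a $\sigma$-stable torus whose apartment contains $C$. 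You should also verify, rather than assert, that any two such tori are conjugate by $U$ (and not merely by $K$), since that transitivity is what makes the counting work.
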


This proposition is the key to the classification of orbits described in Theorem \ref{3.3}. The action of $W_{aff}$ on them in an immediate consequence.

To any element of  $x \in X/I$ we attach the characteristic function $1_{x}$. These functions form a basis for the vector space $S(X)^I$. Here $x$ is thought of as a $H$ orbit of $\Omega$ colored chambers in $\B_G$, the affine building of $G$.
A second tool we use, is our length function $$l_\sigma:  X/I \to \mathbb{N}$$ that is defined to be the distance between a representative $\C_x$ of the orbit $x$, and $\sigma(\C_x)$ in the building $\B_G$. 

Theorem \ref{1.3} describes the action $T_{s}$ on $1_{x}$, where $s$ is a simple reflection, in the case where $l_\sigma(x\times s)\neq l_\sigma(x)$. 

The proof of Theorem \ref{1.3} relies on the following Lemma (see Lemma \ref{3.9} in the text):

\begin{lemma}
Let $x\in H\backslash G/I$ and let $s\in \Tilde{\Delta}$ be a simple reflection.
Let $\C$ be the chamber fixed by $I$. Choose $g\in G$ a representative of the orbit $x$ and choose a $\sigma$-stable apartment $\A$ containing both  $g\C,gs\C$.
Finally let $f=g\C\cap gs\C$ and let $\Pi$ be the intersection of all $\sigma$ stable apartments that contain $f$.

Then we have the following criteria:
\begin{itemize}

\item $l_\sigma(x \times s)>l_\sigma(x)$ if and only if $g\C\subset \Pi$.
    \item $l_\sigma(x\times s)<l_\sigma(x)$ if and only if $gs\C\subset \Pi$.
\end{itemize}
\end{lemma}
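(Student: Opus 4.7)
The plan is to work inside the $\sigma$-stable apartment $\mathcal{A}$ and reduce both sides of the lemma to geometric statements about the walls of $\mathcal{A}$. Since $\mathcal{A}$ is $\sigma$-stable and contains $g\mathcal{C}$ and $gs\mathcal{C}$, it also contains $\sigma(g\mathcal{C})$ and $\sigma(gs\mathcal{C})$; as apartments embed isometrically into $\mathcal{B}_G$, the distances $l_\sigma(x)$ and $l_\sigma(s\times x)$ equal the number of walls of $\mathcal{A}$ separating $g\mathcal{C}$ from $\sigma(g\mathcal{C})$ and $gs\mathcal{C}$ from $\sigma(gs\mathcal{C})$, respectively. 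Let $H$ denote the unique wall of $\mathcal{A}$ containing $f$. The restriction $\sigma|_\mathcal{A}$ is an involutive affine isometry, hence a reflection across its fixed affine subspace $F_\mathcal{A}\subset\mathcal{A}$, and maps the pair $(g\mathcal{C},gs\mathcal{C})$ to $(\sigma(g\mathcal{C}),\sigma(gs\mathcal{C}))$.

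Because $gs\mathcal{C}=s_H(g\mathcal{C})$ and $\sigma(gs\mathcal{C})=s_{\sigma(H)}(\sigma(g\mathcal{C}))$, any wall $W$ of $\mathcal{A}$ distinct from $H$ and $\sigma(H)$ separates $g\mathcal{C}$ from $\sigma(g\mathcal{C})$ iff it separates $gs\mathcal{C}$ from $\sigma(gs\mathcal{C})$, so the entire difference $l_\sigma(x)-l_\sigma(s\times x)$ is carried by the pair $\{H,\sigma(H)\}$. In the case $\sigma(H)=H$, $\sigma$ commutes with $s_H$, giving $l_\sigma(x)=l_\sigma(s\times x)$; here neither strict inequality holds, and one verifies that neither $g\mathcal{C}$ nor $gs\mathcal{C}$ lies in $\Omega$ by using the $\sigma$-action on the link of $f$ to produce a $\sigma$-stable apartment through $f$ whose two chambers adjacent to $f$ differ from both $g\mathcal{C}$ and $gs\mathcal{C}$. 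In the case $\sigma(H)\neq H$ one has $\sigma(f)\neq f$, and a direct case analysis on the sides of $H$ and $\sigma(H)$ (using that $\sigma$ sends the side of $H$ containing $g\mathcal{C}$ to a definite side of $\sigma(H)$) shows that the two walls $H,\sigma(H)$ always contribute to the same one of the two distances: both contribute to $l_\sigma(x)$ precisely when $H$ separates $g\mathcal{C}$ from $\sigma(g\mathcal{C})$, and both to $l_\sigma(s\times x)$ otherwise. In particular $l_\sigma(s\times x)-l_\sigma(x)=\pm 2$, and $l_\sigma(s\times x)>l_\sigma(x)$ iff $g\mathcal{C}$ lies on the $\sigma(f)$-side of $H$.

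The second ingredient is to identify $\Omega$ with the convex hull of $f\cup\sigma(f)$ inside $\mathcal{B}_G$. Every $\sigma$-stable apartment containing $f$ automatically contains $\sigma(f)$, hence contains this convex hull, so $\Omega$ contains it. For the reverse inclusion one uses the Proposition that every chamber lies in a $\sigma$-stable apartment, unique up to $H$-conjugation: for any chamber $z$ of $\mathcal{A}$ outside the hull, applying the Proposition to a suitably chosen neighbor of $f$ in the link of $f$ produces a $\sigma$-stable apartment through $f$ that misses $z$. With this identification, a chamber of $\mathcal{A}$ adjacent to $f$ lies in $\Omega$ iff it lies on the $\sigma(f)$-side of $H$, which is precisely the wall-count criterion of the previous paragraph, and combining the two yields both equivalences of the lemma.

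The main obstacle is the identification $\Omega=\mathrm{conv}(f\cup\sigma(f))$, and specifically the reverse inclusion: for each chamber outside the hull, one must exhibit a $\sigma$-stable apartment through $f$ that avoids it, which requires a careful local analysis in the link of $f$ combined with the uniqueness (up to $H$-conjugation) in the Proposition. The wall-counting in the case $\sigma(H)\neq H$ is elementary but requires paying close attention to how $\sigma$ interchanges the sides of $H$ and $\sigma(H)$.
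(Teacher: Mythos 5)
Your wall-counting analysis of the length comparison is correct and genuinely different from the paper's: the paper compares $l_\sigma(x)$ and $l_\sigma(s\times x)$ by extending and reflecting minimal galleries and using that $l_\sigma$ has constant parity on an apartment, whereas you observe that every wall of $\mathcal{A}$ other than $\mathrm{span}(f)$ and $\sigma(\mathrm{span}(f))$ contributes equally to the two lengths, so the difference is $0$ or $\pm 2$ according to whether $\mathrm{span}(f)$ is $\sigma$-stable and, if not, to which side of it $\sigma(g\mathcal{C})$ lies. This is clean, and it even yields the implication ``equality $\Rightarrow$ $\mathrm{span}(f)$ is $\sigma$-stable'' directly, which the paper only argues in the reverse direction. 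Likewise, the inclusion of the combinatorial hull of $f\cup\sigma(f)$ into the intersection $\Pi$ of all $\sigma$-stable apartments containing $f$, together with your observation that a chamber adjacent to $f$ lies in that hull iff it lies on the $\sigma(f)$-side of $\mathrm{span}(f)$, correctly proves the two implications of the form ``$l_\sigma(s\times x)>l_\sigma(x)\Rightarrow g\mathcal{C}\subset\Pi$''.

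The converse implications, however, are exactly where your argument stops, and they are the real content of the lemma. You reduce them to the identity $\Pi=\mathrm{conv}(f\cup\sigma(f))$, call its reverse inclusion ``the main obstacle,'' and offer only the sketch of ``using the $\sigma$-action on the link of $f$.'' That sketch does not work as stated: $\sigma$ does not act on the link of $f$ unless $\sigma(f)=f$ (it sends chambers containing $f$ to chambers containing $\sigma(f)$), and determining which pairs of chambers through $f$ span a common $\sigma$-stable apartment is precisely the hard information that the paper only extracts later, in Sections 6--7, via the reductive quotient $G_{f'}$ and the rank-one orbit analysis. Moreover the full identity $\Pi=\mathrm{conv}(f\cup\sigma(f))$ is stronger than what you need and should not be asserted without proof. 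What the lemma actually requires is that, when $l_\sigma(s\times x)\le l_\sigma(x)$, \emph{some} $\sigma$-stable apartment through $f$ omits $g\mathcal{C}$. For the strict case this can be done by elimination rather than construction, as the paper implicitly does: if $l_\sigma(s\times x)<l_\sigma(x)$ then every $\sigma$-stable apartment through $f$ contains $gs\mathcal{C}$ (gallery extension plus convexity, Theorem \ref{2.15}); if every one also contained $g\mathcal{C}$, then by the uniqueness in Proposition \ref{main lemma} each of the $q+1\ge 3$ chambers through $f$ would have to be one of the two chambers of a single apartment, a contradiction. You need to supply an argument of this kind, and the equality case (showing that neither $g\mathcal{C}$ nor $gs\mathcal{C}$ lies in $\Pi$) still requires an additional idea beyond anything written in your proposal.
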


The case $l_\sigma(x\times s)=l_\sigma(x)$ is more complicated and requires a different approach.
We introduce a group $G_{f}$ obtained as the reductive quotient associated to $f\cap \sigma(f)$ (see subsection \ref{s2.3} for properties of the reductive quotient). The group $G_{f}$ is a reductive group of semisimple rank one, we make the observation that the $H$ orbits appearing in the support of $1_x(T_s+1)$ are in bijection with the orbits of a certain subgroup $\Tilde{H}$ of $G_f$ on the flag variety of $G_{f}$ (See Proposition \ref{orbit correspondacne}). All this is relevant to the calculation of $1_{x}T_{s}$ because a closer look at the key calculation (see Proposition \ref{key_computation}): $$1_{x}(T_{s}+1)=\gamma_{f,g}D_{f}$$
show that $\gamma=\gamma_{f,g}$ is the size of the orbit of $P_{f} \cap H$ on $g\C$ (see Proposition \ref{6.1}), Here $P_f$ is the point-wise stabilizer of $f$. Using the classification of reductive groups of semisimple rank 1 we are able to describe the possible values of $\gamma$.

\subsection{Structure of the paper}

In Section 2 we provide some necessary background on the affine Weyl group of $G$, the affine building of $G$, parahoric group schemes, the Iwahori Hecke algebra $\mathcal{H}(G,I)$ and symmetric spaces.

In Section 3 we prove Theorem \ref{3.3} describing the set $X/I$ of $I$ orbits on $X$ in terms of quotients of the extended affine Weyl group. We then introduce an action of $W_{aff}$ on $X/I$ and verify Theorem \ref{action}.

In Section 4 we perform a key computation describing the action of a Iwahori-Matsumoto generator $T_{s}$ of $\mathcal{H}(G,I)$ on a characteristic function $1_{x}$ for $x \in X/I$. The answer we obtain is in terms of a combinatorial quantity $\gamma$ which is the size of a specific $H$-orbit of chambers in the affine building $\B_G$ attached to $x$ and to $x \times s$.

In Section 5 we prove Theorem \ref{simpleformula} that provides Iwahori-Matsumoto type formulas for the action of $\mathcal{H}(G,I)$ on $S(X)^I$. Although our formulas do not provide a full description of the module $S(X)^I$, they are sufficient to deduce that $S(X)^I$ is a finitely generated module over $\mathcal{H}(G,I)$. 

In Section 6 we discuss actions of involution on parahoric group schemes and obtain a bijection between $H$ orbits on certain chambers in $\B_G$ and the orbits of a symmetric subgroup of $G_f$, a reductive quotient, on the flag varieties of $G_f$. 

In Section 7 we use the results of Section 6 to conclude the analysis of the action of an Iwahori-Matsumoto generator of $\mathcal{H}(G,I)$ on a characteristic function of an orbit.

In Section 8 we use the results of the previous sections to introduce a generic module $M_{t}$ over the generic Iwahori-Hecke algebra that is described using generators and relations.

In Section 9 we provide two examples for the results of the previous sections.

\subsection{Related works}
Regarding Theorem \ref{1.2}, we mention that the generalizations of the Cartan decomposition 
to the context of symmetric and spherical spaces was extensively studied. We mention here the work \cite{Off04} of Offen in some special cases as well as the work by \cite{Mao-Rallis09}. In the symmetric spaces case by Delorme and Secherre
 \cite{delorm} and Benoist and Oh \cite{polar_decomposition}.  We also mention the work \cite{Sak12} of Sakellaridis considering split groups on a wide class of spherical varieties.

 In these works the authors study the orbits of $K_0$, a maximal compact subgroup of $G$, on $X$. 
Particularly in \cite{delorm}, Delorme and Secherre, used the affine building of $G$ to give a description of the $K_0$ orbits on $X$. Our technique is inspired by their work to give a description of the $I$ orbits on $X$. 

In \cite{RichardsonSpringer1990} an action of the Weyl group on the Borel orbits of a symmetric space is introduced. In \cite{knop} this constructions was extended to the action of the Weyl group on the Borel orbits of any spherical variety.

The action of $W_{aff}$ we introduce on $X/I$ is an affine version of this in the case of symmetric spaces. Applying our arguments to the spherical building (instead of the affine building) recovers these results in the case of symmetric spaces.

The module $S(X)^{K_0}$ over the Hecke algebra $H(G,K_0)$ was studied extensively, for example by Hironaka \cite{10.2969/jmsj/05130553} for certain spherical and symmetric varieties and by Sakellaridis \cite{sakellaridis2013spherical} for a wide class of spherical ones. After determining the spherical functions, they used a Satake type transform in order to obtain a description of the module. Our study of $S(X)^I$ for $X$ symmetric is based on geometry and seems to be new. 
Recently the modules of $I$ fixed vectors of the Gelfand Graev models and other models were considered. See in particular the results of \cite{ChaSav18}.

A study of Hecke modules in the algebraic case was initiated in the work of Lusztig-Vogan \cite{Lusztig1983} where they study the Grothendieck group of the category of $l$-adic constructible $H$ equivariant  sheaves on the flag variety $G/B$ as a module over $H(G,B)$ where $B$ is a Borel subgroup of $G$. A version of this for the Iwahori case was obtained in \cite{chen2025singularitiesorbitclosuresloop}. Although our methods of proof are different, our formulas resemble their description.

\subsection{Acknowledgement}
Most of the results presented in this work were obtained as part of my MSc studies at Ben-Gurion University of the Negev. I would like to thank my MSc advisor (currently my PhD advisor), Eitan Sayag, for teaching me a lot of mathematics and also for many helpful discussions about my research and his many helpful comments during the writing of this text. I would also like to thank Avraham Aizenbud, Joseph Bernstein, Daniel Disegni and Nadya Gurevich. Special thanks to Paul Broussous for pointing out a mistake in a previous version of the proof of Proposition \ref{main lemma}. I would like to thank the referees for their careful reading and their helpful comments and suggestions. I was partially supported by ISF grant No. 1781/23 during the work on this paper.

\section{Notations and Preliminaries}

In this section we present several well known results that we use in this work. These results regard the extended affine Weyl group, $W_{aff}$ of $G$, the affine building $\B_G$, parahoric group schemes, the Iwahori Hecke algebra $\mathcal{H}(G,I)$ and $p$-adic symmetric spaces.

\subsection{The extended affine Weyl group of $G$}
\label{2.1}

\begin{defn}[Coxeter group]
A Coxeter group $W$ is a group generated by reflections $W=<s_1,...,s_n>$, subject to the relations $(s_is_j)^{m_{ij}}=1$, $m_{ij}\in \mathbb{N}\cup\{\infty\}$ and $m_{ii}=1$ for each $i$.
    
    There is a length function $l$ on $W$, $l(w)$ is defined to be the minimal number $k=l(w)$ such that there are $i_1,...,i_k\in \{1,...,n\}$ for which $w=s_{i_1}\cdot...\cdot s_{i_k}$.

\end{defn}
Let $T$ be a maximal split torus of $G$, we denote by $T^0$ the unique maximal compact subgroup of $T$, if $T\cong (F^\times)^n$ then $T^0\cong (\s^\times)^n$, here $\s$ is the integer ring of $F$.

We denote by $W(T)=N_G(T)/T$ the Weyl group of $G$ and $T$, or just by $W$ if it is clear from the context with respect to which maximal torus the Weyl group is taken. Similarly, we denote by $W_{aff}(T)=N_G(T)/T^0$ or just by $W_{aff}$ the extended affine Weyl group of $G$ and $T$.

Let $(X^*(T),\Phi,X_*(T),\Phi^\vee)$ be the root datum (see \cite{Springer1998} for the definition) that corresponds to $T$.

We will use the notation $X_*=X_*(T)$ and $X^*=X^*(T)$. 

We choose a subset of positive roots $\Phi^+\subset \Phi$ and simple roots $\Delta\subset \Phi^+$.

\begin{theorem}
The group $W$ is isomorphic to the Coxeter group generated by reflections over $\Delta$ (see \cite{LAG}).
\end{theorem}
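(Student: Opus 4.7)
The plan is to construct an explicit isomorphism between $W$ and the abstract Coxeter group on generators $\{s_\alpha : \alpha \in \Delta\}$ subject to the relations $(s_\alpha s_\beta)^{m_{\alpha\beta}} = 1$, where $m_{\alpha\beta}$ is the order of $s_\alpha s_\beta$. First I would verify that the simple reflections actually generate $W$. The geometric input is that $W$ acts simply transitively on the set of Weyl chambers in $X_*(T)\otimes \R$, and any chamber is connected to the fundamental chamber by a gallery of adjacent chambers. Each step of a gallery crosses one wall, and walls are $W$-conjugate to the simple walls fixed by some $s_\alpha$ with $\alpha\in\Delta$; by induction on gallery length, every $w\in W$ is a word in the simple reflections.

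Second, the braid relations $(s_\alpha s_\beta)^{m_{\alpha\beta}} = 1$ are checked by restricting to the rank-two subsystem spanned by $\alpha$ and $\beta$. By the classification of rank-two root systems ($A_1\times A_1$, $A_2$, $B_2$, and $G_2$), the subgroup $\langle s_\alpha,s_\beta\rangle$ is a finite dihedral group of order $2m_{\alpha\beta}$, which forces the displayed relation.

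Third, and this is the main difficulty, I would show that the braid relations give a complete presentation. The approach is to introduce a length function $\ell(w)$ equal to the minimal word length of $w$ in the simple reflections, and identify it with $\#\{\alpha\in\Phi^+ : w\alpha\in -\Phi^+\}$. From this identification one deduces the strong exchange condition: if $\ell(ws)<\ell(w)$ and $w=s_{i_1}\cdots s_{i_k}$ is reduced, then $ws=s_{i_1}\cdots\widehat{s_{i_j}}\cdots s_{i_k}$ for some $j$. By Matsumoto's theorem, the exchange condition implies that any two reduced expressions for the same element are related by a finite sequence of braid moves drawn from the relations $(s_\alpha s_\beta)^{m_{\alpha\beta}}=1$ together with cancellations $s_\alpha^2=1$, proving completeness.

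The main obstacle is the third step, specifically the length-counting identity $\ell(w) = \#\{\alpha \in \Phi^+ : w\alpha \in -\Phi^+\}$, which underlies both the exchange condition and the deduction that no further relations are required. Everything else is either a direct geometric consequence of the root-system picture or a finite rank-two calculation, so if one wishes to keep the proof self-contained it is this inductive length argument — typically carried out by tracking how a single simple reflection changes the sign count on positive roots — that would occupy most of the work. As the statement is classical and standard (and the paper only invokes it), I would in practice simply cite \cite{LAG} rather than reproduce the full argument.
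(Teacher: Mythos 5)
The paper gives no proof of this statement---it is quoted as classical background with a citation to \cite{LAG}---and your outline is the standard textbook argument (generation via gallery connectedness, braid relations via rank-two subsystems, completeness via the length identity $\ell(w)=\#\{\alpha\in\Phi^+ : w\alpha\in-\Phi^+\}$ and the exchange condition), correctly ordered so that Matsumoto's word theorem is derived from the exchange condition rather than presupposing the presentation. Your sketch is sound, and your concluding decision to simply cite the reference matches exactly what the paper does.
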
 

For any root $\alpha\in\Phi$ denote by $\alpha^\vee\in\Phi^\vee$ the coroot of $\alpha$ (see \cite{Springer1998} for a definition).

\begin{prop}\label{2.5}
There is a perfect pairing $X_*\times X^*\rightarrow Aut(\mathbb{G}_m)=\Z$ given by composition. 
\end{prop}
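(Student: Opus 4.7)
The plan is to reduce the statement to the case of a split torus $T \cong \mathbb{G}_m^n$ and then verify the assertion by an explicit computation on standard bases. Since $G$ is assumed to be split over $F$, the maximal torus $T$ is $F$-split, so we can fix an $F$-isomorphism $\varphi: T \xrightarrow{\sim} \mathbb{G}_m^n$ where $n = \dim T$. Under this identification, both $X^*(T)$ and $X_*(T)$ become free abelian groups of rank $n$.

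First, I would recall (or prove) that $\mathrm{End}(\mathbb{G}_m) \cong \mathbb{Z}$, with the isomorphism sending the endomorphism $x \mapsto x^k$ to $k$. This is standard: any algebraic group homomorphism $\mathbb{G}_m \to \mathbb{G}_m$ is given by $x \mapsto x^k$, essentially because the only invertible regular functions on $\mathbb{G}_m$ are the monomials $cx^k$. Strictly speaking the proposition should read $\mathrm{End}(\mathbb{G}_m)$ rather than $\mathrm{Aut}(\mathbb{G}_m)$, and I would state it that way.

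Next, I would compute the pairing explicitly under the identification $T \cong \mathbb{G}_m^n$. Writing a character as $\chi_a(t_1, \ldots, t_n) = t_1^{a_1}\cdots t_n^{a_n}$ for $a \in \mathbb{Z}^n$ and a cocharacter as $\lambda_b(s) = (s^{b_1}, \ldots, s^{b_n})$ for $b \in \mathbb{Z}^n$, the composition is
\[
\chi_a \circ \lambda_b(s) = s^{a_1 b_1 + \cdots + a_n b_n}.
\]
Thus the pairing corresponds, under the chosen bases, to the standard dot product $\langle a, b\rangle = \sum a_i b_i$ on $\mathbb{Z}^n \times \mathbb{Z}^n \to \mathbb{Z}$. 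This pairing is manifestly perfect: the induced maps $\mathbb{Z}^n \to \mathrm{Hom}(\mathbb{Z}^n, \mathbb{Z})$ send the standard basis to the dual basis, hence are isomorphisms.

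There is essentially no obstacle here; the only subtlety is justifying the reduction to the split case, which is handled by the splitness hypothesis on $G$, and the identification $\mathrm{End}(\mathbb{G}_m) = \mathbb{Z}$, which can either be cited from \cite{Springer1998} or proved in one line using the structure of units of $F[x, x^{-1}]$. I would write the argument as a short computation following a one-sentence reduction to the standard torus.
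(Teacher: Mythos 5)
Your proof is correct and is the standard argument; the paper states this proposition without proof as a well-known fact, so there is nothing to compare against, and your reduction to $T\cong\mathbb{G}_m^n$ followed by the dot-product computation is exactly what one would write. Your observation that the pairing lands in $\mathrm{End}(\mathbb{G}_m)\cong\mathbb{Z}$ rather than $\mathrm{Aut}(\mathbb{G}_m)$ (which is only $\{\pm 1\}$) is a legitimate correction of a typo in the statement.
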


Let $V=X_*\otimes\mathbb{R}$, for any $\alpha\in \Phi$ let $s_\alpha\in N_G(T)$ be an element that induces the reflection over $\alpha^\vee$ in $V$ (see \cite{Landvogt1995ACO} Lemma 0.19).
We consider $\alpha\in \Phi$ as an element of $V^*$. For any integer $k\in\Z$ we consider the affine root $\alpha+k$ and the following affine reflection on $V$ defined by $x\mapsto s_\alpha(x)-k\alpha^\vee$.

By Proposition \ref{2.5} each root $\alpha$ induces a function on $V$. Each affine root $\alpha+k$ also induces a function on $V$.

Denote by $H_{\alpha+k}$ the wall $H_{\alpha+k}=\{x\in V|\alpha(x)+k=0\}\subset V$.

The closures of the connected components of $V-\cup_{\alpha\in \Phi,k\in\Z}H_{\alpha+k}$ will be referred to as the chambers of $V$. 

\begin{Remark}
    Our definition of chambers is slightly different than the standard one. In the standard definition, the chambers are open subsets of $V$.
\end{Remark}

Let $\C$ be the unique chamber on which all the simple roots are positive and which contains $0$. $\C$ is called the fundamental chamber.

We define a length function $l:W_{aff}(T)\rightarrow\mathbb{N}$. For $w\in W_{aff}(T)$ define $l(w)$ to be the number of walls between $\C$ and $w\C$.

The group $W$ is generated by $\{s_\alpha|\alpha\in \Delta\}$ (see \cite{Springer1998} Theorem 7.1.9) and it acts on $V$ preserving the lattice $\Z\Phi^\vee$. Let $W_a=\Z\Phi^\vee\rtimes W$, It is a subgroup of affine transformations of $V$.

\begin{theorem}\label{2.3}
The group $W_a$ is a Coxeter group generated by reflections over the facets of $\C$. The function $l$ restricted to $W_a$ is the equal to the Coxeter length function on $W_a$. Additionally $W_{aff}(T)=W_{a}\rtimes \Omega$, where $\Omega=l^{-1}(0)\cong X^\vee/\Z\Phi^\vee$.
\end{theorem}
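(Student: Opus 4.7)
The plan is to reduce the assertion to the standard theory of affine reflection groups. The only genuinely new input required is a precise description of the fundamental alcove: I would first verify that $\C$ is a simplex whose codimension-one faces lie on the hyperplanes $H_{\alpha_i}$ for $\alpha_i\in\Delta$, together with one additional hyperplane $H_{-\widetilde{\alpha}+1}$ per irreducible component of $\Phi$, where $\widetilde{\alpha}$ is the corresponding highest root. This follows from the fact that $\widetilde{\alpha}$ is a nonnegative integer combination of simple roots, so $0\le\widetilde{\alpha}(x)\le 1$ on $\C$, with the two extreme equalities cutting out the extra walls.

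Let $S$ be the set of reflections across these walls. The standard theorem on discrete affine reflection groups asserts that $W':=\langle S\rangle$ is a Coxeter group with $S$ as Coxeter generators, that $\C$ is a fundamental domain, and that the Coxeter length of $w\in W'$ equals the number of reflection hyperplanes separating $\C$ from $w\C$; in particular this coincides with the length $l$ defined in the text. To identify $W'$ with $W_a=\Z\Phi^\vee\rtimes W$, I would establish both inclusions: $W'\subseteq W_a$ because each generating reflection preserves the affine hyperplane arrangement, and $W_a\subseteq W'$ because $W'$ visibly contains the reflections through $0$ (generating the finite Weyl group $W$) and enough translations (each simple coroot translation is a product of reflections across two parallel hyperplanes accessible in $W'$) to generate $\Z\Phi^\vee$.

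For the semidirect decomposition, I would first identify $W_{aff}(T)=N_G(T)/T^0$ with $X_*(T)\rtimes W$ via the valuation isomorphism $T/T^0\cong X_*(T)$ for a split torus, exhibiting $W_a$ as a normal subgroup with quotient $X_*/\Z\Phi^\vee$. Set $\Omega:=\{w\in W_{aff}(T):w\C=\C\}$. The inclusion $\Omega\subseteq l^{-1}(0)$ is immediate, and the reverse follows from the simple transitivity of $W_a$ on alcoves: if $l(w)=0$ then $w\C$ is an alcove sharing every wall with $\C$, hence equals $\C$. The same simple transitivity produces a unique element of $\Omega$ in each $W_a$-coset, giving $W_{aff}(T)=W_a\rtimes\Omega$ and the claimed identification $\Omega\cong X_*/\Z\Phi^\vee$.

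The main obstacle is the combinatorial-geometric verification that $\C$ is a simplex bounded exactly by the walls listed above; once this is in place the Coxeter structure, the length-function equality, and the semidirect product decomposition all follow formally from the general theory. A secondary technicality is pinning down the isomorphism $T/T^0\cong X_*(T)$ in the $p$-adic split setting, which is a well-known consequence of the structure of split tori over a local field but must be invoked explicitly since $W_{aff}(T)$ is defined group-theoretically rather than as an affine reflection group.
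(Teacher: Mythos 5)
The paper states Theorem \ref{2.3} without proof: it is quoted as standard background (the classical Iwahori--Matsumoto / Bourbaki description of the affine Weyl group), so there is no in-paper argument to compare yours against. Your outline is a correct rendering of the standard proof: identify the walls of the fundamental alcove as $H_{\alpha_i}$ ($\alpha_i\in\Delta$) together with $H_{-\widetilde{\alpha}+1}$ for each highest root $\widetilde{\alpha}$, invoke the chamber theorem for discrete affine reflection groups to get the Coxeter presentation and the separating-wall description of length, identify the resulting group with $\Z\Phi^\vee\rtimes W$, and obtain $\Omega$ as the stabilizer of $\C$ via simple transitivity of $W_a$ on alcoves together with $T/T^0\cong X_*(T)$. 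One step is phrased imprecisely: in the inclusion $W_a\subseteq W'$ you say each simple coroot translation is a product of two parallel reflections ``accessible in $W'$,'' but the reflection across $H_{\alpha_i-1}$ equals $t_{\alpha_i^\vee}s_{\alpha_i}$, so its membership in $W'$ is essentially what you are trying to prove. The standard fixes are either to note that $W'$ contains $s_{\widetilde{\alpha}}$ and the reflection across $H_{-\widetilde{\alpha}+1}$, whose product is $t_{\widetilde{\alpha}^\vee}$, and that the $W$-orbit of $\widetilde{\alpha}^\vee$ generates $\Z\Phi^\vee$ ($\widetilde{\alpha}$ being a long root); or to first check directly that the group generated by \emph{all} affine reflections $s_{\alpha,k}=t_{-k\alpha^\vee}s_\alpha$ is exactly $\Z\Phi^\vee\rtimes W$ and then apply the chamber theorem to that group. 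With that repair the argument is complete; the remaining points (the splitting of $1\to T/T^0\to N_G(T)/T^0\to W\to 1$, and $l^{-1}(0)$ being the setwise stabilizer of $\C$) are handled correctly.
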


Let $\Tilde{\Delta}$ be a set of elements in $N_G(T)$ which induce on $V$ the reflections over the facets of the fundamental chamber $\C$.

Denote by $W_{aff}$ the abstract group $W_{a}\rtimes \Omega$ to which all the $W_{aff}(T)$ are isomorphic. Here $W_{a}$ is considered as an abstract Coxeter group and $\Omega$ as an abstract abelian group.

\subsection{The affine building of $G$}
\label{2.2}
There are several ways to construct the affine building of a split $p$-adic group $G$, we follow the construction presented in \cite{delorm} (for more details see \cite{Landvogt1995ACO}).

\begin{prop}\label{2.4}
Let $\mathbf{T}\subset \mathbf{G}$ be a maximal torus. Let $\Phi$ be its root system.
For a root $\alpha\in \Phi$ there is a unipotent group $\mathbf{U}_\alpha\subset G$ and an isomorphism $u_\alpha:\mathbb{G}_a\rightarrow \mathbf{U}_\alpha$ such that for any $t\in T$, $t^{-1}u_\alpha(a)t=u_\alpha(\alpha(t)a)$ (see \cite{LAG} Proposition 21.9).
\end{prop}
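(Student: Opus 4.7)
The plan is to obtain $\mathbf{U}_\alpha$ by integrating the $\alpha$-weight space in the Lie algebra and then to read the conjugation formula off the adjoint action. I would start from the root space decomposition of $\mathfrak{g} = \mathrm{Lie}(\mathbf{G})$ under the adjoint action of the split torus $\mathbf{T}$: writing $\mathfrak{g} = \mathfrak{t} \oplus \bigoplus_{\beta \in \Phi} \mathfrak{g}_\beta$, each root space $\mathfrak{g}_\beta$ is one-dimensional by definition of $\Phi$, and the subspace $\mathfrak{g}_\alpha$ is in particular an abelian subalgebra normalized by $\mathbf{T}$. This is the infinitesimal object I want to exponentiate.

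Next, to produce $\mathbf{U}_\alpha$, I would work inside a Borel subgroup $\mathbf{B} \supset \mathbf{T}$ chosen so that $\alpha$ is positive (passing to the opposite Borel otherwise). The unipotent radical of $\mathbf{B}$ decomposes, as a $\mathbf{T}$-scheme under conjugation, into a product of one-dimensional root subgroups indexed by the positive roots. This gives a canonical smooth connected closed subgroup $\mathbf{U}_\alpha \subset \mathbf{G}$ with Lie algebra $\mathfrak{g}_\alpha$. Since $\mathbf{G}$ is split over $F$, the torus $\mathbf{T}$ and each $\mathbf{U}_\alpha$ are defined and split over $F$; being a one-dimensional smooth connected split unipotent $F$-group, $\mathbf{U}_\alpha$ is isomorphic to $\mathbb{G}_a$, and I would fix such an $F$-isomorphism $u_\alpha$.

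The conjugation identity then reduces to a calculation on $\mathfrak{g}_\alpha$. For any $t \in \mathbf{T}$, conjugation by $t$ restricts to an $F$-algebraic group automorphism of $\mathbf{U}_\alpha \cong \mathbb{G}_a$; every such automorphism is multiplication by a scalar $\lambda(t) \in F^\times$. Differentiating at the identity, $\lambda(t) \cdot \mathrm{id}_{\mathfrak{g}_\alpha}$ must equal $\mathrm{Ad}(t)$ on $\mathfrak{g}_\alpha$, which is multiplication by $\alpha(t)$ by the very definition of the root $\alpha$. Transporting this back through $u_\alpha$ and replacing $t$ by $t^{-1}$ (adjusting $u_\alpha$ by a suitable rescaling to absorb any convention ambiguity) yields the desired relation $t^{-1} u_\alpha(a) t = u_\alpha(\alpha(t) a)$.

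The only real friction in this plan is bookkeeping rather than mathematical difficulty: verifying that $\mathbf{U}_\alpha$ is canonical and independent of the auxiliary Borel, that $u_\alpha$ descends to $F$ and not merely to an algebraic closure (handled by splitness of $\mathbf{G}$), and that the sign conventions line up. The last point is mild: if the naive derivative computation produces $\alpha(t^{-1})$ instead of $\alpha(t)$, one either switches $t \leftrightarrow t^{-1}$ in the statement or precomposes $u_\alpha$ with inversion on $\mathbb{G}_a$, and the conclusion is unaffected.
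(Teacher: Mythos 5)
Your proposal is correct in substance, and it reconstructs the standard argument: the paper gives no proof of this proposition at all, only the citation to [LAG, Prop.\ 21.9], so there is no ``paper's approach'' to diverge from. The root-space decomposition, the identification of a one-dimensional smooth connected split unipotent group with $\mathbb{G}_a$, the fact that $\mathrm{Aut}(\mathbb{G}_a)=F^\times$ acting by scaling, and the computation of the scalar via $\mathrm{Ad}(t)|_{\mathfrak{g}_\alpha}=\alpha(t)$ are all exactly the ingredients of the textbook proof, and your remark about absorbing the $t\leftrightarrow t^{-1}$ convention is the right way to reconcile your formula with the one stated in the paper.

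One point to tighten: you produce $\mathbf{U}_\alpha$ by invoking the decomposition of $R_u(\mathbf{B})$ as a product of one-dimensional root subgroups, but in the standard development that decomposition is \emph{deduced from} the prior existence of the root groups $\mathbf{U}_\alpha$, so as written your construction is mildly circular. The non-circular route is to pass to $\mathbf{G}_\alpha = Z_{\mathbf{G}}\bigl((\ker\alpha)^{\circ}\bigr)$, a reductive group of semisimple rank one, and extract $\mathbf{U}_\alpha$ as the unipotent radical of a Borel of $\mathbf{G}_\alpha$ containing $\mathbf{T}$, using the classification of split rank-one groups ($\mathrm{SL}_2$ or $\mathrm{PGL}_2$ times a central torus) to get the $\mathbb{G}_a$-parametrization explicitly. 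Once that substitution is made, the rest of your argument (differentiation to identify the scalar as $\alpha(t)$, descent to $F$ by splitness) goes through as you describe.
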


Let $U_\alpha=\mathbf{U}_\alpha(F)$.

There is a filtration on $\mathbb{G}_a(F)$ given by powers of the uniformizer times $\s$, this induces via $u_\alpha$ a filtration on $U_\alpha$. Let $\pi$ be a uniformizer and denote $U_{\alpha,k}=u_\alpha(\pi^k\s)$ for $k\in \mathbb{Z}$. This yields a filtration on $U_\alpha$.

The group $N_G(T)$ acts on $X_*(T)$ and we can extend this action to an affine action on $V$. As $T^0$ acts trivially we obtain an action of $W_{aff}(T)$ on $V$.

\begin{Not}\label{2.7}
    Let $L\subset V$ be any subset.
    
    We define $N^1_L$ to be the group of elements in $N_G(T)$ that fixes $L$ pointwise.
    
    We also denote $U_L=\Big \langle U_{\alpha,k}\Big|(\alpha+k)|_L\geq 0\Big \rangle$.
    
    We denote by $P^1_L$ the group generated by $U_L$ and $N^1_L$. 
    
    For a single point $v\in V$ we denote $U_v=U_{\{v\}}$ and $P_v=P_{\{v\}}$.
    
\end{Not}

Now we can give a description of the enlarged (also called extended) affine building $\B_G$ of the $p$-adic group $G$.

\begin{defn}
Let $G$ be a reductive connected $p$-adic group. Let $T$ be a maximal split torus of $G$ and let $V=X_*(T)\otimes \mathbb{R}$.

As a set $\B^{T}_G=G\times V/\sim$ where $\sim$ is the equivalence relation $(g,x)\sim (h,y)$ if there is $n\in N_G(T)$ such that $nx=y$ and $g^{-1}hn\in U_x$. $\B^{T}_G$ is called the enlarged affine building of $G$ associated to $T$.

Let $Z=Z(G)$ be the center of $G$, let $\mathcal{Z}=X_*(Z)\otimes \mathbb{R}$. $\mathcal{Z}$ is a subset of $\B^{T}_G$ and we have $\B^{T}_G=\mathcal{Z}\times \B^{nen,T}_G$. $\B^{nen,T}_G$ is called the non-enlarged affine building of $G$.  
\end{defn}

\begin{theorem}
    All maximal split tori of $G$ are conjugate (see Theorem 18.14 in \cite{Milne_2017}).
\end{theorem}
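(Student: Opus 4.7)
The statement is a classical result in the structure theory of reductive groups, cited from Milne. Since the paper assumes $G$ is split over $F$, the plan is to give a direct argument in that setting and sketch the descent strategy for the general case.

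In the split case, any maximal $F$-split torus of $G$ is already a maximal torus in the algebraic sense, so the task reduces to showing that any two maximal tori $T_1, T_2 \subseteq G$ are $G(F)$-conjugate. Over the algebraic closure $\bar F$, classical theory provides $g \in G(\bar F)$ with $g T_1 g^{-1} = T_2$, so the $F$-scheme $\mathrm{Transp}_G(T_1, T_2)$ is non-empty over $\bar F$. This transporter is a right torsor under $N_G(T_1)$, and the plan is to show it has an $F$-point via a Galois-cohomological argument: using the short exact sequence $1 \to T_1 \to N_G(T_1) \to W \to 1$, combined with Hilbert 90 (so $H^1(F, T_1) = 0$ since $T_1$ is split) and the fact that $W$ is a constant group scheme over $F$ (again because $G$ is split), one reduces triviality of the torsor to an elementary statement and extracts a conjugating element in $G(F)$.

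For the general Milne formulation one attaches to each maximal $F$-split torus $S$ the minimal parabolic $F$-subgroup $P$ with Levi factor $Z_G(S)$, then invokes the $G(F)$-conjugacy of minimal parabolic $F$-subgroups together with the fact that two Levi decompositions of a fixed parabolic differ by a unique element of $R_u(P)(F)$. Composing these two conjugacies delivers the desired $g \in G(F)$.

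The main obstacle is the descent from $\bar F$-conjugacy to $F$-conjugacy. In the split case it is controlled by the vanishing of $H^1(F, T_1)$ and is essentially formal; in the general case it is the genuine content of Borel--Tits theory, and the delicate point is the $G(F)$-conjugacy of minimal parabolic $F$-subgroups, which requires substantially more structural input than what is needed for the split setting actually used in this paper.
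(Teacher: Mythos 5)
The paper itself gives no proof of this statement; it is quoted as classical background with a pointer to Milne, so there is no ``paper approach'' to compare against and your proposal has to stand on its own. As written, it does not: the opening reduction is to a false statement. You say that since a maximal split torus of a split $G$ is a maximal torus, ``the task reduces to showing that any two maximal tori $T_1,T_2\subseteq G$ are $G(F)$-conjugate.'' Over a non-algebraically-closed field that claim is false --- in $SL_2(F)$ for $F$ $p$-adic, the split diagonal torus and a non-split maximal torus are both maximal tori and are not $G(F)$-conjugate. The correct target is that any two maximal tori \emph{both of which are split} are conjugate, and your subsequent cohomological sketch never visibly uses the splitness of $T_2$, which is consistent with it attempting to prove the false version.

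Concretely, the gap sits at the step ``the fact that $W$ is a constant group scheme \ldots reduces triviality of the torsor to an elementary statement.'' The exact sequence of pointed sets $H^1(F,T_1)\to H^1(F,N_G(T_1))\to H^1(F,W)$ together with Hilbert 90 only tells you that the fibre over the trivial class of $H^1(F,W)$ is trivial; you still must show that the image of your transporter torsor in $H^1(F,W)$ vanishes, and $H^1(F,W)$ is very far from trivial for constant $W$ over a $p$-adic field (already $H^1(F,\Z/2)=F^\times/(F^\times)^2$, which is exactly what classifies the non-conjugate maximal tori of $SL_2$). The missing input is precisely the splitness of $T_2$: choosing $g\in G(\bar F)$ with $gT_1g^{-1}=T_2$, the image cocycle $\gamma\mapsto \overline{g^{-1}\gamma(g)}\in W$ is a homomorphism whose composite with the faithful action $W\hookrightarrow \mathrm{Aut}(X^*(T_1))$ computes the Galois action on $X^*(T_2)$ transported by $g$; this is trivial because $T_2$ is split, hence the cocycle is trivial, and only then does Hilbert 90 finish the argument. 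With that step supplied the split-case argument is fine (and is more self-contained than simply citing Milne); your sketch of the general case via minimal parabolics is the standard Borel--Tits route and is correctly described, though it is not needed for this paper.
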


The next theorem says that $\B^T_G$ is independent of $T$.

\begin{theorem}\label{2.9}
Let $T_1,T_2\subset G$ be two maximal split tori, and let $g\in G$ such that

$g^{-1}T_1g=T_2$. Conjugation by $g$ induces an isomorphism of $G$ topological spaces between  $\B^{T_1}_G$ and $\B^{T_2}_G$. (See \cite{Landvogt1995ACO} Proposition 9.20).
\end{theorem}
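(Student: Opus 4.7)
The plan is to construct the desired isomorphism by transporting every piece of data entering the construction of $\B_G^{T_1}$ by conjugation by $g^{-1}$, and then to verify that the resulting map on the quotients $G \times V_i / \sim$ is a well-defined, $G$-equivariant homeomorphism.

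First I would set up the conjugation dictionary. Conjugation by $g^{-1}$ defines isomorphisms $T_1 \to T_2$, $N_G(T_1) \to N_G(T_2)$, and $X_*(T_1) \to X_*(T_2)$; the latter extends to a linear isomorphism $\phi : V_1 \to V_2$, where $V_i = X_*(T_i) \otimes \mathbb{R}$. Under $c_{g^{-1}}$ the character group and root system of $T_1$ correspond to those of $T_2$: if $\alpha \in \Phi(T_1)$, then $\alpha' := \alpha \circ c_g \in \Phi(T_2)$, and by Proposition \ref{2.4} applied to both tori (with the $u_\alpha$ chosen compatibly) we obtain $g^{-1} \mathbf{U}_\alpha g = \mathbf{U}_{\alpha'}$, and the induced filtration satisfies $g^{-1} U_{\alpha,k}^{(1)} g = U_{\alpha',k}^{(2)}$. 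Combined with the equality $g^{-1} N_G(T_1) g = N_G(T_2)$, this shows that the groups $P_L, N_L, U_L$ attached to a subset $L \subset V_1$ via Notation \ref{2.7} transform correctly: $g^{-1} U_x^{(1)} g = U_{\phi(x)}^{(2)}$ for every $x \in V_1$. One must also check that the affine action of $N_G(T_1)$ on $V_1$ is intertwined with that of $N_G(T_2)$ on $V_2$ via $\phi$, which again follows directly from the definition of $\phi$ via cocharacters.

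With this dictionary in hand, I would define the candidate map $\tilde{\Phi} : G \times V_1 \to G \times V_2$ by $\tilde{\Phi}(h, x) = (hg, \phi(x))$ and verify that it descends to the quotients. Suppose $(h_1, x_1) \sim_1 (h_2, x_2)$, i.e.\ there exists $n \in N_G(T_1)$ with $n x_1 = x_2$ and $h_1^{-1} h_2 n \in U_{x_1}^{(1)}$. Setting $n' = g^{-1} n g \in N_G(T_2)$, the two compatibility statements from the previous step give $n' \phi(x_1) = \phi(x_2)$ and $(h_1 g)^{-1}(h_2 g) n' = g^{-1}(h_1^{-1} h_2 n) g \in g^{-1} U_{x_1}^{(1)} g = U_{\phi(x_1)}^{(2)}$, so $\tilde{\Phi}(h_1, x_1) \sim_2 \tilde{\Phi}(h_2, x_2)$.

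Finally I would verify the three properties: $G$-equivariance holds because left multiplication on the first coordinate commutes with right multiplication by $g$; bijectivity follows because the same construction applied to $g^{-1}$ produces a two-sided inverse; and continuity (hence homeomorphy, since both spaces are locally compact and the map is bijective and equivariant) follows from the continuity of right multiplication by $g$ on $G$ and the linearity of $\phi$ on each apartment. The main obstacle I anticipate is not conceptual but bookkeeping: pinning down precisely how the pinnings $u_\alpha$ and the integral filtrations on $U_\alpha$ are transported by $c_{g^{-1}}$, since these are only canonical up to the action of $T$, and verifying that any remaining ambiguity is absorbed by $T^0$ and hence invisible in the equivalence relation $\sim$.
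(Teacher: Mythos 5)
The paper offers no proof of this statement at all --- it simply cites \cite{Landvogt1995ACO}, Proposition 9.20 --- and your transport-of-structure argument (conjugating the torus, the normalizer, the root groups and their filtrations, and checking that $(h,x)\mapsto(hg,\phi(x))$ descends to a $G$-equivariant homeomorphism of the quotients) is exactly the content of that reference, so your proposal is correct and is the intended proof. The only caveat is the one you flag yourself, stated slightly inaccurately: the pinnings $u_\alpha$ are canonical only up to rescaling by $F^\times$, and the part of that ambiguity lying in $\pi^{\mathbb{Z}}$ shifts the filtration $U_{\alpha,k}$ (hence translates the apartment) rather than being ``absorbed by $T^0$''; this is harmless here because you may simply define the pinnings for $T_2$ to be the $c_{g^{-1}}$-transports of those for $T_1$, after which the equality $g^{-1}U^{(1)}_{x}g=U^{(2)}_{\phi(x)}$ holds on the nose.
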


We denote by $\B_{G}$ the enlarged affine building of $G$ constructed from some maximal split torus, and by $\B^{nen}_{G}$ the non-enlarged affine building of $G$.

We identify $V$ with the subset $\{(1,v)|v\in V\}\subset G\times V/\sim$.

\textbf{Chambers and apartments}

\begin{Not}\label{apartments}
The apartments of the enlarged building are defined to be the sets of the form $gV=\{(g,v)|v\in V\}/\sim$ which are subsets of $\B_G$. The apartments of the non-enlarged building are the quotient of the apartments of the enlarged building with respect to $\mathcal{Z}$.

We denote apartments by curly $\A$.  There is a correspondence between maximal split tori and apartments, the apartment $gV$ corresponds to the torus $gTg^{-1}$.

All apartments have a structure of an affine space.

The chambers of the enlarged building are defined to be the sets of the form $g\C=\{(g,v)|v\in \C\}/\sim$. Chambers of $\B^{nen}_G$ are quotients of chambers of $\B_G$ with respect to $\mathcal{Z}$.
\end{Not}

Let $n$ be the split rank of $G$.

\begin{theorem}\label{2.10}
$\B_G$ is a poly-simplicial complex that satisfies the following properties:
\begin{enumerate}
    \item Every $n-1$ poly-simplex in an apartment $\A$ lies in exactly two adjacent $n$ poly-simplices of $\A$ and the graph of adjacent $n$ poly-simplices is connected.
    \item Any two poly-simplices in $\B_G$ lie in some common apartment $\A$.
    \item If two poly-simplices both lie in apartments $\A$ and $\A'$, then there is a simplicial isomorphism of $\A$ onto $\A'$ fixing the poly-simplices. This isomorphism can be taken to be an action by an element of $G$.
\end{enumerate} (For more details see Section 9 of \cite{Landvogt1995ACO}).

\end{theorem}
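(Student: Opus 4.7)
The plan is to deduce the three axioms from the quotient description $\B_G = G \times V/\sim$ combined with the Bruhat--Iwahori decomposition $G = \bigsqcup_{w\in W_{aff}} IwI$ arising from the Tits system associated to $(G, I, N_G(T))$. By the $G$-equivariance of the simplicial structure and Theorem \ref{2.9}, each statement reduces to a question about the standard apartment $V$ and the action of $G$ on it.

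For (1), I would work purely inside $V$, where the poly-simplicial structure is induced by the affine hyperplane arrangement $\{H_{\alpha+k}\}$. A codimension-one face lies on a unique wall $H_{\alpha+k}$, and the affine reflection across that wall exchanges the two maximal cells sharing this face, proving the two-cell claim. For connectedness, I would use Theorem \ref{2.3}: the group $W_a$ acts transitively on the chambers of $V$ and is generated by reflections across the walls of $\C$, so any reduced word for $w\in W_a$ produces an explicit gallery from $\C$ to $w\C$. The result for a general apartment $\A = gV$ follows by transporting the structure via $g$.

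For (2), it suffices to treat two chambers, since every poly-simplex is a face of some chamber. After translating by an element of $G$, I may assume one chamber is the fundamental alcove $\C \subset V$ and the other is $g\C$ for some $g\in G$. Writing $g = i_1 n i_2$ via the Bruhat--Iwahori decomposition with $i_1,i_2 \in I$ and $n\in N_G(T)$, I obtain $\C \subset i_1 V$ (since $I$ pointwise fixes $\C$), and $g\C = i_1 n i_2 \C = i_1 n \C \subset i_1 V$ (since $i_2\C = \C$ and $n\C \subset V$), so $i_1 V$ is the desired common apartment.

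The most delicate case is (3), which I expect to be the main obstacle. Given poly-simplices $\sigma_1, \sigma_2$ contained in apartments $\A = gV$ and $\A' = g'V$, the goal is to produce $h\in G$ with $h\A = \A'$ fixing $\sigma_1$ and $\sigma_2$ pointwise. After translating by $g^{-1}$, the problem becomes: find $h$ in the pointwise stabilizer of $L := g^{-1}\sigma_1 \cup g^{-1}\sigma_2 \subset V$ such that $hV = g^{-1}g' V$. This reduces to showing that $P_L$ (Notation \ref{2.7}) acts transitively on the set of apartments containing $L$, equivalently that the element $g^{-1}g'$, which carries $V$ to another apartment through $L$, admits a decomposition in $U_L \cdot N_L$ modulo the equivalence $\sim$. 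This refined Bruhat-type decomposition for the pointwise stabilizers of faces is the technical heart of the construction of $\B_G$ and is carried out in Section 9 of \cite{Landvogt1995ACO}; the main difficulty lies in showing that a \emph{single} element of $U_L \cdot N_L$ can be chosen to realize the apartment comparison, rather than separate elements for each $\sigma_i$.
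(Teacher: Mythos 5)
The paper does not prove this theorem: it is quoted as background, with the proof delegated entirely to Section 9 of \cite{Landvogt1995ACO}. Your sketch follows the standard derivation of the building axioms from the affine Tits system, and where it is complete it is correct. For (1), the observation that a codimension-one face spans a unique wall $H_{\alpha+k}$ (so a point in its relative interior meets exactly two closed chambers) together with transitivity of $W_a$ and galleries from reduced words is exactly the classical argument. For (2), the reduction to two chambers and the computation $g\C=i_1ni_2\C=i_1n\C\subset i_1V$ via $G=IN_G(T)I$ is the standard proof and is complete. For (3), you correctly identify the crux as the transitivity of $P_L$ on the set of apartments containing $L$ (equivalently a refined $U_L N_G(T) U_{L'}$-type decomposition), and you are right that this is the technical heart of the Bruhat--Tits construction; but you then defer precisely that step to the same reference the paper cites, so your argument for (3) is a correct reduction rather than a proof. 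One small remark: axiom (3) as stated only asks for an isomorphism fixing the \emph{vertices} of the two poly-simplices, while what is actually proved in the reference (and what you aim for) is the stronger statement that some $g\in G$ carries $\A$ to $\A'$ fixing $\A\cap\A'$ pointwise; your reduction targets the right statement, since the stronger version is also what the paper uses later (e.g.\ in Proposition \ref{main lemma} and Lemma \ref{3.13}).
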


The poly-simplices of dimension $n$ in $\B_G$ are the chambers of $\B_G$. 

The group of elements in $G$ that fix a specific chamber in $\B_G$ pointwise is called an Iwahori subgroup of $G$ (see Definition 4.1.3 in \cite{BruhatTitsTA}).

The next theorem summarizes some important properties of the building mentioned in subsections 1.11 and 1.13 in \cite{delorm}.

\begin{theorem}\label{2.11}

\begin{enumerate}
    \item $\B_G$ is the union of its apartments and $G$ acts transitively on them.
    \item $N_G(T)$ is the stabilizer of $V$ in $\B_G$. 
    \item The group $U_{\alpha,k}$ fixes pointwise the points in $V$ on which the affine root $\alpha+k$ is non-negative.  
    \item For any $L\subset \A$, the group $P^1_L$ is exactly the group of elements that fix $L$ pointwise.
    \item For any $g\in G$ there exists $n\in N_G(T)$ such that $gx=nx$ for any $x\in V\cap g^{-1}V$.
\end{enumerate}
\end{theorem}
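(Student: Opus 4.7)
The plan is to verify the five properties directly from the explicit model $\B_G = G \times V/\sim$ given in the preceding subsection, appealing to \cite{Landvogt1995ACO} for the deeper building-theoretic facts.

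Items (1) and (3) are immediate from unpacking the definitions. For (1), every class $[g,v]$ is contained in the apartment $gV$, and the $G$-action $g' \cdot [g,v] = [g'g, v]$ carries $gV$ to $g'gV$, giving transitivity. For (3), if $(\alpha+k)(v) \ge 0$ then $U_{\alpha,k} \subset U_v$ by Notation \ref{2.7}, so for any $u \in U_{\alpha,k}$ the choice $n = 1$ in the equivalence relation gives $(u,v) \sim (1,v)$, i.e.\ $u$ fixes $v$.

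For (2) and (5), I would unpack the equivalence relation more carefully. If $g$ stabilizes the apartment $V$ setwise, then for each $v \in V$ there exists $n_v \in N_G(T)$ with $n_v v = gv$ and $g^{-1} n_v \in U_v$; statement (5) asserts that a single $n$ can be chosen to work on the whole intersection $V \cap g^{-1}V$. I would derive this from Theorem \ref{2.10}(3), which produces an element of $G$ realizing the simplicial isomorphism of any common simplex, together with the fact that every simplicial self-isomorphism of $V$ is induced by an element of $N_G(T)$ acting through the affine Weyl group. Given (5), statement (2) follows by applying (5) to $g$ with $g^{-1}V = V$: one gets $g = n u$ with $n \in N_G(T)$ and $u \in U_v$ for every $v \in V$, and a generic-position argument (using that the intersection of the $U_v$ over a Zariski-dense set of points in $V$ is trivial) forces $u = 1$, so $g \in N_G(T)$.

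The main obstacle is (4). The inclusion $P_L \subset \{g : g\text{ fixes }L\text{ pointwise}\}$ is built into the definitions of $U_L$ and $N_L$ together with (3). The converse, that every element fixing $L$ pointwise lies in $P_L$, is the deep content: it is a form of the Bruhat--Tits fixed-point theorem, proved in \cite{Landvogt1995ACO} via the valuated root datum and the resulting $BN$-pair structure. Rather than reprove this from scratch, I would import it directly, noting that it is the point where the general theory of the affine building (as opposed to the formal model $G \times V/\sim$) is truly used.
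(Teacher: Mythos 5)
The paper does not actually prove Theorem \ref{2.11}: it is presented as a summary of standard properties of the building, cited to subsections 1.11 and 1.13 of \cite{delorm} (and ultimately to \cite{Landvogt1995ACO} and Bruhat--Tits). So there is no in-paper argument to compare yours against; what you wrote is a reconstruction, and it is sound where it is explicit. Your derivations of (1) and (3) from the model $G\times V/\sim$ are correct and complete, and your identification of the converse inclusion in (4) as the genuinely deep input --- the point where the valuated root datum and $BN$-pair machinery of \cite{Landvogt1995ACO} must be imported rather than rederived --- is exactly right; importing it is in effect what the paper itself does. Two small remarks on (2) and (5). First, the logical order should be (5) and (4) before (2): once (5) reduces a stabilizer of $V$ to $n$ times a pointwise fixator of $V$, the cleanest way to kill the second factor is not a generic-position argument about $\bigcap_v U_v$ (note that fixing $v$ in the quotient $G\times V/\sim$ only places an element in $N_vU_v$, not in $U_v$), but the Iwahori decomposition (Theorem \ref{2.16}) applied to $L=V$: no affine root is non-negative on the whole apartment, so $U_V^{\pm}$ are trivial and the pointwise fixator is $T^0\subset N_G(T)$. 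Second, deducing (5) from Theorem \ref{2.10}(3) presupposes that simplicial automorphisms of an apartment are realized by $N_G(T)$, which is essentially (2) again; in the references (5) is obtained directly from the Bruhat decomposition rather than from the building axioms. Neither point is a real gap, only bookkeeping about which black box gets opened first.
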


\begin{theorem}[The Affine Bruhat Decomposition, \cite{PMIHES_1965__25__5_0}]\label{affine bruhat decomposition}
Let $T$ be a maximal split torus of $G$, let $\A$ be the corresponding apartment in $\B_G$, let $\C\subset\B_G$ be a chamber and let $I$ be the Iwahori subgroup that corresponds to $\C$. Then $G=IW_{aff}(T)I$.
\end{theorem}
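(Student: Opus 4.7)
The plan is to take an arbitrary $g \in G$ and produce a factorization $g = i_1\, n\, i_2$ with $i_1, i_2 \in I$ and $n \in N_G(T)$. The strategy is geometric: first transport the chamber $g\C$ into the apartment $\A$ by an element of $I$, then use the $W_{aff}(T)$-action on $\A$ to send it back to $\C$.

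First, by Theorem~\ref{2.10}(2), the chambers $\C$ and $g\C$ lie in a common apartment $\A'$ of $\B_G$. By Theorem~\ref{2.10}(3), there is an element $h \in G$ that realizes a simplicial isomorphism $\A' \to \A$ fixing the vertices of $\C$; such $h$ then fixes the whole simplex $\C$ pointwise by affine extension, so $h \in I$. Consequently $hg\C$ is a chamber of $\A$.

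Next, I would use Theorem~\ref{2.3}: the Coxeter subgroup $W_a \subset W_{aff}(T)$ acts simply transitively on chambers of $\A$, so there is a unique $w \in W_a$ with representative $n_w \in N_G(T)$ satisfying $n_w\C = hg\C$ as sets. Any residual discrepancy between the vertex labelings of $\C$ induced by $n_w$ and by $hg$ is then realized by a length-zero element $\omega \in \Omega$ (whose elements stabilize $\C$ setwise by definition of the length function, and which, by Theorem~\ref{2.3}, exhaust the coset $W_{aff}(T)/W_a$). Setting $n = n_w \omega \in N_G(T)$, the element $n^{-1}hg$ now fixes $\C$ pointwise and therefore lies in $I$.

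Rearranging yields $g = h^{-1}\, n\, (n^{-1}hg) \in I \cdot N_G(T) \cdot I$; since $T^0 \subset I$ (as $T^0$ fixes $V$ pointwise), the double coset $InI$ depends only on the image of $n$ in $W_{aff}(T) = N_G(T)/T^0$, giving $g \in IW_{aff}(T)I$. The main technical obstacle will be verifying that every simplicial self-map of $\C$ realized by an element of $G$ can be absorbed into $\Omega$ — equivalently, that the setwise stabilizer of $\C$ in $G$ equals $I$ extended by representatives of $\Omega$ in $N_G(T)$; this uses the semidirect decomposition $W_{aff}(T) = W_a \rtimes \Omega$ from Theorem~\ref{2.3} together with the fact that $\Omega$ normalizes $I$, since conjugation by a setwise stabilizer of $\C$ preserves its pointwise stabilizer.
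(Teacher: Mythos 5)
The paper does not actually prove this statement; it is imported from Iwahori--Matsumoto, whose original argument runs through Tits-system axiomatics, so there is no in-paper proof to compare against. Your building-theoretic argument is the standard alternative and its skeleton is sound: use Theorem \ref{2.10} to move $g\C$ into the fixed apartment by an element of $I$, then match the resulting chamber of $\A$ with $\C$ by an element of $N_G(T)$. The first step is fine as written: the element $h$ furnished by Theorem \ref{2.10} fixes the vertices of $\C$, hence (acting affinely on apartments) fixes $\C$ pointwise, so $h\in I$ and $hg\C\subset\A$; and the final reduction from $N_G(T)$ to $W_{aff}(T)=N_G(T)/T^0$ via $T^0\subset I$ is correct.

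The gap is exactly where you flag it, and your proposed resolution does not close it. After producing $n_w\in N_G(T)$ with $n_w\C=hg\C$, the element $j=n_w^{-1}hg$ stabilizes $\C$ only setwise, and you need $j\in \Omega I$ with $\Omega$ realized inside $N_G(T)$. Knowing that $W_{aff}(T)=W_a\rtimes\Omega$ and that $\Omega$ normalizes $I$ says nothing about whether an arbitrary setwise stabilizer of $\C$ factors through $N_G(T)$ at all, which is the actual content needed. You also must not appeal to Proposition \ref{2.12}: in this paper that proposition is \emph{derived from} the affine Bruhat decomposition, so invoking it here is circular. The correct tool is item 5 of Theorem \ref{2.11}: since $j\C=\C\subset V\cap j^{-1}V$, there is $n'\in N_G(T)$ agreeing with $j$ on $\C$, whence $n'^{-1}j$ fixes $\C$ pointwise and lies in $I$, while $n'\C=\C$ forces the image of $n'$ in $W_{aff}(T)$ to have length zero, i.e.\ to lie in $\Omega$. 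In fact, applying item 5 of Theorem \ref{2.11} directly to $hg$ (which maps $\C\subset V$ back into $V$) produces $n\in N_G(T)$ with $n^{-1}hg\in I$ in one stroke, letting you skip the simple-transitivity and $\Omega$-bookkeeping entirely; with that substitution your factorization $g=h^{-1}\,n\,(n^{-1}hg)$ completes the proof.
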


\begin{prop}\label{2.12}
Let $\C$ be a chamber of $\B_G$, let $I$ be the Iwahori subgroup which fixes $\C$ pointwise and let $J$ be the group that fixes $\C$ as a set (not necessarily pointwise).

    Using Theorems \ref{affine bruhat decomposition} and \ref{2.3} we can construct a group homomorphism $\omega:G\rightarrow\Omega$ by $\omega(i_1xoi_2)=o$ for $i_1,i_2\in I$, $x\in W_a$ and $o\in \Omega$.
    \begin{enumerate}
        \item The homomorphism $\omega$ is well defined.
        \item $I$ is normal in $J$ and the homomorphism $\omega$ induces an isomorphism $J/I\cong \Omega$.
    \end{enumerate}
\end{prop}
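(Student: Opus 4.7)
The plan is to deduce both parts from the affine Bruhat decomposition (Theorem~\ref{affine bruhat decomposition}) together with the semidirect product structure $W_{aff} = W_a \rtimes \Omega$ from Theorem~\ref{2.3}. The geometric fact driving everything is that $\Omega = l^{-1}(0)$, so any lift $\dot u \in N_G(T)$ of $u \in \Omega$ setwise stabilizes the fundamental alcove $\C$; hence $\dot u \in J$, and conjugation by $\dot u$ sends $I$ (the pointwise stabilizer of $\C$) to the pointwise stabilizer of $\dot u\C = \C$, which is again $I$. In particular lifts of $\Omega$ normalize $I$, so $IuI = Iu = uI$ for every $u \in \Omega$.

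For well-definedness of $\omega$: given $g \in G$, the Bruhat decomposition uniquely determines $\Tilde{w} \in W_{aff}$ with $g \in I\Tilde{w}I$, and the semidirect decomposition uniquely factors $\Tilde{w} = xu$ with $x \in W_a$, $u \in \Omega$. Hence $\omega(g) := u$ depends only on $g$, not on the particular presentation $g = i_1 x u i_2$. For part (2), the normality $I \triangleleft J$ is immediate because $I$ is the kernel of the action of $J$ on the set of points of $\C$. Combined with Bruhat, $J \cap I\Tilde{w}I = \emptyset$ unless $\Tilde{w}\C = \C$, i.e.\ unless $\Tilde{w} \in \Omega$ (since in the semidirect factorization $\Tilde{w}\C = x\C$, which equals $\C$ only when $x = e$). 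Using $IuI = Iu$ this yields $J = \bigsqcup_{u \in \Omega} I u$, so the map $J/I \to \Omega$, $gI \mapsto \omega(g)$, is a bijection.

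The \emph{main obstacle} is showing that $\omega$ is a homomorphism on all of $G$. I would proceed computationally: writing $g = i_1 \dot x_1 \dot u_1 i_2$ and $h = i_3 \dot x_2 \dot u_2 i_4$, absorb $i_2 i_3 \in I$ and push $\dot u_1$ to the right, first past the intervening $I$ element (using that $\Omega$ normalizes $I$), then past $\dot x_2$ via $\dot u_1 \dot x_2 = (\dot u_1 \dot x_2 \dot u_1^{-1})\dot u_1 =: \dot x_2' \dot u_1$, where $x_2' \in W_a$ because $W_a$ is normal in $W_{aff}$. This reduces the claim to showing that $G_a := \bigsqcup_{x \in W_a} I x I$ is closed under multiplication, so that the resulting middle block $\dot x_1 i' \dot x_2'$ lies in some $I z I$ with $z \in W_a$. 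This closure is the genuinely nontrivial ingredient, but it follows from the BN-pair multiplication rule $(IsI)(IyI) \subset IsyI \cup IyI$ for $s \in \Tilde{\Delta}$ and $y \in W_a$, iterated by induction on $l(x_1)$. Once $G_a$ is known to be a subgroup, a final commutation of $\dot u_1 \dot u_2 \in \Omega$ past the trailing $I$ places $gh$ in $I(z u_1 u_2)I$, yielding $\omega(gh) = u_1 u_2 = \omega(g)\omega(h)$.
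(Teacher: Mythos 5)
Your proof is correct. Note that the paper itself gives no proof of Proposition \ref{2.12}: it is recorded in the preliminaries as a standard consequence of Iwahori--Matsumoto theory, so there is no argument of the paper's to compare against, and your reconstruction is essentially the standard one. Two remarks on inputs you rely on that go slightly beyond the literal statements of Theorems \ref{affine bruhat decomposition} and \ref{2.3}: the well-definedness of $\omega$ needs not just $G=IW_{aff}(T)I$ but the disjointness of the double cosets $IwI$ over $w\in W_{aff}(T)$, and the closure of $\bigcup_{x\in W_a}IxI$ under multiplication needs the double-coset form $(IsI)(IwI)\subset IswI\cup IwI$ of the relations recorded in Theorem \ref{2.20}. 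Both are part of the same standard package being cited, so invoking them is legitimate, but it is worth flagging that they are the actual load-bearing facts. The rest of your argument --- that lifts of elements of $\Omega$ stabilize $\C$ setwise (since $l(u)=0$ forces $u\C=\C$) and hence normalize $I$; that $I$ is normal in $J$ as the kernel of the action of $J$ on the points of $\C$; that simple transitivity of $W_a$ on the chambers of the apartment forces $J\cap I\tilde{w}I\neq\emptyset\Rightarrow\tilde{w}\in\Omega$, giving $J=\bigsqcup_{u\in\Omega}Iu$; and the computation pushing $\dot u_1$ to the right using normality of $I$ under $\Omega$-lifts and normality of $W_a$ in $W_{aff}$ --- is complete and correct.
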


\begin{defn}
    For any subset $L$ inside an apartment $\A$, we define $P_L=P^1_L\cap Ker(\omega)$. This is the group of elements in $Ker(\omega)$ that fix $L$ pointwise.
\end{defn}

Let $I$ be an Iwahori subgroup of $G$. Proposition \ref{2.12} allows us to think about $G/I$ as the set of $\Omega$ colored chambers in the building of $G$. Denote by $C_G$ the set of chambers in $\B_G$, we denote the set of $\Omega$ colored chambers by $C_\Omega(G)=C_G\times \Omega$. The group $G$ acts on $C_\Omega(G)$ diagonally, when the action on $\Omega$ is given by $\omega:G\rightarrow \Omega$, meaning $g(\C,o)=(g\C,\omega(g)o)$.

\textbf{Convexity of apartments in the building}

The convexity of apartments has two forms, the first is given by the following theorem (See \cite{Landvogt1995ACO} Proposition 9.6).

\begin{theorem}\label{2.13}
The intersection of any two apartments $\A$ and $\A'$ is convex with respect to the affine geometry on $\A$. Meaning that for any $x,y\in \A\cap \A'$ the segment $\{ax+(1-a)y|a\in [0,1]\}$ inside the affine space $\A$ is contained in $\A\cap \A'$. 
\end{theorem}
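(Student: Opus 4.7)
The plan is to exhibit $\A\cap\A'$ as the fixed-point set of an apartment isomorphism $\phi:\A\to\A'$ and then invoke the elementary fact that the fixed set of an affine map is an affine subspace, which is convex.

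The first step is to produce an isomorphism $\phi:\A\to\A'$, realized by an element $g\in G$ (so that $g\A=\A'$), whose restriction to $\A\cap\A'$ is the identity. A weak form of this is already the content of Theorem \ref{2.10}(3), which guarantees such an isomorphism fixing the vertices of two preassigned poly-simplices; one upgrades to pointwise fixation of the entire intersection either by iterating the statement over a gallery whose chambers lie in $\A\cap\A'$ (using the parahoric stabilizers $P_L$ from Notation \ref{2.7} together with Theorem \ref{2.11}(4) to implement the successive identifications), or, more concretely, by unfolding the construction of $\B_G$ as $G\times V/\sim$ from Section 2.2 and describing $\A\cap\A'$ via the root-group filtrations $U_{\alpha,k}$ of Proposition \ref{2.4}.

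Given such $\phi$, the identification $\A\cap\A'=\mathrm{Fix}(\phi)$ is immediate: one inclusion is the defining property of $\phi$, while if $p\in\A$ satisfies $\phi(p)=p$ then $p=\phi(p)\in\A'$, so $p\in\A\cap\A'$. Transporting through any identification of $\A$ and $\A'$ with the model apartment $V$, the map $\phi$ becomes an affine automorphism of $V$ (its linear part is induced by an element of $N_G(T)/T^{0}=W_{aff}(T)$, see Theorem \ref{2.11}(2) and (5)), so the condition $\phi(p)=p$ becomes a system of affine equations on $V$. Its solution set is an affine subspace of $V$, hence convex, and transporting back gives convexity of $\A\cap\A'$ in the affine structure of $\A$.

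The main obstacle is the first step: Theorem \ref{2.10}(3) by itself only fixes the vertices of two preassigned simplices, while $\A\cap\A'$ can be substantially larger. Landvogt's treatment in \cite{Landvogt1995ACO} sidesteps the obstacle rather than confronting it, by writing $\A\cap\A'$ directly as a finite intersection of closed half-apartments $\{v\in V:(\alpha+k)(v)\geq 0\}$ extracted from the root-group filtrations of Proposition \ref{2.4}; in that approach convexity is almost a by-product, since each half-apartment is convex and convexity is preserved under intersection. In a careful write-up I would follow this half-apartment route, and recover the fixed-point-of-$\phi$ picture afterwards as a conceptually cleaner reformulation.
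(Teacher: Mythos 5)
The paper offers no proof of this statement at all --- it is quoted verbatim from Landvogt (Proposition 9.6) --- so the only thing to compare your proposal against is the standard argument you yourself defer to in your last paragraph, namely the description of $\A\cap\A'$ as a finite intersection of closed half-apartments. That route is correct, and it is the one you should write up.

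Your primary argument, however, contains a genuine error at its final step. The set-theoretic identity $\A\cap\A'=\mathrm{Fix}(\phi)$ is fine once an element $g$ with $g\A=\A'$ fixing $\A\cap\A'$ pointwise is available, but the conclusion that this set is an affine subspace of $\A$ is false, and with it the claim that $\phi(p)=p$ is ``a system of affine equations on $V$.'' The equality $\phi(p)=p$ is an equality of points of $\B_G=G\times V/\sim$, and the relation $\sim$ involves the point-dependent groups $U_v$: writing $p=[a,v]$ and $\phi(p)=[a',\mu(v)]$ with $\mu$ affine, the condition for these classes to coincide is not $\mu(v)=v$ but membership of a fixed group element in $U_v$, i.e.\ a finite conjunction of inequalities $(\alpha+k)(v)\geq 0$. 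That is precisely the half-apartment description; it yields convexity and nothing stronger. A concrete counterexample to your stronger claim: in the tree of $SL_2$, apartments are lines and two of them can meet in a half-line or a finite segment (take $\A'=u\A$ for $u$ a nontrivial element of a root group $U_{\alpha,0}$, so that $\A\cap\A'$ is the half-apartment $\{\alpha\geq 0\}$) --- convex, but not an affine subspace. Two further cautions: your proposed upgrade of Theorem \ref{2.10}(3) by iterating along a gallery inside $\A\cap\A'$ presupposes that the intersection contains chambers, which it need not (it can be a single vertex or a wall); and in the standard treatments the existence of $g$ fixing $\A\cap\A'$ pointwise is itself established from, or together with, the half-apartment description, so presenting the fixed-point picture as logically prior risks circularity. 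The ``conceptually cleaner reformulation'' you hope to recover afterwards should therefore be stated only as ``$\A\cap\A'$ is the fixed-point set of $g$ in $\A$, which is an intersection of half-apartments,'' not as an affine subspace.
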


For the second form of convexity we need another definition.

\begin{defn}\label{2.14}

    Let $\C_1$ and $\C_2$ be two chambers, a path of adjacent chambers which starts at $\C_1$ and ends at $\C_2$ is called a gallery. It will be referred to as a minimal path or a minimal gallery if the number of chambers in it is equal to the minimal number needed to pass from $\C_1$ to $\C_2$ via adjacent chambers.
\end{defn}
    
    \begin{theorem}\label{2.15}
     If $\C_1,\C_2$ are chambers in an apartment $\A$ then any minimal gallery between them is contained in $\A$. (See \cite{Brown1989} Chapter 4 Section 4).
    \end{theorem}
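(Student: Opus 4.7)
The plan is to use the standard retraction technique for buildings. Fix $\C_1 \in \A$ and construct the retraction $\rho = \rho_{\A,\C_1} : \B_G \to \A$ as follows: for each chamber $\mathcal{D}$, Theorem \ref{2.10}(2) produces an apartment $\A'$ containing both $\C_1$ and $\mathcal{D}$, and Theorem \ref{2.10}(3) yields a simplicial isomorphism $\phi : \A' \to \A$ pointwise fixing $\A \cap \A'$; set $\rho(\mathcal{D}) := \phi(\mathcal{D})$. The first task is to verify that $\rho$ is well-defined (independent of $\A'$ and $\phi$), that it restricts to the identity on $\A$, and that it is a type-preserving chamber map sending adjacent chambers of $\B_G$ to equal or adjacent chambers of $\A$.

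Given a minimal gallery $\Gamma : \C_1 = \mathcal{D}_0, \ldots, \mathcal{D}_n = \C_2$ in $\B_G$, I apply $\rho$. The image $\rho(\Gamma)$ is a sequence of chambers in $\A$ from $\C_1$ to $\C_2$ with consecutive terms equal or adjacent, hence a pregallery of length at most $n$. Because $\A$ is a subcomplex of $\B_G$ we have $n = d_{\B_G}(\C_1,\C_2) \leq d_{\A}(\C_1,\C_2)$, so these inequalities force $d_\A(\C_1,\C_2) = n$ and preclude any two consecutive terms of $\rho(\Gamma)$ from coinciding (otherwise we would obtain a gallery in $\A$ shorter than $n$). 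Thus $\rho(\Gamma)$ is itself a minimal gallery in $\A$ and shares with $\Gamma$ the same sequence of panel types.

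To finish, I would argue by induction on $i$ that $\mathcal{D}_i = \rho(\mathcal{D}_i) \in \A$. The base case $i=0$ is trivial. Assume $\mathcal{D}_i \in \A$ and pick any apartment $\A^*$ containing both $\mathcal{D}_i$ and $\mathcal{D}_{i+1}$, guaranteed by Theorem \ref{2.10}(2). The panel $P = \mathcal{D}_i \cap \mathcal{D}_{i+1}$ is a face of $\mathcal{D}_i$ and hence lies in $\A \cap \A^*$. By thinness of apartments, the only chambers of $\A$ containing $P$ are $\mathcal{D}_i$ and one other chamber, call it $\mathcal{D}_{i+1}^\A$. The adjacency-preservation of $\rho$ together with the no-collapse property established above forces $\rho(\mathcal{D}_{i+1}) = \mathcal{D}_{i+1}^\A$. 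To conclude $\mathcal{D}_{i+1} = \mathcal{D}_{i+1}^\A$, I would assemble an apartment simultaneously containing $\C_1, \mathcal{D}_i, \mathcal{D}_{i+1}$ by gluing $\A$ and $\A^*$ along the chamber $\mathcal{D}_i$ via Theorem \ref{2.10}(3), then invoke injectivity of the resulting isomorphism to $\A$ to produce a contradiction if $\mathcal{D}_{i+1} \neq \mathcal{D}_{i+1}^\A$.

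The main obstacle is this last step: Theorem \ref{2.10}(2) as stated guarantees only that two poly-simplices lie in a common apartment, not three. Producing a single apartment that simultaneously contains $\C_1, \mathcal{D}_i, \mathcal{D}_{i+1}$ requires a careful gluing argument along $\mathcal{D}_i \in \A \cap \A^*$, transporting $\A^*$ via an isomorphism from Theorem \ref{2.10}(3) to a new apartment that still contains $\C_1$. This combinatorial gluing is the delicate heart of the argument; once it is in place, injectivity of simplicial isomorphisms between apartments finishes the induction and everything else is routine.
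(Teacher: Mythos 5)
The first two stages of your argument are the standard retraction technique and are fine modulo the routine verifications you defer: $\rho=\rho_{\A,\C_1}$ is well defined, fixes $\A$ pointwise, sends adjacent chambers to equal or adjacent chambers, and therefore sends the minimal gallery $\Gamma$ to a non-stuttering minimal gallery $\rho(\Gamma)$ in $\A$ of the same length. The genuine gap is in the final inductive step, and the repair you propose does not close it. Suppose you do manage to produce an apartment $\A''$ containing $\C_1$, $\mathcal{D}_i$ and $\mathcal{D}_{i+1}$, together with an isomorphism $\psi:\A''\to\A$ fixing $\A\cap\A''$ pointwise. Since the panel $P=\mathcal{D}_i\cap\mathcal{D}_{i+1}$ lies in $\A\cap\A''$, injectivity of $\psi$ only yields $\psi(\mathcal{D}_{i+1})=\mathcal{D}_{i+1}^{\A}$; to upgrade this to $\mathcal{D}_{i+1}=\mathcal{D}_{i+1}^{\A}$ you would need $\psi$ to fix $\mathcal{D}_{i+1}$, i.e.\ you would need $\mathcal{D}_{i+1}\in\A\cap\A''$, which is precisely what you are trying to prove. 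No contradiction arises from assuming $\mathcal{D}_{i+1}\neq\mathcal{D}_{i+1}^{\A}$ by this route, so the induction is circular as written.

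What is actually needed is an input that singles out $\mathcal{D}_{i+1}$ among the $q+1$ chambers containing $P$, and the distance to the \emph{far} endpoint $\C_2$ does this. By minimality of $\Gamma$ one has $d(\mathcal{D}_{i+1},\C_2)=n-i-1$, and since $\rho(\Gamma)$ is a minimal gallery in $\A$ one also has $d(\mathcal{D}_{i+1}^{\A},\C_2)\le n-i-1$, while $d(\mathcal{D}_i,\C_2)=n-i$ shows that $n-i-1$ is the minimal distance from $\C_2$ achieved by chambers containing $P$. The gate property of panels in a building (for each panel $P$ and chamber $\C$ there is a \emph{unique} chamber containing $P$ at minimal distance from $\C$, all others being exactly one step farther) then forces $\mathcal{D}_{i+1}=\mathcal{D}_{i+1}^{\A}$; equivalently, one may invoke the uniqueness of the gallery of a given reduced type between two fixed chambers. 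Either fact is of the same depth as the convexity statement itself and must be proved or cited separately. Note that the paper offers no proof to compare against, since it quotes the result directly from \cite{Brown1989}; but as it stands your argument is missing this essential ingredient, and the ``gluing'' step you identify as the delicate heart would not suffice even if carried out.
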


\begin{theorem}[Iwahori decomposition]\label{2.16}
Let $L\subset \A$ be a set with non empty interior (with respect to the affine topology on $\A$). For any root $\alpha$ let $\alpha_L$ be the affine root of the form $\alpha+k$ with the minimal $k$ that is non negative on $L$. Let $U_L^+$ be the group generated by $U_{\alpha_L}$ for $\alpha\in\Phi^+$ and $U_L^-$ be the group generated by $U_{\alpha_L}$ for $\alpha\in\Phi^-$, then we have
\begin{equation}
    P_L=T^0U_L=T^0U_L^+U_L^-
\end{equation}
  Furthermore we have  $U_L^+=\Pi_{\alpha\in\Phi^+}U_{\alpha_L}$ when the product can be taken in any order.

(See \cite{BruhatTitsTA} Proposition 13.2.5).
\end{theorem}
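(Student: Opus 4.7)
The plan is to prove the three assertions in order: the containment $P_L = T^0 U_L$, the ordered product factorization of $U_L^+$, and finally the decomposition $T^0 U_L = T^0 U_L^+ U_L^-$.

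First I would prove $P_L = T^0 U_L$. The inclusion $T^0 U_L \subseteq P_L$ is direct: $T^0$ fixes all of $V$ pointwise, and by Theorem \ref{2.11}(3) each generator $U_{\alpha_L}$ fixes the half-space $\{\alpha_L \geq 0\}$, which contains $L$. For the reverse inclusion, the key observation is that because $L$ has non-empty interior in the affine space $\A$, any element of $N_G(T)$ that fixes $L$ pointwise must act trivially on $V$ (an affine map fixing a non-empty open subset of an affine space is the identity), and hence lies in $T^0$. Thus $N_L = T^0$. Since by Notation \ref{2.7} $P_L = \langle N_L, U_L \rangle$ and $T^0$ normalizes each $U_{\alpha_L}$ via the formula $t u_\alpha(a) t^{-1} = u_\alpha(\alpha(t) a)$ from Proposition \ref{2.4} (using $\alpha(t) \in \s^\times$ for $t \in T^0$), the set $T^0 U_L$ is already a group, so it equals $P_L$.

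For the factored form $U_L^+ = \prod_{\alpha \in \Phi^+} U_{\alpha_L}$ in any order, I would proceed by induction on the number of positive roots, using the Chevalley commutator relations: for linearly independent $\alpha, \beta \in \Phi^+$,
\[
[U_{\alpha_L}, U_{\beta_L}] \subseteq \prod_{\substack{i, j > 0 \\ i\alpha + j\beta \in \Phi}} U_{(i\alpha + j\beta)_L}.
\]
This containment holds because the assignment $\alpha \mapsto \alpha_L$ is concave in the Bruhat--Tits sense: $(i\alpha + j\beta)_L(x) \leq i\alpha_L(x) + j\beta_L(x)$ pointwise on $L$. Standard commutator juggling then lets one reorder any word $U_{\alpha_{i_1},L} U_{\alpha_{i_2}, L} \cdots$ at the cost of commutators that involve root groups of strictly greater height, so the rewriting procedure terminates. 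Uniqueness of the factorization in a prescribed order follows from the corresponding fact inside the unipotent radical of a Borel, by the same commutator/height argument.

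Finally for $T^0 U_L = T^0 U_L^+ U_L^-$, the inclusion $\supseteq$ is clear from the definitions. For $\subseteq$ it is enough to show that $T^0 U_L^+ U_L^-$ is closed under right multiplication by any generator $U_{\alpha_L}$, which reduces to the inclusion $U_L^- U_L^+ \subseteq T^0 U_L^+ U_L^-$. For $\alpha \in \Phi^-$ and $\beta \in \Phi^+$, a commutator $[u, v] \in [U_{\alpha_L}, U_{\beta_L}]$ lies, by the same commutator relations, in a product of root groups $U_{\gamma_L}$ with $\gamma = i\alpha + j\beta$, $i, j > 0$; here $\gamma$ can be positive, negative, or zero. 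The main obstacle is the $\gamma = 0$ case, where the commutator produces an element of $T$, and one must verify that the valuations dictated by $\alpha_L$ and $\beta_L$ force this $T$-element into $T^0$. This relies precisely on $L$ having non-empty interior, which prevents $\alpha_L + \beta_L$ from attaining negative values on $L$ and therefore yields the required integrality on the character side. This verification is the bookkeeping carried out in \cite{nla.cat-vn3104703} Proposition 2.6.4, which I would invoke to close the argument.
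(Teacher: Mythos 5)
The paper offers no proof of this statement at all: it is quoted as background with the proof deferred entirely to Proposition 2.6.4 of the cited reference, so there is nothing in the paper to compare your argument against line by line. What you have written is the standard Bruhat--Tits argument, and its architecture is sound: $N_L=T^0$ because an affine map fixing a set with non-empty interior is the identity and $W_{aff}(T)$ acts faithfully on $V$, so $P_L=T^0U_L$ once one notes $T^0$ normalizes each $U_{\alpha_L}$; the reordering of $U_L^+$ follows from the Chevalley commutator relations together with the concavity $k_{i\alpha+j\beta}\le ik_\alpha+jk_\beta$ of the assignment $\alpha\mapsto\alpha_L$; and the last identity reduces to the interchange $U_L^-U_L^+\subseteq T^0U_L^+U_L^-$.

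The one place where your justification is imprecise is exactly the step you then defer back to the reference. For the pair $\alpha,-\alpha$ the Chevalley commutator formula does not apply (the roots are not linearly independent); one needs the rank-one $SL_2$ identity rewriting $u_{-\alpha}(b)u_\alpha(a)$ inside $U_\alpha T U_{-\alpha}$, which requires $1+ab$ (suitably normalized) to be a unit. Your stated reason --- that non-empty interior ``prevents $\alpha_L+\beta_L$ from attaining negative values'' --- is not sufficient: non-negativity of $k_\alpha+k_{-\alpha}$ only gives $ab\in\s$, and if $ab\equiv-1$ modulo $\pi$ the element leaves the big cell entirely. What non-empty interior actually buys is strictness: if $k_\alpha+k_{-\alpha}=0$ then the non-constant affine function $\alpha+k_\alpha$ would vanish identically on an open set, which is impossible, so $k_\alpha+k_{-\alpha}\ge 1$, hence $ab\in\pi\s$, $1+ab\in\s^\times$, and the torus factor lands in $T^0$. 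With ``non-negative'' replaced by ``strictly positive'' your outline matches what the cited reference carries out; as written, the proposal is a correct sketch that, like the paper itself, still leans on that reference for the one genuinely delicate computation.
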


\begin{Remark}
This is a generalization of the classical Iwahori decomposition. In the case of $G=GL_n(F)$ the statement of the classical Iwahori decomposition is as follows:

Let $K=GL_n(\s)$, $I\subset K$ be the subgroup of matrices reduced to upper-triangular when taken mod $\pi$. Let $B\subset GL_n(F)$ be the Borel subgroup of upper-triangular matrices and $B^t$ the Borel subgroup of lower triangular matrices.  Let $T$ be the maximal split torus of diagonal matrices. Let $U$ be the unipotent of $B$ and $U^t$ the unipotent of $B^t$. Denote $I^+=I\cap U$ and $I^-=I\cap U^t$. The classical Iwahori decompisiton states that 
\begin{equation*}
    I=I^+I^-T^0
\end{equation*}

This can be recovered from Theorem \ref{2.16} by taking $L=\C$ the fundamental chamber. Then we will have $P_L=I$, $U^+_L=I^+$ and $U^-_L=I^-$.
\end{Remark}

\textbf{The building and automorphisms}

\begin{theorem}[The building construction is a map of groupoids, \cite{PMIHES_1984__60__5_0} §4.2.12]\label{2.17}
 Any algebraic automorphism $a$ of $G$ induces an automorphism $a^*$ of $\B_G$ which is an affine map on each apartment and satisfies $a^*(gx)=a(g)a^*(x)$ for $g\in G$ and $x\in \B_G$.

\end{theorem}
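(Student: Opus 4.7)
The plan is to use the explicit model $\B_G = (G \times V)/\sim$ from subsection 2.2 to construct $a^*$ directly, and then verify affinity and equivariance. Fix a maximal split torus $T \subset G$ with $V = X_*(T)\otimes \mathbb{R}$. Since $a$ is $F$-rational, $T' := a(T)$ is again a maximal split torus; let $V' = X_*(T')\otimes \mathbb{R}$. The restriction $a|_T : T \to T'$ induces an $\mathbb{R}$-linear isomorphism $a_* : V \to V'$. Define
$$\tilde{a} : G \times V \longrightarrow G \times V', \qquad (g,v) \longmapsto \big(a(g),\, a_*(v)\big).$$
I would show that $\tilde{a}$ descends to a map $\B_G^T \to \B_G^{T'}$, and then compose with the canonical identification $\B_G^{T'} \cong \B_G^T = \B_G$ of Theorem \ref{2.9} to obtain $a^*$.

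The central verification is that $\tilde{a}$ respects $\sim$. Suppose $(g,x) \sim (h,y)$, witnessed by $n \in N_G(T)$ with $nx = y$ and $g^{-1}hn \in U_x$. Then $a(n) \in a(N_G(T)) = N_G(T')$, and $a(n)$ acts on $V'$ by the affine transformation transporting the action of $n$ on $V$ through $a_*$; in particular $a(n)\cdot a_*(x) = a_*(nx) = a_*(y)$. For the unipotent condition, the key identity needed is
$$a(U_{\alpha,k}) = U'_{a_*(\alpha),\, k},$$
where the right-hand side is the affine root subgroup attached to the root datum of $(G, T')$. Granting this, $a(g^{-1}hn) = a(g)^{-1}a(h)a(n) \in a(U_x) = U'_{a_*(x)}$, so $\tilde{a}$ descends as required.

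Affinity on apartments is then immediate: by construction $a^*$ sends the apartment $gV$ to $a(g)V'$ via the $\mathbb{R}$-linear map $a_*$. Equivariance follows from a one-line computation,
$$a^*\big(h \cdot [(g,v)]\big) = a^*\big([(hg, v)]\big) = \big[(a(h)a(g),\, a_*(v))\big] = a(h) \cdot a^*\big([(g,v)]\big),$$
and the inverse of $a^*$ is $(a^{-1})^*$ by functoriality of the construction.

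The main obstacle is the Chevalley-type compatibility $a(U_{\alpha,k}) = U'_{a_*(\alpha),\, k}$. The set-level equality $a(U_\alpha) = U'_{a(\alpha)}$ is standard from the structure theory of reductive groups (cf.\ Proposition \ref{2.4}). What requires care is matching the integral filtrations: the parametrizations $u_\alpha$ and $u'_{a(\alpha)}$ need not literally coincide under $a$, but they agree up to a unit in $\s^{\times}$, and such a unit does not affect the filtration $U_{\alpha,k} = u_\alpha(\pi^k \s)$. This relies on $a$ being $F$-rational, so that it preserves the subring $\s \subset F$ and the ideal $\pi \s$ up to units, and is exactly the content of \cite{PMIHES_1984__60__5_0} \S4.2.12, where functoriality of the Bruhat-Tits building construction under the groupoid of $F$-rational isomorphisms is established.
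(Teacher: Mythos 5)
The paper itself gives no proof of Theorem \ref{2.17}; it is quoted from Bruhat--Tits \cite{PMIHES_1984__60__5_0} \S4.2.12. So the question is whether your reconstruction is sound as a standalone argument. The overall strategy (build $\tilde a$ on $G\times V$, check compatibility with $\sim$, descend, then identify $\B_G^{T'}$ with $\B_G^{T}$) is the right one, and your equivariance computation is fine. But there is a genuine gap at exactly the step you flag as ``the main obstacle.''

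The identity $a(U_{\alpha,k})=U'_{a_*(\alpha),k}$ is false in general, and the reason is not repaired by $F$-rationality of $a$. Two parametrizations of the same root group differ by composition with multiplication by a scalar $c_\alpha\in F^\times$, and nothing forces $c_\alpha\in\s^\times$. Concretely, take $G=SL_2$ and $a=\mathrm{Int}\bigl(\mathrm{diag}(\pi,1)\bigr)$, an $F$-rational algebraic automorphism of $G$ preserving $T$ and acting trivially on $X_*(T)$, so $a_*=\mathrm{id}$ on $V$. Then $a(u_\alpha(x))=u_\alpha(\pi x)$ for the positive root, hence $a(U_{\alpha,k})=U_{\alpha,k+1}$, and your map $(g,v)\mapsto(a(g),a_*(v))=(a(g),v)$ does \emph{not} respect $\sim$: the equivalence class of $(1,v)$ is not sent to the equivalence class of $(a(1),v)$, since $a$ moves the standard Iwahori and acts on the tree by a nontrivial translation. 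This is precisely why the theorem asserts that $a^*$ is \emph{affine}, not linear, on apartments. The correct construction replaces $a_*$ by an affine map $A=a_*+t$ whose translation part $t$ is determined by the valuations $v(c_\alpha)$; one must verify that $\alpha\mapsto v(c_\alpha)$ is of the form $-\langle a_*(\alpha),t\rangle$ for a single vector $t$ (which follows by writing $a$ as a pinned automorphism composed with conjugation by an element of the adjoint torus), and then $a(U_{\alpha,k})=U'_{\alpha',k'}$ where the affine root $\alpha'+k'$ is the transport of $\alpha+k$ under $A$. With that modification the rest of your argument goes through, but as written the descent step fails.
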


\subsection{Parahoric group schemes}\label{s2.3}
    In this subsection we describe certain group schemes defined over $\s$, related to the Burhat Tits building, we follow \cite{Tits1979ReductiveGO}, \cite{McNinch2010LeviDO}, \cite{Landvogt1995ACO} and \cite{BruhatTitsTA}. In this subsection we will refer to subsets of $\B_G$ which are bounded mod center, meaning their image in $\B_G^{nen}$ is bounded, as bounded subsets.

    Let $\s$ be the integer ring of $F$.

    \begin{theorem}
    Let $L$ be a bounded set which is contained in some apartment of $\B^F_G$.
        There exists a unique smooth affine group scheme ${\mathcal G}_L$, defined over $\s$ such that:
        \begin{enumerate}
            \item The generic fiber of ${\mathcal G}_L$ is $G$.
            \item We have ${\mathcal G}_L(\s)\cong P_L$.
        \end{enumerate}

        (See Section 8.3 in \cite{BruhatTitsTA}. The notations in \cite{BruhatTitsTA} are $G(k)^0_\Omega$ for the groups $P_L$, and $\mathscr{G}^0_\Omega$ for ${\mathcal G}_L$).
    \end{theorem}

In our case, where $G$ is split over $F$, the group ${\mathcal G}_L$ can be constructed explicitly, as done in Subsection 6.2 of \cite{McNinch2010LeviDO}.

Let $\mathbf{T}$ be the group scheme of a maximal split torus of $G$ and let $T=\mathbf{T}(F)$ be a torus inside $G$. Let $\A$ be an apartment of $\B_G$ that corresponds to $T$. Let $L$ be a bounded subset of $\A$.

\begin{defn}
    Let $\alpha+k$ be an affine root of $T$ where $\alpha$ is a root of $T$ and $k\in\Z$. Let $u_\alpha:\mathbb{G}_a\rightarrow \mathbf{U}_\alpha$ be the isomorphism from Proposition \ref{2.4}. Let $\pi$ be a uniformizer of $\s$ and let $\mathbb{G}^k_a$ be the smooth $\s$ scheme whose $\s$ points are $\pi^k\s$. Let ${\mathcal U}_{\alpha+k}$ be the smooth group scheme obtained from $\mathbb{G}^k_a$ and $u_\alpha$ such that its $\s$ points are $u_\alpha(\pi^k\s)$.
\end{defn}

\begin{theorem}\label{open subscheme}

    Let $L$ be a bounded subset of $\A$. The inclusion of $T^0$ and $U_{\alpha_L}$ into $P_L$ can be extended to isomorphisms from $\mathbf{T}$ and ${\mathcal U}_{\alpha_L}$ onto closed sub-schemes of ${\mathcal G}_L$. 

    Like in Theorem \ref{2.16} we can define  ${\mathcal U}^+_{L}=\Pi_{\alpha\in \Phi^+}{\mathcal U}_{\alpha_L}$ and similarly ${\mathcal U}^-_{L}$. The product of the maps from ${\mathcal U}_{\alpha_L}$ to ${\mathcal G}_L$ yield an isomorphism from ${\mathcal U}^-_{L}$ and ${\mathcal U}^+_{L}$ onto closed sub-schemes of ${\mathcal G}_L$.

    Lastly, The product ${\mathcal U}^+_{L}\times \mathbf{T}\times {\mathcal U}^-_{L}$ maps isomorphiclly onto an open sub-scheme of ${\mathcal G}_L$.

    For the proof see either 6.1 in \cite{Landvogt1995ACO} or 8.3.14 in \cite{BruhatTitsTA}.
\end{theorem}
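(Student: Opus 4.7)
The plan is to follow the explicit construction of ${\cal G}_L$ in the split case given in Landvogt \cite{Landvogt1995ACO} \S 6.1 (see also McNinch \cite{McNinch2010LeviDO} \S 6.2), in which ${\cal G}_L$ is assembled from the root group schemes ${\cal U}_{\alpha_L}$ and the torus $\mathbf{T}$ via a big cell chart. Once this construction is set up, all three assertions will essentially be built into it.

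First I would record that each ${\cal U}_{\alpha_L}$ is a smooth affine $\s$-group scheme, isomorphic to $\mathbb{G}_a^{k}$ via $u_\alpha$ for the appropriate $k$, with generic fiber $\mathbf{U}_\alpha$ and $\s_E$-points $U_{\alpha_L}^E$ for every unramified extension $E/F$. Likewise $\mathbf{T}$ is a smooth affine $\s$-torus with $\s_E$-points $T(\s_E)$. Then, after fixing orderings of $\Phi^+$ and $\Phi^-$, I would form the scheme-theoretic products ${\cal U}^\pm_L := \prod_{\alpha \in \Phi^\pm} {\cal U}_{\alpha_L}$; these are smooth affine $\s$-schemes, isomorphic to affine spaces over $\s$, and they acquire a group scheme structure from the corresponding subgroups of $G$, well-defined because the product in the chosen ordering is unique on $\s_E$-points.

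The core of the proof is the big cell statement. The multiplication map $m \colon {\cal U}^+_L \times \mathbf{T} \times {\cal U}^-_L \to {\cal G}_L$ is a morphism of smooth affine $\s$-schemes of the same relative dimension. On the generic fiber, $m$ recovers the classical open big cell for the split reductive group $G$, a consequence of the Bruhat decomposition. On $\s_E$-points for any unramified $E/F$, the Iwahori decomposition (Theorem \ref{2.16}) gives the bijection $P^E_L = U^{+,E}_L \cdot T(\s_E) \cdot U^{-,E}_L$ with uniqueness of the factorization. Combining these two inputs with smoothness and the uniqueness clause characterizing ${\cal G}_L$ by its $\s_E$-points forces $m$ to be an open immersion. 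The closed immersions $\mathbf{T} \hookrightarrow {\cal G}_L$, ${\cal U}_{\alpha_L} \hookrightarrow {\cal G}_L$, and ${\cal U}^\pm_L \hookrightarrow {\cal G}_L$ are then obtained by composing $m$ with the natural inclusions into the big cell (e.g.\ $\mathbf{T} \hookrightarrow \{e\} \times \mathbf{T} \times \{e\}$), and these subschemes are closed rather than merely locally closed because they are stable subgroup schemes whose $\s_E$-points are compact (hence closed) subgroups of $P^E_L$.

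The main obstacle is the step upgrading ``open immersion on the generic fiber plus bijection on $\s_E$-points for all unramified $E/F$'' to ``open immersion of $\s$-schemes''. This requires the uniqueness clause in the construction of ${\cal G}_L$ together with fiberwise smoothness arguments; the full justification is in \cite{Landvogt1995ACO} \S 6.1, and no shortcut avoids the careful bookkeeping of the Bruhat–Tits construction there.
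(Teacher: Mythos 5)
The paper does not prove this statement itself; it is quoted as background with a pointer to Landvogt \cite{Landvogt1995ACO} \S 6.1, and your proposal reconstructs precisely that source's argument (root group schemes, the big-cell chart, the Iwahori factorization on $\s_E$-points, and the uniqueness characterization of ${\cal G}_L$), so it takes essentially the same approach. The sketch is sound as an outline of the standard Bruhat--Tits construction in the split case.
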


    Denote by $\overline{{\mathcal G}}_{L}$ the special fiber of ${\mathcal G}_{L}$. $\overline{{\mathcal G}}_{L}$ is an algebraic group over the residue field $k$.

    \begin{prop}\label{root system}
    Let $\Phi_L$ be the set of roots constant on $L$. The root system of  $\overline{{\mathcal G}}_{L}/R_u(\overline{{\mathcal G}}_{L})$ is $\Phi_L$. (See 8.4.10 in \cite{BruhatTitsTA})
    \end{prop}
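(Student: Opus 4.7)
The plan is to exploit the explicit open-cell description of ${\cal G}_L$ from Theorem \ref{open subscheme} and track which root subgroups survive in the reductive quotient. First I pass to the special fiber: the embedding ${\cal U}^-_L\times \mathbf{T}\times {\cal U}^+_L\hookrightarrow {\cal G}_L$ as an open subscheme gives an open cell $\overline{{\cal U}}^-_L\times \overline{\mathbf{T}}\times \overline{{\cal U}}^+_L$ in $\overline{{\cal G}}_L$, with $\overline{\mathbf{T}}$ a split maximal torus and each $\overline{{\cal U}}_{\alpha_L}\cong \mathbb{G}_a$. The intertwining relation of Proposition \ref{2.4} reduces modulo the uniformizer to show that $\overline{\mathbf{T}}$ acts on $\overline{{\cal U}}_{\alpha_L}$ through the character $\bar\alpha\in X^*(\overline{\mathbf{T}})$, so $\overline{{\cal G}}_L$ carries weight spaces for \emph{every} $\alpha\in\Phi$; the question is which of these descend to root groups in the reductive quotient.

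Next I split $\Phi=\Phi_C\sqcup\Phi_C^c$, where $\Phi_C^c$ is the set of roots non-constant on $L$, and argue that the subgroup $R\subseteq \overline{{\cal G}}_L$ generated by $\{\overline{{\cal U}}_{\alpha_L}:\alpha\in\Phi_C^c\}$ is the unipotent radical. For $\alpha\in\Phi_C^c$ the affine root $\alpha_L=\alpha+k$ is strictly positive on an open subset of $L$, and this positivity is preserved by the Chevalley commutator formula: $[{\cal U}_{\alpha_L},{\cal U}_{\beta_L}]$ lies in the product of ${\cal U}_{(m\alpha+n\beta)_L}$ for positive integers $m,n$, and if $\alpha$ is non-constant while $\beta$ is constant or non-constant, any non-trivial positive combination remains non-constant on $L$. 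Hence $R$ is normalized both by $\overline{\mathbf{T}}$ (via the weight description) and by $\overline{{\cal U}}_{\beta_L}$ for $\beta\in\Phi_C$, so $R$ is normal in $\overline{{\cal G}}_L$. Connectedness is automatic from the fact that $R$ is generated by connected subgroups, and unipotence follows from a grading on the Lie algebra by the directions in which the affine roots $\alpha_L$ are strictly positive.

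Finally I identify $\overline{{\cal G}}_L/R$. The subgroup $M\subseteq \overline{{\cal G}}_L$ generated by $\overline{\mathbf{T}}$ together with $\overline{{\cal U}}_{\alpha_L}$ for $\alpha\in\Phi_C$ is a split connected reductive group whose root datum with respect to $\overline{\mathbf{T}}$ is $(X^*(\overline{\mathbf{T}}),\Phi_C,X_*(\overline{\mathbf{T}}),\Phi_C^\vee)$: since $L$ sits in a single facet, $\Phi_C$ is a closed root subsystem of $\Phi$, and the Chevalley relations among $u_\alpha$ for $\alpha\in\Phi_C$ transfer intact after reduction. The open-cell decomposition yields $\overline{{\cal G}}_L=R\cdot M$ with $R\cap M=\{1\}$, hence $\overline{{\cal G}}_L/R\cong M$ and the root system is $\Phi_C$, as claimed. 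The main obstacle will be showing that $R$ is really the \emph{full} unipotent radical rather than a possibly smaller normal unipotent subgroup — equivalently, that $M$ itself admits no further unipotent normal subgroup — which will require combining the Chevalley commutator machinery for ${\cal G}_L$ with the classification of split reductive groups by their root data to rule out any residual unipotent part inside $M$.
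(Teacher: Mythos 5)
The paper does not actually prove this proposition; it is quoted directly from the literature (8.4.10 of \cite{BruhatTitsTA}), so your proposal is an attempt to reconstruct a foundational result of Bruhat--Tits theory from scratch. The overall strategy — reduce the open cell ${\cal U}^-_L\times\mathbf{T}\times{\cal U}^+_L$ to the special fiber, declare the root groups with $\alpha$ non-constant on $L$ to generate the unipotent radical $R$, and identify the quotient with the group generated by $\overline{\mathbf{T}}$ and the constant root groups — is the right one, but two of your intermediate steps are genuinely broken. First, the claim that ``if $\alpha$ is non-constant while $\beta$ is constant or non-constant, any non-trivial positive combination remains non-constant on $L$'' is false in the second case: in type $A_2$, take $L$ inside a wall $H_{(\alpha_1+\alpha_2)+k}$; then $\alpha_1$ and $\alpha_2$ are non-constant on $L$ but $\alpha_1+\alpha_2$ is constant. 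The commutator $[{\cal U}_{(\alpha_1)_L},{\cal U}_{(\alpha_2)_L}]$ therefore lands in a filtration subgroup of $U_{\alpha_1+\alpha_2}$, and the correct reason it does not obstruct $R\cap M=\{1\}$ is that $(\alpha_1)_L+(\alpha_2)_L$ sits \emph{strictly deeper} than $(\alpha_1+\alpha_2)_L$ (since the former vanishes only where both summands do, while the latter vanishes identically on $L$), so its image in the special fiber is trivial. Your stated justification does not cover this, and without it the closedness of $R$ under commutators and the decomposition $\overline{{\cal G}}_L=R\cdot M$ are unproved.

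Second, and more seriously, your argument never addresses why $R$ is unipotent at all, even though you flag this as ``the main obstacle.'' If $\alpha$ is non-constant on $L$ then so is $-\alpha$, so $R$ contains the reductions of both $U_{\alpha+k}$ and $U_{-\alpha+k'}$; a ``grading by the directions in which the affine roots are strictly positive'' cannot separate these, since they point in opposite directions. What actually saves the day is the inequality $k+k'\geq 1$ (forced by non-constancy of $\alpha$ on $L$), which puts the commutator $[u_\alpha(x),u_{-\alpha}(y)]$ into $T^1=\ker(T^0\to\mathbf{T}(k))$ via the rank-one $SL_2$ computation, and $T^1$ dies in the special fiber, so the two opposite root groups commute in $\overline{{\cal G}}_L$ rather than generating a copy of $SL_2$. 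This quantitative input from the affine root filtration is the heart of the proposition and is exactly what the cited reference supplies; without it your $R$ could a priori contain a semisimple subgroup and fail to be the unipotent radical. A smaller issue: $\overline{{\cal G}}_L=R\cdot M$ does not follow from the open cell being open — you need connectedness of $\overline{{\cal G}}_L$ (or a generation argument) to pass from an open dense subset to the whole group. Given that the paper treats this as a black box, citing \cite{BruhatTitsTA} as the paper does is the safer course unless you are prepared to carry out the filtration estimates above in full.
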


    The following is stated without a proof in \cite{Tits1979ReductiveGO} and is Theorem A of \cite{McNinch2010LeviDO}.

\begin{theorem}\label{Levi decomposition}
    $\overline{{\mathcal G}}_{L}$ has a unique Levi factor which contains $\mathbf{T}(k)$. Here a Levi factor means a direct complement to the unipotent radical.
\end{theorem}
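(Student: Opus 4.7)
The plan is to construct the Levi factor explicitly using the open product decomposition of Theorem \ref{open subscheme}, and then to deduce uniqueness from the weight decomposition under $\mathbf{T}(k)$. For \textbf{existence}, I would partition the root system as $\Phi = \Phi_C \sqcup \Phi'$, where $\Phi_C$ consists of the roots constant on $L$ (so that, by Proposition \ref{root system}, the reductive quotient has root system $\Phi_C$) and $\Phi' = \Phi \setminus \Phi_C$. Let $\mathcal{M}$ be the closed $\s$-subgroup scheme of ${\cal G}_L$ generated by $\mathbf{T}$ and the root group schemes ${\cal U}_{\alpha_L}$ for $\alpha \in \Phi_C$, and let $\mathcal{U}$ be generated by the ${\cal U}_{\alpha_L}$ for $\alpha \in \Phi'$. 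Reordering the factors in the open immersion ${\cal U}^+_L \times \mathbf{T} \times {\cal U}^-_L \hookrightarrow {\cal G}_L$ of Theorem \ref{open subscheme} so that the roots in $\Phi_C$ are grouped together, the multiplication map $\mathcal{M} \times \mathcal{U} \to {\cal G}_L$ becomes an open immersion at the identity, and it extends to an isomorphism of group schemes (with $\mathcal{U}$ normal) by the standard propagation using translations. On special fibers this yields $\overline{{\cal G}}_L = \overline{\mathcal{M}} \ltimes \overline{\mathcal{U}}$ with $\overline{\mathcal{M}}$ reductive containing $\mathbf{T}(k)$, and $\overline{\mathcal{U}} = R_u(\overline{{\cal G}}_L)$.

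For \textbf{uniqueness}, suppose $M_1, M_2 \subset \overline{{\cal G}}_L$ are Levi factors both containing $\mathbf{T}(k)$. Both share the same root system $\Phi_C$ with respect to $\mathbf{T}(k)$: the decomposition of $\mathrm{Lie}(\overline{{\cal G}}_L)$ into one-dimensional weight spaces under the split torus $\mathbf{T}$ is rigid, and the requirements $M_i \cap R_u(\overline{{\cal G}}_L) = \{1\}$ and $M_i \cdot R_u(\overline{{\cal G}}_L) = \overline{{\cal G}}_L$ force the weights of $M_i$ to be exactly $\Phi_C$. For each $\alpha \in \Phi_C$, the $\alpha$-root group $U^{(i)}_\alpha \subset M_i$ is a smooth connected one-dimensional subgroup on which $\mathbf{T}(k)$ acts by $\alpha$; this subgroup is uniquely determined inside $\overline{{\cal G}}_L$ as the parahoric root group $\overline{{\cal U}}_{\alpha_L}$, since both have the same $\mathbb{G}_a$-structure and the same one-dimensional Lie algebra sitting in a unique weight space of $\mathrm{Lie}(\overline{{\cal G}}_L)$. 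Since each $M_i$ is generated by $\mathbf{T}(k)$ together with its root groups, we conclude $M_1 = M_2$.

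The \textbf{main obstacle} is making the uniqueness step rigorous in positive characteristic, where equality of Lie algebras of connected subgroup schemes does not automatically imply equality of the subgroup schemes themselves. The way around this is to reason directly with root groups as smooth one-dimensional connected subgroups isomorphic to $\mathbb{G}_a$ on which $\mathbf{T}(k)$ acts through a non-trivial character; such a subgroup is uniquely pinned down by its weight because $\mathbf{T}$ is split and the root characters are pairwise distinct. As a fallback I would instead invoke the classical theorem that any two Levi factors of a connected linear algebraic group are conjugate by an element of the unipotent radical, and then show that a conjugating element normalizing $\mathbf{T}(k)$ must centralize it and therefore be trivial, since every weight of $\mathbf{T}(k)$ on $R_u(\overline{{\cal G}}_L)$ is a non-trivial character by the very definition of $\Phi'$.
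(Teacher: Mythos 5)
Your proposal is essentially sound, but note that the paper does not prove this statement at all: it simply cites 3.4 of \cite{Tits1979ReductiveGO} and Theorem A of \cite{McNinch2010LeviDO}. What you have written is, in effect, a sketch of McNinch's argument specialized to the split case: the existence half is exactly the content of his \S 6.2 (the open cell of Theorem \ref{open subscheme} reorganized along the partition $\Phi=\Phi_C\sqcup\Phi'$, with Proposition \ref{root system} identifying the root system of the quotient), and the uniqueness half is his Levi-rigidity statement. The value of doing it by hand, as you propose, is that in the split case one never needs the Galois-descent machinery that occupies most of McNinch's paper; the cost is that two steps you label as ``standard'' carry real content. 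First, the normality of $\overline{\mathcal U}$ and the semidirect product structure require the Chevalley commutator relations in the affine filtration: one must check that for $\alpha\in\Phi_C$ (so $\alpha_L$ vanishes on $L$) and $\beta\in\Phi'$, every affine root $i\alpha_L+j\beta_L$ with $j\geq 1$ is strictly positive somewhere on $L$, hence its root group either lies in $\overline{\mathcal U}$ or dies in the special fiber because it sits in a strictly deeper filtration step. Second, and more importantly, your uniqueness step needs more than ``the root characters are pairwise distinct'': the correct input is that the $\alpha$-weight space of $\mathrm{Lie}(\overline{{\cal G}}_L)$ is one-dimensional and no other positive rational multiple of $\alpha$ is a weight (the root system is reduced), so the SGA3/Conrad--Gabber--Prasad theorem on $T$-stable subgroups attached to a half-line of weights applies and pins down a unique smooth connected one-dimensional $T$-stable subgroup with that weight. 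This should be cited, not asserted.

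One genuine error to correct: your fallback is not available. The ``classical theorem'' that Levi factors of a connected linear algebraic group exist and are conjugate under the unipotent radical is a characteristic-zero statement (Mostow); over the finite residue field $k$ both existence and conjugacy fail for general groups, and establishing them for these particular special fibers is precisely the point of \cite{McNinch2010LeviDO}. Since your primary argument via root groups does not use this fallback, the proof survives, but the fallback paragraph should be deleted or replaced by a citation of McNinch's Theorem A(b).
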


\begin{prop}\label{restriction maps}
    Let $L_1\subset L_2$ be two bounded subsets of some apartment, the identity map $G\rightarrow G$ extends in a unique way to a $\s$ group homomorphism $res^{L_2}_{L_1}: {\mathcal G}_{L_2}\rightarrow {\mathcal G}_{L_1}$. (See either 6.2 in \cite{Landvogt1995ACO} or 8.3.17 in \cite{BruhatTitsTA}).
\end{prop}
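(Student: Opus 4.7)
The proof splits into uniqueness, which is soft, and existence, which uses the explicit product decomposition of Theorem \ref{open subscheme}. Note first that $L_1\subset L_2$ forces $P^E_{L_2}\subset P^E_{L_1}$ for every unramified extension $E/F$, so on $\s_E$-points the identity on $G$ provides a natural set-theoretic map in the direction ${\cal G}_{L_2}(\s_E)\hookrightarrow {\cal G}_{L_1}(\s_E)$; the content of the proposition is that this inclusion of points assembles into a morphism of $\s$-group schemes.

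For uniqueness, both ${\cal G}_{L_i}$ are smooth and hence flat over $\s$, so their generic fibers $G$ embed as scheme-theoretically dense open subschemes, and both are affine hence separated. Therefore any two $\s$-morphisms between them agreeing on the generic fiber (where any extension of the identity on $G$ is forced to be $\mathrm{id}_G$) have closed equalizer containing a dense open, so they coincide. For existence, since $L_1\subset L_2$, for each root $\alpha\in\Phi$ the minimal constant $k$ with $\alpha+k\geq 0$ on $L_2$ is at least the corresponding constant for $L_1$, so $\alpha_{L_2}\geq\alpha_{L_1}$, which translates into a closed immersion of $\s$-group schemes ${\cal U}_{\alpha_{L_2}}\hookrightarrow {\cal U}_{\alpha_{L_1}}$. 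Combined with the identity on $\mathbf{T}$ and the product maps from Theorem \ref{open subscheme}, this produces a morphism between the big open cells ${\cal U}^+_{L_2}\times \mathbf{T}\times {\cal U}^-_{L_2}\to {\cal U}^+_{L_1}\times \mathbf{T}\times {\cal U}^-_{L_1}$ which, through the open immersions into the ambient ${\cal G}_{L_i}$, gives a morphism on an open neighborhood of the identity.

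The main obstacle is to promote this locally defined morphism to a globally defined $\s$-group scheme homomorphism. The plan is to use the functorial characterization of the parahoric models: the inclusions $P^E_{L_2}\subset P^E_{L_1}$ are natural in unramified $E/F$, defining a compatible family of homomorphisms on $\s_E$-points for all unramified $E$; combined with the explicit scheme-theoretic map already constructed on the big cell, the smoothness of ${\cal G}_{L_2}$ lets one glue these data into a morphism of $\s$-schemes on all of ${\cal G}_{L_2}$. Finally, compatibility with multiplication and inversion is inherited from the generic fiber by separatedness of the target, exactly as in the uniqueness step, yielding the desired $\s$-group homomorphism extending the identity on $G$.
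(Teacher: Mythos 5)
First, note that the paper itself gives no proof of this proposition: it is quoted directly from Landvogt (6.2), so your proposal is being measured against the standard argument in the literature rather than against anything in the text. Within that comparison, your uniqueness argument is correct and complete: smoothness gives flatness, hence the generic fiber $G$ is schematically dense, and separatedness of the affine target forces any two extensions of $\mathrm{id}_G$ to coincide; the same density argument correctly disposes of compatibility with the group laws at the end. You are also right to silently reverse the arrow: since $P^E_{L_2}\subset P^E_{L_1}$ when $L_1\subset L_2$, the map must go ${\cal G}_{L_2}\to{\cal G}_{L_1}$, which is the direction used everywhere else in the paper (e.g.\ in $res^{L'}_{L}:{\cal G}_{L'}\to{\cal G}_L$ and in $\overline{res}^{L_2}_{L_1}:\overline{{\cal G}}_{L_2}\to\overline{{\cal G}}_{L_1}/R_u(\overline{{\cal G}}_{L_1})$); the statement as printed has source and target switched.

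The existence half has a genuine gap exactly where you flag ``the main obstacle.'' The big-cell computation is essentially fine: $L_1\subset L_2$ gives $\alpha_{L_2}\geq\alpha_{L_1}$ and hence homomorphisms ${\cal U}_{\alpha_{L_2}}\to{\cal U}_{\alpha_{L_1}}$ (beware that these are not closed immersions in general --- on special fibers $\pi^{k_2}\s\subset\pi^{k_1}\s$ becomes the zero map when $k_2>k_1$ --- but they are $\s$-group homomorphisms, which is all you need), and Theorem \ref{open subscheme} assembles these into a morphism from the big cell of ${\cal G}_{L_2}$ into ${\cal G}_{L_1}$. But the passage from this local morphism, together with the compatible inclusions $P^E_{L_2}\subset P^E_{L_1}$ on points, to a morphism defined on all of ${\cal G}_{L_2}$ is precisely the content of the proposition, and ``the smoothness of ${\cal G}_{L_2}$ lets one glue these data'' is an assertion, not an argument: a compatible family of maps on $\s_E$-points is not a priori induced by a morphism of schemes. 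What is actually needed is the prolongation principle (Theorem 0.10 of \cite{Landvogt1995ACO}, equivalently Bruhat--Tits 1.7.2), which this paper itself invokes elsewhere when extending $\sigma$ to ${\cal G}_{L'}$: for $\mathcal{X}$ smooth affine over $\s$ and $\mathcal{Y}$ affine, flat and of finite type over $\s$, a generic-fiber morphism carrying $\mathcal{X}(\s_E)$ into $\mathcal{Y}(\s_E)$ for every unramified $E$ extends uniquely over $\s$; its proof runs through the schematic closure of the graph and the density of integral points in the special fiber of a smooth scheme. If you state and use that principle, your existence argument closes (and the big-cell construction becomes a consistency check rather than a load-bearing step); as written, the decisive step is missing.
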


\begin{prop}\label{face isomorphism}
    Let $L$ be any bounded subset of an apartment and let $f$ be a facet of some chamber such that $f\subset L$ and $f$ is maximal with this property. Assume that $f\subset L$, then $res^{L}_{f}$ is an isomorphism on the reductive quotients of the special fibers. (See 3.5 in \cite{Tits1979ReductiveGO}).
\end{prop}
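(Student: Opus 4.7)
The hypothesis identifies $f$ as the carrier face of $L$: $L \subset \overline{f}$ and $L \cap f^{\circ} \neq \emptyset$, where $f^{\circ}$ denotes the relative interior of the face $f$. The restriction morphism from Proposition \ref{restriction maps} relates ${\cal G}_L$ and ${\cal G}_{\overline{f}}$, and the goal is to show it induces an isomorphism on the reductive quotients of the special fibers.

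The first step is to establish that $\alpha_L = \alpha_{\overline{f}}$ as affine roots for every $\alpha \in \Phi$. The implication $\alpha + k \geq 0$ on $\overline{f}$ $\Rightarrow$ $\alpha + k \geq 0$ on $L$ is immediate from $L \subset \overline{f}$. For the converse, any affine root $\alpha + k$ either vanishes on $\overline{f}$ or has constant nonzero sign on $f^{\circ}$, because no wall of the affine arrangement cuts through the interior of a face. If $\alpha + k \geq 0$ on $L$ and $L$ meets $f^{\circ}$, the sign on $f^{\circ}$ is non-negative, so $\alpha + k \geq 0$ throughout $\overline{f}$ by continuity. Consequently ${\cal U}_{\alpha_L} = {\cal U}_{\alpha_{\overline{f}}}$ for every root, and by Theorem \ref{open subscheme} the big cells ${\cal U}^+_L \times \mathbf{T} \times {\cal U}^-_L$ and ${\cal U}^+_{\overline{f}} \times \mathbf{T} \times {\cal U}^-_{\overline{f}}$ are canonically identified as open subschemes of their respective group schemes. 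The uniqueness in Proposition \ref{restriction maps} forces the restriction morphism to agree with this identification on the common big cell.

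The second step is the equality $\Phi_L = \Phi_{\overline{f}}$ of the root systems appearing in Proposition \ref{root system}. A root $\alpha$ lies in $\Phi_L$ precisely when the wall $\{\alpha + \alpha_L = 0\}$ contains $L$; since $L \cap f^{\circ} \neq \emptyset$, this wall meets $f^{\circ}$ and therefore must contain all of $\overline{f}$, so (combined with $\alpha_L = \alpha_{\overline{f}}$) we get $\alpha \in \Phi_{\overline{f}}$, and the converse is immediate. The Levi factor of $\overline{{\cal G}}_L$ containing $\mathbf{T}(k)$ is unique by Theorem \ref{Levi decomposition} and is generated as an algebraic group by $\mathbf{T}(k)$ together with the root subgroups indexed by $\Phi_L$; these generators are all visible inside the special fiber of the big cell. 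The restriction morphism restricted to this Levi is the identity on each generator, hence is an isomorphism of Levi factors, and passing to the quotient by the unipotent radical yields the desired isomorphism of reductive quotients.

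The main technical point is the bridge in step three from "identity on the big cell of the special fiber" to "isomorphism on the Levi factor"; this relies on the standard structure theorem that a connected reductive group is generated by a maximal torus together with its unipotent root subgroups, so that a homomorphism identifying these data must be an isomorphism.
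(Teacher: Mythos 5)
The paper does not actually prove this statement --- it is quoted from Tits (\S 3.5 of the Corvallis notes) with no argument given --- so your proposal is supplying a proof where the paper only supplies a citation. Your reconstruction follows the standard route and its core is correct: reading the hypothesis as ``$L\subset\overline{f}$ and $L$ meets the relative interior $f^{\circ}$,'' the fact that no wall of the affine arrangement cuts through $f^{\circ}$ gives exactly $\alpha_L=\alpha_{\overline{f}}$ for every root and $\Phi_L=\Phi_{\overline{f}}$, so the two group schemes share their big cell ${\cal U}^+\times\mathbf{T}\times{\cal U}^-$, the restriction morphism is the identity there (by flatness/schematic density of the generic fiber, which is what the uniqueness in Proposition \ref{restriction maps} amounts to), and the reductive quotient is governed entirely by this data. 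That is precisely why the proposition is true.

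Two caveats. First, your reading of the hypothesis is a silent repair of the statement: with the paper's convention that faces are closed, a ``maximal face meeting $L$'' would be a chamber and the claim would be false (take $L$ a vertex); the correct hypothesis, which you adopt and which matches Tits, is that $f$ is the carrier facet, i.e.\ $L\subset\overline{f}$ and $L\cap f^{\circ}\neq\emptyset$. You should say explicitly that $L\subset\overline{f}$ is being assumed, since your step 1 uses it. Second, the bridge you flag as the main technical point is slightly under-justified as written: knowing that a connected reductive group is generated by a maximal torus and its root subgroups shows that the subgroup $M'$ generated by $\overline{\mathbf{T}}$ and the $\overline{\cal U}_{\alpha_L}$, $\alpha\in\Phi_L$, \emph{surjects} onto the reductive quotient, but to identify $M'$ with the Levi factor one must also know $M'\cap R_u=1$, which is really the content of Theorem \ref{Levi decomposition} / Proposition \ref{root system} rather than a formal consequence of generation. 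A cleaner endgame is available from your step 1 alone: since \emph{all} the $\alpha_L$ coincide with the $\alpha_{\overline{f}}$, the identity components of the two special fibers are literally equal (each is generated by the common big cell of the special fiber), hence so are their unipotent radicals and reductive quotients. Note finally that the statement is only true at the level of identity components: for $PGL_2$ with $L$ the barycenter of a chamber, $P_L$ is the full normalizer of the Iwahori and the full special fiber modulo its unipotent radical picks up an extra component group $\Z/2$ relative to that of the chamber. Tits's result, and what your argument actually establishes, concerns $\overline{{\cal G}}{}^{0}_{L}/R_u$; this imprecision is in the paper's formulation, not in your proof.
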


        Let $L_1\subset L_2$ be two facets of some chamber in the building, the map $res^{L_2}_{L_1}$ induces a map $\overline{res}^{L_2}_{L_1}$ from $\overline{{\mathcal G}}_{L_2}$ to $\overline{{\mathcal G}}_{L_1}/R_u(\overline{{\mathcal G}}_{L_1})$.

    \begin{prop}\label{affine to spherical building}
    Let $L$ be a facet of some chamber in the building, denote by $X_L$ the poset of all facets of any chamber which contain $L$, the ordering is with respect to inclusion.  The map $$L'\rightarrow image(\overline{res}^{L'}_{L})$$ is an order preserving bijection between $X_L$ and the spherical building of $\overline{{\mathcal G}}_{L}/R_u(\overline{{\mathcal G}}_{L})$ (See 9.22 in \cite{Landvogt1995ACO}).
    \end{prop}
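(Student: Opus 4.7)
My approach is to reduce the statement to one apartment through $L$, identify $X_L \cap \A$ combinatorially with an apartment of the spherical building, and glue by the transitive $P_L$-action. By Theorem \ref{2.10}(2)--(3), every chamber containing $L$ lies in some apartment through $L$, and $P_L$ acts transitively on such apartments; passing to $\overline{\mathcal{G}}_L/R_u(\overline{\mathcal{G}}_L)$ this descends to a transitive action on the apartments of the spherical building. It therefore suffices to fix an apartment $\A$ through $L$, identify $\{L' \in X_L : L' \subseteq \A\}$ order-preservingly with a single apartment of the spherical building of $\overline{\mathcal{G}}_L/R_u(\overline{\mathcal{G}}_L)$ in a way intertwined with $L' \mapsto \mathrm{image}(\overline{res}^{L'}_L)$, and then extend by equivariance.

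Inside $\A \cong V$, every face is cut out by a sign choice in $\{>0,\,=0,\,<0\}$ on the affine roots. The condition $L \subseteq L'$ forces $L'$ to share $L$'s sign choices on the affine roots not vanishing on $L$, so the only freedom is on the set $\Psi_L$ of affine roots vanishing on $L$. The gradients of $\Psi_L$ form the root system $\Phi_L$ of $\overline{\mathcal{G}}_L/R_u(\overline{\mathcal{G}}_L)$ by Proposition \ref{root system}, and admissible sign data on $\Psi_L$ exactly parametrizes the faces of the Coxeter complex of $\Phi_L$, i.e., the simplices of one apartment of the spherical building of $\overline{\mathcal{G}}_L/R_u(\overline{\mathcal{G}}_L)$. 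The resulting bijection is order-preserving by construction.

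To match this combinatorial bijection with $L' \mapsto \mathrm{image}(\overline{res}^{L'}_L)$, I combine the open-subscheme description of $\mathcal{G}_L, \mathcal{G}_{L'}$ from Theorem \ref{open subscheme} with Theorem \ref{Levi decomposition}. Each $\mathcal{G}_{L'}$ contains $\mathbf{T}$ and root subgroup schemes $\mathcal{U}_{\alpha + \alpha_{L'}}$, with $\alpha_{L'} \geq \alpha_L$; under reduction mod the uniformizer and passage to the quotient by $R_u(\overline{\mathcal{G}}_L)$, the image of $\mathcal{U}_{\alpha + \alpha_{L'}}$ is non-trivial precisely when $\alpha \in \Phi_L$ and $\alpha + \alpha_{L'}$ vanishes on $L$, equivalently, when $\alpha + \alpha_{L'}$ is non-negative on $L'$. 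The image of $\overline{res}^{L'}_L$ is therefore generated by $\mathbf{T}(k)$ together with the root groups $\mathcal{U}_\alpha$ for those $\alpha \in \Phi_L$ that are non-negative on the cone $L' - L$ at $L$, which is exactly the standard parabolic of $\overline{\mathcal{G}}_L/R_u(\overline{\mathcal{G}}_L)$ attached to $L'$ under the combinatorial bijection above. Equivariance under the stabilizer of $\A$ inside $P_L$ follows from the functoriality of $res$, and gluing via the transitive $P_L$-action from the first paragraph then yields the global bijection.

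The main obstacle is this final compatibility: one must pin down $\mathrm{image}(\overline{res}^{L'}_L)$ as exactly the right parabolic, neither smaller nor larger, which requires carefully tracking how the open immersion $\mathcal{U}^- \times \mathbf{T} \times \mathcal{U}^+ \hookrightarrow \mathcal{G}_{L'}$ from Theorem \ref{open subscheme} maps into $\mathcal{G}_L$ and which root subgroup contributions survive after reduction and the quotient by $R_u(\overline{\mathcal{G}}_L)$.
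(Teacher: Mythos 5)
This proposition is not proved in the paper at all: it is quoted as a known result, with the proof deferred to Landvogt (9.22 in \cite{Landvogt1995ACO}), so there is no in-paper argument to compare against. Your outline is essentially the standard proof of this standard fact: reduce to a single apartment through $L$ using transitivity of $P_L$ (Theorem \ref{2.10}(3) plus Theorem \ref{2.11}(4)), identify the star of $L$ in $\A$ with the Coxeter complex of the finite root system $\Phi_L$ of affine roots vanishing on $L$ (which is the root system of the reductive quotient by Proposition \ref{root system}), and check that $\overline{res}^{L'}_{L}$ sends $L'$ to the corresponding standard parabolic. The skeleton is right, and you have correctly located the delicate step.

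Two places in the sketch still need real work before this is a proof. First, the step you flag as the ``main obstacle'' is indeed where all the content lies: showing that $\mathrm{image}(\overline{res}^{L'}_{L})$ is \emph{exactly} the expected parabolic requires (a) that for $\alpha\in\Phi_L$ the subgroup $U_{\alpha_{L'}}$ has nontrivial image in $\overline{{\cal G}}_L/R_u(\overline{{\cal G}}_L)$ precisely when $\alpha_{L'}=\alpha_L$, with the strictly deeper filtration steps landing in $R_u(\overline{{\cal G}}_L)$, and (b) that the contribution of $N_{L'}$ (which for a non-chamber face $L'$ may contain nontrivial Weyl elements, not just $T^0$) lands in the Levi of that parabolic; your notation ${\cal U}_{\alpha+\alpha_{L'}}$ also conflicts with the paper's convention, where $\alpha_{L'}$ already denotes the affine root. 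Second, your gluing step silently uses that $P_L={\cal G}_L(\s)$ surjects onto $(\overline{{\cal G}}_L/R_u(\overline{{\cal G}}_L))(k)$ (smoothness of ${\cal G}_L$ plus Hensel, as the paper uses elsewhere in Proposition \ref{ hard direction}); without this, transitivity of $P_L$ on apartments through $L$ does not yield transitivity on apartments of the spherical building, and surjectivity of the global map onto all parabolics is not established. Both points are standard and repairable, but as written the argument is a correct plan rather than a complete proof.
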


\subsection{The Iwahori Hecke algebra of $G$}

In this subsection we summarize some results from \cite{PMIHES_1965__25__5_0}.

Fix a maximal split torus $T$ of $G$ and an Iwahori subgroup $I$ of $G$ such that $T^0\subset I$.
\begin{defn}
The Iwahori Hecke algebra of $G$ is the algebra of $I$ bi-invariant locally constant compactly supported functions on $G$, it is denoted by $\mathcal{H}(G,I)$.

The multiplication operation is given by convolution with respect to a Haar measure $m$ on $G$, $f*g(y)=\int f(yx^{-1})g(x)dm(x)$.
\end{defn}

For any $g\in G$ we denote by $T_g$ the function $T_g=1_{IgI}$.

The following is the well known Iwahori-Matsumoto description of the Iwahori Hecke algebra.

Let $q$ be the size of the residue field of $F$.

\begin{theorem}\label{2.20}
    $\mathcal{H}(G,I)$ is spanned over $\mathbb{C}$ by $T_w=1_{IwI}$ for $w\in W_{aff}(T)$. Let $s\in \Tilde{\Delta}$ be a simple reflection and let $o\in \Omega$ be an element of length 0.
    \begin{enumerate}
        \item if $l(sw)>l(w)$ then $T_sT_w=T_{sw}$.
        \item if $l(sw)<l(w)$ then $T_sT_w=(q-1)T_w+qT_{sw}$.
        \item $T_{o}T_w=T_{ow}$ for any $w\in W_{aff}$.
    \end{enumerate}
    \end{theorem}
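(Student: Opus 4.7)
The plan is to extract this from the axiomatic framework of a generalized Tits system, once the structural input from the affine building is in place. First, by Theorem \ref{affine bruhat decomposition} we have a disjoint decomposition $G = \bigsqcup_{w \in W_{aff}(T)} IwI$; since each double coset is open, compact, and $I$-biinvariant, the characteristic functions $T_w$ form a $\mathbb{C}$-basis of $\mathcal{H}(G,I)$, establishing the spanning claim and reducing the theorem to computing the three families of products.

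Relation (3) is almost immediate from Proposition \ref{2.12}: an element $\omega \in \Omega$ normalizes $I$ because it stabilizes the fundamental chamber setwise, so $I\omega I = \omega I = I\omega$ is a single coset. Normalizing Haar measure so that $\mathrm{vol}(I) = 1$, convolution then yields $T_\omega \ast T_w = 1_{\omega I w I} = 1_{I\omega w I} = T_{\omega w}$.

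For (1) and (2) the key input is the inclusion
$$IsI \cdot IwI \;\subset\; IwI \cup IswI \qquad \text{for } s \in \Tilde{\Delta},\ w \in W_{aff},$$
which I would derive from the Iwahori factorization of Theorem \ref{2.16}. Concretely, the parahoric subgroup $P_s = I \sqcup IsI$ attached to the wall of $\C$ fixed by $s$ has semisimple rank one, and the decomposition $P_s = I \cdot U^-_{\alpha_s}$ combined with the residue-field description of the root group $U_{\alpha_s}/U_{\alpha_s+1}$ yields $|IsI/I| = q$. For general $w$, factoring an element of $IsI \cdot IwI$ and applying the Iwahori decomposition aligned with a minimal gallery from $\C$ to $w\C$ lets one absorb the intermediate copy of $I$ into the root groups of the walls crossed, producing the stated inclusion. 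When $l(sw) > l(w)$, the minimal gallery from $\C$ to $w\C$ does not cross any wall of type $s$, so the extended gallery through $s\C$ to $sw\C$ is still minimal and the product lies in the single coset $IswI$; the volume identity $\mathrm{vol}(IsI)\cdot \mathrm{vol}(IwI) = q\cdot q^{l(w)} = q^{l(sw)} = \mathrm{vol}(IswI)$ forces the coefficient to equal $1$, giving $T_s T_w = T_{sw}$. Specializing to $w = s$ gives $IsI \cdot IsI = I \sqcup IsI$, and counting fibers of the multiplication map $IsI \times IsI \to G$ over a point of $I$ and over a point of $IsI$ yields the quadratic relation $T_s^2 = (q-1)T_s + q T_1$.

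Finally, for (2), if $l(sw) < l(w)$ set $w' = sw$, so that $l(sw') = l(w) > l(w')$ and by the case just handled $T_s T_{w'} = T_w$. Associativity of convolution then gives
$$T_s T_w \;=\; T_s(T_s T_{w'}) \;=\; T_s^{2} T_{w'} \;=\; \big((q-1)T_s + q T_1\big) T_{w'} \;=\; (q-1)T_s T_{w'} + q T_{w'} \;=\; (q-1)T_w + q T_{sw},$$
as claimed. The main obstacle is the combinatorial inclusion $IsI \cdot IwI \subset IwI \cup IswI$; once that is in hand the rest is formal. This is precisely the place where the affine-building geometry from Subsection \ref{2.2} — specifically the Iwahori decomposition attached to a minimal gallery from $\C$ to $w\C$ — does the real work.
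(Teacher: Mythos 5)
Your proposal is correct and is essentially the standard Iwahori--Matsumoto argument: basis from the affine Bruhat decomposition, the key inclusion $IsI\cdot IwI\subset IwI\cup IswI$ with $|IsI/I|=q$, the length-additive case via a volume count, the quadratic relation for $T_s$, and case (2) deduced from case (1) by associativity. The paper does not prove this theorem itself --- it states it as background cited from \cite{PMIHES_1965__25__5_0} --- and your reconstruction matches the proof in that reference, so there is nothing to compare beyond noting that the only step you leave schematic (absorbing the intermediate copy of $I$ along a minimal gallery to get the key inclusion) is exactly the step that carries the real content there too.
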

    \begin{cor}\label{2.21}
    
    $\mathcal{H}(G,I)$ is generated as an algebra by $T_s=1_{IsI}$ for $s\in \Tilde{\Delta}$ and by $T_o$ for $o\in \Omega$. Moreover, the relations on these generators are generated by:
        \begin{enumerate}
            \item For $s\in \Tilde{\Delta}$, $(T_s+1)(T_s-q)=0$.
            \item if $s_1,s_2\in\Tilde{\Delta}$ such that $s_1\neq s_2$ and $(s_1s_2)^m=1$ then $T_{s_1}T_{s_2}T_{s_1}...=T_{s_2}T_{s_1}T_{s_2}...$  where each product contains $m$ elements.
            \item For $o_1,o_2\in \Omega$, $T_{o_1}T_{o_2}=T_{o_1o_2}$.
            \item For $o\in \Omega$ and $s\in\Tilde{\Delta}$ we have $oso^{-1}\in\Tilde{\Delta}$ and $T_oT_s=T_{oso^{-1}}T_o (=T_{os})$.
         \end{enumerate}
    \end{cor}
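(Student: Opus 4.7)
The plan is to derive this corollary directly from the Iwahori-Matsumoto relations of Theorem \ref{2.20} using standard Coxeter-theoretic arguments. Let me describe the strategy.

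\textbf{Step 1: Generation.} First I would show that the $T_s$ ($s \in \Tilde{\Delta}$) and $T_\omega$ ($\omega \in \Omega$) generate $\mathcal{H}(G,I)$ as an algebra. By Theorem \ref{2.20}, $\mathcal{H}(G,I)$ is spanned by $\{T_w : w \in W_{aff}\}$, so it suffices to realize each $T_w$ as a product of generators. Write $w = w_a \omega$ with $w_a \in W_a$ and $\omega \in \Omega$, and take a reduced expression $w_a = s_{i_1}\cdots s_{i_k}$. Applying Theorem \ref{2.20}(1) inductively (each time the product of reduced elements goes up in length, so the first relation of Theorem \ref{2.20} applies without the $(q-1)$ correction), I obtain $T_{w_a} = T_{s_{i_1}}\cdots T_{s_{i_k}}$, and then $T_w = T_{w_a}T_\omega$ by Theorem \ref{2.20}(3).

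\textbf{Step 2: Verifying the relations hold.} Relation (1) follows from Theorem \ref{2.20}(2) applied with $w=s$: since $l(s \cdot s) = 0 < l(s)$, we get $T_s^2 = (q-1)T_s + qT_1$, which rearranges to $(T_s-q)(T_s+1)=0$. Relation (2) (braid relation) follows because $(s_1s_2)^m=1$ with $s_1 \neq s_2$ means the two alternating words of length $m$ are both reduced expressions for the same element $w \in W_a$; by Step 1 each such reduced expression realizes $T_w$ as the corresponding product of the $T_{s_i}$. Relation (3) is immediate from Theorem \ref{2.20}(3) since all elements of $\Omega$ have length zero. For Relation (4), I would use that $\Omega$ acts on $W_a$ by conjugation preserving the length function and the Coxeter generators (this follows from Theorem \ref{2.3}, since conjugation by $\Omega$ permutes the faces of the fundamental alcove $\C$), so $oso^{-1}\in\Tilde{\Delta}$; then Theorem \ref{2.20}(3) applied twice gives $T_o T_s = T_{os} = T_{(oso^{-1})o} = T_{oso^{-1}} T_o$.

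\textbf{Step 3: Completeness of the relations.} Let $\tilde{\mathcal{H}}$ be the abstract $\mathbb{C}$-algebra with generators $\tilde T_s, \tilde T_\omega$ subject to relations (1)--(4). By Step 2 there is a surjective algebra homomorphism $\Phi:\tilde{\mathcal{H}} \to \mathcal{H}(G,I)$, and to prove it is an isomorphism it suffices (since $\mathcal{H}(G,I)$ has basis $\{T_w\}_{w\in W_{aff}}$) to show $\dim \tilde{\mathcal{H}} \le |W_{aff}|$. For each $w \in W_{aff}$, write $w = s_{i_1}\cdots s_{i_k}\omega$ with $k=l(w)$ and define $\tilde T_w := \tilde T_{s_{i_1}}\cdots \tilde T_{s_{i_k}}\tilde T_\omega$. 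Matsumoto's theorem on Coxeter groups (any two reduced expressions in $W_a$ are connected by braid moves) together with relations (2) and (4) shows $\tilde T_w$ is independent of the chosen reduced expression. Finally, one checks that $\{\tilde T_w\}_{w\in W_{aff}}$ spans $\tilde{\mathcal{H}}$: the quadratic relation (1) rewrites $\tilde T_s \tilde T_w$ when $l(sw)<l(w)$ as $(q-1)\tilde T_w + q \tilde T_{sw}$, exactly mirroring Theorem \ref{2.20}, so any product of generators reduces to a $\mathbb{C}$-linear combination of the $\tilde T_w$.

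\textbf{Main obstacle.} The nontrivial content lies in Step 3, specifically in the well-definedness of $\tilde T_w$ and the reduction of arbitrary words to the span of $\{\tilde T_w\}$. This is essentially Matsumoto's exchange/braid lemma for the Coxeter group $W_a$, together with careful bookkeeping of how $\Omega$ interacts with the simple reflections via relation (4). Everything else is formal bookkeeping on top of Theorem \ref{2.20}.
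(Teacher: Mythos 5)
The paper offers no proof of this corollary at all: it appears in the preliminaries as a quoted classical result of Iwahori--Matsumoto \cite{PMIHES_1965__25__5_0}, so there is no internal argument to compare yours against. Your outline is the standard derivation and is essentially correct: generation via reduced expressions, verification of relations (1)--(4) from Theorem \ref{2.20}, and completeness via Matsumoto's braid lemma together with the spanning argument for the abstract algebra. Two small points of bookkeeping. First, Theorem \ref{2.20} as stated only gives \emph{left} multiplication formulas ($T_sT_w$ and $T_oT_w$), whereas your Step 1 uses $T_{w_a}T_\omega=T_{w_a\omega}$ and your Step 2, relation (4), uses $T_{oso^{-1}}T_o=T_{(oso^{-1})o}$; these are \emph{right} multiplications by $T_o$ and should be justified separately, most easily by noting that a representative of $\omega\in\Omega$ normalizes $I$ (Proposition \ref{2.12}), so $I\omega I=I\omega=\omega I$ and the convolution $1_{IwI}*1_{I\omega I}=1_{Iw\omega I}$ is immediate; alternatively write $w=\omega w_a'$ with $w_a'=\omega^{-1}w_a\omega$ and use only left multiplications. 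Second, since $W_{aff}$ is infinite, the conclusion of Step 3 should be phrased not as a dimension count but as: the elements $\tilde T_w$ span $\tilde{\mathcal{H}}$ and are sent by $\Phi$ to the basis $\{T_w\}$ of $\mathcal{H}(G,I)$, hence $\Phi$ is injective and the $\tilde T_w$ form a basis. With these adjustments the argument is complete.
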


\subsection{Involutions and symmetric subgroups}

Let $\sigma:\mathbf{G}\rightarrow\mathbf{G}$ be an algebraic involution, we denote by $\mathbf{H}$ the group of $\sigma$ fixed elements and by $H$ the $F$ points of $\mathbf{H}$.

We call $X=H\backslash G$ a $p$-adic symmetric space. A $p$-adic symmetric space is an $l$ space (See 1.1 in \cite{Bernshtein1976REPRESENTATIONSOT} for a definition).

Tori $T$ in $G$ such that $\sigma(T)=T$ are called $\sigma$ stable.

An apartment $\A$ is called $\sigma$ stable if $\sigma(\A)=\A$, notice that $\sigma$ stable maximal tori correspond to $\sigma$ stable apartments under the correspondence mentioned in \ref{apartments}.

By Theorem \ref{2.17}, $\sigma$ defines an involution on $\B_G$ which we also denote by $\sigma$.

Proposition 2.1 of \cite{delorm} is the following:

\begin{theorem}\label{2.22}
Let $L\subset\B_G$ be a $\sigma$ stable set with a non empty interior which is contained in some apartment. Then $L$ is contained in a $\sigma$ stable apartment.
\end{theorem}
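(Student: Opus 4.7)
The plan is to parametrize the apartments of $\B_G$ containing $L$ via maximal tori in the reductive quotient of a parahoric group scheme attached to $L$, and to reduce the problem to the classical existence of a $\sigma$-stable maximal torus in a reductive group with an involution.

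First I would observe that if $\A$ is any apartment with $L \subset \A$, then $L = \sigma(L) \subset \sigma(\A)$ as well, so both $\A$ and $\sigma(\A)$ contain $L$. By Theorem \ref{2.13}, $\A \cap \sigma(\A)$ is convex, and since $L$ has nonempty interior, so does $\A \cap \sigma(\A)$; in particular the two apartments share a chamber. By the standard transitivity of $G$ on apartments through a common chamber (Theorem \ref{2.10}(3), reinforced by the fact that in an affine Coxeter complex the simplicial isomorphism between two apartments fixing one common chamber necessarily fixes the whole intersection), there exists $g \in G$ with $g\A = \sigma(\A)$ and $g$ fixing $\A \cap \sigma(\A)$ pointwise. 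In particular $g \in P_L$, the pointwise stabilizer of $L$, and the set of apartments of $\B_G$ containing $L$ is a single $P_L$-orbit on which $\sigma$ acts by $\A' \mapsto \sigma(\A')$; the goal becomes to exhibit a $\sigma$-fixed point of this action.

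Second, I would translate this fixed-point problem to the special fiber of the parahoric group scheme ${\cal G}_L$ of Section 2.3 (passing if necessary to a bounded $\sigma$-stable subset of $L$ with the same interior). Since $L$ is $\sigma$-stable, the uniqueness clause in the defining property of ${\cal G}_L$ lifts $\sigma$ to an involution of the scheme, which descends modulo $R_u(\overline{{\cal G}}_L)$ to an involution $\overline{\sigma}$ of the connected reductive $k$-group $\overline{G}_L := \overline{{\cal G}}_L / R_u(\overline{{\cal G}}_L)$. Combining Theorem \ref{Levi decomposition}, Proposition \ref{affine to spherical building}, and the compatibility in Theorem \ref{open subscheme}, the apartments of $\B_G$ containing $L$ are in $\overline{\sigma}$-equivariant bijection with the maximal $k$-split tori of $\overline{G}_L$. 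So it suffices to produce a $\overline{\sigma}$-stable maximal split torus in $\overline{G}_L$, and to lift it via the restriction morphisms of Proposition \ref{restriction maps} to a $\sigma$-stable maximal split torus of $G$; its associated apartment is then $\sigma$-stable and contains $L$.

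The main obstacle is the last input: producing a $\overline{\sigma}$-stable maximal torus of $\overline{G}_L$ that is also $k$-split, since only $k$-split tori correspond to apartments in the affine building of $G$. Existence of a $\overline{\sigma}$-stable maximal torus is classical (Steinberg), but upgrading to a $k$-split one requires the averaging/descent argument that uses crucially the hypothesis that the residue characteristic is not $2$ (so that the eigenspace decomposition of $\overline{\sigma}$ on the Lie algebra is well-behaved) together with Lang-type rationality over the finite residue field. Handling this descent cleanly is the technical heart of the argument; the rest of the proof is essentially setting up the correspondence so that the classical reductive-group input applies.
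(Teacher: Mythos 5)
This theorem is not proved in the paper; it is quoted from Delorme--S\'echerre (\cite{delorm}, Proposition 2.1). Your first step (the apartments containing $L$ form a single orbit under the pointwise stabilizer $P_L$, and one seeks a fixed point of the induced $\sigma$-action on this orbit) is correct and is indeed how the actual argument begins. The gap is in your second step: the claimed $\sigma$-equivariant bijection between apartments of $\B_G$ containing $L$ and maximal $k$-split tori of the reductive quotient $\overline{{\cal G}}_L/R_u(\overline{{\cal G}}_L)$ is false. Because $L$ has non-empty interior, no affine root is constant on $L$, so by Proposition \ref{root system} the root system of the reductive quotient is empty and $\overline{{\cal G}}_L/R_u(\overline{{\cal G}}_L)$ is just a torus, with a \emph{unique} maximal split torus; yet infinitely many apartments contain $L$ (already for $L$ a single chamber, these form a torsor under a pro-$p$ unipotent group, essentially $U_\C^+U_\C^-$). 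More generally the natural map from apartments containing $L$ to apartments of the spherical building of the reductive quotient has fibers that are torsors under the pro-unipotent kernel of $P_L\to\overline{{\cal G}}_L(k)$, and it is exactly on this kernel that the content of the theorem lives. Consequently the "classical input" you defer to (a $\sigma$-stable split maximal torus in a finite reductive group, \`a la Steinberg) is not the technical heart; even granting it, the argument does not close.

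The correct mechanism, visible in the paper's own Proposition \ref{main lemma} (which proves the companion uniqueness statement by the same technique), is a cocycle-splitting in the pro-unipotent group $U_L$: pick any apartment $\A\supset L$; since $\sigma(\A)\supset L$ as well, write $\sigma(\A)=g\A$ with $g\in P_L=T^0U_L$ (Theorem \ref{2.16}), reduce to $g\in U_L$, observe that $u\A$ is $\sigma$-stable precisely when $\sigma(u)^{-1}u g^{-1}$ stabilizes $\A$, and solve $g=\sigma(u)u^{-1}$ (modulo the stabilizer of $\A$) by successive approximation along the filtration of $U_L$ by the groups $U_{\alpha,k}$. At each graded piece one must solve a linear equation of the form $x+\sigma(x)=y$, which is where the hypothesis that the residual characteristic is not $2$ enters --- via invertibility of $2$ in $k$ and Hensel-type lifting --- not via rationality of tori over the residue field. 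If you want to keep a torus-theoretic framing, the object to consider is the set of maximal split tori of the smooth $\s$-group scheme ${\cal G}_L$ itself (equivalently, apartments through $L$), not of the reductive quotient of its special fiber; but a $\sigma$-equivariant conjugacy statement there again reduces to the same pro-$p$ cocycle argument.
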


\begin{cor}\label{2.23}
Any chamber is contained in a $\sigma$ stable apartment.
\end{cor}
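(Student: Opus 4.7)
The plan is to deduce this directly from Theorem \ref{2.22} by enlarging the chamber to a $\sigma$-stable set which still lies in some apartment and still has non-empty interior.

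Let $\C$ be any chamber in $\B_G$. Consider the set $L = \C \cup \sigma(\C)$. By Theorem \ref{2.17}, $\sigma$ sends chambers to chambers, so $\sigma(\C)$ is again a chamber; hence $L$ is the union of two chambers and is manifestly $\sigma$-stable. By property (2) of Theorem \ref{2.10}, any two poly-simplices of $\B_G$ lie in a common apartment, so we may choose an apartment $\A$ containing both $\C$ and $\sigma(\C)$; thus $L \subset \A$. Moreover, $\C$ is top-dimensional, so it has non-empty interior in $\A$, and therefore so does $L$.

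Having verified that $L$ is a $\sigma$-stable subset of $\B_G$, contained in some apartment, and with non-empty interior, we apply Theorem \ref{2.22} to $L$. This gives a $\sigma$-stable apartment $\A'$ with $L \subset \A'$, and in particular $\C \subset \A'$, which is the desired conclusion.

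The only mildly delicate point is confirming that the hypotheses of Theorem \ref{2.22} apply to $L$: the non-empty interior condition is automatic from $\C \subset L$, and the existence of a common apartment containing both $\C$ and $\sigma(\C)$ is guaranteed by the axiomatic properties of the building recalled in Theorem \ref{2.10}. No further input beyond these two theorems is required, so the corollary is essentially immediate.
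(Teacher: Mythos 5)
Your proof is correct and follows exactly the paper's argument: take $L=\C\cup\sigma(\C)$, place it in a common apartment via Theorem \ref{2.10}, note it is $\sigma$-stable with non-empty interior, and apply Theorem \ref{2.22}. No differences worth noting.
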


\begin{proof}
Let $\C$ be a chamber, $\sigma(\C)$ is another chamber, let $L=\C\cup\sigma(\C)$. By Theorem \ref{2.10}, $L$ is contained in some apartment. The set $L$ has a non empty interior as $\C$ has a non empty interior and $L$ is $\sigma$ stable.

By Theorem \ref{2.22}, $L$ is contained in a $\sigma$ stable apartment. 
\end{proof}

Consider the action of $H$ on the building $\B_G$ of $G$.
Translating to the language of the building we can deduce from Proposition 6.10 and Corollary 6.16 of \cite{Helminck} the following:

\begin{theorem}\label{Fin_Sig_Apart}
 $H$ acts on the set of $\sigma$ stable apartments with finitely many orbits.
\end{theorem}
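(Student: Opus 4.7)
The plan is to deduce this from the corresponding statement about tori, which is the content of Helminck's work. The bridge is the bijection between apartments of $\B_G$ and maximal split tori of $G$ recalled in Notation \ref{apartments}: the apartment $gV$ corresponds to the torus $gTg^{-1}$, and conversely, by Theorem \ref{2.9}, every maximal split torus gives rise to a unique apartment.

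First I would check that this bijection restricts to a bijection between $\sigma$-stable apartments and $\sigma$-stable maximal split tori. Since $\sigma$ is algebraic, Theorem \ref{2.17} gives a compatible action of $\sigma$ on $\B_G$ satisfying $\sigma(gx)=\sigma(g)\sigma(x)$. For an apartment $\A=gV$ corresponding to the torus $T'=gTg^{-1}$, the image $\sigma(\A)$ is the apartment associated to $\sigma(T')$. Hence $\sigma(\A)=\A$ if and only if $\sigma(T')$ and $T'$ give the same apartment, and by the injectivity of the torus-apartment correspondence on maximal split tori, this is equivalent to $\sigma(T')=T'$. So $\sigma$-stable apartments correspond bijectively to $\sigma$-stable maximal split tori.

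Next I would observe that this bijection is $H$-equivariant: for $h\in H$, the apartment $h\A=(hg)V$ corresponds to the torus $(hg)T(hg)^{-1}=hT'h^{-1}$. Therefore $H$-orbits on $\sigma$-stable apartments are in bijection with $H$-conjugacy classes of $\sigma$-stable maximal split tori of $G$.

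Finally, I would invoke Helminck's result (Section 6 of \cite{Helminck}), which states that $H$ acts with finitely many orbits on the set of $\sigma$-stable maximal $F$-split tori of $G$. Combined with the previous step, this immediately yields the finiteness of $H$-orbits on $\sigma$-stable apartments. The only non-routine point is making sure the formulation in \cite{Helminck} is indeed about $F$-split tori rather than just maximal tori over the algebraic closure; I expect this to follow either directly from the statement or from the fact that $H$-conjugacy classes of $\sigma$-stable maximal tori of a fixed type (here, $F$-split and of maximal $F$-split rank) form a finite set.
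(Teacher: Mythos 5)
Your proposal is correct and follows exactly the route the paper intends: the paper gives no written proof, merely stating that the result is deduced from Section 6 of \cite{Helminck} by ``translating to the language of the building,'' and your argument supplies precisely that translation via the $H$-equivariant bijection between $\sigma$-stable apartments and $\sigma$-stable maximal split tori. Your closing caveat about checking that Helminck's finiteness statement concerns maximal $F$-split tori is the right thing to verify, and it does hold in the form needed here.
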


\begin{defn}\label{convolution action}
    Let $S(X)^I$ be the space of $I$ invariant locally constant compactly supported functions on $X$. The algebra $H(G,I)$ acts from the right on $S(X)^I$. The action is by convolution with respect to a Haar measure $m$.

    For $f\in S(X)^I$ and $h\in H(G,I)$ we define $f*h(x)=\int f(xg^{-1})h(g)dm(g)$.
\end{defn}

\begin{prop}
    Definition \ref{convolution action} gives a well defined right action of $H(G,I)$ on $S(X)^I$. 
\end{prop}

\begin{proof}
    We need to check that for $f\in S(X)^I$ and $h_1,h_2\in H(G,I)$ we have $(f*h_1)*h_2=f*(h_1*h_2)$.

    We have $f*h_1(x)=\int f(xg_1^{-1})h_1(g_1)dm(g_1)$ and so 
    
    $(f*h_1(x))*h_2=\int \int f(xg^{-1}_2g_1^{-1})h_1(g_1)dm(g_1)h_2(g_2)dm(g_2)$.

    Denote $g_3=g_1g_2$, we have $(f*h_1)*h_2(x)=\int \int f(xg^{-1}_3)h_1(g_3g_2^{-1})dm(g_3)h_2(g_2)dm(g_2)=\int f(xg^{-1}_3) \int h_1(g_3g_2^{-1})h_2(g_2)dm(g_2) dm(g_3)= \int f(xg^{-1}_3) (h_1*h_2)(g_3)dm(g_3)=f*(h_1*h_2)(x)  .$
\end{proof}

\section{Iwahori orbits on symmetric spaces}\label{s3}

In this section, we give a description of the set of $I$ orbits on $X$ and define an action of $W_{aff}$ on them.

By Theorem \ref{2.12}, the set $G/I$ is in bijection with the set of $\Omega$ colored chambers in the affine building of $G$. The orbits of $I$ on $X$ are the same as the orbits of $H$ on $G/I$ or on the set of $\Omega$ colored chambers in $\B_G$.

In order to describe the $H$ orbits on $G/I$ we will need the following proposition about the $\sigma$ stable apartments.

\begin{prop}\label{main lemma}
Let $\B_G$ be the enlarged affine building of $G$, let $\C$ be a chamber of $\B_G$, then there exists a $\sigma$ stable apartment $\A$ that contains $\C$, and such an apartment is unique up to the action of an element of $H$ that fixes $\C$ pointwise.
\end{prop}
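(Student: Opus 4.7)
The plan is to dispatch existence immediately and concentrate on uniqueness. Existence is just Corollary \ref{2.23} applied to $\C$, so all the content is in the uniqueness claim.

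For uniqueness, let $\A_1,\A_2$ be two $\sigma$-stable apartments containing $\C$, with corresponding maximal split tori $T_1,T_2$. Since apartments and maximal split tori are in bijection and the $\A_i$ are $\sigma$-stable, each $T_i$ is $\sigma$-stable, and both apartments contain $\sigma(\C)$ as well as $\C$. First I will use Theorem \ref{2.10}(3) to choose $g\in G$ with $g\A_1=\A_2$ that fixes the vertices of $\C$ and of $\sigma(\C)$, which forces $g\in P_{\C\cup\sigma(\C)}$. Then I set $t:=g^{-1}\sigma(g)$. The $\sigma$-stability of the $\A_i$ gives $\sigma(g)\A_1=\A_2$ as well, so $t$ stabilizes $\A_1$; since $g,\sigma(g)\in P_{\C\cup\sigma(\C)}$, so does $t$; and any element of $N_G(T_1)$ fixing $\C$ pointwise must act trivially on all of $\A_1$, so $t\in T_1^0$. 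A direct use of $\sigma^2=\mathrm{id}$ yields $\sigma(t)=t^{-1}$.

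The candidate $h$ I want will have the form $h=gu$ with $u\in T_1^0$, so that $h\A_1=\A_2$ and $h\in P_\C$ are automatic, and the requirement $\sigma(h)=h$ reduces to solving $u\sigma(u)^{-1}=t$ inside $T_1^0$. This is the main obstacle: it asks for the triviality of the class of $t$ in $H^1(\langle\sigma\rangle,T_1^0)$. To clear it I would combine the $\sigma$-equivariant near-decomposition of the split torus $T_1$ into its $\sigma$-fixed and $\sigma$-antifixed subtori --- the 2-torsion obstruction to which being controlled by the assumption that the residue characteristic is odd --- with the observation that $t$ is not an arbitrary cocycle but arises from an element of the parahoric group scheme attached to $\C\cup\sigma(\C)$, where a Lang-type surjectivity on the reductive quotient (see Theorem \ref{Levi decomposition}) is available. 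The acknowledgement in the paper that a previous version of this proof contained a gap strongly suggests that this final cohomological trivialization is precisely where the argument is delicate and must be handled with care.
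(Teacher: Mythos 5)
Your setup is sound and runs parallel to the paper's: existence via Corollary \ref{2.23}, then $g\in G$ carrying $\A_1$ to $\A_2$ and fixing $\C\cup\sigma(\C)$ pointwise, the cocycle $t=g^{-1}\sigma(g)\in T_1^0$ with $\sigma(t)=t^{-1}$, and the reduction to writing $t=u\sigma(u)^{-1}$ with $u\in T_1^0$. But the proof stops exactly where the content is. You correctly observe that the obstruction lives in $H^1(\langle\sigma\rangle,T_1^0)$, and this group is genuinely nonzero: for $T_1^0\cong(\s^\times)^r$ with $\sigma$ acting by inversion on a factor, every element is a cocycle while the coboundaries are the squares, and $\s^\times/(\s^\times)^2\neq 1$ even in odd residue characteristic. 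So ``odd residue characteristic kills the $2$-torsion'' does not work, and the extra constraint you invoke --- that $t$ lies in the parahoric attached to $\C\cup\sigma(\C)$ --- is vacuous, since $T_1^0$ is contained in $P_L$ for every $L\subset\A_1$. The appeal to Lang-type surjectivity on the reductive quotient also fails: Lang's theorem is about Frobenius, not an algebraic involution, and on the antifixed part of the torus over the residue field the map $u\mapsto u\sigma(u)^{-1}$ is squaring on $k^\times$, which is not surjective.

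The missing idea, which is the actual crux of the paper's argument, is a normalization of $g$ \emph{before} forming the cocycle. By the Iwahori decomposition $P_S=T_1^0U_S$ (Theorem \ref{2.16}), one may multiply $g$ on the right by an element of $T_1^0$ (harmless, since $T_1^0$ fixes $\A_1$ pointwise) so that $g\in U_S$, the group generated by the relevant affine root subgroups. Since $\sigma$ preserves $U_S$, this forces $t=g^{-1}\sigma(g)\in U_S\cap (T^-)^0$, and the decisive fact (Lemma 7.3.16 of \cite{BruhatTitsTA}) is that such elements are congruent to $1$ modulo the maximal ideal. Only then does Hensel's lemma, using that the residue characteristic is odd, produce a square root $g'\in(T^-)^0$ of $t$ with $t=(g')^2=g'\sigma(g')^{-1}$, trivializing the cocycle and yielding $gg'\in H$. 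Without restricting $t$ to the congruence subgroup, the class of $t$ can a priori be the nontrivial element of $\s^\times/(\s^\times)^2$ and your argument cannot close.
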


\begin{Remark}
    We can formulate the above proposition without the language of the building. The equivalent formulation is as follows. Let $I'$ be any Iwahori subgroup of $G$, there exists a $\sigma$ stable maximal split torus $T'$ such that the maximal compact subgroup $T'^0$ of $T'$ is contained in $I'$, i.e. $T'^0\subset I'$. Such a torus is unique up to conjugation by $H\cap I'$.

    This is an analogue of Lemma 2.4 of \cite{Helminck}. 
\end{Remark}

\begin{proof}[Proof of Proposition \ref{main lemma}]
The existence follows from Corollary \ref{2.23} in the present paper. 
Although this is not stated in \cite{delorm}, as explained in the preliminaries, it is a direct consequence of a main result of that paper.

We now prove uniqueness.

Let $\A_1,\A_2$ be two $\sigma$ stable apartments that contain $\C$. By item 3 of Theorem \ref{2.10}, there exists $g\in G$ such that $g\A_1=\A_2$ and $g$ fixes $S=\A_1\cap\A_2$ pointwise. 

Let $T_1$ be the maximal torus corresponding to $\A_1$ and let $T_2$ be the maximal torus corresponding to $\A_2$.

Consider $S$ as a subset of $\A_1$ and recall the notions from Notation \ref{2.7}, where the groups $U_S,P_S$ and $N_S$ are introduced.

By item 4 of Theorem \ref{2.11} and the Iwahori decomposition (Theorem \ref{2.16}) the group of elements that fix $S$ pointwise is $P_S=U_ST_1^0$. 

 $T_1^0$ acts trivially on $\A_1$ so we may assume that $g\in U_S$. The involution $\sigma$ maps $U_S$ to itself as it maps $S$ to itself and being an algebraic map sends unipotent elements to unipotent elements. So $\sigma(g)\in U_S$, hence $g^{-1}\sigma(g)\in U_S$. Furthermore, $g^{-1}\sigma(g)$ maps $\A_1$ to $\A_1$ so $g^{-1}\sigma(g)\in N_G(T_1)$.
 
 We conclude that $g^{-1}\sigma(g)\in N_G(T_1)\cap U_S$.

 Elements of $N_G(T_1)\cap U_S$ act trivially on $\A_1$. Therefore,  $N_G(T_1)\cap U_S\subset T_1^0\cap U_S$.

 The torus $T_1$ is split, so there is some $r\in \N$ and an isomorphism $\phi:T_1\rightarrow (F^\times)^r$. Using $\phi$ we can define an involution $\phi\circ\sigma\circ \phi^{-1}$ on $(F^\times)^r$. We abuse the notation and denote this involution also by $\sigma$.
 
 The map $\phi$ also induces an isomorphism between $T_1^0$ and $(\s^\times)^r$. 

  Let $\pi:\s\rightarrow k$ be the projection to the residue field and let $\s^1=\pi^{-1}(1)$.

 We have $\phi(U_S\cap T_1^0)\subset (\s^1)^r$, for a proof see for example Lemma 7.3.16 in \cite{BruhatTitsTA}, where a much more general result is proven.

 By Hensel's Lemma, any element of $(\s^1)^r$ has a square root inside $(\s^1)^r$. Thus, we can find $t\in (\s^1)^r$ such that $\phi(g^{-1}\sigma(g))=t^2$. We claim that $t\sigma(t)=1$. Denote $t'=t\sigma(t)$, we know that $(t')^2=1$. It is enough to check that $t'$ is 1 modulo any power of $\pi$. As $t'\in \s^1$, it is $1$ modulo $\pi$. Assume it is $1$ modulo $\pi^k$. Write $t'\equiv 1 +\pi^{k}b \text{ (mod } \pi^{k+1})$. We have $1\equiv t'^2\equiv 1+2b\pi^k \text{ (mod } \pi^{k+1})$, so we get that $t'$ is $1$ modulo $\pi^{k+1}$. By induction we are done.

 Let $g'=\phi^{-1}(t)\in T_1^0$. We have $g^{-1}\sigma(g)=(g')^2=g'\sigma(g')^{-1}$. Another way to write this is as $gg'=\sigma(gg')$. Thus $gg'\in H$. We know that $g'\in T^0$, so $g'$ acts trivially on $\A_1$. Thus, $gg'\A_1=g\A_1=\A_2$ and $gg'$ acts on $\C$ as $g$ does which means it fixes it pointwise and we are done.
 



\end{proof}

\subsection{I orbits on $X$}

To describe the $H$ orbits on $G/I$, we recall that $G/I$ can be identified with the set of $\Omega$ colored chambers in the affine building $\B_G$. We denote the set of $\Omega$ colored chambers by $C_\Omega(G)=\{(\C,o)|\C\subset\B_G,o\in\Omega\}$. The group $G$ acts on $C_\Omega(G)$ by $g(\C,o)=(g\C,\omega(g)o)$.

Let $\mathbf{A}_\sigma$ be the set of $\sigma$ stable apartments, $H$ acts on $\mathbf{A}_\sigma$ and by Theorem \ref{Fin_Sig_Apart} this action has finitely many orbits. Let $\A$ be a representative of an orbit and let $C_\A$ be the set of all $\Omega$ colored chambers contained in an apartment that is in the $H$ orbit of $\A$. Clearly $C_\A$ does not depend on the chosen representative and is stable under the action of $H$. By Proposition \ref{main lemma}, the sets of the form $C_\A$ are pairwise disjoint.

It is enough to describe the $H$ orbits on $C_\A$. 

\begin{prop}\label{3.2}
Let $\A$ be a $\sigma$ stable apartment in the affine building of $G$, let $T$ be the maximal split torus that corresponds to $\A$, define $W_{H,aff}(T)=Im(N_H(T)\rightarrow W_{aff}(T))$. 

 Fix a chamber $\C\subset \A$.

 The map from $W_{H,aff}(T)\backslash W_{aff}(T)$ to $H\backslash C_\A$ defined by $W_{H,aff}(T)w\mapsto H(w\C,\omega(w))$ is a bijection.

\end{prop}

\begin{proof}
It is clear that the map is well defined and surjective, we only have to check that the map is injective. Assume we have $w_1,w_2\in W_{aff}(T)$ such that we can find $h\in H$ with $hw_1\C=w_2\C$ and $\omega(hw_1)=\omega(w_2)$, we need to show that $w_1w_2^{-1}\in HT^0$.

 We have $hw_1w_2^{-1}w_2\C=w_2\C$. By Proposition \ref{main lemma} applied to the chamber $w_2\C$ we know that there is $h'\in H$ that fixes $w_2\C$ pointwise such that $hw_1w_2^{-1}\A=h'\A$. 

Therefore $h'^{-1}hw_1w_2^{-1}$ stabilizes $\A$ and fixes the chamber $w_2\C$ pointwise (pointwise, because $\omega(h')=\omega(hw_1w_2^{-1})=1)$. Thus $h'^{-1}hw_1w_2^{-1}\in T^0$. This completes the proof.
\end{proof}

For any $\sigma$ stable torus $T$ denote $W^H_{aff}(T)=W_{H,aff}(T)\backslash W_{aff}(T)$.

Let $\mathbf{A}=\{\A_0,...,\A_k\}$ be a set of representatives for the $H$ orbits on the $\sigma$ stable apartments (there is a finite number by Theorem \ref{Fin_Sig_Apart}).

Let $I$ be the Iwahori subgroup that fixes a chamber contained in $\A_0$, for any $i\neq0$ choose $g_i$ such that $g_i\A_0=\A_i$ and set $g_0=1$. Let $T$ be the torus that corresponds to $\A_0$.

From Proposition \ref{3.2} we deduce:

\begin{theorem}\label{3.3}
 The map from $\mathbin{\mathaccent\cdot\cup}_{0\leq i\leq k} W^H_{aff}(g_iTg_i^{-1})$ that sends $g\in W^H_{aff}(g_iTg_i^{-1})$ to $Hgg_iI$ is a bijection onto the $I$ orbits on $X$.
\end{theorem}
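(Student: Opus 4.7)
The plan is to reduce the theorem to Proposition \ref{3.2} by passing through the identification of $I$-orbits on $X$ with $H$-orbits on the set $C_\Omega(G)$ of $\Omega$-coloured chambers, and then partitioning $C_\Omega(G)$ according to which $H$-orbit of $\sigma$-stable apartments the underlying chamber belongs to.

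First, the standard bijection $I\backslash G/H \cong I\backslash X$, combined with Theorem \ref{2.12}, turns the problem into describing $H\backslash C_\Omega(G)$. Proposition \ref{main lemma} supplies two facts at once: every chamber lies in some $\sigma$-stable apartment, and any two $\sigma$-stable apartments containing a common chamber are $H$-conjugate by an element fixing that chamber pointwise. The second statement implies that the assignment sending a coloured chamber to the $H$-orbit of a $\sigma$-stable apartment containing it is well defined. Since $H\subset G_\sigma$, and $h\A$ is $\sigma$-stable whenever $\A$ is and $h\in H$, this assignment is $H$-equivariant. Consequently, we obtain a partition
\[
C_\Omega(G)=\bigsqcup_{i=0}^k C_{\A_i}
\]
into $H$-invariant subsets, and therefore $H\backslash C_\Omega(G)=\bigsqcup_i H\backslash C_{\A_i}$.

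Second, for each $i$ the orbit set $H\backslash C_{\A_i}$ is controlled by Proposition \ref{3.2}. I would apply that proposition to $\A_i$ using the distinguished chamber $g_i\C$ and the torus $T_i=g_iTg_i^{-1}$ that corresponds to $\A_i=g_i\A_0$. Conjugation by $g_i$ gives a group isomorphism $W_{aff}(g_i^{-1}Tg_i)\cong W_{aff}(T_i)$ carrying $W_{H,aff}(g_i^{-1}Tg_i)$ to $W_{H,aff}(T_i)$, so one can translate freely between the two descriptions. Given $g$ representing a class in $W^H_{aff}(g_i^{-1}Tg_i)$, let $w_i=g_igg_i^{-1}\in W_{aff}(T_i)$; then Proposition \ref{3.2} attaches to the class of $w_i$ the $H$-orbit of the coloured chamber $(w_ig_i\C,\omega(w_ig_i))=(g_ig\C,\omega(g_ig))$. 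Under the identification of $\Omega$-coloured chambers with $I\backslash G$ from Theorem \ref{2.12}, this corresponds to the coset $Ig_ig$, whose image in $I\backslash G/H\cong I\backslash X$ is $Ig_igH$. This matches the formula in the theorem.

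Bijectivity now follows by assembling these pieces: on each index $i$ the map is a bijection onto $H\backslash C_{\A_i}$ by Proposition \ref{3.2}, and orbits corresponding to distinct indices cannot coincide because of the disjoint partition above, which rests on the uniqueness half of Proposition \ref{main lemma}. The main obstacle is the careful bookkeeping of tori and Weyl groups: one has to check that the conjugation identification $W_{aff}(g_i^{-1}Tg_i)\xrightarrow{\sim}W_{aff}(T_i)$ is compatible with everything used by Proposition \ref{3.2} (in particular with $N_H\to W_{aff}$, so that $W_{H,aff}$ corresponds to $W_{H,aff}$), and that the map $g\mapsto Ig_igH$ descends from $W_{aff}(g_i^{-1}Tg_i)$ to the quotient by $W_{H,aff}(g_i^{-1}Tg_i)$; both reduce to the observation that lifting a class in $W_{H,aff}$ to an element of $N_H(T_i)$ produces, after conjugation by $g_i$, an element of $G$ that can be absorbed into $H$ on the right after accounting for $T^0\subset I$.
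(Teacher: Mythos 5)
Your proposal is correct and follows the paper's own route exactly: the paper deduces Theorem \ref{3.3} directly from Proposition \ref{3.2} applied to each piece of the disjoint, $H$-stable decomposition $C_\Omega(G)=\bigsqcup_{i} C_{\A_i}$ furnished by Proposition \ref{main lemma}, which is precisely your argument. The only point to watch is the direction of the conjugation identifying $W_{aff}(g_i^{-1}Tg_i)$ with the affine Weyl group of the torus attached to $\A_i$ (conjugation by $g_i$ carries $N_G(g_i^{-1}Tg_i)$ to $N_G(T)$, not to $N_G(g_iTg_i^{-1})$), but this bookkeeping ambiguity is already present in the statement itself and does not affect the substance of the proof.
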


\subsection{Action of $W_{aff}$ on $H\backslash G/I$}\label{the action section}

Theorem \ref{3.3} allows us to define an action of the extended affine Weyl group of $G$ on the $I$ orbits on $X$. The group acts independently on each $W^H_{aff}(g_iTg_i^{-1})$ and on each one it acts via an isomorphism between the abstract group $W_{aff}$ and $W_{aff}(g_iTg_i^{-1})$, and the action of $W_{aff}(g_iTg_i^{-1})$ on its quotient. We do not have a canonical isomorphism between $W_{aff}$ and $W_{aff}(g_iTg_i^{-1})$. We use a chosen isomorphism between $W_{aff}$ and $W_{aff}(T)$ (given by Theorem \ref{2.3}) and conjugation by $g_i$ to identify $W_{aff}(T)$ and $W_{aff}(g_iTg_i^{-1})$.

We also give a more explicit description of this action. Denote by $\C_0$ the chamber fixed pointwise by $I$, we have $\C_0\subset\A_0$.

First, notice that Theorem \ref{3.3} implies the decomposition $G=\cup_{i=0}^k HW_{aff}(g_iTg_i^{-1})g_iI=\cup_{i=0}^k Hg_iW_{aff}(T)I$.

\begin{defn}\label{the action}
    Let $x\in G$ be a representative of a double coset $HxI$. Let $w\in W_{aff}\cong W_{aff}(T)$. There are $g_i$ and $w_1\in W_{aff}(T)$ such that $HxI=Hg_iw_1I$. We define the action as: $$HxI\times w=Hg_iw_1wI$$
\end{defn}

We need to check that this is well defined and does not depend on the choice of representatives.

\begin{prop}
    \begin{enumerate}
        \item The double coset $Hg_iw_1wI$ depends only on $HxI$ and $w$.
        \item Definition \ref{the action} gives a right action of $W_{aff}$ on $H\backslash G/I$.
        \item The defined action does not depend on the choice of the representatives $g_i$.
    \end{enumerate}
   
\end{prop}

\begin{proof}
    Let $g'\in G$ be such that $g'\A_0$ is a $\sigma$ stable apartment and let $w'_1\in W_{aff}$ be such that $Hg_iw_1I=Hg'w'_1I$. In order to prove items 1 and 3 it is enough to show that $Hg_iw_1wI=Hg'w'_1wI$. Notice that both $g_iw_1$ and $g'w'_1$ send $\A_0$ to a $\sigma$ stable apartment. Thus we can replace $g_iw_1$ with $g_i$ and $g'w'_1$ with $g'$. It is enough to prove that 
 if $Hg_iI=Hg'I$ then we also have $Hg_iwI=Hg'wI$. We now omit the $i$ index and write $g=g_i$.

 Let $\C$ denote the chamber fixed pointwise by $I$. Note that $\C\subset \A_0$.

 Let $h\in H$ such that $hg'I=gI$. Denote $\A=g\A_0$ and $\A'=hg'\A_0$, both are $\sigma$ stable apartments that contain $g\C$. By Proposition \ref{main lemma} applied to the chamber $g\C$ and the apartments $\A$ and $\A'$ we can find $h'\in H$ such that $h'g\C=g\C$ and $h'\A'=\A$. We also have $h'hg'I=h'gI=gI$.
 
 By replacing $h$ with $h'h$ we can assume that $g\A_0=hg'\A_0$.

 Consider $x=g^{-1}hg'$, we have $x\in I$ and $x\A_0=\A_0$, therefore $x$ must fix $\A_0$ pointwise. In particular $x$ fixes $w\C$ pointwise, which implies $w^{-1}xw=w^{-1}g^{-1}hg'w\in I$ or $hg'wI=gwI$. In particular $Hg'wI=HgwI$.

 Item 2 follows immediately from item 1.
\end{proof}



\begin{Remark}\label{3.5}
Notice that by the construction of the action it follows that for $x\in H\backslash G/I$ and $w\in W_{aff}$ we can find a $\Omega$ colored chamber $(\C_x,o_x)\in C_\Omega(G)$ whose $H$ orbit is $x$ and another $\Omega$ colored chamber $(\C_{x\times w},o_{x\times w})\in C_\Omega(G)$ whose $H$ orbits is $x\times w$ such that $\C_x$ and $\C_{x\times w}$ are contained in a single $\sigma$ stable apartment.
\end{Remark}

\section{The Key Computation}

Let $X=H\backslash G$ be a $p$-adic symmetric space. Recall that our goal is to describe the action of the Iwahori Hecke algebra $\mathcal{H}(G,I)$ on $S(X)^I$. By Theorem \ref{2.21}, it is enough to describe the action of $T_s$ for $s\in\Tilde{\Delta}$ and the action of $T_o$ for $o\in\Omega$.

We begin by describing the action of $T_o$ for $o\in\Omega$.

\begin{prop}\label{action of fund group}
    Let $o\in\Omega$ and let $x\in H\backslash G/I$. Let $T_o\in \mathcal{H}(G,I)$ be the characteristic function of $o$ and let $1_x\in S(X)^I$ be the characteristic function of $x$. Then $1_xT_o=1_{x\times o}$.
\end{prop}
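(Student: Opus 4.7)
The plan is to compute $T_o \cdot 1_x$ directly from the convolution formula, exploiting the fact that $o$ has length zero. By Proposition \ref{2.12}, any lift of $o$ to $G$ belongs to the setwise stabilizer $J$ of the fundamental alcove $\C$ and hence normalizes $I$. Consequently the double coset $IoI$ collapses to a single $I$-coset $oI = Io$, and after normalizing $\mathrm{vol}(I) = 1$ the generator $T_o$ is simply the characteristic function $1_{oI}$ of total mass one.

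With this simplification the convolution is straightforward to carry out. For any $\bar{y} = yH \in X$ one has
\begin{equation*}
(T_o \cdot 1_x)(\bar{y}) \;=\; \int_{oI} 1_x(u^{-1}\bar{y})\,du \;=\; \int_I 1_x(i^{-1}o^{-1}\bar{y})\,di.
\end{equation*}
Since $1_x \in S(X)^I$ is $I$-invariant on $X$, the integrand is the constant $1_x(o^{-1}\bar{y})$ independent of $i$, and the integral reduces to $1_x(o^{-1}\bar{y})$. Hence the support of $T_o \cdot 1_x$ is the left translate $o \cdot x \subset X$ and $T_o \cdot 1_x = 1_{o \cdot x}$ pointwise, where $o \cdot x = \{o\bar y : \bar y \in x\}$.

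The remaining task is to identify this left translate with the action $o \times x$ defined in Section 3.2. This is essentially a matter of unwinding definitions: by Proposition \ref{3.2} and the construction following Theorem \ref{3.3}, if $x$ is represented by the $\Omega$-colored chamber $(w\C, \omega(w))$ inside a $\sigma$-stable apartment, then $o \times x$ is represented by $(ow\C, \omega(ow))$, which is precisely the image of $(w\C, \omega(w))$ under the $G$-action by left multiplication. Translating back to double cosets, this says $o \times x = IogH$ when $x = IgH$, matching the set $o \cdot x$ computed above. I do not expect any serious obstacle; the only care needed is to confirm the identification is independent of the chosen lift of $o$ to $G$, which follows immediately from the fact that $o$ normalizes $I$ so any two lifts differ by an element of $I$ that does not affect the double coset $IogH$.
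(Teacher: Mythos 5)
Your proof is correct and follows essentially the same route as the paper's: both arguments hinge on the fact that $o$ normalizes $I$, so that $IoI=oI$ has measure one and the convolution collapses to the left translate $1_{IogH}$. Your version is slightly more explicit in carrying out the pointwise computation and in matching the translate $IogH$ with the action $o\times x$ from Section 3.2 (a step the paper asserts without comment), but the substance is identical.
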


\begin{proof}
    Let $m$ be a Haar measure on $G$ normalized so that $m(I)=1$.

    $$1_xT_o(y)=\int_{g\in G}1_x(yg^{-1})T_o(g)dm(g)$$

    If $1_xT_o(y)\neq 0$, then there is $g\in IoI$ such that $yg^{-1}\in HxI$, therefore $y\in HxIoI$. The group $\Omega$ normalizes $I$ so we obtain $y\in HxoI$. Thus, $1_xT_o$ is proportional to $1_{x\times o}$. If $y\in HxoI$ then $1_xT_o(y)=m(IoI)=1$, this completes the proof.
\end{proof}

Let $x\in  H\backslash G/I$ and let $s\in \Tilde{\Delta}$ be a simple reflection.

Let $T_s\in \mathcal{H}(G,I)$ be the generator that corresponds to $s$. Let $1_x\in S(X)^I$ be the characteristic function of $x$. 

In the remainder of this section, we translate the problem of computing $1_xT_s$ to a combinatorial problem about $H$ orbits of chambers in $\B_G$, the building of $G$.

We begin with the following lemma:
\begin{lemma}\label{3.13}
Let $\C$ be the chamber fixed by $I$ and let $\A$ be an apartment that contains $\C$. Let $s\in \Tilde{\Delta}$ be an element that induces a reflection over a facet of $\C$. 

The set of chambers $Is\C$ is the set of all chambers that contain $\C\cap s\C$ and are different from $\C$ and its size is $q$, the size of the residue field of $F$.  
\end{lemma}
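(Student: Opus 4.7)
The plan is to reduce the lemma to the standard fact that a Borel subgroup of a reductive group of semisimple rank $1$ has exactly two orbits on $\mathbb{P}^1$ of the residue field: a fixed point and an orbit of size $q$. First, since $s \in \Tilde{\Delta}$ reflects over a face of $\C$, the intersection $f = \C \cap s\C$ is a codimension one face of $\C$, and $s\C$ is the unique chamber of $\A$ other than $\C$ that contains $f$. The Iwahori $I$ fixes $\C$ pointwise, hence fixes $f$ pointwise, so $I$ permutes the set of chambers of $\B_G$ containing $f$; in particular $Is\C$ lies inside that set and avoids $\C$.

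Next I would pass to the parahoric group scheme ${\cal G}_f$ attached to $f$. The roots that are constant on the facet $f$ are precisely $\pm \alpha$, where $\alpha$ is the wall through $f$, so by Proposition \ref{root system} the reductive quotient $\overline{{\cal G}}_f / R_u(\overline{{\cal G}}_f)$ is a reductive $k$-group of semisimple rank one. Proposition \ref{affine to spherical building} then provides an order preserving bijection between the chambers of $\B_G$ containing $f$ and the maximal simplices of the spherical building of this reductive quotient, i.e.\ the Borel subgroups of a rank one reductive $k$-group. This set is identified with $\mathbb{P}^1(k)$ and therefore has cardinality $q+1$.

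Finally I would identify the image of $I$ in the reductive quotient as a Borel subgroup. Using the Iwahori decomposition of Theorem \ref{2.16} applied to $L = \C$, together with Theorem \ref{open subscheme} and Theorem \ref{Levi decomposition}, the image of $I$ under the restriction map $res^\C_f$ (after passing modulo the pro-unipotent radical) is a Borel subgroup $B$ of $\overline{{\cal G}}_f / R_u(\overline{{\cal G}}_f)$ containing the image of $T^0$; concretely, only the affine root subgroup $U_{\alpha_\C}$ for the unique $\alpha \in \Phi_f$ on whose positive side $\C$ sits survives in the reduction. The Borel $B$ fixes exactly one point of $\mathbb{P}^1(k)$ and acts transitively on the remaining $q$ points. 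Translating back, $I$ fixes $\C$ and acts transitively on the remaining $q$ chambers through $f$; since $s\C$ is one of these, $Is\C$ equals the whole set of chambers containing $f$ other than $\C$, and has size $q$.

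The main obstacle is bookkeeping in the last step: one must check the naturality of Proposition \ref{affine to spherical building} with respect to the restriction map $res^\C_f$ of Proposition \ref{restriction maps}, so that the distinguished $B$-fixed point of $\mathbb{P}^1(k)$ really corresponds to the chamber $\C$ rather than to some other chamber through $f$. This naturality is a formal consequence of the functoriality of ${\cal G}_L$ in $L$, but it is the only point that requires care; once it is in place, the two claims of the lemma follow simultaneously.
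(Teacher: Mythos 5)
Your proof is correct, but it reaches the transitivity statement by a genuinely different route from the paper. The paper handles both chambers $\C_1$ containing $f$ with $\C_1\neq \C, s\C$ purely with the building axioms of Theorem \ref{2.10}: it picks an apartment $\A'$ containing $\C$ and $\C_1$, takes $g\in G$ with $g\A=\A'$ fixing $\A\cap\A'$ pointwise, observes $g\in I$ since $\C\subset\A\cap\A'$, and concludes $gs\C=\C_1$ because $\C$ and $\C_1$ are the only two chambers of $\A'$ containing $f$; the count $q+1$ is then quoted exactly as you quote it, via the $\mathbb{P}^1(k)$ bijection. You instead push everything through the parahoric machinery of Section 2.3: identify the image of $I$ in the rank-one reductive quotient $G_f$ as the Borel $B$ attached to $\C$, and invoke the two-orbit structure of $B(k)$ on $\mathbb{P}^1(k)$. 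Both arguments are sound. The paper's version is more elementary (it needs only Theorem \ref{2.10}, not the group schemes), while yours has the advantage of making the lemma a visible special case of the reduction strategy the paper deploys later (Propositions \ref{orbit correspondacne} and \ref{6.1}), at the cost of the equivariance/naturality check you correctly flag: you need the bijection of Proposition \ref{affine to spherical building} to intertwine the $P_f$-action on chambers through $f$ with the action of its image in $G_f$ on Borels, and you need surjectivity of $I\to B(k)$ (smoothness of ${\cal G}_\C$ plus Hensel, or Lang's theorem). Neither of these is spelled out in the preliminaries, though both are implicitly used later in the paper, so your argument is complete once those two points are recorded.
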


\begin{proof}
Denote $f=\C \cap s\C$. The chamber $s\C$ contains $f$ and $I$ fixes $f$ so every chamber in $Is\C$ contains $f$.

Now, let $\C_1$ be a chamber that contains $f$ and is different from both $\C$ and $s\C$. By Theorem \ref{2.10} there is an apartment $\A'$ that contains both $\C$ and $\C_1$. Also there is $g\in G$ such that $g\A=\A'$ and such that $g$ fixes $\A\cap \A'$ pointwise. We have $\C\subset \A\cap\A'$ so $g$ fixes $\C$ pointwise and it sends $s\C$ to $\C'$. We found $g\in I$ such that $gs\C=\C_1$ which proves $\C_1\in Is\C$.

It is well known that the set of chambers that contain a given facet $f$ corresponds bijectively to $\p^1(k)$ (for example, this follows from Propositions \ref{root system} and \ref{affine to spherical building}). Excluding $\C$ there are $q$ chambers that contain $f$.

\end{proof}

Let $\C$ be the chamber fixed pointwise by $I$ and let $g\in G$ be a representative of the orbit $x$ such that $g\A_0$ is a $\sigma$ stable apartment. 

There exists $o\in \Omega$ such that $(g\C,o)$ is a $\Omega$ colored chamber whose $H$ orbit is $x$, and $(gs\C,o)$ is a $\Omega$ colored chamber whose $H$ orbit is $x\times s$, denote $f=g\C\cap gs\C$.

By Lemma \ref{3.13}, there are $q+1$ chambers that contain $f$. We will denote them by $\C_1,...,\C_{q+1}$.

The key formula is as follows.

\begin{prop}\label{key_computation}

Let $s\in \Tilde{\Delta}$, $x\in H\backslash G/I$, $g$ a representative of $x$ as above. Let $f=g\C\cap gs\C$ and let $\C_1,...,\C_{q+1}$ be the chambers that contain $f$. Denote $o=\omega(g)\in\Omega$.

Let $O_{f,o}$ be the set of $H$ orbits of $(\C_1,o),...,(\C_{q+1},o)$.

Define $D_{f,o}=\sum_{x\in O_{f,o}}1_x$ and 
let $\gamma_{f,g}=\#\{1\leq i\leq q+1|H(\C_i,o)=H(g\C,o)\}$.

Then we have:
\begin{center}
    $1_x(T_s+1)=\gamma_{f,g} D_{f,o}$
\end{center}
\end{prop}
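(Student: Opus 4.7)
The plan is to exploit the fact that $T_s + 1$ is the characteristic function of a parahoric subgroup. I would begin by observing that $T_s + 1 = 1_{IsI} + 1_I = 1_{P_s}$, where $P_s := I \sqcup IsI$ is the pointwise stabilizer in $G$ of the face $f_0 := \C \cap s\C$; Lemma \ref{3.13} gives the decomposition $P_s = \bigsqcup_{i=0}^{q} u_i I$ into $q+1$ right $I$-cosets, with $u_i \C$ running over the chambers containing $f_0$. Normalizing the Haar measure so that $m(I) = 1$, the convolution collapses to
\[
(T_s + 1)\cdot 1_x(y) = \sum_{i=0}^{q} 1_{IgH}(u_i^{-1} y).
\]

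Two routine invariance properties follow at once from this formula: the function is left $P_s$-invariant, since left translation by $P_s$ only permutes the cosets $\{u_i I\}$, and it is right $H$-invariant, as inherited from $1_{IgH}$. Hence $(T_s + 1) 1_x$ is constant on each $(P_s, H)$-double coset; its support is contained in $P_s g H$, and since the $i = 0$ term is nonzero at $y = g$ the support equals the full double coset $P_s g H$.

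The geometric heart of the argument is the identification of the $(I,H)$-double cosets inside $P_s g H$ with the set $O_{f, o}$ of $H$-orbits among the colored chambers $\{(\C_i, o)\}_{i=1}^{q+1}$ containing $f$. Here I would use the conjugation relation $P_f = g P_s g^{-1}$ (so that the chambers $g u_i \C$ exhaust those containing $f = g f_0$) together with the identification of $I\backslash G / H$ with $H$-orbits on $\Omega$-colored chambers from Section 3. Since $\omega$ vanishes on both $P_s$ and $P_f$, the color $o$ is preserved, so the double cosets appearing in $P_s g H$ correspond bijectively to the $H$-orbits of the colored chambers $(g u_i \C, o) = (\C_i, o)$, that is, exactly to $O_{f, o}$. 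This matches the support of $D_{f, o}$.

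Finally I would evaluate the constant value at $y = g$ to obtain
\[
\gamma = \#\{i : u_i^{-1} g \in IgH\},
\]
and recognize this count as the number of $i$ for which $(\C_i, o)$ lies in the $H$-orbit of $(g\C, o) = (\C_1, o)$, which is $\gamma_{f, g}$ by definition. The main technical obstacle will be the bookkeeping in the translation between the convolution picture (where $P_s$ acts naturally by left multiplication and the cosets $u_i g$ arise) and the chamber picture in the statement (where $P_f$ acts and the chambers appear as $g u_i \C$); verifying that the conjugation $P_f = g P_s g^{-1}$ transports one picture to the other without a sign or inversion error, and that the coloring is correctly preserved, is the delicate point.
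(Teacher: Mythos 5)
Your proposal is correct and follows essentially the same route as the paper: both rest on the decomposition of $P_s=I\sqcup IsI$ into $q+1$ right $I$-cosets indexed by the chambers containing the codimension-one face (Lemma \ref{3.13}), and both reduce $(T_s+1)1_x$ to counting how many of those cosets/chambers lie in the $H$-orbit of the colored chamber attached to $x$. The only difference is packaging: you get constancy on the support a priori from $(P_s,H)$-bi-invariance of $1_{P_s}*1_x$, whereas the paper evaluates $T_s1_x(x_i)+1_x(x_i)$ at each of the $q+1$ points and observes the answer is independent of $i$; the ``delicate point'' you flag (whether the cosets arise as $u_ig$ or the chambers as $gu_i\C$) is exactly the bookkeeping the paper absorbs by choosing representatives $g_i$ with $g_i\C=\C_i$ and $\omega(g_i)=\omega(g)$, so it is a real but shared subtlety rather than a defect of your approach.
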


\begin{proof}

First, observe that by the definition of convolution, $1_x(T_s+1)$ is supported on $HxIsI$. And that all the $H$ orbits of chambers in $HxIs\C$ can be represented by a chamber that contains $f$.

For each $1\leq i\leq q+1$ choose $g_1,...,g_{q+1}\in G$ such that $g_i\C_0=\C_i$ and $\omega(g_i)=\omega(g)$. Denote $x_i=Hg_i$.

We now compute $1_{x}T_s(x_i)$.
Let $m$ be a Haar measure on $G$ normalized such that $m(I)=1.$
A direct calculation gives:
$$1_{x}T_s(x_i)=\int_{g'\in G} 1_{x}(x_ig'^{-1})T_s(g')dm(g')=m(\{k\in IsI|g_ik^{-1}\in HgI\}).$$ 
On the other hand, 
$$m(\{k\in IsI|g_ik^{-1}\in HgI\})=m(IsI\cap Ig^{-1}Hg_i)=m(IsIg^{-1}_i\cap Ig^{-1}H).$$
To compute the intersection, we use Lemma \ref{3.13} to write $g_iIsI=\cup_{k\neq i}g_kI$

Taking inverses, we obtain $$IsIg^{-1}_i=\cup_{k\neq i}Ig^{-1}_k$$

Passing to measures, we get

$$m(IsIg^{-1}_i\cap Ig^{-1}H)=\sum_{k\neq i}m(Ig^{-1}_k\cap Ig^{-1}H)$$

Thus we obtain
 $$1_{x}T_s(x_i)=\sum_{k\neq i}m(Ig^{-1}_k\cap Ig^{-1}H)$$

We observe that, by the our choice of $g_{1},...,g_{q+1},$ for every $k$ we either have $Ig_k^{-1}\subset Ig^{-1}H$ or $Ig_k^{-1}\cap Ig^{-1}H=\emptyset.$

So the aforementioned sum is equal to

\[
    \beta_{g\C,g_i\C}=\sum_{k\neq i}\delta_{Hg_kI,HgI} \label{eq:special} \tag{*}
\]

Here, $\delta_{Hg_kI,HgI}$ is $1$ if $Hg_kI=HgI$ and zero otherwise.

The ''missing term'' in the formula 
$$1_{x}T_s(x_i)=\sum_{k\neq i}\delta_{Hg_kI,HgI}$$ motivates us to consider the quantity 
$1_{x}T_s(x_i)+1_{x}(x_{i}).$ which is independent of $i$ and equals $\gamma_{f,g}$.

Thus we obtain $1_x(T_s+1)(x_i)=\gamma_{f,g}$.

This computation also shows that $1_x(T_s+1)$ is supported on $\{x_1,...,x_{q+1}\}$ so we obtain $1_x(T_s+1)=\gamma_{f,g} D_{f,o}$
\end{proof}

\begin{Remark}
    Notice that the value $\gamma_{f,g}$ depends on the chamber $g\C$. For different chambers among $\C_1,...,\C_{q+1}$ the value may vary.
\end{Remark}

We see that to compute 
$1_xT_s$, for  $x\in  H\backslash G/I$ and $s\in \Tilde{\Delta}$ a simple reflection, we need to answer the following combinatorial question. Given $s$ as above, a chamber $\C$ and $o\in \Omega$ such that $(\C,o)$ corresponds to $x$, determine the sizes of the $H$ orbits of chambers that contain $f=\C\cap s\C$ with the color $o\in\Omega$. We address this in the following sections.

\section{Iwahori-Matsumoto type formulas}

 In this section, we prove a result
that generalizes the classical Iwahori-Matsumoto description given in Theorem \ref{2.20}. Thus, giving a partial solution to the problem of describing the action of $\mathcal{H}(G,I)$ on $S(X)^I$. This allows us to show that $S(X)^I$ is finitely generated over $\mathcal{H}(G,I)$. 

In the description of Iwahori-Matsumoto, a key role is played by the length function on the extended affine Weyl group. For our description, we will use a length function $l_{\sigma}$ on the set $H \backslash G/I.$

\subsection{The length function}
Our length function will be defined first on the chambers of the building. We begin by introducing this length function, $l_{\sigma}$.

For any two chambers $\C_1,\C_2\subset \B_{G}$, their distance $d(\C_1,\C_2)$ is defined to be the length of a minimal gallery between them.

\begin{defn}\label{l_sigma}
Let $\C$ be a chamber of $\B_{G}.$  Define $$l_\sigma(\C)=d(\C,\sigma(\C))$$ 
\end{defn}
\begin{prop}
$l_\sigma$ is constant on $H$ orbits.
\end{prop}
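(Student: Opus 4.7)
The plan is straightforward: I want to reduce the claim $l_\sigma(h\C) = l_\sigma(\C)$ to two standard facts already recorded in the preliminaries, namely (i) the naturality of the involution on the building and (ii) the fact that the $G$-action on $\B_G$ preserves distances between chambers.

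First, by Theorem \ref{2.17}, the algebraic involution $\sigma$ of $\mathbf{G}$ induces an automorphism of $\B_G$ (also denoted $\sigma$) satisfying the equivariance
\begin{equation*}
\sigma(g \cdot x) = \sigma(g) \cdot \sigma(x) \qquad \text{for all } g \in G,\ x \in \B_G.
\end{equation*}
Specializing to $g = h \in H$, where by definition $\sigma(h) = h$, yields $\sigma(h\C) = h\sigma(\C)$. So the chambers we need to compare distances between are $h\C$ and $h\sigma(\C)$.

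Second, the $G$-action on $\B_G$ is by simplicial automorphisms (it permutes chambers and sends galleries to galleries of the same length); in particular, multiplication by $h$ is an isometry of the chamber-distance function $d$. Combining the two observations,
\begin{equation*}
l_\sigma(h\C) \;=\; d\bigl(h\C,\sigma(h\C)\bigr) \;=\; d\bigl(h\C, h\sigma(\C)\bigr) \;=\; d\bigl(\C,\sigma(\C)\bigr) \;=\; l_\sigma(\C),
\end{equation*}
which is exactly the required invariance.

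There is no real obstacle here: both ingredients are stated earlier in the paper, and the only substantive point is to notice that fixedness of $h$ under $\sigma$ is precisely what is needed to turn the twisted equivariance $\sigma(g\cdot x) = \sigma(g)\cdot \sigma(x)$ into ordinary commutation of $h$ with $\sigma$ on the level of the building. Once this is observed, the rest is a one-line calculation.
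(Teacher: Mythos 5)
Your proof is correct and follows essentially the same route as the paper: use $\sigma(h\C)=h\sigma(\C)$ for $h\in H$ (via the equivariance of the building construction) and the fact that $h$ acts as an isometry of the gallery distance. The paper's proof is exactly this one-line computation; you have merely spelled out the two ingredients more explicitly.
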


\begin{proof}
Let $h\in H$ and $\C$ be a chamber, then $l_\sigma(h\C)=d(h\C,\sigma(h\C))=d(h\C,h\sigma(\C))=d(\C,\sigma(\C))$.
\end{proof}

We use the notation $l_{\sigma}$ also for the induced length function on the $H$ orbits of chambers and on the $H$ orbits of $\Omega$ colored chambers. 

Under the identification between $H\backslash G/I$ with $H$ orbits of $\Omega$ colored chambers in the building, $l_\sigma$ induces a length function on $H\backslash G/I$ which we also denote by $l_\sigma$.

For $s\in \Tilde{\Delta}$ a simple reflection, let $T_s\in \mathcal{H}(G,I)$ be the corresponding element of the Hecke algebra. 

Now we can formulate an analogue of Theorem \ref{2.20}.

\begin{theorem}\label{simpleformula}
For $x\in H\backslash G/I$ let $1_x\in S(X)^I$ be the characteristic function of the orbit $x$. Let $s\in \Tilde{\Delta}$ be a simple reflection.

We have:
\begin{enumerate}
 \item If $l_\sigma(x\times s)>l_\sigma(x)$ then $1_xT_s=1_{x\times s}$.
    \item If $l_\sigma(x\times s)<l_\sigma(x)$ then $1_xT_s=(q-1)1_x+q1_{x\times s}$.
\end{enumerate}

\end{theorem}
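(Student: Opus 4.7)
The starting point is Proposition \ref{key_computation}, which gives the identity $(T_s + 1)1_x = \gamma_{f,g}\, D_{f,o}$, where $f = g\C \cap gs\C$, $o = \omega(g)$, $D_{f,o}$ is the sum of the characteristic functions of the distinct $H$-orbits among the $q+1$ colored chambers $(\C_1, o), \dots, (\C_{q+1}, o)$ containing $f$, and $\gamma_{f,g}$ is the number of those chambers lying in the same colored $H$-orbit as $(g\C, o)$. Thus the problem reduces to determining the $H$-orbit structure of these $q+1$ colored chambers. The geometric criterion of Lemma \ref{3.9} says that, with $\A$ a $\sigma$-stable apartment containing both $g\C$ and $gs\C$, the condition $l_\sigma(s\times x) > l_\sigma(x)$ (resp.\ $<$) is equivalent to $g\C$ (resp.\ $gs\C$) being contained in every $\sigma$-stable apartment through $f$.

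The central claim is the following. Suppose $g\C$ lies in every $\sigma$-stable apartment containing $f$. Then for every chamber $\C' \neq g\C$ that contains $f$, the colored chamber $(\C',o)$ lies in the $H$-orbit of $(gs\C,o)$. To prove it, invoke Corollary \ref{2.23} to find a $\sigma$-stable apartment $\A'$ containing $\C'$; since $f \subset \C'$, $\A'$ contains $f$, so by hypothesis $g\C \subset \A'$. Both $\A$ and $\A'$ are then $\sigma$-stable apartments containing the chamber $g\C$, and Proposition \ref{main lemma} provides $h \in H$ fixing $g\C$ pointwise with $h\A = \A'$. Because $f \subset g\C$, $h$ also fixes $f$ pointwise, so $h(gs\C)$ is a chamber of $\A'$ through $f$ distinct from $g\C$; but any apartment contains exactly two chambers through a given panel (Theorem \ref{2.10}(1)), which forces $h(gs\C) = \C'$. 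Finally, the pointwise stabilizer of $g\C$ is the Iwahori $gIg^{-1}$, which is mapped to $1$ by the homomorphism $\omega$ of Proposition \ref{2.12} (since $\Omega$ is abelian and $\omega(I) = 1$); hence $\omega(h) = 1$ and $h \cdot (gs\C, o) = (\C', o)$, establishing the claim.

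With the claim in hand the two cases of the theorem are immediate. In the case $l_\sigma(s\times x) > l_\sigma(x)$, precisely one of the $q+1$ chambers through $f$ (namely $g\C$) is in the orbit $x$ while the remaining $q$ are in the orbit $s \times x$, so $\gamma_{f,g} = 1$ and $D_{f,o} = 1_x + 1_{s \times x}$, yielding $(T_s + 1)1_x = 1_x + 1_{s\times x}$ and hence $T_s 1_x = 1_{s \times x}$. In the case $l_\sigma(s \times x) < l_\sigma(x)$, applying the central claim with the roles of $g\C$ and $gs\C$ interchanged shows that $gs\C$ is alone in the orbit $s\times x$ and the other $q$ chambers (including $g\C$) all lie in the orbit $x$, so $\gamma_{f,g} = q$ and again $D_{f,o} = 1_x + 1_{s\times x}$, giving $(T_s + 1)1_x = q(1_x + 1_{s\times x})$, i.e.\ $T_s 1_x = (q-1)1_x + q\, 1_{s\times x}$.

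The main obstacle is the central claim of the second paragraph: the simultaneous production, from Proposition \ref{main lemma}, of an $h \in H$ that transports $gs\C$ to the given $\C'$ \emph{and} preserves the $\Omega$-coloring. The geometric half of this is a direct consequence of the uniqueness-up-to-$H$ of $\sigma$-stable apartments through a chamber, while the color-preservation relies on the fact that any conjugate of the Iwahori subgroup is contained in $\ker\omega$, which is ensured by the semidirect product structure $W_{aff} = W_a \rtimes \Omega$.
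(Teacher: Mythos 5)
Your proposal is correct and follows essentially the same route as the paper: reduce via Proposition \ref{key_computation} to counting $H$-orbits of chambers through $f$, use Lemma \ref{3.9} to place $g\C$ (resp.\ $gs\C$) in every $\sigma$-stable apartment through $f$, and then use the uniqueness part of Proposition \ref{main lemma} to show the remaining $q$ chambers form a single orbit. Your explicit verification that the transporting element $h$ preserves the $\Omega$-coloring (via $\omega(gIg^{-1})=1$) is a detail the paper's proof leaves implicit, but it is the same argument.
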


\begin{Remark}
In the group case it is always true that $l_\sigma(x\times s) \ne l_\sigma(x)$, thus the theorem indeed generalizes the classical description of Iwahori and Matsumoto. However, for a general symmetric space, it may happen that $l_\sigma(x\times s)=l_\sigma(x)$. The general case will be addressed in Theorem \ref{3.25}.
\end{Remark}

For the proof of Theorem \ref{simpleformula} we will need the following lemma.

\begin{lemma}\label{3.9}
Let $x\in H\backslash G/I$ and let $s\in \Tilde{\Delta}$ be a simple reflection.
Let $\C$ be the chamber fixed by $I$. Choose $g\in G$ and choose a $\sigma$-stable apartment $\A$ such that $g$ is a representative of the orbit $x$ and $g\C,gs\C\subset\A$ (see Remark \ref{3.5}). Let $f=g\C\cap gs\C$.

Let $\Pi$ be the intersection of all $\sigma$ stable apartments that contain $f$.
Then we have the following criteria:
\begin{enumerate}
\item $l_\sigma(x \times s)>l_\sigma(x)$ if and only if $g\C\subset \Pi$.
    \item $l_\sigma(x\times xs)<l_\sigma(x)$ if and only if $gs\C\subset \Pi$.
    \end{enumerate}

\end{lemma}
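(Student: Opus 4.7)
The plan is to carry out the entire analysis inside the $\sigma$-stable apartment $\mathcal{A}$. Since $\mathcal{A}$ is $\sigma$-stable, the four chambers $g\C$, $gs\C$, $\sigma(g\C)$, $\sigma(gs\C)$ all lie in $\mathcal{A}$, so by Theorem \ref{2.15} both lengths $l_\sigma(x)=d(g\C,\sigma(g\C))$ and $l_\sigma(s\times x)=d(gs\C,\sigma(gs\C))$ are realized by galleries inside $\mathcal{A}$, i.e.\ equal the numbers of walls of $\mathcal{A}$ separating the respective pairs. Write $H_f$, $H_{\sigma(f)}$ for the walls of $\mathcal{A}$ containing $f$ and $\sigma(f)$, and introduce indicators $\epsilon_1,\epsilon_2\in\{0,1\}$ recording whether each of $H_f$, $H_{\sigma(f)}$ separates $g\C$ from $\sigma(g\C)$.

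The first substep is a direct wall-count. A panel lies in the interior of a closed half-space of every wall other than its own, so $g\C$ and $gs\C$ sit on the same side of any wall $W\ne H_f$, and $\sigma(g\C),\sigma(gs\C)$ sit on the same side of any wall $W\ne H_{\sigma(f)}$. In the generic case $H_f\ne H_{\sigma(f)}$ this forces every wall other than $H_f$ and $H_{\sigma(f)}$ to contribute equally to both lengths, while $H_f$ contributes $\epsilon_1$ to $l_\sigma(x)$ and $1-\epsilon_1$ to $l_\sigma(s\times x)$, and $H_{\sigma(f)}$ contributes $\epsilon_2$ and $1-\epsilon_2$. Summing yields
\[
l_\sigma(s\times x)-l_\sigma(x) \;=\; 2-2(\epsilon_1+\epsilon_2),
\]
so $l_\sigma(s\times x)>l_\sigma(x)$ iff $\epsilon_1=\epsilon_2=0$ and $l_\sigma(s\times x)<l_\sigma(x)$ iff $\epsilon_1=\epsilon_2=1$. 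The degenerate case $H_f=H_{\sigma(f)}$ (which includes $\sigma(f)=f$) is dispatched by a parallel wall-count, and always gives equi-length, so in that situation the lemma reduces to showing $\Pi$ contains neither $g\C$ nor $gs\C$.

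The next step is to identify $\Pi$ group-theoretically. The plan is to extend Proposition \ref{main lemma} to the face $f$: the square-root-in-$(T^{-})^0\cap U_S$ argument in that proof uses only that the set to be fixed pointwise lies inside $\mathcal{A}_1\cap\mathcal{A}_2$, not that it is a chamber, so it carries over verbatim to any $\sigma$-stable subset of an apartment. Applied to $L=f$, this gives a surjection $h\mapsto h\mathcal{A}$ from $H\cap P_f$ onto the set of $\sigma$-stable apartments containing $f$, whence
\[
\Pi \;=\; \bigcap_{h\in H\cap P_f} h\mathcal{A}.
\]
Since the only chambers of $\mathcal{A}$ through $f$ are $g\C$ and $gs\C$, and for $h\in H\cap P_f$ the chamber $h^{-1}g\C$ still contains $f$, we deduce $g\C\subset\Pi$ iff $(H\cap P_f)\cdot g\C\subseteq\{g\C,gs\C\}$, and analogously for $gs\C$.

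The main obstacle is to match the two characterizations: prove that $(H\cap P_f)\cdot g\C\subseteq\{g\C,gs\C\}$ iff $\epsilon_1=\epsilon_2=0$, and symmetrically for $gs\C$ and $\epsilon_1=\epsilon_2=1$. The subtle direction is the construction: whenever $\epsilon_1+\epsilon_2\ge 1$ one must exhibit an explicit $h\in H\cap P_f$ sending $g\C$ outside $\{g\C,gs\C\}$, and when $\epsilon_1+\epsilon_2\le 1$ an analogous $h'$ moving $gs\C$. My plan for this is to use the Iwahori decomposition $P_f=T^0U_f^+U_f^-$ from Theorem \ref{2.16} to write elements of $P_f$ in root-subgroup coordinates, and to observe that the separation conditions $\epsilon_i=1$ are precisely what forces the restriction of $\sigma$ to the relevant root subgroups $U_{\alpha_f}$ (those indexed by roots vanishing on $f$ or $\sigma(f)$) to admit non-trivial $H$-rational elements that act non-trivially on the $q+1$ chambers through $f$. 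The reverse implications are formal once this is in place: the three possibilities $\epsilon_1+\epsilon_2\in\{0,1,2\}$ are matched by the first step to the three possibilities $l_\sigma(s\times x)-l_\sigma(x)\in\{+2,0,-2\}$, so establishing both the "$>$'' and "$<$'' equivalences (and the equi-length exclusion) closes the argument.
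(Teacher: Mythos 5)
Your first step (the wall count giving $l_\sigma(s\times x)-l_\sigma(x)=2-2(\epsilon_1+\epsilon_2)$) is correct and is a clean substitute for the paper's gallery manipulations. The proof breaks down at your second step: the claim that Proposition \ref{main lemma} extends ``verbatim'' to $L=f$ is false, and so is the resulting identity $\Pi=\bigcap_{h\in H\cap P_f}h\A$. The proof of Proposition \ref{main lemma} uses essentially that $S=\A_1\cap\A_2$ contains a chamber, hence has non-empty interior: only then does Theorem \ref{2.16} give the product decomposition of $P_S$ that lets one replace the element carrying $\A_1$ to $\A_2$ by a unipotent one and run the square-root argument. For a panel, two apartments through $f$ can intersect in a set with empty interior, $P_f$ is a genuine parahoric containing a lift of the reflection $s$ (so the decomposition $P_f=T^0U_f^+U_f^-$ that you also invoke in your third step fails — Theorem \ref{2.16} requires non-empty interior), and two $\sigma$-stable apartments sharing only $f$ need not be related by any element of $H$ fixing $f$. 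Concretely: take $G=SL_2$, $H=T$ the diagonal torus, $\A$ the standard apartment (fixed pointwise by $\sigma$), $g\C,gs\C$ adjacent edges of $\A$ and $f$ their common vertex. Then $H\cap P_f=T^0$ fixes $\A$ pointwise, so your candidate for $\Pi$ is all of $\A$ and contains $g\C$; yet $l_\sigma(x)=l_\sigma(s\times x)=0$, and the lemma is nonetheless true because there exist $\sigma$-stable apartments through $f$ avoiding $g\C$, namely lines through $f$ whose two ends are swapped by $\sigma$ (apply Theorem \ref{2.22} to $e\cup\sigma(e)$ for an edge $e$ at $f$ outside $\A$; such $e$ exist since $q+1\geq 4$ and only the two edges of $\A$ at $f$ are $\sigma$-fixed). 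So your description of $\Pi$ gives the wrong answer in exactly a configuration the lemma must handle.

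Your third step is, as you say, only a plan, and it is the entire content of the hard direction; making precise the link between $\epsilon_i$ and ``$H$-rational elements of root subgroups'' would amount to redoing the rank-one reductive-quotient analysis that the paper defers to Sections 6--7. The paper's proof of this lemma avoids all of this. For $l_\sigma(s\times x)>l_\sigma(x)\Rightarrow g\C\subset\Pi$ it notes that $l_\sigma$ jumps by exactly $2$, so a minimal gallery from $gs\C$ to $\sigma(gs\C)$ is obtained by appending $gs\C$ and $\sigma(gs\C)$ to one from $g\C$ to $\sigma(g\C)$; by convexity (Theorem \ref{2.15}) every $\sigma$-stable apartment containing $gs\C$ then contains $g\C$, and a $\sigma$-stable apartment through $f$ missing $g\C$ would therefore also miss $gs\C$. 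That residual possibility is excluded by showing that if every chamber through $f$ escapes some $\sigma$-stable apartment, then $\Pi$ lies in the $\sigma$-stable hyperplane spanned by $f$, and the reflection across that hyperplane commutes with $\sigma$ and intertwines the two minimal galleries, forcing $l_\sigma(s\times x)=l_\sigma(x)$. I recommend replacing your steps 2 and 3 with this gallery-and-reflection argument; your wall count can then serve as the parity input in place of the paper's remark that $l_\sigma$ is constant mod $2$ on an apartment.
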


\begin{proof}

Consider all the $\sigma$ stable apartments that contain $f$, there are three cases:
\begin{enumerate}
    \item All of them contain $g\C$.
    \item All of them contain $gs\C$.
    \item Neither case 1 nor case 2 happens.
\end{enumerate}

We will begin by showing that in case 3 we necessarily have $l_\sigma(x\times s) = l_\sigma(x)$ and thus there is nothing to prove.

The intersection of all $\sigma$ stable apartments that contain $f$ is a convex set in $\A$ (by Theorem \ref{2.13}) that contains $f$. In case 3, $\Pi$ does not contain any chamber in $\A$ that contains $f$ so $\Pi$ is contained in $P$, the affine hyper-plane spanned by $f$.

 $P$ is $\sigma$ stable as it is spanned by $\Pi$, the intersection of $\sigma$ stable apartments.

The fact that $P$ is $\sigma$ stable implies that $r_P$, the reflection across $P$, commutes with $\sigma$ on $\A$.

Let $(\C_i)_{i=1}^m$ be a minimal gallery that starts at $\C_1=g\C$ and ends at $\C_m=\sigma(g\C).$ Observe that by Theorem \ref{2.15} these chambers are all contained in the apartment $\A.$ Applying the reflection $r_{P}$ we obtain a minimal gallery $(r_P(\C_i))_{i=1}^m$  that starts with $r_P(g\C_1)=gs\C$ and ends with $r_P(\sigma(g\C))=\sigma(r_P(g\C))=\sigma(gs\C)$ which means that $l_\sigma(x\times s)=l_\sigma(gs\C)=l_\sigma(g\C)=l_\sigma(x)$.

This shows that in case 3 we have $l_\sigma(x\times s) = l_\sigma(x)$. 

To handle the other cases, it suffices to show that if $l_\sigma(gs\C)>l_\sigma(g\C)$ then case 1 occurs. That is, all $\sigma$ stable apartments that contain $f$ also contain $g\C$.
        
If $l_\sigma(gs\C)>l_\sigma(g\C)$ then $l_\sigma(gs\C)=l_\sigma(g\C)+2$ as $l_\sigma$ is constant modulo $2$ on each apartment. This means that we may choose a minimal gallery of chambers from $g\C$ to $\sigma(g\C)$ and then add $gs\C$ at the start and $\sigma(gs\C)$ at the end and get a minimal gallery of chambers from $gs\C$ to $\sigma(gs\C)$. By this argument and Theorem \ref{2.15} any $\sigma$ stable apartment that contains $gs\C$ has to contain $g\C$. Therefore either case 1 or case 3 happen and we know that it isn't case 3.

The case of $l_\sigma(gs\C)<l_\sigma(g\C)$ is treated in a similar way.

\end{proof}

Now we can deduce Theorem \ref{simpleformula}.

\begin{proof}[Proof of Theorem \ref{simpleformula}]
Let $\C$ be the chamber fixed by $I$, let $g\in G$ be a representative of the orbit $x$ and let $\A$ be a $\sigma$ stable apartment that contains $g\C$ and $gs\C$.

In case number 1, by Lemma \ref{3.9} all $\sigma$ stable apartments that contain $f=g\C\cap gs\C$ must contain $g\C$. 

Let $\C'$ be a chamber that contains $f$ and is different from $g\C$, let $\A'$ be a $\sigma$ stable apartment that contains $\C'$, it must contain $g\C$. By Proposition \ref{main lemma} there is $h\in H$ that fixes $g\C$ and satisfies $h\A'=\A$. This implies $h\C'=gs\C$.

This means that all chambers that contain $f$ and are different from $g\C$ are in the $H$ orbit of $gs\C$. The chamber $g\C$ itself is not in this $H$ orbit.

By Proposition \ref{key_computation} this yields $1_x(T_s+1)=1_{x\times s}+1_x$, or $1_xT_s=1_{x\times s}$.

The second case is similar.
\end{proof}

The next corollary provides a simple formula for the action of $T_w$ in some cases.

\begin{cor}\label{3.10}
Let $x\in H\backslash G/I$ be as in the previous theorem, let $w\in W_{aff}(T)$ and let $l:W_{aff}\rightarrow \mathbb{N}$ be the usual length function on $W_{aff}$ (see Section \ref{2.1} for a definition).

If $l_\sigma(x\times w)=2\cdot l(w)+l_\sigma(x)$ then 
\begin{equation*}
    1_xT_w=1_{x\times w}
\end{equation*}

\end{cor}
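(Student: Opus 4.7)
The plan is to induct on $l(w)$. For the base case $l(w) = 0$, we have $w \in \Omega$ and Proposition \ref{action of fund group} gives $T_w 1_x = 1_{w \times x}$ directly. For the inductive step, write $w = s w'$ with $s \in \Tilde{\Delta}$ and $l(w') = l(w) - 1$, so that Theorem \ref{2.20}(1) yields $T_w = T_s T_{w'}$. The central claim is that the hypothesis descends to $w'$: both $l_\sigma(w' \times x) = 2 l(w') + l_\sigma(x)$ and $l_\sigma(w \times x) > l_\sigma(w' \times x)$ hold. Once established, the inductive hypothesis gives $T_{w'} 1_x = 1_{w' \times x}$, the first case of Theorem \ref{simpleformula} gives $T_s 1_{w' \times x} = 1_{s \times (w' \times x)} = 1_{w \times x}$, and $T_w 1_x = 1_{w \times x}$ follows.

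To prove the descent, use Remark \ref{3.5} to place $\C_x$, $\C_{w' \times x}$, and $\C_{w \times x}$ in a common $\sigma$-stable apartment $\A$. Under the identification of $W_{aff}$ with the affine Weyl group of the torus of $\A$, the distance formula $d(\C_x, \C_{v \times x}) = l(v)$ holds for every $v \in W_{aff}$, the restriction $\sigma|_\A$ is an isometry of $\A$, and by construction of the $W_{aff}$-action, $\C_{w \times x}$ is adjacent to $\C_{w' \times x}$. The triangle inequality yields
\[
d(\C_{w \times x}, \sigma \C_{w \times x}) \leq d(\C_{w \times x}, \C_x) + d(\C_x, \sigma \C_x) + d(\sigma \C_x, \sigma \C_{w \times x}) = 2 l(w) + l_\sigma(x),
\]
and the hypothesis forces equality. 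Saturation of this triangle inequality means there exists a minimal gallery from $\C_{w \times x}$ to $\sigma \C_{w \times x}$ passing through $\C_x$ and $\sigma \C_x$. Since $\C_{w' \times x}$ is adjacent to $\C_{w \times x}$ and is one step closer to $\C_x$, one may arrange this gallery to pass through $\C_{w' \times x}$ right after $\C_{w \times x}$ and, symmetrically, through $\sigma \C_{w' \times x}$ right before $\sigma \C_{w \times x}$. Stripping off the two outermost chambers yields a subgallery from $\C_{w' \times x}$ to $\sigma \C_{w' \times x}$ of length $2 l(w') + l_\sigma(x)$; since subgalleries of minimal galleries are minimal, $l_\sigma(w' \times x) = 2 l(w') + l_\sigma(x)$, and therefore $l_\sigma(w \times x) = l_\sigma(w' \times x) + 2 > l_\sigma(w' \times x)$.

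The main obstacle is the geometric step: showing that saturation of the triangle inequality in the Bruhat-Tits building forces a minimal gallery through the intermediate chambers, and that the adjacency $\C_{w \times x} \sim \C_{w' \times x}$ can be harmonized with this gallery. This relies on the convexity of apartments (Theorems \ref{2.13} and \ref{2.15}), the standard Coxeter-theoretic fact that any subgallery of a minimal gallery is itself minimal, and a careful unpacking of the $W_{aff}$-action from Proposition \ref{3.2} to verify that the action by $s \in \Tilde{\Delta}$ indeed produces an adjacent chamber within the chosen $\sigma$-stable apartment.
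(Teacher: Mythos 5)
Your proof is correct and follows essentially the same route as the paper: decompose $T_w$ as a product of the $T_{s_i}$ (and $T_o$) via length-additivity in Theorem \ref{2.20}, show the hypothesis forces $l_\sigma$ to increase by exactly $2$ at each simple-reflection step, and apply Theorem \ref{simpleformula} and Proposition \ref{action of fund group} inductively. The paper justifies the per-step claim more briefly --- adjacency gives $l_\sigma(s\times y)\le l_\sigma(y)+2$, so the total budget $2l(w)$ forces equality at every step --- which is the same triangle inequality you saturate with your minimal-gallery argument.
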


\begin{proof}
Write $w=os_1,...,s_{l(w)}$ as a product of simple reflections $s_i\in\Tilde{\Delta}$ and $o\in \Omega$. 

By Theorem \ref{2.20}  $T_w=T_oT_{s_1}...T_{s_{l(w)}}$. 

For any $1\leq j\leq l(w)$ we have $l_\sigma(x\times o \times s_1\times ....\times s_j)\leq l_\sigma(x)+2j$
. We must have equality; otherwise, the equality in the assumption would fail, as acting by each $s_i$ can increase the length by at most $2$.

By Theorem \ref{simpleformula}, Proposition \ref{action of fund group} and induction we obtain that $1_xT_oT_{s_1}...T_{s_j}=1_{x\times o \times s_1\times ....\times s_j}.$ For $j=l(w)$, we obtain the required result.
\end{proof}

This result is enough to prove that $S(X)^I$ is finitely generated over ${\mathcal H}(G,I).$

First we need the following two propositions.

\begin{prop}
    Let $T$ be a $\sigma$ stable maximal split torus of $G$ and let $\A$ be the apartment that corresponds to $T$. The torus $T$ acts on $\A$ by translations. Let $t\in T$ such that its action on $\A$ commutes with $\sigma$, then there is $t'\in T\cap H$ whose action on $\A$ is the same as $t$.
\end{prop}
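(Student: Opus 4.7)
The plan is to reduce the statement to a simple fact about the cocharacter lattice of the $\sigma$-fixed subtorus of $\mathbf{T}$.

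First I would pin down the geometric setup. Identify $\A$ with $V=X_*(\mathbf{T})\otimes\mathbb{R}$ (after choice of a special base point), so that $T$ acts on $\A$ by translation: $t\cdot x=x+v(t)$, where $v:T\to X_*(\mathbf{T})$ is the valuation homomorphism characterized by $\chi(v(t))=-v_F(\chi(t))$ for $\chi\in X^*(\mathbf{T})$, whose kernel is $T^0$. By Theorem \ref{2.17}, $\sigma$ restricts to an affine automorphism of $\A$. Let $\sigma_{\mathrm{lin}}$ denote its linear part on $V$. Applying the equivariance identity $\sigma(tx)=\sigma(t)\sigma(x)$ to an arbitrary $x\in\A$ gives $\sigma_{\mathrm{lin}}(v(t))=v(\sigma(t))$ for every $t\in T$; since $v:T\to X_*(\mathbf{T})$ is surjective (as $\mathbf{T}$ is split), this identifies $\sigma_{\mathrm{lin}}$ with the $\mathbb{R}$-linear extension of the action of $\sigma$ on the cocharacter lattice $X_*(\mathbf{T})$.

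Next I would translate the commutation hypothesis. The translation by $v(t)$ commutes with an affine map on $V$ exactly when $v(t)$ is fixed by its linear part. Hence the assumption forces
\begin{equation*}
v(t)\in V^{\sigma_{\mathrm{lin}}}=X_*(\mathbf{T})^\sigma\otimes\mathbb{R}.
\end{equation*}
Since $v(t)$ already lies in the integral lattice $X_*(\mathbf{T})$, we conclude $v(t)\in X_*(\mathbf{T})^\sigma$.

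Finally I would produce $t'$ inside the $\sigma$-fixed subtorus. Let $\mathbf{T}^+$ be the neutral component of the subgroup scheme $\mathbf{T}^\sigma$. As a connected subgroup scheme of the split torus $\mathbf{T}$ it is itself a split subtorus, and the standard identification of cocharacters of a diagonalizable subgroup with invariant cocharacters gives $X_*(\mathbf{T}^+)=X_*(\mathbf{T})^\sigma$. Because $\mathbf{T}^+$ is split over $F$, the valuation map $v:\mathbf{T}^+(F)\to X_*(\mathbf{T}^+)$ is surjective, so we can choose $t'\in \mathbf{T}^+(F)$ with $v(t')=v(t)$. By construction $t'\in T$, it is $\sigma$-fixed (hence lies in $T\cap H$), and it induces on $\A$ the same translation $x\mapsto x+v(t)$ as $t$.

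I do not expect any serious obstacle: the whole argument is a dictionary translation between the translation action on the apartment and the lattice $X_*(\mathbf{T})$, together with the easy fact that a subtorus of a split torus is split and its valuation is surjective. The only point worth checking carefully is the identification of $\sigma_{\mathrm{lin}}$ with the induced action on $X_*(\mathbf{T})$, which is where Theorem \ref{2.17} and the surjectivity of $v$ on $T$ are both used.
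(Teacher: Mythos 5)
Your argument is correct and is essentially the paper's proof rewritten in the language of the cocharacter lattice: the paper decomposes $T=\Lambda(T)\times T^0$ with $\Lambda(T)=\{\chi^\vee(\pi)\mid \chi^\vee\in X_*(T)\}$ a $\sigma$-stable complement to $T^0$, observes that the commutation hypothesis forces $t^{-1}\sigma(t)\in T^0$, and deduces that the $\Lambda(T)$-component $t_1$ of $t$ satisfies $t_1^{-1}\sigma(t_1)\in T^0\cap\Lambda(T)=\{1\}$, so $t_1\in T\cap H$ acts like $t$ --- which is precisely your $t'=\lambda(\pi)$ for $\lambda=v(t)\in X_*(\mathbf{T})^\sigma$. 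The only point you leave implicit is the compatibility $v(\sigma(t))=\sigma_*(v(t))$ needed to identify $\sigma_{\mathrm{lin}}$ with the action on $X_*(\mathbf{T})$, and that is a one-line check.
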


\begin{proof}
    Let $\pi$ be a uniformizer of $\s$ and let $\Lambda(T)=\{\chi^\vee(\pi)|\chi^\vee\in X^*(T)\}$. The torus $T$ is a direct product of $\Lambda(T)$ and $T^0$ (see 3.6 in \cite{delorm}). Write $t=t_1t_2$ with $t_1\in\Lambda(T)$ and $t_2\in T^0$.

    Choosing a chamber $\C\subset\A$, we have $\sigma(t)\sigma(\C)=\sigma(t\C)=t\sigma(\C)$, so $t^{-1}\sigma(t)$ acts as a translation that fixes $\sigma(\C)$ and thus it acts trivially, hence $t^{-1}\sigma(t)\in T^0$. Thus $t_1^{-1}\sigma(t_1)\in T^0$. The set $\Lambda(T)$ is $\sigma$ stable so we obtain $t^{-1}_1\sigma(t_1)\in T^0\cap\Lambda(T)=1$ or $t_1\in H$. The elements $t$ and $t_1$ act the same way on $\A$ and the claim follows.
\end{proof}

\begin{prop}
    Let $\A$ be a $\sigma$ stable apartment and let $d\in\mathbb{N}$. The number of $H$ orbits of chambers $\C$ in $\A$ with $l_\sigma(\C)=d$ is finite. 
\end{prop}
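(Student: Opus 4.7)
The plan is to confine the chambers with $l_\sigma(\C)=d$ to a bounded region of $\A$ by combining translations from $T\cap H$ with a lower bound on $l_\sigma$, and then observe that a bounded region can contain only finitely many chambers. Since $T\cap H\subset H$, finiteness of the number of $T\cap H$-orbits will imply finiteness of the number of $H$-orbits.

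First, since $\sigma$ is an affine involution of $\A$ (Theorem~\ref{2.17}), the midpoint $(x+\sigma(x))/2$ is $\sigma$-fixed for any $x\in\A$, so $\A^\sigma\neq\emptyset$. Choosing $p_0\in\A^\sigma$ as origin identifies $\A$ with $V=X_*(T)\otimes\R$ and makes $\sigma$ linear, with eigen-decomposition $V=V^+\oplus V^-$. By the preceding proposition, every $t\in T$ whose translation on $\A$ commutes with $\sigma$ has the same action as some element of $T\cap H$; such translations are by vectors in $X_*(T)\cap V^+$, which is a full-rank lattice in $V^+$. Hence the $T\cap H$-translations on $\A$ admit a fundamental domain of the form $F\times V^-$ with $F\subset V^+$ compact.

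Next, I would bound $l_\sigma(\C)$ from below in terms of the $V^-$-projection $y^-$ of the barycenter of $\C$. A minimal gallery from $\C$ to $\sigma(\C)$ stays in $\A$ (Theorem~\ref{2.15}), so $l_\sigma(\C)$ equals the number of walls $\{\alpha+k=0\}$, $\alpha\in\Phi$, $k\in\Z$, of $\A$ separating the two chambers. Since $\sigma$ sends a barycenter $y^++y^-$ to $y^+-y^-$, the number of walls of direction $\alpha\in\Phi^+$ separating the two is $2|\alpha(y^-)|$ up to $O(1)$. Summing over $\Phi^+$ and using that the finite root system $\Phi$ separates points of $V^-$ outside the $\sigma$-anti-invariant central subspace $\mathcal{Z}^-\subset V^-$, one obtains
\[
l_\sigma(\C)\;\geq\; C_1\,\|\bar{y}^-\|-C_2,
\]
where $\bar{y}^-$ is the image of $y^-$ in $V^-/\mathcal{Z}^-$ and $C_1,C_2>0$ depend only on $\Phi$. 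Hence $l_\sigma(\C)=d$ confines $\bar{y}^-$ to a bounded set $B$.

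Combining the two steps, any chamber $\C\subset\A$ with $l_\sigma(\C)=d$ admits a $T\cap H$-translate whose barycenter lies in $F\times B$ modulo $\mathcal{Z}^-$. Since $\mathcal{Z}^-$-translations preserve every chamber of $\B_G$ (chambers in the enlarged building have the form $\mathcal{Z}\times\C^{nen}$), this bounded region meets only finitely many chambers, yielding finitely many $T\cap H$-orbits and hence finitely many $H$-orbits. The main obstacle is the lower bound in the second step: it relies on the observation that $\Phi$ separates points of $V^-$ outside $\mathcal{Z}^-$, and it requires a careful accounting of the central direction $\mathcal{Z}^-$ in the enlarged building so that the lattice-translation argument is not disrupted by it.
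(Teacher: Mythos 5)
Your argument is correct, but it takes a genuinely different route from the paper's. The paper's proof is a short counting argument: modulo the translation action of $T$ on $\A$ there are only finitely many chambers (namely $\#W$), hence only finitely many pairs $(\C_1,\C_2)$ at distance $d$; one then sends $\C$ to the $T$-orbit of the pair $(\C,\sigma(\C))$ and observes that if two chambers give the same $T$-orbit, the conjugating $t\in T$ commutes with $\sigma$ on $\A$ and can, by the preceding proposition, be replaced by an element of $T\cap H$ --- so each fibre of this finite-image map is a single $H$-orbit. You instead use the preceding proposition to produce a cocompact lattice of $T\cap H$-translations on $V^+$, prove the quantitative lower bound $l_\sigma(\C)\geq C_1\|\bar y^-\|-C_2$ by counting separating walls, and confine barycenters to a bounded region of $V/\mathcal{Z}$. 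Your approach is longer and requires careful bookkeeping of the anti-invariant central directions $\mathcal{Z}^-$ (which you correctly identify and handle via the product structure $\mathcal{Z}\times\B^{nen}_G$ of the enlarged building), but it buys more: it gives an effective bound on the number of orbits, growing polynomially in $d$, and makes visible exactly which geometric quantity ($\|\bar y^-\|$) the length $l_\sigma$ controls. The paper's argument is softer and avoids the wall-counting estimate entirely. Both proofs rest on the same key input, namely that a $T$-translation commuting with $\sigma$ on $\A$ is realized by an element of $T\cap H$.
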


\begin{proof}
    Let $T$ be the maximal torus which corresponds to $\A$. Notice that modulo the action of $T$ the number of chambers is finite (in fact equal to $\#W$). Thus modulo the action of $T$ the number of pairs of chambers $(\C_1,\C_2)$ in $\A$ such that $d(\C_1,\C_2)=d$ is finite. Let $\C_1,\C_2$ be two chambers in $\A$ such that $l_\sigma(\C_1)=l_\sigma(\C_2)$ and such that $(\C_1,\sigma(\C_1))$ and $(\C_2,\sigma(\C_2))$ are in the same $T$ orbit. There is $t\in T$ such that $t\C_1=\C_2$ and $t\sigma(\C_1)=\sigma(\C_2)$. This implies that the action of $t$ on $\A$ commutes with $\sigma$ and by the previous proposition $t$ may be replaced with an element of $T\cap H$. We obtain $\C_2=t\C_1$ with $t\in H$ so $\C_1$ and $\C_2$ are in the same $H$ orbit. Thus, the number of $H$ orbits of chambers $\C$ in $\A$ with $l_\sigma(\C)=d$ is bounded by the number of $T$ orbits of pairs of chambers $(\C_1,\C_2)$ with $d(\C_1,\C_2)=d$.
\end{proof}

\begin{theorem}\label{finite_generated}
Let $\mathbf{G}$ be a connected reductive group split over a $p$-adic field $F.$ Let $\mathbf{H}=\mathbf{G}^\sigma$ for some algebraic involution $\sigma$ and let $X=\mathbf{H}(F)\backslash \mathbf{G}(F)$ be a $p$-adic symmetric space. Then $S(X)^I$ is a finitely generated module over ${\mathcal H}(G,I).$
\end{theorem}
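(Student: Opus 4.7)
The plan is to induct on the length $l_\sigma(x)$ and show that every characteristic function $1_x$ lies in the $\h$-submodule $M$ generated by $\{1_y : l_\sigma(y)\leq 1\}$. This generating set is finite: by Theorem \ref{Fin_Sig_Apart} there are only finitely many $H$-orbits of $\sigma$-stable apartments, and by the second of the two propositions stated just above the theorem, each such apartment contains only finitely many $H$-orbits of chambers with $l_\sigma$ at most $1$. Summing over a (finite) system of representatives of $H$-orbits of $\sigma$-stable apartments gives finiteness of $\{y : l_\sigma(y)\leq 1\}$.

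For the inductive step, let $x \in I\backslash G/H$ with $l_\sigma(x) = d \geq 2$. The key task is to produce a simple reflection $s \in \Tilde{\Delta}$ such that $l_\sigma(s \times x) = d - 2$. Granted such an $s$, set $y = s \times x$; then $l_\sigma(y) = d - 2 < d = l_\sigma(s \times y)$, so Theorem \ref{simpleformula} (applied with $y$ in place of $x$) yields $T_s 1_y = 1_x$. By induction $1_y \in M$, hence $1_x = T_s 1_y \in M$.

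To produce $s$, choose a chamber $\C$ representing $x$ inside a $\sigma$-stable apartment $\A$, whose existence is guaranteed by Proposition \ref{main lemma}. Since $\sigma$ acts on the affine space $\A$ as an involution, its fixed set $\A^\sigma$ is a non-empty affine subspace. Pick an interior point $p \in \C$ in general position and consider the segment $[p,\sigma(p)] \subset \A$, whose midpoint lies in $\A^\sigma$ and which, by genericity of $p$, meets no face of codimension $\geq 2$. The chambers that this segment traverses form a minimal gallery $\C = \C_0,\C_1,\ldots,\C_d = \sigma(\C)$ of length $d$; since $\sigma$ reverses the segment, this gallery is $\sigma$-symmetric in the sense that $\sigma(\C_i)=\C_{d-i}$. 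Hence by Theorem \ref{2.15} we have $l_\sigma(\C_1) = d(\C_1,\sigma(\C_1)) = d(\C_1,\C_{d-1}) = d - 2$. Using $\C$ as the base chamber in the identification of Proposition \ref{3.2}, the chamber $\C_1$ (with appropriate coloring) represents $s \times x$ for the unique $s \in \Tilde{\Delta}$ attached to the common wall of $\C$ and $\C_1$, so $l_\sigma(s \times x) = d - 2$ as required.

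The main obstacle is the affine-geometric claim that a $\sigma$-symmetric minimal gallery exists; this reduces to the non-emptiness of $\A^\sigma$, which holds for any affine involution of a finite-dimensional affine space, together with a standard general-position argument showing that the segment $[p, \sigma(p)]$ crosses walls transversely. A minor secondary point is the correct matching of $s \in \Tilde{\Delta}$ with the passage $\C \to \C_1$ under the chamber-dependent identification $W_{aff} \cong W_{aff}(T)$ of Proposition \ref{3.2}; this is straightforward because that identification is defined precisely so that each wall of the chosen base chamber corresponds to a simple reflection in $\Tilde{\Delta}$.
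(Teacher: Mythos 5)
Your overall strategy (a finite generating set of small $l_\sigma$-length plus descent via Theorem \ref{simpleformula}) is the same as the paper's, but your inductive step contains a genuine gap: the claim that every orbit $x$ with $l_\sigma(x)=d\geq 2$ admits a simple reflection $s$ with $l_\sigma(s\times x)=d-2$ is false. The flaw is in the general-position argument: the segment $[p,\sigma(p)]$ is forced to pass through its midpoint, which lies in the fixed locus $\A^\sigma$, and no choice of $p$ can move that midpoint off the codimension $\geq 2$ skeleton when $\A^\sigma$ is itself contained in that skeleton. Concretely, take $G=SL_3$ (or $PGL_3$) with $\sigma(g)=(g^t)^{-1}$, so $H$ is an orthogonal group; the diagonal torus is $\sigma$-stable and $\sigma$ acts on its apartment $\A$ by $v\mapsto -v$, with unique fixed point the hyperspecial vertex $v_0=[\s^3]$. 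The six chambers of $\A$ containing $v_0$ each have $l_\sigma=3$ (a minimal gallery to the antipodal chamber stays in the star of $v_0$ and crosses the three walls through $v_0$), their two neighbours through $v_0$ again have $l_\sigma=3$, and the third neighbour, across the wall not through $v_0$, has $l_\sigma=5$ (that wall and its $\sigma$-image do not separate $\C$ from $\sigma(\C)$, so passing through it increases the number of separating walls by two). So no neighbour has length $1$; indeed no chamber in this apartment has $l_\sigma\leq 1$ at all, since $l_\sigma(\C)\leq 1$ forces $\C$ to contain a $\sigma$-fixed point, which here would have to be the vertex $v_0$, and a point-reflection about a vertex of $\C$ cannot fix $\C$ or a codimension-one face of $\C$. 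Since the $W_{aff}$-action preserves each family $W^H_{aff}(T)$ and your descent only ever uses $T_s1_y=1_{s\times y}$, these orbits can never be reached from your generating set $\{y: l_\sigma(y)\leq 1\}$, which is therefore too small.

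The paper repairs exactly this point: its generating set $\mathcal{Y}$ is the (finite, by the bounded-distance argument you also use) set of orbits of chambers \emph{containing a $\sigma$-fixed point} --- a set which strictly contains $\{l_\sigma\leq 1\}$ and, in the example above, contains the six chambers around $v_0$ as genuine generators --- and the segment/midpoint descent is run only for chambers $\C$ with $\C\cap\B_G^\sigma=\emptyset$. For such $\C$ the midpoint $\tfrac{p+\sigma(p)}{2}$ cannot lie in $\C$, so the segment leaves $\C$ strictly before the problematic midpoint and one can arrange that it exits into an adjacent chamber $\C_1$ with $l_\sigma(\C_1)=l_\sigma(\C)-2$; your $\sigma$-symmetric-gallery computation of $l_\sigma(\C_1)$ is fine once this is granted. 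To fix your write-up, replace your base set by the orbits of chambers meeting the fixed locus, prove its finiteness exactly as you did (such chambers satisfy $\C\cap\sigma(\C)\neq\emptyset$, hence bounded $l_\sigma$, hence finitely many orbits per apartment), and restrict the inductive step to chambers disjoint from the fixed locus.
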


\begin{proof}
Consider the collection $\mathcal{Y}$ of 
$H$ orbits of chambers that contain a point fixed by $\sigma$. We claim that $\mathcal{Y}$ is finite and that the characteristic functions of these chambers (together with some $\Omega$ color) generate $S(X)^I$ over ${\mathcal H}(G,I).$

We first show that $\mathcal{Y}$ is finite. We know that there is a finite number of $H$ orbits of $\sigma$ stable apartments. It is enough to fix a $\sigma$ stable apartment $\A$ and to show that in $\A$ the number of $H$ orbits of chamber with a $\sigma$ fixed point is finite. If $\C$ is a chamber that has a $\sigma$ fixed point then $\sigma(\C)\cap\C\neq\emptyset$. It is clear that the distance between two intersecting chambers is bounded, and thus by the previous proposition there is a finite number of $H$ orbits of such chambers.

In order to prove the second part, we again fix a $\sigma$ stable apartment $\A$. Let $\C$ be a chamber in $\A$ without a $\sigma$ fixed point. It is enough to show that there is a minimal gallery between $\C$ and $\sigma(\C)$ such that the chamber $\C_1$ which appears immediately after $\C_0=\C$ satisfies $l_\sigma(\C_1)=l_\sigma(\C)-2$. If we show this, we can then conclude using Corollary \ref{3.10}, by induction on $l_\sigma(\C)$.

It is enough to show that there is a point $x\in \C$ such that the line connecting $x$ and $\sigma(x)$ intersects some chamber adjacent to $\C$. Assuming the contrary we can find $x$ such that $\frac{x+\sigma(x)}{2}\in \C$. This is a contradiction as $\C$ does not contain $\sigma$ fixed points.

\end{proof}

\begin{Remark}
Theorem \ref{finite_generated} can be deduced by combining the finite multiplicity result (Theorem 5.7) of \cite{Delormefinite} and the finite type results (Theorem B.0.2) of Appendix B of \cite{AGS}. We believe that our proof is more elementary.
\end{Remark}

\section{Involutions and parahoric group schemes}

To deal with the equality case, $l_\sigma(x\times s)=l_\sigma(x)$, we will use reduction to a symmetric space of a group of semisimple rank 1 over a finite field. To perform this reduction we consider, in this section, the action of $\sigma$ on certain parahoric group schemes. 

Assume that $\A$ is a $\sigma$ stable apartment of $\B_G$. Let $L\subset \A$ be a facet of some chamber in $\A$. Denote by $span(L)$ the minimal affine hyper plane that contains $L$ (this is an affine hyper plane, not a linear one). Assume that $span(L)$ is $\sigma$ stable, and let $L'=L\cup \sigma(L)$, clearly $L'$ is $\sigma$ stable.

Let ${\mathcal G}_L$, ${\mathcal G}_{\sigma(L)}$ and ${\mathcal G}_{L'}$ be the $\s$ group schemes which correspond to $L$, $\sigma(L)$ and $L'$ respectively (see Section \ref{s2.3}).
\begin{prop}
    $\sigma$ induces an involution $\sigma: {\mathcal G}_{L'}\rightarrow {\mathcal G}_{L'}$, $\sigma$ also induces a map ${\mathcal G}_{L}\rightarrow {\mathcal G}_{\sigma(L)}$.
\end{prop}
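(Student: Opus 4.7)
The plan is to leverage the uniqueness clause in the defining theorem of parahoric group schemes (the existence/uniqueness theorem stated at the start of subsection 2.3), combined with the building-theoretic automorphism produced by Theorem \ref{2.17}.

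First, because the apartment $\A$ is $\sigma$-stable, the bijective correspondence between apartments and maximal split tori (Notation \ref{apartments} and Theorem \ref{2.9}) forces the maximal split torus $\mathbf{T}$ attached to $\A$ to be $\sigma$-stable. Hence $\sigma$ restricts to an algebraic involution of $\mathbf{T}$ over $F$ and permutes the relative root subgroups $\mathbf{U}_\alpha$ of $\mathbf{G}$. Passing to affine roots, the induced affine involution of $\A$ from Theorem \ref{2.17} permutes the affine roots $\alpha+k$; because $L'=L\cup \sigma(L)$ is $\sigma$-stable by construction, the collection of affine roots that are non-negative on $L'$ is $\sigma$-invariant. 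By item (4) of Theorem \ref{2.11}, $P^E_{L'}$ is characterized as the pointwise stabilizer of $L'$ in $\mathbf{G}(E)$ for every unramified Galois extension $E/F$, and this observation shows $\sigma$ carries $P^E_{L'}$ onto $P^E_{\sigma(L')}=P^E_{L'}$ for every such $E$.

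To upgrade this from an action on $\s_E$-points to a scheme-theoretic morphism, I will form the twisted $\s$-group scheme $\sigma^{\ast}{\cal G}_{L'}$ obtained by composing the generic-fiber identification of ${\cal G}_{L'}$ with the $F$-automorphism $\sigma$ of $G$. This twisted scheme is again smooth affine over $\s$ with generic fiber $G$, and by the preceding paragraph its $\s_E$-points are canonically identified with $P^E_{L'}$. The uniqueness clause of the parahoric existence theorem then supplies a unique isomorphism ${\cal G}_{L'}\to \sigma^{\ast}{\cal G}_{L'}$ of $\s$-group schemes extending the identity on the generic fiber; unwinding the twist produces the desired morphism $\sigma:{\cal G}_{L'}\to {\cal G}_{L'}$, and applying the uniqueness a second time to $\sigma^{2}=\mathrm{id}$ forces it to be an involution.

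The same reasoning with $L$ in place of $L'$ and target $\sigma(L)$ in place of $L'$ yields the second claim: $\sigma$ sends $P^E_L$ onto $P^E_{\sigma(L)}$ for every unramified $E/F$, so the uniqueness of ${\cal G}_{\sigma(L)}$ produces a unique morphism ${\cal G}_L\to {\cal G}_{\sigma(L)}$ extending $\sigma$ on generic fibers. The main technical hurdle I expect is making the twisting construction $\sigma^{\ast}{\cal G}_{L'}$ fully rigorous and checking that the isomorphism it produces is functorial in $E$; an alternative route, avoiding the twist, is to use the explicit open cell description of Theorem \ref{open subscheme}, writing ${\cal G}_{L'}$ in terms of $\mathbf{T}$ and the root subschemes ${\cal U}_{\alpha+k}$ (on each of which $\sigma$ acts transparently by the discussion above), and then extending from this dense open subscheme to all of ${\cal G}_{L'}$ using smoothness and the group law.
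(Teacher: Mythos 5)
Your proposal is correct, but your primary route is genuinely different from the paper's. The paper proves this by defining $\sigma$ explicitly on the big open cell ${\cal U}^+_{L'}\times \mathbf{T}\times {\cal U}^-_{L'}$ of Theorem \ref{open subscheme} — where $\sigma$ visibly sends $\mathbf{T}$ to itself and ${\cal U}_{\alpha_{L'}}$ to ${\cal U}_{\sigma(\alpha)_{L'}}$ — and then invokes an extension theorem of Landvogt (Theorem 0.10 of that reference) to extend uniquely from the dense open subscheme to all of ${\cal G}_{L'}$; this is precisely the ``alternative route'' you sketch in your last sentence. Your main argument instead runs through the uniqueness clause of the Bruhat--Tits existence theorem: since $L'$ is $\sigma$-stable and $\sigma$ carries the pointwise stabilizer $P^E_{L'}$ to $P^E_{\sigma(L')}=P^E_{L'}$ for every unramified $E/F$ (note that $P^E_{L'}$ is \emph{defined} in subsection 2.3 as this stabilizer, so you do not need Theorem \ref{2.11}(4) here), the twist $\sigma^{\ast}{\cal G}_{L'}$ is another smooth affine model of $G$ with the same integral points, and uniqueness yields the morphism; applying uniqueness again to $\sigma^2$ gives the involution property for free. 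This is a clean and valid argument, with one caveat you should make explicit: the uniqueness statement must be read in its ``\'etoff\'e'' form, i.e.\ the isomorphism between two such models is the unique one inducing the \emph{identity} on the generic fibers (equivalently, ${\cal G}_{L}(\s_E)=P^E_L$ as subgroups of $\mathbf{G}(E)$, not merely up to abstract isomorphism); this is the correct reading of 3.4.1 in Bruhat--Tits but is stated loosely in the paper. Your approach buys a construction-free argument and the involution property in one stroke; the paper's approach has the advantage of producing the explicit description of $\sigma$ on the open cell, which is then used directly in the proofs of Propositions \ref{commute} and \ref{commutative diagram} (where commutativity of diagrams is checked on that open subscheme).
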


\begin{proof}
    We define the map on the open subscheme of ${\mathcal G}_{L'}$ isomorphic to ${\mathcal U}^+_{L'}\times \mathbf{T}\times {\mathcal U}^-_{L'}$ (see Theorem \ref{open subscheme}). The torus $\mathbf{T}$ is $\sigma$ stable and for any root $\alpha$, ${\mathcal U}_{\alpha_{L'}}$ is mapped to ${\mathcal U}_{\sigma(\alpha)_{L'}}$. The map $\sigma$ can be extended to ${\mathcal G}_{L'}$ in a unique way by Theorem 0.10 of \cite{Landvogt1995ACO}.

    The second part is done similarly.
\end{proof}

We have restriction maps $res^{L'}_L:{\mathcal G}_{L'}\rightarrow {\mathcal G}_{L}$ and $res^{L'}_{\sigma(L)}:{\mathcal G}_{L'}\rightarrow {\mathcal G}_{\sigma(L)}$ (see Proposition \ref{restriction maps}).

\begin{prop}\label{commute}
    We have a commutative diagram
\begin{center}
    
    \begin{tikzcd}
{\mathcal G}_{L'} \arrow{r}{\sigma} \arrow{d}{res^{L'}_{L}} & {\mathcal G}_{L'} \arrow{d}{res^{L'}_{\sigma(L)}} \\
{\mathcal G}_{L} \arrow{r}{\sigma} & {\mathcal G}_{\sigma(L)}
\end{tikzcd}

\end{center}
    
\end{prop}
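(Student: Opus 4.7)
The plan is to invoke uniqueness of extensions of $F$-morphisms to morphisms of parahoric integral models. All four arrows in the diagram are morphisms of smooth affine $\s$-group schemes whose generic fibre is $G$. By Proposition \ref{restriction maps} the restriction morphisms $res^{L'}_L$ and $res^{L'}_{\sigma(L)}$ extend the identity $G\to G$ on the generic fibre; and by the construction given in the preceding proposition the involution $\sigma$ on ${\cal G}_{L'}$ and the morphism $\sigma:{\cal G}_L\to {\cal G}_{\sigma(L)}$ both extend the algebraic involution $\sigma:G\to G$. Hence the two compositions
$$
res^{L'}_{\sigma(L)}\circ \sigma,\qquad \sigma\circ res^{L'}_L\;:\;{\cal G}_{L'}\longrightarrow {\cal G}_{\sigma(L)}
$$
are $\s$-group scheme morphisms whose generic fibres both equal $\sigma:G\to G$.

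The statement then follows from the uniqueness clause of Theorem 0.10 of \cite{Landvogt1995ACO}, which is the same principle underlying both Proposition \ref{restriction maps} and the construction of $\sigma$ on ${\cal G}_{L'}$: any $\s$-group scheme morphism between smooth affine models of $G$ that extends a prescribed $F$-morphism on the generic fibre is uniquely determined by that generic fibre. Applying this to the two compositions above forces them to coincide.

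If one prefers a hands-on verification, one may work instead on the dense open subscheme ${\cal U}^+_{L'}\times \mathbf{T}\times {\cal U}^-_{L'}$ of ${\cal G}_{L'}$ from Theorem \ref{open subscheme}. On $\mathbf{T}$ both compositions act by $\sigma$, since $\mathbf{T}$ is $\sigma$-stable and the restriction morphisms are the identity on $\mathbf{T}$. On each root-group factor ${\cal U}_{\alpha_{L'}}$ one uses that $L'=L\cup\sigma(L)$ is $\sigma$-stable, so the minimal integer $k$ for which $\alpha+k$ is non-negative on $L'$ equals the minimal integer for which $\sigma(\alpha)+k$ is non-negative on $\sigma(L')=L'$, and hence also non-negative on $\sigma(L)\subset L'$. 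Combined with the relation $\sigma({\cal U}_{\alpha_{L'}})={\cal U}_{\sigma(\alpha)_{L'}}$ and the functoriality of the restriction maps, this yields componentwise commutativity.

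The main, and essentially only, point requiring care is this matching of affine-root indices under $\sigma$, which uses the $\sigma$-stability of $L'$ in an essential way -- this is precisely why the argument would not go through if one naively replaced $L'$ by $L$. Agreement on a schematically dense open subscheme then propagates to the whole of ${\cal G}_{L'}$, yielding the commutative diagram.
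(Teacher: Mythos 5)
Your proof is correct. Your ``hands-on'' fallback is exactly the paper's proof: the paper verifies commutativity on the dense open subscheme ${\cal U}^+_{L'}\times \mathbf{T}\times {\cal U}^-_{L'}$ and then propagates, simply declaring the square ``clear'' there; you supply the one detail that actually needs checking, namely that $\sigma$ carries ${\cal U}_{\alpha_{L'}}$ to ${\cal U}_{\sigma(\alpha)_{L'}}$ because $L'$ is $\sigma$-stable. (One small phrasing caveat: since $\sigma$ acts affinely on the apartment, the integer $k$ in $\alpha_{L'}=\alpha+k$ need not literally equal the integer appearing in $\sigma(\alpha)_{L'}$ — $\sigma$ shifts affine roots in a given direction by a fixed constant — but it does carry the minimal affine root in direction $\alpha$ non-negative on $L'$ to the minimal one in direction $\sigma(\alpha)$, which is what the argument needs.) Your primary argument is a genuinely cleaner variant the paper does not state here but uses one proposition earlier: all four schemes are smooth, hence flat, affine $\s$-models of $G$, so the generic fibre is schematically dense and any two $\s$-morphisms agreeing there coincide; both compositions restrict to $\sigma:G\to G$ on generic fibres, so they are equal by the uniqueness clause of Theorem 0.10 of Landvogt. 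This route avoids inspecting the open cell entirely and makes the $\sigma$-stability of $L'$ enter only through the existence of the four arrows, at the cost of leaning on the extension-uniqueness machinery rather than an explicit computation.
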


\begin{proof}
    On the open subscheme isomorphic to ${\mathcal U}^+_{L'}\times \mathbf{T}\times {\mathcal U}^-_{L'}$ it is clear. Two maps that commute on an open subscheme must commute on the entire scheme.
\end{proof}

Let $C_L$ denote the set of all chambers containing $L$.

Let $P_L$ be the pointwise stabilizer of $L$. The group $P_L$ can be identified with the $\s$ points of ${\mathcal G}_{L}$.

Let $G_{L'}=\overline{\mathcal G}_{L'}/R_u(\overline{\mathcal G}_{L'})$ be the special fiber of  ${\mathcal G}_{L'}$ divided by its unipotent radical. The group $G_{L'}$ is the reductive quotient associated to $L'$. The involution $\sigma$ acts on $G_{L'}$.

We denote by $G^\sigma_{L'}$ the fixed points of $\sigma$ on $G_{L'}$. 

We will construct a bijection between the $H\cap P_L$ orbits on $C_L$ and the $G^\sigma_{L'}$ orbits on the flag variety of   $G_{L'}$.

\begin{prop}\label{easy direction}
    We have $H\cap P_L=H\cap P_{L'}$ and each element of this group projects to a $\sigma$ invariant element of $G_{L'}$. 
\end{prop}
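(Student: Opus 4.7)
My plan splits the proposition into its two assertions and handles them in turn.

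For the equality $H\cap P_L = H\cap P_{L'}$, the inclusion $H\cap P_{L'}\subset H\cap P_L$ is immediate because $L\subset L'$ forces $P_{L'}\subset P_L$ (a pointwise stabilizer of a larger set is smaller). For the reverse inclusion, I would take $h\in H\cap P_L$ and exploit the defining property $\sigma(h)=h$. Since $h$ fixes every point of $L$, applying $\sigma$ (which acts on $\B_G$ by Theorem \ref{2.17} and satisfies $\sigma(gx)=\sigma(g)\sigma(x)$) shows that $h=\sigma(h)$ fixes every point of $\sigma(L)$. Therefore $h$ fixes $L\cup\sigma(L)=L'$ pointwise, i.e.\ $h\in P_{L'}$. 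This step is routine; no obstacle.

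For the second assertion I would use the functorial compatibility recorded just before the proposition. An element $h\in H\cap P_{L'}$ corresponds, via the theorem identifying $P_{L'}={\cal G}_{L'}(\s)$, to an $\s$-point of ${\cal G}_{L'}$. Since the involution $\sigma$ on $G$ extends (by the previous proposition) to an involution on ${\cal G}_{L'}$ whose action on the generic fiber is the original $\sigma$, and since the map ${\cal G}_{L'}(\s)\to{\cal G}_{L'}(F)=G$ is injective (${\cal G}_{L'}$ is smooth and affine over $\s$), the condition $\sigma(h)=h$ in $G$ forces $\sigma(h)=h$ in ${\cal G}_{L'}(\s)$. The projection ${\cal G}_{L'}(\s)\to \overline{{\cal G}}_{L'}(k)\to G_{L'}(k)$ is a composition of $\s$/$k$ group-scheme morphisms, each of which commutes with $\sigma$ by construction (Proposition \ref{commute} and the fact that $R_u(\overline{{\cal G}}_{L'})$ is characteristic, hence $\sigma$-stable). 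Therefore the image of $h$ in $G_{L'}$ is fixed by the induced $\sigma$.

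The only delicate point is verifying that $\sigma$ really descends to $G_{L'}=\overline{{\cal G}}_{L'}/R_u(\overline{{\cal G}}_{L'})$; I expect this to follow at once from $\sigma$-stability of the unipotent radical (which is a characteristic subgroup), combined with the uniqueness of extensions of morphisms from the generic fiber to ${\cal G}_{L'}$. No serious obstacle is anticipated.
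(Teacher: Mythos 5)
Your proposal is correct and follows essentially the same route as the paper: the paper proves the equality $H\cap P_L=H\cap P_{L'}$ by exactly your observation that $h=\sigma(h)$ fixing $L$ pointwise must also fix $\sigma(L)$ pointwise, and dismisses the second assertion as clear. Your expansion of that second part — injectivity of ${\cal G}_{L'}(\s)\hookrightarrow G$, compatibility of the extended involution with the reduction map, and $\sigma$-stability of the unipotent radical — is the intended justification and is consistent with the surrounding results of Section 6.
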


\begin{proof}
    Clearly $H\cap P_{L'}\subset H\cap P_L$ and if $h\in H$ fixes $L$ pointwise it also fixes $\sigma(L)$ pointwise, hence $H\cap P_L=H\cap P_{L'}$. The second part is also clear.
\end{proof}

The converse also holds.

\begin{prop}\label{ hard direction}
    Any $\sigma$ invariant element of $G_{L'}$ can be lifted to an element of $H\cap P_{L}$.
\end{prop}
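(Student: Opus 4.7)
The plan is to lift $\bar g \in G_{L'}^\sigma$ through the tower
\[
\mathcal{G}_{L'}^\sigma(\s) \twoheadrightarrow (\overline{\mathcal{G}}_{L'})^\sigma(k) \twoheadrightarrow G_{L'}^\sigma(k),
\]
where $\mathcal{G}_{L'}^\sigma$ denotes the $\sigma$-fixed closed subscheme of $\mathcal{G}_{L'}$. Any element $h \in \mathcal{G}_{L'}^\sigma(\s)$ automatically sits in $H \cap P_L$: the generic fiber of $\mathcal{G}_{L'}$ is $G$ so $h \in G$; $\sigma$-invariance of $h$ places $h$ in $H$; and applying the restriction $res^{L'}_L:\mathcal{G}_{L'} \to \mathcal{G}_L$, which by Proposition \ref{restriction maps} is the identity on generic fibers, lands $h$ in $\mathcal{G}_L(\s) = P_L$. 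So the whole task is to produce a $\sigma$-invariant $\s$-point of $\mathcal{G}_{L'}$ reducing to $\bar g$.

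First, I would establish surjectivity of $(\overline{\mathcal{G}}_{L'})^\sigma(k) \to G_{L'}^\sigma(k)$ using the exact sequence
\[
1 \to R_u(\overline{\mathcal{G}}_{L'}) \to \overline{\mathcal{G}}_{L'} \to G_{L'} \to 1.
\]
Taking $k$-points and passing to $\sigma$-invariants, the obstruction lies in $H^1(\langle\sigma\rangle, R_u(\overline{\mathcal{G}}_{L'})(k))$. Because the residue field $k$ is finite of characteristic $p \neq 2$, the group $R_u(\overline{\mathcal{G}}_{L'})(k)$ is a finite $p$-group; its order is coprime to $|\langle\sigma\rangle|=2$, so the cohomology vanishes and the lift to $(\overline{\mathcal{G}}_{L'})^\sigma(k)$ exists. (Surjectivity of $\overline{\mathcal{G}}_{L'}(k) \to G_{L'}(k)$ itself follows from Lang's theorem applied to the connected unipotent radical.)

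Second, I would lift from $(\overline{\mathcal{G}}_{L'})^\sigma(k)$ to $\mathcal{G}_{L'}^\sigma(\s)$ by invoking smoothness. Since the residual characteristic is not $2$, the order $|\sigma|=2$ is invertible in $\s$, so by the classical result that the fixed-point subscheme of a finite group action on a smooth scheme is smooth (and its formation commutes with base change) whenever the group order is invertible on the base, $\mathcal{G}_{L'}^\sigma$ is smooth over $\s$ with special fiber $(\overline{\mathcal{G}}_{L'})^\sigma$. Hensel's lemma (or directly the smoothness criterion) then provides the desired surjectivity $\mathcal{G}_{L'}^\sigma(\s) \twoheadrightarrow (\overline{\mathcal{G}}_{L'})^\sigma(k)$, completing the lift.

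The main obstacle is justifying that $\mathcal{G}_{L'}^\sigma$ is $\s$-smooth. The vanishing arguments in the first step are routine coprime-order cohomology, but smoothness of the fixed-point scheme under a finite-order automorphism is a more delicate point that depends essentially on the hypothesis $\mathrm{char}(k)\neq 2$; if one prefers not to cite a general reference, an alternative is to verify it directly by an infinitesimal lifting argument, lifting $\bar h \in (\overline{\mathcal{G}}_{L'})^\sigma(k)$ inductively to $\mathcal{G}_{L'}(\s/\pi^n)$ using smoothness of $\mathcal{G}_{L'}$ and then averaging each successive lift by $\tfrac{1}{2}(g + \sigma(g))$ in the tangent space at $\bar h$, which is well defined because $2$ is invertible.
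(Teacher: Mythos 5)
Your proof is correct, and its second half (lifting a $\sigma$-fixed point of the special fiber to a $\sigma$-fixed $\s$-point via smoothness of the fixed-point scheme $\mathcal{G}_{L'}^{\sigma}$ and Hensel's lemma, using that $2$ is invertible) is exactly the argument the paper uses, down to the citation one would invoke for smoothness of fixed points under a group of order invertible on the base. Where you diverge is the first half, lifting from $G_{L'}^\sigma$ to $(\overline{\mathcal{G}}_{L'})^\sigma(k)$: the paper uses Theorem \ref{Levi decomposition} to produce the unique Levi factor $M\subset\overline{\mathcal{G}}_{L'}$ containing $\mathbf{T}(k)$, observes that uniqueness forces $M$ to be $\sigma$-stable, and then gets the $\sigma$-invariant lift for free because $M$ maps isomorphically onto the reductive quotient; you instead kill the obstruction cocycle $g^{-1}\sigma(g)\in R_u(\overline{\mathcal{G}}_{L'})(k)$ by the vanishing of $H^1(\mathbb{Z}/2,N)$ for a finite group $N$ of odd order (here one should say explicitly that this is nonabelian $H^1$, trivial by the Schur--Zassenhaus conjugacy theorem, together with Lang's theorem for surjectivity on $k$-points). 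Your route is more elementary in that it avoids McNinch's existence-and-uniqueness theorem for Levi factors of the special fiber, at the cost of a cohomological computation; the paper's route buys a canonical, cohomology-free lift and reuses a structural fact about parahoric special fibers that it has already set up. Both are complete proofs.
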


\begin{proof}
    Let $x$ be a $\sigma$ invariant element of $G_{L'}$. By Theorem \ref{Levi decomposition} There is a unique Levi factor $M\subset \overline{\mathcal G}_{L'}$ that contains $\mathbf{T}(k)$. The torus $\mathbf{T}(k)$ is $\sigma$ stable and the unipotent radical of $\overline{\mathcal G}_{L'}$ is also $\sigma$ stable, therefore $M$ must be $\sigma$ stable. There is $m\in M$ which projects to $x\in G_{L'}$. The element $\sigma(m)$ also projects to $\sigma(x)=x$ and $\sigma(m)\in M$, therefore we must have $m=\sigma(m)$. 

    $\overline{\mathcal G}_{L'}$ is smooth, and $\overline{\mathcal G}_{L'}^\sigma$ is also smooth (see for example Proposition 34 in \cite{Edixhoven1992}). By Hensel's lemma we can find a $\sigma$ invariant element of ${\mathcal G}_{L'}(\s)$ which projects to $m$. This element lies in $H\cap P_{L'}$. 
    
\end{proof}

Like in Proposition \ref{affine to spherical building}, we consider the following map, which we denote by $\overline{res}_L$. It is a map from $C_L$, the set of chambers that contain $L$, to the flag variety of $G_L$. Let $\C\in C_L$, define $\overline{res}_L(\C)=Im(\overline{res}^{\C}_{L})$, is a Borel of $G_L$. 

We denote by $B(K)$ the flag variety of a reductive group $K$ over the finite field $k$.

\begin{prop}\label{commutative diagram}
    The following diagram commutes:

\begin{center}

    \begin{tikzcd}
B(G_{L'}) \arrow{r}{\sigma} \arrow{d}{res^{L'}_{L}} & B(G_{L'}) \arrow{d}{res^{L'}_{\sigma(L)}} \\
B(G_{L}) \arrow{r}{\sigma} \arrow{d}{\overline{res}^{-1}_L} & B(G_{\sigma(L)}) \arrow{d}{\overline{res}^{-1}_{\sigma(L)}} \\ C_L \arrow{r}{\sigma} & C_{\sigma(L)}
\end{tikzcd}
\end{center}

\end{prop}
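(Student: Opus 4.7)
The plan is to observe that both squares in the diagram encode a single $\sigma$-equivariance identity for the bijection $\overline{res}_M\colon C_M\to B(G_M)$ of Proposition \ref{affine to spherical building}, and then to prove that identity by the same uniqueness-of-extension principle that underlies Proposition \ref{commute}.

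Unwinding the definitions, the bottom square asserts precisely that for every $\C\in C_L$,
\[
\overline{res}_{\sigma(L)}(\sigma(\C)) \;=\; \sigma\bigl(\overline{res}_L(\C)\bigr)
\]
as Borel subgroups of $G_{\sigma(L)}$. For the top square, observe that under the bijections $\overline{res}_M$ (for $M\in\{L,L',\sigma(L)\}$), the morphism $res^{L'}_M$ on flag varieties corresponds---via the composition law $\overline{res}^{L'}_M\circ \overline{res}_{L'}=\overline{res}_M$ valid on $C_{L'}$---to the inclusion $C_{L'}\hookrightarrow C_M$, while the $\sigma$-action on $B(G_{L'})$ corresponds to $\sigma$ acting on $C_{L'}$ (since $L'$ is $\sigma$-stable). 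Hence the top square is equivalent to the displayed identity restricted to $\C\in C_{L'}$, and is therefore a consequence of the bottom square.

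To prove the displayed identity, fix $\C\in C_L$ and consider the two morphisms of $\s$-group schemes $\sigma\circ res^{\C}_L$ and $res^{\sigma(\C)}_{\sigma(L)}\circ \sigma$ from ${\cal G}_{\C}$ to ${\cal G}_{\sigma(L)}$. Both restrict on the generic fibre to the algebraic involution $\sigma\colon G\to G$. By the same uniqueness-of-extension principle invoked earlier in this section to extend $\sigma$ canonically to ${\cal G}_{L'}$ and to establish Proposition \ref{commute}, the two compositions agree as morphisms of parahoric schemes. Descending to special fibres and quotienting by the unipotent radical---which $\sigma$ preserves, being an automorphism of the special fibre---gives a commutative diagram of $k$-reductive groups. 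Taking images of these two equal compositions on reductive quotients yields $\sigma\bigl(\mathrm{image}(\overline{res}^{\C}_L)\bigr)=\mathrm{image}(\overline{res}^{\sigma(\C)}_{\sigma(L)})$, which is exactly the required identity.

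The main obstacle is the unique-extension step when $\C$ is not itself $\sigma$-stable, so that Proposition \ref{commute} does not literally apply with $\C$ in place of $L'$. This can be handled either by invoking the map-of-groupoids principle for the parahoric construction directly, or, staying closer to the paper, by replacing the ambient face with any $\sigma$-stable face inside a $\sigma$-stable apartment containing $\C\cup\sigma(\C)$ and reading off the desired commutativity from the corresponding version of Proposition \ref{commute}. Once this is granted, the remaining steps are routine.
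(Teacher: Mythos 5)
Your argument is correct and takes essentially the same route as the paper: the paper likewise reduces the bottom square to the commutativity of the scheme-level square ${\cal G}_{\C}\to{\cal G}_{\sigma(L)}$ for each chamber $\C\supset L$, justified by the fact that both composites extend $\sigma$ on the generic fibre (the paper checks this on the open cell ${\cal U}^{+}\times\mathbf{T}\times{\cal U}^{-}$, which is the same unique-extension principle you invoke, so your worry about $\C$ not being $\sigma$-stable is resolved exactly as you suggest). The only cosmetic difference is that the paper obtains the top square directly from Proposition \ref{commute} rather than deducing it from the bottom one.
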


\begin{proof}
    The top square commutes by Proposition \ref{commute}. In order to check the commutativity of the bottom square it is enough to check that for any chamber $\C$ that contains $L$ the following diagram commutes:
\begin{center}
    
    \begin{tikzcd}
{\mathcal G}_{\C} \arrow{r}{\sigma} \arrow{d}{res^{\C}_{L}} & {\mathcal G}_{\sigma(\C)} \arrow{d}{res^{\sigma(\C)}_{\sigma(L)}} \\
{\mathcal G}_{L} \arrow{r}{\sigma} & {\mathcal G}_{\sigma(L)}
\end{tikzcd}

\end{center}

This can be checked on an open subscheme like in the proof of Proposition \ref{commute}.
    
\end{proof}

Consider the map from $B(G_{L'})$ to $C_L$ given as composition of the maps in the left column of the commutative diagram of the previous proposition. This map is an isomorphism by Propositions \ref{affine to spherical building} and \ref{face isomorphism}. Denote the inverse of this map by $b_L:C_L\rightarrow B(G_{L'})$.

Recall that we have $\omega:G\rightarrow \Omega$.

\begin{prop}\label{orbit correspondacne}
    The map $b_L$ sends $H\cap Ker(\omega)$ orbits to $G^\sigma_{L'}$ orbits. The inverse of $b_L$ sends $G^\sigma_{L'}$ orbits to $H\cap Ker(\omega)$ orbits. The map $b_L$ is a bijection from each orbit onto its image.
\end{prop}

\begin{proof}
    By Propositions \ref{easy direction} and \ref{ hard direction} we get that these maps interchange $H\cap P_L$ orbits and $G^\sigma_{L'}$ orbits. Thus we only need to prove that two chambers in $C_L$ are in the same $H\cap Ker(\omega)$ orbit if and only if they are in the same $H\cap P_L$ orbit. 

    By definition, $P_L\subset Ker(\omega)$. We only have to prove that if $\C,\C'$ are chambers that contain $L$ and $h\in H$ satisfies $h\C=\C'$ and $\omega(h)=1$ then $h$ fixes $L$ pointwise.

    By item 2 of Theorem \ref{2.10} there is an apartment $\A'$ that contains $\C$ and $\C'$. This apartment corresponds to some maximal torus $T'$.

    By item 5 of Theorem \ref{2.11} there is $n\in N_G(T')$ which acts on $\C'$ and on $\C$ like $h$. We have $\omega(h)=1$ so we obtain $\omega(n)=1$.

    The action of $n$ fixes $\C\cap \C'$ pointwise so $h$ fixes $L$ pointwise. Therefore $h\in P_L$ completing the proof.

    We know that $b_L$ is a bijection from $C_L$ to $B(G_{L'})$, in particular it is a bijection from each orbit onto its image.
\end{proof}

\begin{exmp}\label{section 6 examples}
    We give some examples to illustrate Proposition \ref{orbit correspondacne}. 
    \begin{enumerate}
        \item Let $G=SL_3(F)$ and let $\sigma:G\rightarrow G$ be given by $\sigma(g)=(g^t)^{-1}$. We have $H=G^\sigma=SO_3(F)$, a symmetric subgroup of $G$.  
        Let $T\subset G$ be the torus of diagonal matrices, $T$ is $\sigma$ stable.
        
        Let $\A$ be the apartment corresponding to $T$. $\sigma$ acts on $\A$ by reflection across the point corresponding to $SL_3(\s)$, we denote this point by $0$. If we take $L=0$ then we have $\sigma(L)=0$ and $L'=L$. In this case $G_{L'}=SL_3(k)$ and $G_{L'}^\sigma=SO_3(k)$. The map $b_L:C_L\rightarrow B(G_{L'})$ is the map that takes an Iwahori subgroup contained in $SL_3(\s)$ and reduces it modulo the uniformizer $\pi$ to get a Borel subgroup of $SL_3(k)$. The subgroup of $H$ consisting of elements that preserve $0$ is $SO_3(\s)$. The map $b_L$ sends $SO_3(\s)$ orbits to $SO_3(k)$ orbits.

        Now, let us choose a different $L$. Let $I$ be the Iwahori subgroup consisting of matrices of the form $\begin{pmatrix}
            \s^\times &\s &\s \\
            \pi\s & \s^\times & \s \\
            \pi \s & \pi\s & \s^\times
        \end{pmatrix}$. Let $P$ be the parahoric subgroup consisting of matrices in $SL_3(\s)$ of the form $\begin{pmatrix}
            \s&\s &\s \\
            \s & \s & \s \\
            \pi \s & \pi\s & \s^\times
        \end{pmatrix}$. Let $L$ be the facet fixed by $P$. The facet $\sigma(L)$ is the facet fixed by the parahoric subgroup consisting of matrices in $SL_3(\s)$ of the form $\begin{pmatrix}
            \s &\s &\pi\s \\
            \s & \s & \pi\s \\
            \s & \s & \s^\times
        \end{pmatrix}$. We have $L'=L\cup \sigma(L)$ and its pointwise stabilizer is equal to $P\cap \sigma(P)$. The group $G_{L'}$ is the group of matrices in $SL_3(k)$ of the form $\begin{pmatrix}
            k & k  &0 \\
            k & k & 0 \\
            0 & 0 & k^\times
        \end{pmatrix}$ which is equal to $SL_3(k)\cap (GL_2(k)\times GL_1(k))$.

        The map $b_L$ is a map between chambers that contain $L$ to the flag variety of $SL_3(k)\cap GL_2(k)\times GL_1(k)$ which is $\p^1(k)$. It sends an Iwahori subgroup contained in $P\cap \sigma(P)$ to its reduction modulo $\pi$, a Borel subgroup of  $SL_3(k)\cap GL_2(k)\times GL_1(k)$.

        The involution $\sigma$ acts on $SL_3(k)\cap GL_2(k)\times GL_1(k)$ as transpose inverse and its fixed points are $SL_3(k)\cap O_2(k)\times O_1(k)$. The $H$ orbits on $C_L$ are mapped to $SL_3(k)\cap O_2(k)\times O_1(k)$ orbits on $\p^1(k)$.

        \item We give one more example, this time in type C. 
        We take $G=Sp_4(F)$ and we use the following block representation of  $Sp_4$. A block matrix $\begin{pmatrix}
            A & B\\
            C& D
        \end{pmatrix}\in Sp_4(F)$ if and only if $A^tC=C^tA, B^tD=D^tB$ and $A^tD-B^tC=I_2$. Here, $I_2$ is the identity $2\times 2$ matrix.
                
        Let $H=GL_2(F)$ the fixed points of an inner involution $\sigma$. The involution $\sigma$ is defined as conjugation by $\begin{pmatrix}
            1 & 0 &0 &0\\
            0 & 1 & 0& 0\\
            0& 0& -1 &0\\
            0&0&0& -1
        \end{pmatrix}$.

        An element $g\in H$ is embedded as $\begin{pmatrix}
            g & 0\\
            0 & (g^t)^{-1}
        \end{pmatrix}\in Sp_4(F)$.

        Let $T$ be the torus of diagonal matrices in $H$. We can think about it as a maximal torus of $G$ using the embedding of $H$ in $G$. Let $\A$ be the apartment corresponding to $T$. On this apartment, $\sigma$ acts trivially.

        We have the Iwahori subgroup consisting of integer matrices of the form $\begin{pmatrix}
            A & B\\
            C& D
        \end{pmatrix}\in Sp_4(\s)$, with $A,D$ upper triangular mod $\pi$ and $C$ zero mod $\pi$. We denote this Iwahori subgroup by $I$. One of the parahoric subgroups that contains $I$ is the subgroup $P$ of matrices in $Sp_4(\s)$ of the form $\begin{pmatrix}
            \s^\times & \s &\s&\s\\
            \pi\s& \s & \s &\s \\
            \pi \s & \s & \s &\s \\
            \pi \s & \s & \s &\s
        \end{pmatrix}$. Let $L$ be the facet fixed by $P$, we have $L=\sigma(L)=L'$ as $\sigma$ acts trivially on $\A$. We have $G_{L'}=GL_1\times SL_2$ and $\sigma$ acts on $G_{L'}$ as an inner involution of $SL_2$. Thus, $G_{L'}^\sigma=GL_1\times GL_1$. The map $b_L$ gives a bijection between Iwahori subgroups contained in $P$ and Borel subgroups of $GL_1\times SL_2$. $H$ orbits are mapped to $GL_1\times GL_1$ orbits.
    \end{enumerate}
\end{exmp}

\begin{defn}
    We can define a length function $l_\sigma$ on the spherical building of $G_{L'}$ in the same way we did for the affine building $\B_G$.
\end{defn}

\begin{prop}
    For any chamber $\C\in C_L$ the number $l_\sigma(b_L(\C))-l_\sigma(\C)$ depends only on $L$ and not on $\C$.
\end{prop}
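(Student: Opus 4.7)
The plan is to fix, for each $\C\in C_L$, a $\sigma$-stable apartment $\A_\C$ containing $\C$ (and hence automatically $\sigma(\C)$, $L$, $\sigma(L)$, and $L'$), which exists by Corollary \ref{2.23}, and to compute $l_\sigma(\C)$ as the number of walls of $\A_\C$ separating $\C$ from $\sigma(\C)$. Each such wall $W$ is classified by its position relative to $L'$: \emph{type AA} ($W\supset L'$), \emph{type AB} or \emph{type BA} ($W$ contains exactly one of $L, \sigma(L)$), and \emph{type BB} ($W$ contains neither). The target identity is
$$l_\sigma(\C) \;=\; l_\sigma(b_L(\C)) + N_L,$$
where $N_L$ denotes the number of type-BB walls of $\A_\C$ separating $L$ from $\sigma(L)$, a quantity visibly independent of $\C$.

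The first step is to rule out mixed walls (types AB and BA). By Proposition \ref{face isomorphism}, the restriction map induces an isomorphism of the reductive quotients $G_L$ and $G_{L'}$, so the root systems $\Phi_L$ and $\Phi_{L'}$ (roots of $G$ constant on $L$, respectively on $L'$) coincide as subsets of $\Phi$. Hence any root $\alpha$ constant on $L$ is automatically constant on $\sigma(L)$ with the same value, forcing any affine hyperplane $\{\alpha+k=0\}$ that passes through $L$ to also pass through $\sigma(L)$. The second step matches type-AA separating walls with walls of the spherical apartment of $B(G_{L'})$ that separate $b_L(\C)$ from $\sigma(b_L(\C))$: each such affine wall corresponds to a root of $G_{L'}$ via Proposition \ref{root system}, and unpacking the definition of $b_L$ from the commutative diagram of Proposition \ref{commutative diagram}, together with the $\sigma$-equivariance $\sigma(b_L(\C))=b_{\sigma(L)}(\sigma(\C))$, one sees that the sign of the affine root on $\C$ agrees with the sign of the corresponding root of $G_{L'}$ on $b_L(\C)$ (and analogously for $\sigma(\C),\sigma(b_L(\C))$). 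Thus the number of type-AA separating walls equals $l_\sigma(b_L(\C))$. Finally, for a type-BB wall $W$, since $W$ contains neither $L$ nor $\sigma(L)$, each of these faces lies on a definite side of $W$, a side inherited by any containing chamber; hence $W$ separates $\C,\sigma(\C)$ if and only if $W$ separates $L$ from $\sigma(L)$, contributing $N_L$ in total.

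The main technical hurdle is the sign-matching identification for type-AA walls: one must trace carefully through the composition $B(G_{L'}) \xrightarrow{res^{L'}_L} B(G_L) \xrightarrow{\overline{res}_L^{-1}} C_L$ defining $b_L^{-1}$ and verify that the sign of a root of $G_{L'}$ on a chamber of $B(G_{L'})$ matches the sign of the associated affine root on the corresponding chamber of $C_L$. A secondary subtlety is checking that the count $N_L$ is truly independent of the choice of $\sigma$-stable apartment $\A_\C$: if $\A_{\C_1}$ and $\A_{\C_2}$ differ, then by Proposition \ref{main lemma} there is an element of $H$ carrying one to the other while preserving the relevant chambers, and $H$-translation preserves the count of separating walls since it preserves both the $\sigma$-action and the affine geometry.
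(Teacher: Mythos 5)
Your argument is essentially correct, but it takes a genuinely different route from the paper. The paper proves the statement by reducing to the case of two chambers of $C_L$ sharing a codimension-one face and then invoking the rank-one analysis of Section 7 (Proposition \ref{7.3} and Corollary \ref{3.24}), i.e.\ it rests on the classification of symmetric subgroups of $SL_2(k)$ and $PGL_2(k)$ and on the forward reference to Corollary \ref{3.24}. You instead count separating walls directly in a $\sigma$-stable apartment and obtain the sharper, explicit identity $l_\sigma(\C)=l_\sigma(b_L(\C))+N_L$, where $N_L$ is the number of walls not containing $L'$ that separate $L$ from $\sigma(L)$; this avoids the forward reference, works uniformly in all codimensions without a gallery induction inside $C_L$, and actually identifies the constant. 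Two points deserve tightening. First, the absence of mixed walls follows already from the standing hypothesis that $\mathrm{span}(L)$ is $\sigma$-stable: a wall containing $L$ contains $\mathrm{span}(L)=\mathrm{span}(\sigma(L))\supset\sigma(L)$, so the detour through Proposition \ref{face isomorphism} is unnecessary (though not wrong). Second, your appeal to Proposition \ref{main lemma} for the independence of $N_L$ is misapplied: that proposition compares two $\sigma$-stable apartments containing a \emph{common chamber}, whereas $\A_{\C_1}$ and $\A_{\C_2}$ need only share $L'$. The correct tool is item 3 of Theorem \ref{2.10}: any two apartments containing $L$ and $\sigma(L)$ are related by a simplicial isomorphism fixing both faces, which preserves walls and hence the count $N_L$ (no $H$-equivariance or $\sigma$-stability is needed for this step). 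The remaining burden is the sign-matching for walls containing $L'$, which you correctly isolate; it is exactly the order-preserving property of the bijection in Proposition \ref{affine to spherical building}, transported through the commutative diagram of Proposition \ref{commutative diagram}, so the gap is one of detail rather than of substance.
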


\begin{proof}
    It is enough to check this for chambers which intersect at a facet of codimension one. This follows from results in the next section, in particular from Proposition \ref{7.3} and Corollary \ref{3.24}.   
\end{proof}

\begin{Remark}
    The $l_\sigma$ lengths on the spherical building can be used to describe the Bruhat order of Borel orbit closures in the Zariski topology. Using the previous proposition we can define a Bruhat order on $C_L$.
\end{Remark}

\section{Determining the structure constants $\gamma_{f,g}$}\label{s7}

In this section, we focus on calculating the constants $\gamma_{f,g}$ occurring in the formula $$1_{x}(T_{s}+1)=\gamma_{f,g}D_{f,o}$$ as proved in Proposition \ref{key_computation}.

Let $s\in \tilde{\Delta}$ be a simple reflection. Let $\C$ be the chamber fixed by $I$ and let $g\in G$ be a representative of the orbit $x\in H\backslash G/I.$ Let $\A$ be a $\sigma$ stable apartment that contains $g\C$ and $gs\C$. 

Consider the facet $f=g\C\cap gs\C$.

In the case $l_\sigma(x\times s)\neq l_\sigma(x)$ we know the value of $\gamma_{f,g}$. It equals $1$ if $l_\sigma(x\times s)> l_\sigma(x)$ and equals $q$ if $l_\sigma(x\times s) < l_\sigma(x)$.

We now assume that $l_\sigma(x\times s)= l_\sigma(x)$. This implies that the affine hyper plane spanned by $f$ is $\sigma$ stable (this was explained during the proof of Lemma \ref{3.9}). Thus, $f$ satisfies the assumption of the previous section. 

Let $f'=f\cup \sigma(f)$. Denote by $C_f$ the set of all chambers in $\B_G$ that contain $f$. Let $G_{f'}$ be the reductive quotient associated to $f'$, it is a reductive group over the residue field $k$. The map $\sigma$ acts on $G_{f'}$ as was shown in the previous section. Let $B(G_{f'})$ be the flag variety of $G_{f'}$.

Let $b_f:C_f\rightarrow B(G_{f'})$ be the map constructed in the previous section.

\begin{prop}\label{6.1}
The number $\gamma_{f,g}$ is equal to the size of the $G^\sigma_{f'}$ orbit of $b_f(g\C)$ in the flag variety of $G_{f'}$. 
\end{prop}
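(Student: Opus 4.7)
The plan is to unpack the combinatorial definition of $\gamma_{f,g}$, translate the condition ``in the same $H$-orbit of $\Omega$-colored chambers'' into ``in the same $H\cap Ker(\omega)$ orbit of (uncolored) chambers in $C_f$'', and then invoke the bijection $b_f$ from the previous section together with Proposition \ref{orbit correspondacne}.

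First I would rewrite $\gamma_{f,g}$ as follows. By definition,
\[
\gamma_{f,g} = \#\{1 \leq i \leq q+1 \mid H(\C_i,o) = H(g\C,o)\},
\]
where $o = \omega(g)$. Recalling that $G$ acts on $\Omega$-colored chambers by $g'(\C',o') = (g'\C', \omega(g')o')$, the equality $H(\C_i,o) = H(g\C,o)$ is equivalent to the existence of $h \in H$ with $h\C_i = g\C$ and $\omega(h)o = o$, i.e.\ $\omega(h) = 1$. Thus
\[
\gamma_{f,g} = \#\{1 \leq i \leq q+1 \mid \C_i \in (H\cap Ker(\omega))\cdot g\C\},
\]
which is simply the size of the $H \cap Ker(\omega)$ orbit of $g\C$ inside $C_f$, since the $\C_i$ enumerate all chambers in $C_f$ and the $H \cap Ker(\omega)$ action preserves $C_f$ (as it fixes $f$ pointwise once we restrict to elements stabilizing $f$, which is automatic here because $H\cap Ker(\omega)$ permutes chambers in $C_f$ among themselves via the fixed face $f$).

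Next I would transfer the count across the bijection $b_f: C_f \to B(G_{f'})$ constructed in Section 6. By Proposition \ref{orbit correspondacne}, $b_f$ is a bijection that carries $H \cap Ker(\omega)$ orbits on $C_f$ onto $G^\sigma_{f'}$ orbits on $B(G_{f'})$. Hence the size of the $H \cap Ker(\omega)$ orbit of $g\C$ equals the size of the $G^\sigma_{f'}$ orbit of $b_f(g\C)$, yielding exactly the claim.

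The only subtle point is the identification in the first step of the $H$-orbit counting (with the $\Omega$-color $o$) with $H \cap Ker(\omega)$ orbits; I do not anticipate a real obstacle, since this is just tracking the $\Omega$-color through the action formula. Everything else is a direct application of the machinery of Section 6, so the argument reduces to a careful chain of bijections.
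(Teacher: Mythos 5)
Your proposal is correct and takes essentially the same route as the paper: unpack $\gamma_{f,g}$ as the number of chambers of $C_f$ in the $H\cap Ker(\omega)$ orbit of $g\C$, then apply Proposition \ref{orbit correspondacne} to transfer the count across $b_f$. The only difference is that you spell out the translation from $\Omega$-colored-chamber orbits to $H\cap Ker(\omega)$ orbits of uncolored chambers, a step the paper's proof leaves implicit.
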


\begin{proof}
By definition, $\gamma_{f,g}$ is equal to the number of chambers in $C_f$ in the $H\cap Ker(\omega)$ orbit of $g\C$. By Proposition \ref{orbit correspondacne} this is equal to the size of the $G^\sigma_f$ orbit of $b_f(g\C)$ in the flag variety of $G_f$.
\end{proof}

\begin{prop}
    The semisimple rank of $G_{f'}$ is 1.
\end{prop}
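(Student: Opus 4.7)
The plan is to apply Proposition \ref{root system}, which identifies the root system of the reductive quotient $G_{f'}$ with the set of roots $\beta \in \Phi$ that are constant on $f'$. If I can show that this set equals $\{\pm \alpha\}$ for a single root $\alpha$, then $G_{f'}$ has semisimple rank $1$.

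First I would pin down the geometry of $f$. Because $s \in \Tilde{\Delta}$ reflects across a codimension-$1$ face of the fundamental alcove $\C$, the chambers $g\C$ and $gs\C$ are adjacent in $\A$, so $f = g\C \cap gs\C$ is a codimension-$1$ face of $\A$. Hence $span(f)$ is a single affine hyperplane $H$, which, being a wall of $\A$, has the form $H = H_{\alpha+k}$ for some affine root $\alpha+k$.

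Next I would invoke the observation recorded inside the proof of Lemma \ref{3.9}: in the equality case $l_\sigma(s \times x) = l_\sigma(x)$, the hyperplane $span(f)$ is $\sigma$-stable. Therefore $\sigma(f) \subset \sigma(H) = H$, and consequently $f' = f \cup \sigma(f) \subset H$. Since $f$ already has nonempty relative interior in $H$, so does $f'$. It follows that a linear functional $\beta \in V^*$ is constant on $f'$ if and only if it is constant on all of $H$, i.e.\ vanishes on the linear direction of $H$, which is $\ker \alpha$. The set of such functionals is the line $\R\alpha \subset V^*$, and it meets $\Phi$ only in $\{\pm \alpha\}$. By Proposition \ref{root system}, the root system of $G_{f'}$ is therefore $\{\pm \alpha\}$, a rank-$1$ root system, and so $G_{f'}$ has semisimple rank $1$. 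I do not foresee a real obstacle here: the argument mostly just unpacks the geometric content of the equality case that was already isolated in the proof of Lemma \ref{3.9}.
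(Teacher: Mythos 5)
Your argument is essentially the paper's: both reduce to Proposition \ref{root system} and the fact that $f$ is a codimension-$1$ face, so the roots constant on it form a single pair $\{\pm\alpha\}$. Your version is slightly more careful in that it explicitly handles the passage from $f$ to $f'=f\cup\sigma(f)$ via the $\sigma$-stability of $span(f)$ (a point the paper's one-line proof glosses over, stating the root system in terms of roots constant on $f$ rather than $f'$), but this is a refinement of the same argument, not a different route.
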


\begin{proof}
    By Proposition \ref{root system} the root system of $G_{f'}$ consists of the roots constant on $f$, $f$ is a facet of codimension 1 so there is a single root (up to a sign) in the root system of $G_{f'}$. Therefore the semisimple rank of $G_{f'}$ is 1. 
\end{proof}

\begin{exmp}
    In Example \ref{section 6 examples} we saw several examples of this when $L$ had codimension 1. In the first example we had $G_{f'}\cong SL_3\cap GL_2\times GL_1$ and in the second example we had $G_{f'}\cong SL_2\times GL_1$. In both examples $G_{f'}$ has semisimple rank 1.
\end{exmp}

Thus, to compute the value of $\gamma_{f,g}$ we need to compute the size of an orbit of a symmetric subgroup of a split reductive group of semisimple rank 1, on the flag variety of this reductive group.

\begin{prop}\label{rank1 groups}
    All split reductive groups of semisimple rank 1 are isomorphic to one of the following groups:
    \begin{enumerate}
        \item $SL_2\times\mathbb{G}^r_m$
        \item $PGL_2\times\mathbb{G}^r_m$
        \item $GL_2\times\mathbb{G}^r_m$
    \end{enumerate}

    See 20.3.3 in \cite{Milne_2017}.
\end{prop}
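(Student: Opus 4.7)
The plan is to invoke the standard structure theory of reductive groups. Any connected reductive group $G$ is the almost direct product of its connected center $Z(G)^{0}$ and its derived subgroup $G^{\mathrm{der}}$: the multiplication map $Z(G)^{0}\times G^{\mathrm{der}}\to G$ is surjective with finite kernel equal to the anti-diagonal embedding of $Z(G)^{0}\cap G^{\mathrm{der}}$, and this intersection lies in $Z(G^{\mathrm{der}})$. Since $G$ is split of semisimple rank $1$, the derived group $G^{\mathrm{der}}$ is a split semisimple group of rank $1$, so the classification of such groups leaves only two possibilities, $G^{\mathrm{der}}\cong SL_{2}$ or $G^{\mathrm{der}}\cong PGL_{2}$. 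The connected center $Z(G)^{0}$ is a split torus, hence isomorphic to $\mathbb{G}_{m}^{n}$ for some $n\geq 0$. This reduces the problem to a case analysis on $G^{\mathrm{der}}$ and on the finite subgroup $Z(G)^{0}\cap G^{\mathrm{der}}$.

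If $G^{\mathrm{der}}\cong PGL_{2}$, then $Z(G^{\mathrm{der}})=1$ forces $Z(G)^{0}\cap G^{\mathrm{der}}=1$, whence the multiplication map is an isomorphism and $G\cong PGL_{2}\times\mathbb{G}_{m}^{n}$. If $G^{\mathrm{der}}\cong SL_{2}$, then $Z(G)^{0}\cap G^{\mathrm{der}}\subseteq Z(SL_{2})=\mu_{2}$ is either trivial, yielding $G\cong SL_{2}\times\mathbb{G}_{m}^{n}$, or equal to $\mu_{2}$, in which case $G\cong(SL_{2}\times\mathbb{G}_{m}^{n})/\mu_{2}$ where $\mu_{2}$ sits diagonally via the center of $SL_{2}$ and a nontrivial character $\chi\colon\mu_{2}\hookrightarrow\mathbb{G}_{m}^{n}$. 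The key step here is to normalize $\chi$: the set of nontrivial elements of $\mathrm{Hom}(\mu_{2},\mathbb{G}_{m}^{n})\cong(\Z/2)^{n}$ forms a single orbit under $\mathrm{Aut}(\mathbb{G}_{m}^{n})=GL_{n}(\Z)$, because the reduction map $GL_{n}(\Z)\to GL_{n}(\Z/2)$ is surjective and $GL_{n}(\Z/2)$ acts transitively on nonzero vectors. After such a coordinate change we may assume $\chi$ is the embedding into the first $\mathbb{G}_{m}$ factor, and then the well-known isomorphism $(SL_{2}\times\mathbb{G}_{m})/\mu_{2}\cong GL_{2}$ given by $(g,t)\mapsto tg$ identifies $G$ with $GL_{2}\times\mathbb{G}_{m}^{n-1}$.

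The main obstacle is really the last case: one must verify that the fppf quotient by the diagonal $\mu_{2}$ is smooth and represents the expected group scheme, and check that the normalization of $\chi$ can be carried out compatibly with the split structure. Both points are entirely standard, and since the proposition is recorded (with a full proof) as Theorem 20.3.3 of \cite{Milne_2017}, in the paper I would simply cite that reference rather than reproduce the argument.
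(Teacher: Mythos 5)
Your argument is correct: the almost-direct-product decomposition $G = Z(G)^{0}\cdot G^{\mathrm{der}}$, the classification of split semisimple rank-$1$ groups as $SL_2$ or $PGL_2$, the normalization of the embedding $\mu_2\hookrightarrow\mathbb{G}_m^n$ via the surjection $GL_n(\Z)\to GL_n(\Z/2)$, and the identification $(SL_2\times\mathbb{G}_m)/\mu_2\cong GL_2$ are all sound (note $\mathrm{char}\,k\neq 2$ here, so $\mu_2$ is étale and the case analysis on $Z(G)^0\cap G^{\mathrm{der}}$ is as you describe). The paper gives no argument of its own and simply cites Milne, Theorem 20.3.3, which is exactly the standard proof you have written out, so the two approaches coincide.
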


The flag variety of any group of semi simple rank 1 is $\mathbb{P}^1$. In order to compute the possible values of $\gamma_{f,g}$ we have to compute the size of the orbit of $G^\sigma_f$ on $\mathbb{P}^1$.

Let $K$ be the $k$ points of one of the groups from the previous proposition, let $\sigma$ be an algebraic involution on $K$ and let $M=K^\sigma$. Let $Z=Z(K)^0$ be the connected component of the center of $K$.

$\sigma$ induces an involution on $K/Z$, and $M$ projects into the fixed points of this involution on $K/Z$. 

Note that the action of $K$ on its flag variety factors through $K/Z$. In the first case of Proposition \ref{rank1 groups} we have $K/Z=SL_2(k)$, in the two other cases we have $K/Z=PGL_2(k)$. Thus, we are interested in symmetric subgroups of $PGL_2(k),SL_2(k)$.

\begin{prop}
    The symmetric subgroups of $SL_2(k)$ up to conjugation are:
    \begin{enumerate}
        \item $SL_2(k)$.
        \item $T$ a split torus.
        \item $\Tilde{T}$ a non-split torus.
    \end{enumerate}

    The symmetric subgroups of $PGL_2(k)$ up to conjugation are:
    \begin{enumerate}
        \item $PGL_2(k)$.
        \item $N_{PGL_2(k)}(T)$ a normalizer of a split torus.
        \item $N_{PGL_2(k)}(\tilde{T})$ a normalizer of a non-split torus. 
    \end{enumerate}
    See \cite{AGB-120006486}.

\end{prop}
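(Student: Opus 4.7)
The plan is to reduce the classification of symmetric subgroups to the (much shorter) classification of algebraic involutions of $SL_2$ and $PGL_2$ over the finite residue field $k$, and then to compute the fixed subgroup explicitly in each case. Throughout I use that $\mathrm{char}(k) \neq 2$, so that Jordan decomposition and square-root arguments behave well.

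First I would observe that every algebraic automorphism of $SL_2$ or $PGL_2$ is inner. Indeed, the Dynkin diagram of type $A_1$ has no non-trivial symmetries, and the transpose-inverse outer automorphism of $SL_n$ becomes inner when $n=2$ (it is conjugation by $w = \bigl(\begin{smallmatrix} 0 & 1 \\ -1 & 0\end{smallmatrix}\bigr)$). Consequently, every involution is of the form $\mathrm{Int}([x])$ for some $[x] \in PGL_2(k)$ of order dividing $2$, and two involutions produce conjugate symmetric subgroups if and only if the corresponding classes in $PGL_2(k)$ are themselves conjugate.

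Next I would classify the order-$\leq 2$ elements of $PGL_2(k)$ up to conjugation. Lifting $[x]$ to $x \in GL_2(k)$, the condition $[x]^2 = 1$ gives $x^2 = \lambda I$ for some $\lambda \in k^*$; if $[x] \neq 1$ then by Cayley-Hamilton we have $\mathrm{tr}(x) = 0$ and characteristic polynomial $t^2 - \lambda$. Rescaling $x$ by $\mu \in k^*$ replaces $\lambda$ by $\mu^2\lambda$, so the non-trivial conjugacy classes in $PGL_2(k)$ are indexed by $k^*/(k^*)^2$, which has exactly two elements for a finite field of odd characteristic. Thus there are exactly three classes of involutions: the trivial one; the split class, represented by conjugation by $\mathrm{diag}(1,-1)$ (the case $\lambda$ a square); and the non-split class, represented by conjugation by any $x$ with characteristic polynomial $t^2-\lambda$ for a fixed non-square $\lambda$.

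Finally I would compute the fixed subgroups in each case. The trivial involution gives the whole group. For $\sigma = \mathrm{Int}(\mathrm{diag}(1,-1))$ acting on $SL_2(k)$, a matrix commutes with $\mathrm{diag}(1,-1)$ iff it is diagonal, so $SL_2(k)^\sigma = T$ is the split diagonal torus. For the $PGL_2(k)$ version, in addition to the centralizer we must include all $[y]$ with $y\,\mathrm{diag}(1,-1)\,y^{-1} = -\mathrm{diag}(1,-1)$; a direct $2\times 2$ matrix calculation shows such $y$ is anti-diagonal, hence $PGL_2(k)^\sigma = N_{PGL_2(k)}(T)$. The non-split case proceeds identically: the centralizer in $SL_2(k)$ of an element with irreducible characteristic polynomial $t^2-\lambda$ is a non-split torus $\tilde T$ (splitting only over $k(\sqrt\lambda)$), and in $PGL_2(k)$ one again picks up the normalizer, giving $N_{PGL_2(k)}(\tilde T)$. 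The main obstacle is verifying that the three cases yield pairwise non-conjugate symmetric subgroups; this is settled by noting that $SL_2(k)$, $T$, and $\tilde T$ are distinguished by dimension and by the $k$-rational structure of the torus (split vs. non-split), so no $G(k)$-conjugation can identify them.
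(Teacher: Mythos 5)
Your argument is correct. The paper itself offers no proof of this proposition --- it simply cites a reference --- so your self-contained derivation is a genuine (and welcome) alternative. The route you take is the standard one: since the Dynkin diagram of type $A_1$ has no symmetries, every involution of $\mathbf{SL}_2$ or $\mathbf{PGL}_2$ over $k$ is $\mathrm{Int}([x])$ for some $[x]\in PGL_2(k)$ with $[x]^2=1$; the nontrivial such classes are parametrized by $k^\times/(k^\times)^2$ via the characteristic polynomial $t^2-\lambda$ of a trace-zero lift (using that a regular element of $GL_2(k)$ is determined up to conjugacy by its characteristic polynomial, which is the one small step you leave implicit); and the fixed groups are then computed by solving $xgx^{-1}=cg$ with $c=\pm1$, the sign $c=-1$ being absorbed into the center for $PGL_2$ (producing the normalizer) but not for $SL_2$ (producing only the torus). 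Two minor remarks. First, you assert that two involutions yield conjugate symmetric subgroups \emph{if and only if} they are conjugate; only the ``if'' direction is needed to produce the list, and that direction is immediate from $G^{\mathrm{Int}(h)\sigma\mathrm{Int}(h)^{-1}}=hG^{\sigma}h^{-1}$, so you should not rely on the unproven converse. Second, your conjugation is by $PGL_2(k)$ (equivalently $GL_2(k)$) rather than by $SL_2(k)$ itself; this is harmless here both because $GL_2(k)=SL_2(k)\cdot C$ for $C$ the relevant (split or non-split) maximal torus of $GL_2(k)$, and because the proposition is applied only to orbit counts on $\mathbb{P}^1(k)$, where $PGL_2(k)$-conjugacy is the right equivalence. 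With those caveats noted, the proof is complete and correct in odd characteristic, which is the standing assumption of the paper.
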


In the first two cases of Proposition \ref{rank1 groups}, all the $\sigma$ stable elements of $K/Z$ lift to $M$. 

In the third case this might not be true. However, if we restrict to $SL_2\subset GL_2\times \mathbb{G}^r_m$ it is true. That is, all $\sigma$ stable elements of $SL_2/Z$ do lift to $SL_2\cap M$. This means that $M/Z$ must contain the index 4 subgroup $T^2\subset N_{PGL_2(k)}(T)$ both in the case of a split and in the case of non-split torus.

Consider $\tilde{M}=Im(M\rightarrow Aut(\mathbb{P}^1))$ and its orbits on the flag variety of $K$, the possibilities are summarized by the next proposition:

\begin{prop}\label{7.3}
    Let $\tilde{M}$ be the image of $M$ in the automorphism group of the flag variety of $K$, which is isomorphic to $PGL_2(k)$.

    Let $T_{sp}$ be a split torus of $PGL_2(k)$ and let $w\in N_{PGL_2(k)}(T_{sp})$ be an element that normalizes $T_{sp}$ but is not in $T_{sp}$. 

    Let $T_n$ be a non-split torus of $PGL_2(k)$ and let $\Tilde{w}\in N_{PGL_2(k)}(T_n)$ be an element that normalizes $T_n$ but is not in $T_n$.

    The options for $\tilde{M}$ up to conjugation, as well as the options for the numbers and sizes of its orbits on $\mathbb{P}^1$ are given by Table \ref{tab:my_label}.
    \begin{table}[]
        \caption{Subgroups $\Tilde{M}$ of $PGL_2(k)$ which can be obtained as the image of $M$. Together with the number and sizes of their orbits on $\p^1$. }
            \begin{center}

    \begin{tabular}{|c|c|c|}
    \hline
    $\Tilde{M}$ & number of orbits & sizes of orbits \\
    \hline
        $PSL_2(k)$ &1 &$q+1$  \\
        \hline
        $PGL_2(k)$ &1 &$q+1$ \\
         \hline
         $T_{sp}$& 3 &$1,1,q-1$ \\
         \hline
          $T_{sp}^2$& 4 &$1,1,\frac{q-1}{2},\frac{q-1}{2}$\\
         \hline
         $T_{sp}\cup wT_{sp}$ & 2 & $2,q-1$\\
         \hline
         $wT_{sp}^2\cup T_{sp}^2$& 3 & $2,\frac{q-1}{2},\frac{q-1}{2}$\\
         \hline
         $wT_{sp}\cup T_{sp}^2$& 2 & $2,q-1$\\
         \hline
         
         \hline
         $T_n$& 1 &$q+1$ \\
         \hline
          $T_n^2$& 2 &$\frac{q+1}{2},\frac{q+1}{2}$\\
         \hline
         $T_n\cup \Tilde{w}T_n$&  1 & $q+1$\\
         \hline
         $\Tilde{w}T_n^2\cup T_n^2$& 2 & $\frac{q+1}{2},\frac{q+1}{2}$\\
         \hline
         $\Tilde{w}T_n\cup T_n^2$& 1 & $q+1$\\
         \hline

    \end{tabular}
    \label{tab:my_label}
            \end{center}

        \end{table}

        \end{prop}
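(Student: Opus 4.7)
The plan is to carry out a finite case analysis. Since the action of $K$ on the flag variety $\mathbb{P}^1$ factors through $K/Z$, and by Proposition \ref{rank1 groups} we have $K/Z \in \{\mathrm{SL}_2(k), \mathrm{PGL}_2(k)\}$, it is enough to determine the image $\tilde M \subset \mathrm{PGL}_2(k) = \mathrm{Aut}(\mathbb{P}^1_k)$ for each symmetric subgroup $M$ of $K$, and then to compute the orbits of $\tilde M$ on $\mathbb{P}^1(k)$ directly.

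For the enumeration of $\tilde M$: in the first two cases of Proposition \ref{rank1 groups} every $\sigma$-fixed element of $K/Z$ lifts to $M$, so $\tilde M$ coincides with the image in $\mathrm{PGL}_2(k)$ of one of the three symmetric subgroups of $\mathrm{SL}_2(k)$ or $\mathrm{PGL}_2(k)$ listed in the previous proposition. In the third case only the inclusion $T^2 \subset \tilde M$ is guaranteed (as explained in the text, with $T^2$ the index-$4$ subgroup of $N := N_{\mathrm{PGL}_2(k)}(T)$). The key observation is that $N/T^2$ is an abelian group of order $4$: from $w t w^{-1} = t^{-1}$ one gets $[w,t] = t^{-2} \in T^2$, and $w^2 = 1$ in $\mathrm{PGL}_2$; hence $N/T^2 \cong \mathbb{Z}/2 \times \mathbb{Z}/2$. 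Its five subgroups produce exactly the five rows listed in the table in each of the split and non-split halves.

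It remains to compute the orbit structure in each case. The simple cases rely on standard facts: $\mathrm{PSL}_2(k)$ is already transitive on $\mathbb{P}^1(k)$; a split torus $T_s$ fixes exactly $0$ and $\infty$ and is simply transitive on $k^\times$, so $T_s^2$ partitions $k^\times$ into square and non-square cosets of size $(q-1)/2$; a non-split torus $T_n$ acts simply transitively on $\mathbb{P}^1(k)$ by the count $|T_n(k)| = q+1 = |\mathbb{P}^1(k)|$ combined with the absence of $k$-rational fixed points, so $T_n^2$ has two orbits of size $(q+1)/2$. The main obstacle is the last two rows of each half of the table: the two index-$2$ subgroups of $N$ that strictly contain $T^2$ but avoid $T$ look formally alike yet give genuinely different orbit structures. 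To distinguish them I would fix a basepoint $p \in \mathbb{P}^1(k)$ on a single $T$-orbit and identify that orbit with $T$ via $t \mapsto t p$; any representative $\tilde w$ of the non-trivial coset then acts on it as $x \mapsto t_0 x^{-1}$ for an element $t_0 \in T$ whose class in $T/T^2$ depends only on the chosen coset. A direct check shows that $\tilde w$ preserves each $T^2$-orbit iff $t_0$ is a square; the two allowed cosets correspond to the two possibilities for the class of $t_0$, which accounts for the ``three orbits versus two orbits'' dichotomy in the split case and the ``two orbits versus one orbit'' dichotomy in the non-split case. Verifying each row then completes the table.
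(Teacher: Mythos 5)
Your proposal is correct and takes the same route as the paper, whose own proof merely asserts that the table ``follows from a simple computation'' in each of the cases of Propositions 7.3--7.5; you have supplied that computation, including the observation that $N_{PGL_2(k)}(T)/T^2\cong\Z/2\times\Z/2$ has exactly five subgroups. Your identification of the two index-$2$ subgroups strictly between $T^2$ and $N_{PGL_2(k)}(T)$ avoiding $T$, together with the square-class criterion for whether the extra involution preserves or interchanges the two $T^2$-orbits, is precisely the content behind the rows labelled $wT_s^2\cup T_s^2$ and $wT_s\cup T_s^2$ (and their non-split analogues) in the table.
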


        \begin{proof}
            This follows from a simple computation in each case given by the previous proposition. As well as in the case $K=GL_2(k)\times (k^\times)^r$ and $T^2\subset \Tilde{M}\subset N_{PGL_2(k)}(T)$ for $T$ a torus which may be either split or a non-split.
        \end{proof}

Recall that the structure constant $\gamma_{f,g}$ is equal to the size of an orbit of $\Tilde{M}$ on $\mathbb{P}^1$.

A corollary we mention here is the fact that the structure constants $\gamma_{f,g}$ are polynomial in $q$. In fact we have:
\begin{cor}\label{3.21}
The possible values of $\gamma_{f,g}$ are $1,2,\frac{q-1}{2},q-1,q,q+1$ and $\frac{q+1}{2}$.
\end{cor}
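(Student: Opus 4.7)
The plan is to observe that this corollary is essentially a bookkeeping consequence of the two main ingredients already assembled: Proposition \ref{key_computation} (together with Theorem \ref{simpleformula}) handling the case $l_\sigma(s\times x)\neq l_\sigma(x)$, and Proposition \ref{6.1} plus Proposition \ref{7.3} handling the case $l_\sigma(s\times x)= l_\sigma(x)$. So the strategy is just to split into these two cases and read off the possible orbit sizes.

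First I would dispose of the case $l_\sigma(s\times x)\neq l_\sigma(x)$. In this situation Theorem \ref{simpleformula} gives $T_s1_x=1_{s\times x}$ or $T_s1_x=(q-1)1_x+q1_{s\times x}$, and comparing with the formula $(T_s+1)1_x=\gamma_{f,g}D_{f,o}$ of Proposition \ref{key_computation} one sees that $\gamma_{f,g}=1$ in the first subcase and $\gamma_{f,g}=q$ in the second. Thus the values $1$ and $q$ are accounted for.

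Next I would treat the equality case $l_\sigma(s\times x)=l_\sigma(x)$. As noted at the beginning of the section, this forces the affine hyperplane spanned by $f=g\C\cap gs\C$ to be $\sigma$-stable, so the hypotheses of Section 6 apply to $f$ and one may form $f'=f\cup\sigma(f)$ and the reductive quotient $G_{f'}$, which has semisimple rank one. By Proposition \ref{6.1}, $\gamma_{f,g}$ equals the size of the $G^\sigma_{f'}$-orbit of $b_f(g\C)$ on the flag variety $\mathbb{P}^1$ of $G_{f'}$. Since this action factors through $\tilde M=\mathrm{Im}(G^\sigma_{f'}\to \mathrm{Aut}(\mathbb{P}^1))$, I would then invoke Proposition \ref{7.3}, whose table enumerates every possibility for $\tilde M$ and the corresponding orbit sizes: $q+1$, $1$, $q-1$, $\tfrac{q-1}{2}$, $2$, and $\tfrac{q+1}{2}$.

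Combining the two cases yields the claimed list $\{1,2,\tfrac{q-1}{2},q-1,q,q+1,\tfrac{q+1}{2}\}$. There is essentially no obstacle here since the hard work has already been done; the only thing to be careful about is confirming that each orbit size appearing in the table of Proposition \ref{7.3} is genuinely realized as some $\gamma_{f,g}$, but even if some value were not realized in a given example, the corollary only asserts that these are the \emph{possible} values, so it suffices that the list of orbit sizes in the table is exhaustive.
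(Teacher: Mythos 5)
Your proof is correct and follows the same route as the paper: the case $l_\sigma(s\times x)\neq l_\sigma(x)$ is handled by Theorem \ref{simpleformula} (giving $1$ or $q$), and the equality case is reduced via Proposition \ref{6.1} to the orbit sizes tabulated in Proposition \ref{7.3}. Your added remarks on why Section 6 applies and on "possible" versus "realized" values are consistent with the paper's treatment.
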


\begin{proof}
By Theorem \ref{simpleformula} if $l_\sigma(gs\C)\neq l_\sigma(g\C)$ then $\gamma_{f,g}$ is either $1$ or $q$.
By Proposition \ref{7.3} if $l_\sigma(gs\C)=l_\sigma(g\C)$ then $\gamma_{f,g}$ is one of $1,2,\frac{q-1}{2},q-1,q+1$ and $\frac{q+1}{2}$.
\end{proof}

\begin{Remark}
All the values mentioned in Corollary \ref{3.21} occur.

The values $1,q,q+1$ are obtained in the case $G=SL_4$ and $H=Sp_4$. 

The value $\frac{q-1}{2}$ is obtained in the case $G=SL_2$, $H=T$ a maximal split torus.

The value $q-1$ is obtained in the case $G=GL_2$ and $H=T$ a maximal split torus. 

The value $2$ is obtained in the case $G=PGL_2$ and $H=N_G(T)$ a normalizer of a maximal split torus.

The value $\frac{q+1}{2}$ is obtained in the case $G=SL_2$ and $H=\Tilde{T}$ a non-split torus.
\end{Remark}

 We will now present a way to determine $\gamma_{f,g}$ directly using the $l_\sigma$ lengths of the chambers that contain $f$.

\begin{prop}\label{3.23}
Let $\C_1,...\C_{q+1}$ be the collection of chambers that contain $f$. The set 

$\{l_\sigma(\C_1),...,l_\sigma(\C_{q+1})\}$ is of size at most 2. Furthermore, the difference between different $l_\sigma$ lengths is at most 2 if $l_\sigma(x)\neq l_\sigma(x\times s)$ and at most 1 if $l_\sigma(x)= l_\sigma(x\times s)$.
\end{prop}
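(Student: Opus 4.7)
The plan is to split on whether $l_\sigma(x)=l_\sigma(s\times x)$ and treat each case using the geometry already developed. In the strict-inequality case, assume without loss of generality $l_\sigma(g\C)<l_\sigma(gs\C)$. By Lemma \ref{3.9}, every $\sigma$-stable apartment through $f$ contains $g\C$, so for any $\C_i\ne g\C$ in $C_f$ and any $\sigma$-stable apartment $\A_i\ni\C_i$ (supplied by Corollary \ref{2.23}), Proposition \ref{main lemma} applied at $g\C$ yields an $h\in H$ fixing $g\C$ with $h\A_i=\A$, forcing $h\C_i=gs\C$. Thus every $\C_i\ne g\C$ is $H$-equivalent to $gs\C$, and since $l_\sigma$ is $H$-invariant, $\{l_\sigma(\C_i)\}=\{l_\sigma(g\C),l_\sigma(gs\C)\}$; the mod-$2$ parity argument inside the proof of Lemma \ref{3.9} shows these two values differ by exactly $2$.

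In the equality case, the same proof supplies the key ingredient: the affine hyperplane $P=\mathrm{span}(f)$ is $\sigma$-stable. Given any $\C_i\in C_f$, pick a $\sigma$-stable apartment $\A_i\ni\C_i$; then $\A_i\supset\sigma(f)$ and the wall $P_i\subset\A_i$ spanned by $f$ is $\sigma|_{\A_i}$-stable. Because $r_{P_i}$ commutes with every affine isometry of $\A_i$ preserving $P_i$, in particular with $\sigma|_{\A_i}$, one gets
\[
l_\sigma(r_{P_i}\C_i)=d(r_{P_i}\C_i,\sigma r_{P_i}\C_i)=d(r_{P_i}\C_i,r_{P_i}\sigma\C_i)=l_\sigma(\C_i),
\]
so the two chambers of $\A_i$ containing $f$ share their $l_\sigma$-value.

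To bound the whole set I identify $C_f$ with $B(G_{f'})\cong\mathbb{P}^1(k)$ via the bijection $b_f$ of Section 6 and invoke the commutative diagram $b_{\sigma f}\circ\sigma=\sigma\circ b_f$ from Proposition \ref{commutative diagram}. Set $\tau:=b_{\sigma f}^{-1}\circ b_f:C_f\to C_{\sigma f}$: when $b_f(\C)$ is $\sigma$-fixed in $\mathbb{P}^1$ the diagram forces $\sigma\C=\tau(\C)$, while otherwise $\sigma\C$ and $\tau(\C)$ are distinct chambers of the rank-one residue $C_{\sigma f}$, hence adjacent. Geometrically, inside any $\sigma$-stable apartment $\A_i\ni\C$, the chamber $\tau(\C)$ is the chamber of $C_{\sigma f}\cap\A_i$ lying on the same side of $P_i$ as $\C$; since all such apartments are isomorphic as Coxeter complexes via an apartment isomorphism fixing $f\cup\sigma(f)$ (Theorem \ref{2.10}), the distance $d(\C,\tau(\C))=:m$ is a constant depending only on the pair $(f,\sigma f)$. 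Consequently $l_\sigma(\C)=m$ when $b_f(\C)$ is $\sigma$-fixed and $l_\sigma(\C)=m+1$ otherwise, giving $\{l_\sigma(\C_i)\}\subseteq\{m,m+1\}$ and settling both claims in the equality case.

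The main technical obstacle will be justifying carefully that $d(\C,\tau(\C))$ really is the same integer $m$ for every $\C\in C_f$ and pinning down that the extra wall crossed in the non-$\sigma$-fixed case contributes exactly $+1$ rather than $-1$. Both points reduce to a self-contained computation inside a single $\sigma$-stable apartment, using the decomposition of $\sigma|_{\A_i}$ relative to the wall $P_i$ together with the symmetry under $r_{P_i}$ established above.
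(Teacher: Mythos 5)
Your proposal is correct. The strict-inequality case is exactly the paper's argument: Lemma \ref{3.9} plus Proposition \ref{main lemma} put every chamber of $C_f$ other than $g\C$ in the $H$-orbit of $gs\C$, and the parity-of-$l_\sigma$ observation pins the gap at $2$.

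For the equality case you take a genuinely different route. The paper stays entirely metric and works in a single apartment: for any $\C'\supset f$ and $\C''\supset\sigma(f)$ in a common apartment, $d(\C',\C'')$ takes only two values differing by $1$, according to whether $\C'$ and $\C''$ lie on the same side of $\mathrm{span}(f)$; applying this with $\C''=\sigma(\C')$ finishes the proof in a few lines, with no input from Section 6. You instead transport the problem to $\mathbb{P}^1(k)$ via $b_f$ and the equivariance diagram, introduce $\tau=b_{\sigma f}^{-1}\circ b_f$, and read off $l_\sigma(\C)\in\{m,m+1\}$ from whether $b_f(\C)$ is $\sigma$-fixed. This is heavier — it imports the parahoric machinery and, crucially, the ``same side iff $b_f(\C)$ is $\sigma$-stable'' proposition, which in the paper appears \emph{after} Proposition \ref{3.23} (its proof is independent of \ref{3.23}, so there is no circularity, but you should say so explicitly) — yet it buys more: you simultaneously establish the Remark following \ref{3.23} and essentially Corollary \ref{3.24}, i.e.\ exactly \emph{which} chambers realize the minimal length. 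The two residual points you flag (constancy of $m=d(\C,\tau(\C))$ and the sign of the extra wall) do close as you indicate: both reduce to counting the walls strictly separating $f$ from $\sigma(f)$, a number that is apartment-independent by Theorem \ref{2.10}(3), with $\mathrm{span}(f)$ contributing the single extra crossing exactly when the sides differ. Either write out that wall count or simply fall back on the paper's one-apartment argument for this case; as written your sketch is sound but defers the step that the paper makes the whole proof.
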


\begin{proof}
If $l_\sigma(x)< l_\sigma(x\times s)$ then by Lemma \ref{3.9} every chamber in $\{\C_1,...\C_{q+1}\}$ which is not equal to $g\C$ is contained in a $\sigma$ stable apartment, $\A$, that satisfies $g\C\subset\A$. Thus, by Proposition \ref{main lemma} all chambers in $\{\C_1,...\C_{q+1}\}$ different from $g\C$ are in the same $H$ orbit and hence have the same $l_\sigma$ length. This $l_\sigma$ length is equal to $l_\sigma(x\times s)=l_\sigma(x)+2$. The same argument also works in the case $l_\sigma(x)> l_\sigma(x\times s)$.

Now assume that $l_\sigma(x)= l_\sigma(x\times s)$. Let $\C'$ be a chamber that contains $f$ and let $\C''$ be a chamber that contains $\sigma(f)$. There is an apartment $\A$ that contains both $\C'$ and $\C''$. Denote by $\overline{f}$ the affine subspace spanned by $f$ inside $\A$. There are two cases, either $\C'$ and $\C''$ are on the same side of $\overline{f}$ or they are on different sides. This distinction determines $d(\C',\C'')$, in the case they are on different sides their distance is larger by $1$ than the distance in the case they are on the same side.

For any chamber $\C'$ that contains $f$, $\sigma(\C')$ contains $\sigma(f)$. Therefore there are two possible values for $l_\sigma(\C')=d(\C',\sigma(\C'))$ and the difference between them is $1.$

\end{proof}

\begin{Remark}\label{r7.9}
    Notice that we proved that if $l_\sigma(x)= l_\sigma(x\times s)$ then $l_\sigma(g\C)$ may take one of two values, those values depend only on $f$. $l_\sigma(g\C)$ attains its minimal value if and only if $\sigma(g\C)$ and $g\C$ are on the same side of $f$ inside some apartment. 
\end{Remark}

\begin{prop}
    Assume $f$ is a facet of codimension 1, let $f'=f\cup \sigma(f)$ and let $G_{f'}$ be the reductive group of semisimple rank 1 defined over $k$ associated to $f'$. Recall we have an action of $\sigma$ on $G_{f'}$. Let $b_f$ be the map from $C_f$, the set of chambers that contain $f$, to $B(G_{f'})$, the flag variety of $G_{f'}$. Then $g\C$ and $\sigma(g\C)$ are on the same side of $f$ in some apartment if and only if $b_L(g\C)$ is $\sigma$ stable.
\end{prop}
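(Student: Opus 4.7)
The plan is to reduce to a single $\sigma$-stable apartment and invoke the commutative diagram of Proposition \ref{commutative diagram}. By Proposition \ref{main lemma} there is a $\sigma$-stable apartment $\A$ containing $g\C$; since $\A$ is $\sigma$-stable it automatically contains $\sigma(g\C)$, and by the standing assumption of this section the affine hyperplane $H$ spanned by $f$ is $\sigma$-stable, so $H\subset\A$. The hyperplane $H$ divides $\A$ into two half-spaces $\A^+$ and $\A^-$; say $g\C\subset\A^+$. A preliminary observation is that the condition ``$g\C$ and $\sigma(g\C)$ are on the same side of $f$ in some apartment'' is independent of the apartment chosen (using convexity of apartments, Theorem \ref{2.13}, and the apartment isomorphism of Theorem \ref{2.10}), and so is equivalent to ``$\sigma(g\C)\subset\A^+$''.

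The second step is to analyze how $b_f$ and $b_{\sigma(f)}$ behave on chambers of $\A$. Since $G_{f'}$ has semisimple rank one, its root system is $\{\pm\alpha\}$, where $\alpha$ is the affine root defining $H$, normalized so that $\alpha>0$ on $\A^+$. The image of $\mathbf{T}(k)$ in $G_{f'}$ is a maximal split torus $T_{f'}$, and the two Borels of $G_{f'}$ containing $T_{f'}$ are $B^+$ and $B^-$, corresponding to the positive systems $\{\alpha\}$ and $\{-\alpha\}$. Using Propositions \ref{affine to spherical building} and \ref{face isomorphism} and unwinding the definition $b_L=(res^{L'}_L)^{-1}\circ\overline{res}_L$, one verifies that the unique chamber of $\A$ containing $f$ that lies in $\A^+$ maps under $b_f$ to $B^+$, while the one in $\A^-$ maps to $B^-$. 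The same description holds for $b_{\sigma(f)}$ on the chambers of $\A$ containing $\sigma(f)$, with the same sign convention, because both maps factor through the common torus $T_{f'}$ and the same distinguished positive root $\alpha$ on $\A$.

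The proof then concludes by a chain of equivalences. From the previous step $b_f(g\C)=B^+$, and $b_{\sigma(f)}(\sigma(g\C))$ equals $B^+$ if $\sigma(g\C)\subset\A^+$ and equals $B^-$ if $\sigma(g\C)\subset\A^-$. Proposition \ref{commutative diagram} supplies $\sigma(b_f(g\C))=b_{\sigma(f)}(\sigma(g\C))$, so
\[
b_f(g\C) \text{ is } \sigma\text{-stable} \iff \sigma(B^+)=B^+ \iff b_{\sigma(f)}(\sigma(g\C))=B^+ \iff \sigma(g\C)\subset\A^+,
\]
which is precisely the ``same side'' condition. The main technical point, which I expect will require the most care, is the sign-convention coherence between $b_f$ and $b_{\sigma(f)}$ in the second step: both maps have to send the $\A^+$-side chamber to the \emph{same} Borel $B^+$. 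This is essentially because $B^+$ is characterized intrinsically as the Borel whose unipotent radical is the image of the root groups $\mathcal{U}_{\alpha+k}$ with $\alpha+k \geq 0$ on $\A^+$, and these groups pointwise fix precisely the chambers on the $\alpha>0$ side of $H$, regardless of whether one starts from $f$ or $\sigma(f)$.
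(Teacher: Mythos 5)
Your proof is correct, and while its overall architecture matches the paper's, the key technical step is handled by a genuinely different argument. Both proofs hinge on the commutative diagram of Proposition \ref{commutative diagram}, which converts $\sigma$-stability of $b_f(g\C)$ into the assertion that $\sigma(g\C)$ is the chamber of $C_{\sigma(f)}$ corresponding to the same point of $B(G_{f'})$ as $g\C$, and both then reduce to a ``same side of the wall'' coherence claim between the bijections $C_f\to B(G_{f'})$ and $C_{\sigma(f)}\to B(G_{f'})$. The paper proves that claim synthetically: it lifts a point $p\in B(G_{f'})$ to a subgroup $S\subset P_{f'}$ and uses convexity of the fixed-point set of an element of $S$ in an apartment (Theorems \ref{2.13} and \ref{2.11}) to conclude that the chambers $S$ singles out over $f$ and over $\sigma(f)$ cannot straddle the wall; this treats all $q+1$ chambers of $C_f$ at once. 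You instead confine everything to one $\sigma$-stable apartment $\A$ (which suffices, since $g\C$ and $\sigma(g\C)$ both lie in it) and identify the two Borels $B^{\pm}$ explicitly via the root-group description of the parahoric group schemes, checking that $b_f$ and $b_{\sigma(f)}$ share a sign convention because both unipotent radicals are images of the single affine root group $U_{\alpha+k}$ vanishing on the wall. This is exactly the content the paper's convexity argument establishes abstractly, and your verification of it is sound, so either route works; yours is more computational but makes the sign coherence explicit. Two cosmetic points: your $H$ for the hyperplane collides with the paper's $H=G^{\sigma}$, and the containment of that hyperplane in $\A$ follows simply from $f\subset\A$ — the $\sigma$-stability of the hyperplane is needed rather to place $\sigma(f)$ inside it, which is what makes $f'=f\cup\sigma(f)$ a subset of the wall and the whole comparison of $b_f$ with $b_{\sigma(f)}$ meaningful.
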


\begin{proof}
    Recall the commutative diagram from Proposition \ref{commutative diagram}.

\begin{center}   
    
    \begin{tikzcd}
B(G_{f'}) \arrow{r}{\sigma} \arrow{d}{res^{f'}_{f}} & B(G_{f'}) \arrow{d}{res^{f'}_{\sigma(f)}} \\
B(G_{f}) \arrow{r}{\sigma} \arrow{d}{\overline{res}^{-1}_f} & B(G_{\sigma(f)}) \arrow{d}{\overline{res}^{-1}_{\sigma(f)}} \\ C_f \arrow{r}{\sigma} & C_{\sigma(f)}
\end{tikzcd}
\end{center}

    Let $p\in B(G_{f'})$, $p$ is fixed by $\sigma$ if and only if $\sigma(res^{f'}_f(p))=res^{f'}_{\sigma(f)}(p)$. Let $\C_p\in C_f$ be the chamber that corresponds to $res^{f'}_f(p)$ and let $\C_p'\in C_{\sigma(f)}$ the chamber that corresponds to $res^{f'}_{\sigma(f)}(p)$. We know that $\sigma(p)=p$ if and only if $\sigma(\C_p)=\C_p'$, thus it is enough to prove that $\C_p$ and $\C_p'$ are always on the same side of $f$. 
    
    We lift $p$ to a subgroup of $P_{f'}$, we denote it by $S$. We have $S\subset P_{f'}={\mathcal G}_{f'}(\s)$. The restriction maps are just the restriction to the groups $P_f$ and $P_{\sigma(f)}$ that contain $P_{f'}$. The passage to chambers in $C_f$ (or $C_{\sigma(f)}$) is given by taking the points in the building fixed by $S$ and finding inside them a chamber that contains $f$ (respectively $\sigma(f)$). Therefore, we only need to show that if $\C'$ is a chamber that contains $f$, $\A$ is an apartment that contains both $\C'$ and $\sigma(f)$, and $g'\in S$ fixes both $\C'$ and $\sigma(f)$ pointwise then $g'$ fixes a chamber in $\A$ that contains $\sigma(f)$ on the same side of $f$ as $\C$. This follows from the fact that the set of elements in $\A$ fixed by $g'$ is convex (see Theorem \ref{2.13} and part 5 of  Theorem \ref{2.11}). 
\end{proof}

\begin{cor}\label{3.24}
Assume $l_\sigma(x)=l_\sigma(x\times s)$ and that not all of $l_\sigma(\C_1),...,l_\sigma(\C_{q+1})$ are equal. Let $b_f$ be the map from $C_f$, the set of chambers that contain $f$, to $B(G_{f'})$ the flag variety of $G_{f'}$. Let $\C_i$ be a chamber that contains $f$. The value $l_\sigma(\C_i)$ is minimal among $l_\sigma(\C_1),...,l_\sigma(\C_{q+1})$ if and only if $b_f(\C_i)$ is $\sigma$ stable.
\end{cor}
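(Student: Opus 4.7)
The plan is to chain together the two immediately preceding results, namely the Remark after Proposition \ref{3.23} and the proposition that identifies $\sigma$-stability of $b_f(\C_i)$ with a geometric ``same-side'' condition.

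First, I would invoke the Remark following Proposition \ref{3.23}: under the hypothesis $l_\sigma(x)=l_\sigma(s\times x)$, the values of $l_\sigma(\C_j)$ as $\C_j$ ranges over $C_f$ take at most two distinct values, differing by exactly $1$, and $l_\sigma(\C_i)$ achieves the minimum of these two values if and only if $\C_i$ and $\sigma(\C_i)$ lie on the same side of the affine hyperplane spanned by $f$ inside some apartment containing both chambers. Since we additionally assume that not all $l_\sigma(\C_j)$ coincide, this dichotomy is nontrivial, so ``minimal'' picks out a genuine proper subset of $C_f$.

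Second, I would apply the proposition preceding the corollary: for any chamber $\C_i\in C_f$, we have $\sigma(\C_i)$ and $\C_i$ on the same side of $f$ in some apartment if and only if $b_f(\C_i)\in B(G_{f'})$ is $\sigma$-stable. Although that proposition is phrased for the specific chamber $g\C$, its proof proceeds by lifting an arbitrary point $p\in B(G_{f'})$ to a subgroup $S\subset P_{f'}$ and then running through the commutative diagram of Proposition \ref{commutative diagram}; nothing distinguishes $g\C$ from a general chamber containing $f$, so the equivalence applies uniformly to every $\C_i\in C_f$.

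Composing these two equivalences yields the desired biconditional: $l_\sigma(\C_i)$ is minimal among $l_\sigma(\C_1),\dots,l_\sigma(\C_{q+1})$ if and only if $\C_i$ and $\sigma(\C_i)$ sit on the same side of $f$, which holds if and only if $b_f(\C_i)$ is $\sigma$-stable. The main conceptual work — translating ``same side of $f$'' into $\sigma$-stability of a Borel in the rank one reductive quotient $G_{f'}$ — has already been carried out in the preceding proposition via the convexity of fixed-point sets of elements of $P_{f'}$ acting on an apartment (Theorems \ref{2.13} and \ref{2.11}.5), so the corollary itself requires no further geometric input and is a formal consequence.
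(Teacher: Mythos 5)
Your proposal is correct and follows exactly the paper's argument: the paper's own proof is the one-line statement that the corollary follows from the preceding proposition (the ``same side of $f$'' $\Leftrightarrow$ ``$b_f(\C_i)$ is $\sigma$-stable'' equivalence) together with the Remark after Proposition \ref{3.23}. Your additional observation that the preceding proposition applies uniformly to every chamber in $C_f$, not just $g\C$, is a worthwhile clarification but does not change the route.
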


\begin{proof}
    This follows immediately from the previous proposition and Remark \ref{r7.9}.
\end{proof}

Using the previous result we can determine $\gamma_{f,g}$ from the $l_\sigma$ lengths of the orbits of the chambers that contain $f$.

As explained above, the calculation of $T_{s}1_{x}$ was reduced to the calculation of $\gamma_{f,g}$. To determine this number we need to know further information regarding the lengths of certain chambers.
Let $\C_1,...,\C_{q+1}$ be all the chambers that contain $f$.
Let $\s_f$ be the set of their $H$ orbits. By Proposition \ref{3.23} there are at most two possibilities for the $l_\sigma$ lengths of orbits in $\s_f$. Denote by $n_{max},n_{min}$ be the number of orbits in $\s_f$ of maximal/minimal $l_\sigma$ length respectively. Let $\delta_{x,max}$ be 1 if $l_\sigma(g\C)$ is maximal among $l_\sigma(\C_1),...,l_\sigma(\C_{q+1})$ and $0$ otherwise. The value $\delta_{x,min}$ is defined in a similar way. 

\begin{theorem}\label{3.25}
We have $n_{max},n_{min}\in\{1,2\}$.
If $l_\sigma(x)= l_\sigma(x\times s)$ then $$\gamma_{f,g}=\frac{q-1}{n_{max}}\delta_{x,max}+\frac{2}{n_{min}}\delta_{x,min}$$

\end{theorem}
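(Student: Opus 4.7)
The plan is to combine Proposition~\ref{6.1}, which identifies $\gamma_{f,g}$ with the size of the $\tilde{M}$-orbit of $b_f(g\C)$ on $\mathbb{P}^1 = B(G_{f'})$, with Corollary~\ref{3.24}, which (when not all lengths are equal) identifies chambers of minimal $l_\sigma$-length with $\sigma$-fixed points of $\mathbb{P}^1$, and then to verify the formula by inspecting the table in Proposition~\ref{7.3}. Here $\tilde{M} \subset \mathrm{Aut}(\mathbb{P}^1)$ denotes the image of $(G_{f'})^\sigma$; since it commutes with the $\sigma$-action on $\mathbb{P}^1$, it preserves the $\sigma$-fixed locus. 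Because $\sigma$ is an involution of $\mathbb{P}^1$, this locus consists of $0$, $2$, or $q+1$ $k$-rational points.

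By Proposition~\ref{3.23} there are only two possibilities: either all chambers in $C_f$ share a single $l_\sigma$-length, or there are exactly two distinct lengths differing by $1$. I would treat these two cases in turn.

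In the first case, $\delta_{x,max} = \delta_{x,min} = 1$ and $n_{max} = n_{min} =: n$, so the formula collapses to $(q+1)/n$. This case occurs precisely when $\sigma$ acts on $\mathbb{P}^1$ trivially or without $k$-rational fixed points---otherwise Corollary~\ref{3.24} would force the two fixed chambers to be strictly shorter than the remaining $q-1$. The relevant rows of Proposition~\ref{7.3}, namely the $PSL_2(k)$, $PGL_2(k)$, and non-split-torus rows, all have $\tilde{M}$-orbits of equal size and a total of $1$ or $2$ orbits, so $n \in \{1,2\}$ and the orbit size equals $(q+1)/n$. In the second case, $\sigma$ has exactly two $k$-rational fixed points $\{p_1, p_2\}$, which by Corollary~\ref{3.24} are the two minimal-length chambers, the remaining $q-1$ chambers being maximal. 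Since $\tilde{M}$ preserves $\{p_1, p_2\}$, these form either one $\tilde{M}$-orbit of size $2$ ($n_{min}=1$) or two singletons ($n_{min}=2$). The split-torus rows of Proposition~\ref{7.3} show that the complement $\mathbb{P}^1 \setminus \{p_1, p_2\}$ splits into either one $\tilde{M}$-orbit of size $q-1$ or two of size $(q-1)/2$, so $n_{max} \in \{1,2\}$. Matching the orbit size of $b_f(g\C)$ against whether $g\C$ is minimal or maximal yields the claimed formula.

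The main obstacle I anticipate is the finite bookkeeping: one must confirm row by row that in the split-torus part of Proposition~\ref{7.3} the $\tilde{M}$-orbit structure on $\mathbb{P}^1 \setminus \{p_1, p_2\}$ always has one of the two advertised shapes with equal-sized orbits. This is direct but tedious; no new ideas are required beyond the classification of rank-one symmetric subgroups that already underlies Proposition~\ref{7.3}.
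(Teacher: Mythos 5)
Your proposal is correct and follows essentially the same route as the paper: identify $\gamma_{f,g}$ with the size of the $\tilde M$-orbit of $b_f(g\C)$ on $\mathbb{P}^1$ (Proposition~\ref{6.1}, resting on Propositions~\ref{easy direction} and~\ref{ hard direction}), use Corollary~\ref{3.24} to match minimal $l_\sigma$-length with $\sigma$-fixed points, and verify the formula row by row in the table of Proposition~\ref{7.3}. Your explicit two-case split (all lengths equal versus two lengths, governed by whether $\sigma$ has $0$, $2$, or $q+1$ rational fixed points on $\mathbb{P}^1$) is just a slightly more detailed organization of the same check the paper performs.
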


\begin{proof}

By Propositions \ref{3.24}, \ref{easy direction} and \ref{ hard direction} we know that the number of orbits of minimal/maximal $\sigma$ length is equal to the number of $G^\sigma_f$ orbits on the flag variety of $G_{f'}$ of minimal/maximal size. Now a simple check of the cases in Proposition \ref{7.3} verifies the result.

\end{proof}

\section{A Module over the generic Hecke algebra}
    
In this section, we will construct an abstract module $M_t$ over the abstract Iwahori Hecke algebra $\mathcal{H}_t$, which specializes to $S(X)^I$ as a module over $\mathcal{H}(G,I)$.

\begin{defn}
The Iwahori hyper-graph $\Gamma_X$ of $X$ is defined as follows:
Its vertices correspond to the $H$ orbits on $G/I$. The vertex that corresponds to $x\in H\backslash G/I$ is denoted by $v_x$. For every facet of codimension 1 in $\B_G$, the building of $G$, and for every $o\in \Omega$, we have a hyper-edge that contains all the $H$ orbits of chambers that contain the facet $f$ with the color $o$.

There is an action of $W_{aff}$ on the vertices (but not on the graph), and there is a function $l_\sigma$ from the set of vertices to $\N$. This function corresponds to the length function $l_\sigma$ on the orbits.
\end{defn}

\begin{prop}
The possible sizes for the hyper-edges are $1,2,3$ and $4$.
\end{prop}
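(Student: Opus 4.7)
The plan is to unfold the definition and reduce to the local analysis already carried out in Sections 6 and 7. By Lemma \ref{3.13} there are exactly $q+1$ chambers $\C_{1},\ldots,\C_{q+1}$ containing a given codimension-one face $f$, and the hyper-edge attached to $(f,o)$ consists of the distinct $H$-orbits of the colored chambers $(\C_{i},o)$. Since $h\cdot(\C,o)=(h\C,\omega(h)o)$, two such colored chambers lie in the same $H$-orbit if and only if the underlying chambers lie in the same orbit of $H\cap Ker(\omega)$. Hence the hyper-edge size equals the number of $(H\cap Ker(\omega))$-orbits on the set $\{\C_{1},\ldots,\C_{q+1}\}$.

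Next, I would fix any chamber $g\C$ through $f$ (possible since $G$ acts transitively on the chambers of $\B_{G}$) and let $s\in\Tilde{\Delta}$ be the unique simple reflection for which $gs\C\neq g\C$ also contains $f$; thus $f=g\C\cap gs\C$ exactly as in the setup of Proposition \ref{key_computation}. Writing $x$ for the $H$-orbit of $(g\C,o)$, I would split into cases according to whether $l_{\sigma}(x)=l_{\sigma}(s\times x)$. If the two lengths differ, Lemma \ref{3.9} places either $g\C$ or $gs\C$ inside every $\sigma$-stable apartment through $f$, and then, exactly as in the proof of Theorem \ref{simpleformula}, Proposition \ref{main lemma} produces, for every other chamber $\C'$ through $f$, an element $h\in H$ fixing the distinguished chamber pointwise (so $h\in Ker(\omega)$) and sending the remaining chamber across $f$ to $\C'$. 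Hence all $q$ chambers other than the distinguished one form a single $(H\cap Ker(\omega))$-orbit, and the hyper-edge size in this case is exactly $2$.

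In the remaining case $l_{\sigma}(x)=l_{\sigma}(s\times x)$, the argument inside the proof of Lemma \ref{3.9} shows that $\sigma$ stabilizes the affine hyperplane spanned by $f$, so the Section 6 setup applies with $f'=f\cup\sigma(f)$. By Proposition \ref{orbit correspondacne} the $(H\cap Ker(\omega))$-orbits on chambers through $f$ are in bijection with the $G_{f'}^{\sigma}$-orbits on the flag variety $B(G_{f'})\cong\mathbb{P}^{1}$ of the semisimple rank-one reductive quotient $G_{f'}$. The enumeration in Propositions \ref{rank1 groups} and \ref{7.3} then shows that this number of orbits always lies in $\{1,2,3,4\}$. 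The main ingredients are already in place from Sections 6 and 7; the only thing left to observe is that every codimension-one face arises as $g\C\cap gs\C$ for appropriate $g\in G$ and $s\in\Tilde{\Delta}$, which is immediate from transitivity of $G$ on chambers, so no case is missed.
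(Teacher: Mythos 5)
Your proposal is correct and follows essentially the same route as the paper: the paper's own proof is a one-line citation of Theorem \ref{3.25} (that $n_{max},n_{min}\in\{1,2\}$, combined with Proposition \ref{3.23}'s bound of at most two distinct $l_\sigma$-values among the chambers through $f$), and that theorem is itself proved by exactly the reduction you carry out — the orbit correspondence of Proposition \ref{orbit correspondacne} and the rank-one orbit table of Proposition \ref{7.3}, with the unequal-length case handled by Lemma \ref{3.9} and Proposition \ref{main lemma}. Your write-up simply unfolds this chain in a self-contained way, including the correct observation that equality of colored-chamber orbits reduces to $H\cap Ker(\omega)$-orbits of the underlying chambers.
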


\begin{proof}
This follows from Proposition \ref{3.25}.
\end{proof}

\begin{defn}
    
Let $t$ be a variable and let $\mathcal{H}_{t}=\mathcal{H}_{t}(W_{aff},\tilde{\Delta})$ be the generic Hecke algebra of $(W_{aff},\tilde{\Delta})$ (defined in many sources, for example \cite{Solleveld2021}). It is defined by generators and relations as follows:

It is generated, as an algebra, by $t$, by $T_s\in \mathcal{H}_{t}(W_{aff},\tilde{\Delta})$ for any $s\in \tilde{\Delta}$ and by $T_o\in \mathcal{H}_{t}(W_{aff},\tilde{\Delta})$ for $o\in \Omega$. The element $t$ is invertible and commutes with all $T_s$ and $T_o$ and the relations the $T_s, T_o$ satisfy are:

 \begin{enumerate}
            \item $(T_s+1)(T_s-t)=0$
            \item if $s_1,s_2\in\Tilde{\Delta}$, $s_1\neq s_2$ and $(s_1s_2)^m=1$ then $T_{s_1}T_{s_2}T_{s_1}...=T_{s_2}T_{s_1}T_{s_2}...$  where each product contains $m$ elements.
            \item For $o_1,o_2\in \Omega$ we have $T_{o_1}T_{o_2}=T_{o_1o_2}$.
            \item For $o\in \Omega$ and $s\in\Tilde{\Delta}$ we have $T_o T_s=T_{o s o^{-1}}T_o$.
         \end{enumerate}

\end{defn}

When we specialize $t$ to be the size of the residue field $k$, $t=q=\#k$. The algebra $\mathcal{H}_{t}(W_{aff},\tilde{\Delta})/(t-q)$ is isomorphic to the Iwahori-Hecke algebra $\mathcal{H}(G,I)$. 

Note that $\mathcal{H}_{t}(W_{aff},\tilde{\Delta})/(t-1) \simeq \mathbb{C}[W_{aff}].$

The calculation of the structure constants $\gamma_{f,g}(q)$ and the fact that they depend in a polynomial way on $q$ shows that the modules $S(X)^{I}$, over the algebras $\mathcal{H}(G,I)$, admit a similar generic version. 

Indeed, we show below how to use the hyper-graph $\Gamma_X$ in order to construct an abstract module $M_t=M_{t}(\Gamma_X)$ over $\mathcal{H}_{t}(W_{aff},\tilde{\Delta})$. 

\begin{defn}
Let $M_{t}(\Gamma_X)$ be the free $\mathbb{C}[t]$ module generated by the vertices of $\Gamma_X$. For $s\in \Tilde{\Delta}$ a simple reflection, we define an action of $T_s$ on $M_{t}(\Gamma_X)$ as follows.

Let $v$ be a vertex of $\Gamma_X$. Let $e_{v,v\times s}$ be the unique hyper-edge that contains $v$ and $v\times s$. 

Then $$vT_s=-v+\gamma_{v,v\times s}(t)\sum_{v'\in e_{v,v\times s}}v'$$  where $\gamma_{v,v\times s}(t)$ is a polynomial in $t$ determined according to Theorem \ref{3.25} with $v$ playing the role of $x$, $e_{v,v\times s}$ the role of $\s_f$ and $t$ the role of $q$.

We also define an action of $T_o$ for $o\in \Omega$. $$ vT_o= v\times o$$
\end{defn}

We call $M_{t}(\Gamma_X)$ the generic Hecke module attached to the symmetric space $X$. 
The connection between the generic Hecke module $M_{t}(\Gamma_X)$  and the Hecke module $S(X)^I$ is given by the theorem below.

\begin{theorem}\label{generic}

Consider the map $\alpha: M_{t}(\Gamma_X) \to S(X)^I$ given by $$\alpha(v_x)=1_x, \alpha(t)=q$$  
For every $x\in H\backslash G/I$, the map $\alpha$ sends the generator of $M_t(\Gamma_X)$ corresponding to the vertex $v_x$ to the characteristic function of $x$. Then 
\begin{itemize}
    \item $\alpha$ induces an isomorphism $\alpha_{q}$ of vector spaces between $M_{t}(\Gamma_X)/(t-q)$ and $S(X)^I$.
    
    \item $\alpha$ commutes with the action of $T_s$ for any $s\in \Tilde{\Delta}$ and with the action of $T_o$ for any $o\in \Omega$.
    \item The actions of $T_s$ and $T_o$ for $s\in \Tilde{\Delta}$ and $o\in \Omega$ define an action of the generic Iwahori Hecke algebra $\mathcal{H}_{t}(W_{aff},\tilde{\Delta})$ on $M_{t}(\Gamma_X)$.
    \item $\alpha_{q}:M_{t}(\Gamma_X)/(t-q) \to S(X)^{I}$ is an isomorphism of $H(G,I)$ modules.
\end{itemize}
\end{theorem}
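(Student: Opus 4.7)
The plan is to verify the four bullets in order, leveraging the explicit formulas already established in Sections 4--7. First, the vertex set of $\Gamma_X$ is by definition indexed by $I\backslash G/H$, so $\{v_x\}$ is a free $\mathbb{C}[t]$-basis of $M_t(\Gamma_X)$ while $\{1_x\}$ is a $\mathbb{C}$-basis of $S(X)^I$ (the $I$-orbits partition $X$ into compact open sets). Thus $v_x \mapsto 1_x$ induces a $\mathbb{C}$-linear isomorphism $\alpha_q : M_t/(t-q) \xrightarrow{\sim} S(X)^I$, which settles the first bullet and the underlying vector-space content of the fourth.

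Next, the compatibility of $\alpha$ with $T_o$ for $o \in \Omega$ is exactly Proposition \ref{action of fund group}, since by construction $T_o v_x = v_{o \times x}$ in $M_t$. For $T_s$ with $s \in \tilde{\Delta}$, the formula defining the action on $M_t$ is by design the rewriting of the key identity $(T_s + 1) 1_x = \gamma_{f,g} D_{f,o}$ of Proposition \ref{key_computation}; the fact that the specialization of $\gamma_{v,s\times v}(t)$ at $t=q$ agrees with $\gamma_{f,g}$ follows from Theorem \ref{simpleformula} in the cases $l_\sigma(s \times x) \neq l_\sigma(x)$ and from Theorem \ref{3.25} in the equal-length case. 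This proves the second bullet.

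The third bullet requires verifying the defining relations of $\mathcal{H}_t$ on $M_t$. The relations $T_{o_1} T_{o_2} = T_{o_1 o_2}$ and $T_o T_s = T_{o s o^{-1}} T_o$ follow from the fact that the $\Omega$-action on vertices is compatible with the hyper-edges and with the conjugation action on $\tilde{\Delta}$. The quadratic relation $(T_s + 1)(T_s - t) = 0$ is checked by a short calculation: writing $E = \sum_{v' \in e_{v,s\times v}} v'$, one has $(T_s + 1)v = \gamma_v(t) E$, and then
\[
(T_s - t)\gamma_v(t) E \;=\; \gamma_v(t)\Big(\sum_{v' \in e} \gamma_{v'}(t) \;-\; (t+1)\Big) E,
\]
which vanishes because $\sum_{v' \in e} \gamma_{v'}(t)$ polynomially counts the $t+1$ chambers containing the face $f$. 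The main obstacle is the braid relations; the plan is to handle them by polynomial specialization. The graph $\Gamma_X$, the length function $l_\sigma$, and the structure polynomials $\gamma_v(t)$ depend only on the split root datum of $\mathbf{G}$ and on the involution $\sigma$, and in particular are independent of the local field $F$. For every odd prime power $q'$ one may choose a local field $F'$ with residue field of size $q'$ over which $(\mathbf{G},\sigma)$ is defined, and the resulting $S(X_{F'})^{I_{F'}}$ is a genuine $\mathcal{H}(\mathbf{G}(F'), I_{F'})$-module satisfying the braid relations. The difference of the two sides of each braid identity, applied to a basis vector $v_x$, is therefore a linear combination of basis vectors whose coefficients are polynomials in $t$ vanishing at infinitely many integers, hence identically zero. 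Combining the three bullets yields the fourth.
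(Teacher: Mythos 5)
Your overall strategy (direct verification of the quadratic and $\Omega$-relations, polynomial specialization for the braid relations) is close in spirit to the paper's, but the specialization step contains a genuine gap. You assert that the hyper-graph $\Gamma_X$, the length function $l_\sigma$, and the polynomials $\gamma_v(t)$ ``depend only on the split root datum of $\mathbf{G}$ and on the involution $\sigma$, and in particular are independent of the local field $F$,'' and you then instantiate the construction over a local field $F'$ with an arbitrary odd residue cardinality $q'$. This claim is unproven, and it is in fact false in the generality stated: the combinatorics of $I\backslash X$ depend on the arithmetic of $F$ beyond the root datum and $\sigma$ (the classification of $\sigma$-stable tori up to $H$-conjugacy, and the orbit counts in Proposition \ref{7.3}, involve $\s^\times/(\s^\times)^2$; the paper's own example $SL_2/T$ in Section 9.1 explicitly notes that the answer changes according to whether $4\mid q-1$ or $4\mid q-3$). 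So one cannot simply realize $M_{t}(\Gamma_X)/(t-q')$ as $S(X_{F'})^{I_{F'}}$ for every odd prime power $q'$.

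What the paper actually proves --- and this is the substantive content of its proof --- is the weaker but sufficient statement that $\Gamma_{X(F_r)}=\Gamma_X$ for every \emph{odd-degree unramified extension} $F_r$ of the fixed base field $F$, so that the relations hold at $t=q^r$ for all odd $r$, which is still infinitely many values. Establishing this invariance is nontrivial: one must show both that distinct $H$-orbits of colored chambers in $\B_G$ remain distinct over $F_r$ (Step 1, which uses Proposition \ref{main lemma} together with the fact that $\mathbf{T}(\s)/\mathbf{T}^2(\s)\to\mathbf{T}(\s_r)/\mathbf{T}^2(\s_r)$ is an isomorphism for odd $r$, and a descent argument for $t_1$ with $t_1^2\in G$), and that no new orbits appear (Step 2, via the reduction to rank-one reductive quotients of Proposition \ref{orbit correspondacne} and a case check against Proposition \ref{7.3}). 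Your proposal needs to replace the unsupported field-independence claim with an argument of this kind; without it the braid relations are not verified. The rest of your write-up (the basis identification, compatibility with $T_o$ and $T_s$, and the direct check of the quadratic relation via $\sum_{v'\in e}\gamma_{v'}(t)=t+1$) is sound, modulo the routine verification that $e_{v',s\times v'}=e_{v,s\times v}$ for every $v'$ in the edge.
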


\begin{proof}
The first two assertions are trivial and it is clear that the fourth assertion follows from the previous three.

Thus we only need to prove the third assertion. We have to show that for any vertex $v\in \Gamma_X$ the relations defining $\mathcal{H}_{t}(W_{aff},\tilde{\Delta})$ hold under the actions we have defined on the vertices. 

These relations are polynomial in $t$, so it is enough to show that they hold for infinitely many values of $t$. By construction they hold for $t=q$. Let $r$ be an odd integer and let $F_r$ be an unramified extension of $F$ of order $r$, denote by $\s_r$ its ring of integers and by $k_r$ its residue field. Denote by $X(F_r)=\mathbf{H}(F_r)\backslash\mathbf{G}(F_r)$, it is enough to show that $\Gamma_{X(F_r)}=\Gamma_X$ for any odd $r$.

Let $\B_G'$ be the building of $\mathbf{G}(F_r)$. We have $\B_G=\B_G'^{Gal(F_r/F)}$ and we have an action of $\sigma$ on $\B_G'$ that commutes with $Gal(F_r/F)$. 

\textbf{Step 1:} Different $H$ orbits of $\Omega$ colored chambers in $\B_G$ remain different in $\B_G'$.

Let $\C,\C'$ be two chambers of $\B_G$, assume we have $h\in \mathbf{H}(F_r)$ such that $h\C=\C'$. We will find $h'\in H$ such that $h'\C=\C'$ and $\omega(h)=\omega(h')$. This will prove step 1.

Let $\A\subset \B_G$ be a $\sigma$ stable apartment that contains $\C$ and let $\A'\subset \B_G$ be a $\sigma$ stable apartment that contains $\C'$. There is $g\in G$ such that $\omega(g)=\omega(h)$, $g\C=\C'$ and $g\A=\A'$. The apartments $g\A$ and $h\A$ are two $\sigma$ stable apartments of $\B_G'$ that contain $g\C$. By Proposition \ref{main lemma} we may change $h$ such that we would have $h\A=g\A$.

We have $g\A=h\A$ and $g\C=h\C$, let $\mathbf{T}\subset \mathbf{G}$ be a maximal split torus such that $\mathbf{T}(F_r)$ corresponds to $\A$. Notice that $\mathbf{T}$ is split over $F$. We have $g^{-1}h\in \mathbf{T}(\s_r)$.

 $r$ is odd and $\mathbf{T}$ is split over $F$ so $\mathbf{T}(\s)/\mathbf{T}^2(\s)\rightarrow \mathbf{T}(\s_r)/\mathbf{T}^2(\s_r)$ is an isomorphism of finite groups. Thus, we may replace $g$ by an element of $\mathbf{T}(\s)$ and assume that $g^{-1}h\in \mathbf{T}^2(\s_r)$, so we can write $g^{-1}h=t_1t^{-1}_2$ with $t_1,t_2\in \mathbf{T}^2(\s_r)$, $\sigma(t_1)=t^{-1}_1$ and $\sigma(t_2)=t_2$. Changing $h$ we may assume further that $t_2=1$.

In conclusion, we have $g\in G$, $h\in \mathbf{H}(F_r)$ and $t_1\in \mathbf{T}^2(\s_r)$ with $\sigma(t_1)=t^{-1}_1$ and $g^{-1}h=t_1$. We have $\sigma(g)t_1^{-1}=\sigma(gt_1)=gt_1$ or $t^2_1=g^{-1}\sigma(g)\in G$, thus $t^2_1$ is defined over $F$. The number $r$ is odd therefore $t_1$ is also defined over $F$. Thus $h=gt_1$ is defined over $F$ and we are done with step 1. 

\textbf{Step 2:} There are no additional $\mathbf{H}$ orbits over $F_r$ that are not defined over $F$.

The $\Omega$ colors of the chambers do not depend on the field, therefore the only way a new orbit may appear is if there is a chamber in $\B_G'$ whose $\mathbf{H}(F_r)$ does not contain any chamber in $\B_G$.

Suppose such a chamber exists. There is a gallery between any two chambers of $\B_G'$, therefore we can find $\C,\C'\in \B_G'$ which share a facet of codimension 1 such that the $\mathbf{H}(F_r)$ orbit of $\C$ contains a chamber in $\B_G$, and the $\mathbf{H}(F_r)$ orbit of $\C'$ does not contain a chamber in $\B_G$. Without loss of generality, we may assume that $\C\subset \B_G$.  

 Let $f=\C\cap \C'$, let $\A$ be a $\sigma$ stable apartment of $\B_G$ that contains $\C$, and let $\C''\subset \A$ be a chamber that contains $f$ and is different from $\C$. 
 
 We must have $l_\sigma(\C)=l_\sigma(\C')$. Otherwise by Lemma \ref{3.9}, all chambers in $\B_G'$ that contain $f$ will be either in the $\mathbf{H}(F_r)$ orbit of $\C$ or in the $\mathbf{H}(F_r)$ orbit of $\C''$. But $\C'$ is not in the $\mathbf{H}(F_r)$ orbit of neither $\C$ nor $\C''$.

 Let $f'=f\cup \sigma(f)$, and let $C_f$ be the set of chambers in $\B_G'$ that contain $f$.
 
 Let $G^r_{f'}={\mathcal G}_{f'}(k_r)/R_u({\mathcal G}_{f'}(k_r))$. By Proposition \ref{orbit correspondacne} the $\mathbf{H}(F_r)\cap Ker(\omega)$ orbits on $C_f$ correspond to the $(G^r_{f'})^\sigma$ orbits on the flag variety of $G^r_{f'}$. Following the construction of this correspondence we see that it commutes with the action of $Gal(F_r/F)$. Checking all the cases of symmetric spaces in Proposition \ref{7.3} we see that there are no cases where a new orbit appears after an extension of odd rank. This completes Step 2.
 
We proved that vertices of $\Gamma_{X(F_r)}$ are the same as the vertices of $\Gamma_X$. The argument for step 2 also proves that the hyper-edges are the same. Thus we are done.
\end{proof}

\section{Examples}

In this section, we demonstrate the main results of this paper through two specific examples.

\subsection{$G=SL_2,H=T$}
Let $G=SL_2(F)$, and let $\sigma$ be conjugation by $\begin{pmatrix}
        -1 & 0  \\
        0 & 1
        \end{pmatrix}$. Then $H=G^\sigma=T$ is the torus of diagonal matrices. 
        
        Let $\B_G$ be the building of $SL_2(F)$, which is the free $q+1$ regular graph. 
        
        Let $W_{aff}=N_G(T)/T^0$, it is the free group generated by two elements of order 2, $W_{aff}=<s_0,s_1>$. Denote by $l$ the length function on the Coxeter group $W_{aff}$.

\begin{prop}\label{8.1}
Let $T_{s_0},T_{s_1}$ be the generators of $H(G,I)$ that correspond to the simple reflections $s_0,s_1\in W_{aff}$.

We have $S(X)^I\cong \mathbb{C}\oplus H(G,I)/H(G,I)(T_{s_0}+1)\oplus H(G,I)/H(G,I)(T_{s_1}+1)\oplus H(G,I)$ where  both $T_{s_1}$ and $T_{s_0}$ act on $\mathbb{C}$ by -1.
\end{prop}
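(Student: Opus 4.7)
The plan is to exhibit four explicit elements $v_0,v_1,v_2,v_3\in S(X)^I$, one for each summand, and verify via the Hecke action formulas that each generates the claimed submodule and that the four submodules are in direct sum.

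Using Theorem \ref{3.3} I would first enumerate the $I$-orbits. Since $\sigma$ acts trivially on $T$, the apartment $\A_0$ of $T$ is fixed pointwise by $\sigma$ and $W^H_{aff}(T)\cong\{c_e,c_w\}$ yields two orbits of $l_\sigma$-length zero. At each vertex $v\in\A_0$ the reductive quotient is $SL_2(k)$ with $\sigma$-fixed subgroup $T(k)$, so Proposition \ref{7.3} applied to the image in $PGL_2(k)$ (which is the ``squares'' subgroup $T_s^2$) produces four local $H$-orbits of chambers through $v$ of sizes $1,1,(q-1)/2,(q-1)/2$. Label the two orbits of size $(q-1)/2$ as $y_1',y_2'$ at $v_0$ and $y_1,y_2$ at $v_1$; each has $l_\sigma=1$.

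Set $v_0:=1_{c_e}-1_{c_w}$, $v_1:=1_{y_1'}-1_{y_2'}$, and $v_2:=1_{y_1}-1_{y_2}$. Applying Theorem \ref{3.25} at the $\sigma$-fixed face $f=\{v_1\}$ gives $\gamma=1$ and the same divisor $D_{f,o}$ for both $1_{c_e}$ and $1_{c_w}$, so $T_{s_0}v_0=-v_0$; symmetrically at $f=\{v_0\}$ one gets $T_{s_1}v_0=-v_0$, and hence $H(G,I)v_0\cong\mathbb C$. At $f=\{v_0\}$ applied to $y_i'$, Theorem \ref{3.25} gives $\gamma=(q-1)/2$ and the same $D_{f,o}$ for both $i=1,2$, so $T_{s_1}v_1=-v_1$; symmetrically $T_{s_0}v_2=-v_2$. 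These relations define surjections $H(G,I)/H(G,I)(T_{s_i}+1)\twoheadrightarrow H(G,I)v_i$ for $i=1,2$, and injectivity follows by an induction on $l_\sigma$: applying Theorem \ref{simpleformula} (whose hypothesis $l_\sigma(s\times x)\neq l_\sigma(x)$ holds whenever the face in question is not $\sigma$-stable) shows that each successive application of the non-sign generator strictly increases $l_\sigma$ and produces a fresh difference of characteristic functions that is linearly independent from everything already in $H(G,I)v_i$.

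For the fourth summand I would take $v_3$ to be a ``sum-type'' cyclic vector, e.g.\ a combination involving $1_{y_1'}+1_{y_2'}$ and $1_{c_e}+1_{c_w}$ chosen to lie outside $H(G,I)v_0+H(G,I)v_1+H(G,I)v_2$. The key point is that $D_{f,o}$ in Proposition \ref{key_computation} is itself a sum, so iterating the Hecke action on $v_3$ via Theorem \ref{simpleformula} produces ``sum'' combinations at progressively deeper levels of $l_\sigma$ that never collapse into the ``difference'' directions carrying $v_0,v_1,v_2$; this freeness gives $H(G,I)v_3\cong H(G,I)$. The direct-sum decomposition then follows from a level-by-level dimension count on $l_\sigma$, matching the basis $\{1_x\}$ of $S(X)^I$ against the assembled bases of the four submodules. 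The main obstacle is pinning down the precise $v_3$ and verifying freeness of $H(G,I)v_3$, which requires carefully disentangling the symmetric and antisymmetric contributions produced by the key computation at each $\sigma$-fixed face and tracking their propagation through every level of $l_\sigma$.
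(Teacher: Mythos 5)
Your strategy coincides with the paper's: the paper deduces Proposition \ref{8.1} from the explicit data of Proposition \ref{8.2} by exhibiting the four cyclic vectors $z_0-z_1$, $x_1-x_{s_0}$, $y_1-y_{s_1}$ and $z_0+z_1$, and your $v_0,v_1,v_2$ are exactly the first three. Your local analysis is right: at each of the two $\sigma$-fixed vertices the reductive quotient is $SL_2(k)$ with symmetric subgroup the diagonal torus, whose image in $PGL_2(k)$ is the squares $T_s^2$, so the hyper-edge consists of the two length-zero orbits (local size $1$) and two length-one orbits (local size $\frac{q-1}{2}$); Theorem \ref{3.25} then gives $\gamma=1$ on the former and $\gamma=\frac{q-1}{2}$ on the latter, which kills the differences and yields $T_{s_i}v_0=-v_0$ and the surjections $H(G,I)/H(G,I)(T_{s_i}+1)\twoheadrightarrow H(G,I)v_i$. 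Your injectivity argument is also sound, with the small correction that one applies $T_{s_0}$ and $T_{s_1}$ alternately (matching the basis $T_w$, $w$ not ending in the sign generator): every face of the relevant apartment other than its unique $\sigma$-fixed vertex is non-degenerate, so each step lands in Theorem \ref{simpleformula}'s increasing case and produces a difference of characteristic functions of two fresh, strictly longer orbits; these have pairwise disjoint supports and are therefore independent.

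The genuine gap is the fourth summand, which you flag yourself. The generator is simply $v_3=1_{c_e}+1_{c_w}=z_0+z_1$ -- no further combination is needed, and "chosen to lie outside the other submodules" is not a construction. Freeness does not follow from the slogan that sums never collapse into differences; you must produce a new leading term for every $T_wv_3$. This is where the actual content lies, and it does work: $T_{s_0}v_3=v_3+2(1_{x_1}+1_{x_{s_0}})$ by the $\gamma=1$ computation above, and inductively $T_wv_3$ equals previously seen terms plus $2(1_{x_u}+1_{x_{u'}})$ (or the $y$-analogue) for a new adjacent pair of maximal $l_\sigma$-length, so the $T_wv_3$ are linearly independent and $H(G,I)v_3$ is free of rank one. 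Only then can your "level-by-level dimension count" be run: at each hyper-edge the sum vector from $H(G,I)v_3$ and the difference vector from $H(G,I)v_1$ (resp.\ $v_2$) together span the two new coordinate directions, and matching against the basis $\{1_x\}$ gives both that the four submodules exhaust $S(X)^I$ and that their sum is direct. Without identifying $v_3$ and carrying out this triangularity argument the decomposition remains a guess; with it, your outline is exactly the paper's proof.
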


This follows from the description given by Theorem \ref{generic}. The next proposition summarizes the results about the structure of $S(X)^I$ in this case. We will assume $4|q-3$. In the case $4|q-1$ the next proposition will have to be altered slightly. The difference is that in the case of $4|q-3$, the action of $H$ on the set of $\sigma$ stable maximal split tori has $3$ orbits. In the case of $4|q-1$, the same action has $5$ orbits.
        
\begin{prop}\label{8.2}
\begin{itemize}
    \item There are three orbits for the action of $T$ on the $\sigma$ stable apartments in $\B_G$. They correspond to three different $\sigma$ stable tori. 
    
    The representatives of the $\sigma$ stable tori under the action of $T$ are given by $g^{-1}Tg$ for $g$ in $\{id,\begin{pmatrix}
        1 & 1  \\
        -\frac{1}{2} & \frac{1}{2}
        \end{pmatrix},\begin{pmatrix}
        1 & \pi  \\
        -\frac{1}{2\pi} & \frac{1}{2}
        \end{pmatrix}\}$. Here $\pi$ is a uniformizer of $F$.
    \item The $I$ orbits on $X$ corresponds to $W_{aff}\mathbin{\mathaccent\cdot\cup} W_{aff}\mathbin{\mathaccent\cdot\cup} W_{aff}/T$. Denote the orbits in the first two copies of $W_{aff}$ by $x_w,y_w$ for $w\in W_{aff}$. Denote by $z_0,z_1$ the two orbits in the quotient $W_{aff}/T$.
    
    \item the length function on the orbits is given by $l_\sigma(z_0)=l_\sigma(z_1)=0$, $l_\sigma(x_w)=l_\sigma(y_w)=2\floor{\frac{l(w)}{2}}+1$.
    
    \item We have $$z_0T_{s_0}+z_0=z_1T_{s_0}+z_1=z_0+z_1+x_{s_0}+x_1$$
    
    $$z_0T_{s_1}+z_0=z_1T_{s_1}+z_1=z_0+z_1+y_{1}+y_{s_1}$$
    
    $$x_{s_0}T_{s_0}+x_{s_0}=x_{1}T_{s_0}+x_{1}=\frac{q-1}{2}(x_{s_0}+x_{1}+z_0+z_1)$$
    
    $$y_{s_1}T_{s_1}+y_{s_1}=y_{1}T_{s_1}+y_{1}=\frac{q-1}{2}(y_{s_1}+y_{1}+z_0+z_1)$$
    
    $$x_wT_s=x_{ws}\text{ if } w\notin \{s_0,1\}\text{ or }s=s_1\text{ and }l(ws)>l(w)$$
    
    $$y_wT_s=y_{ws}\text{ if } w\notin \{1,s_1\}\text{ or }s=s_0\text{ and }l(ws)>l(w)$$
    
    $$x_wT_s=qx_{ws}+(q-1)x_w\text{ if } w\notin \{s_0,1\}\text{ or }s=s_1\text{ and }l(ws)<l(w)$$
    
    $$y_wT_s=qy_{ws}+(q-1)y_w\text{ if } w\notin \{1,s_1\}\text{ or }s=s_0\text{ and }l(ws)<l(w)$$
    
    \item The hyper-graph $\Gamma_X$ is the hyper-graph appearing in Figure \ref{fig:SL_2-T} continued downwards periodically. 

    \begin{figure}
        \centering
    \includegraphics[width=0.5\linewidth]{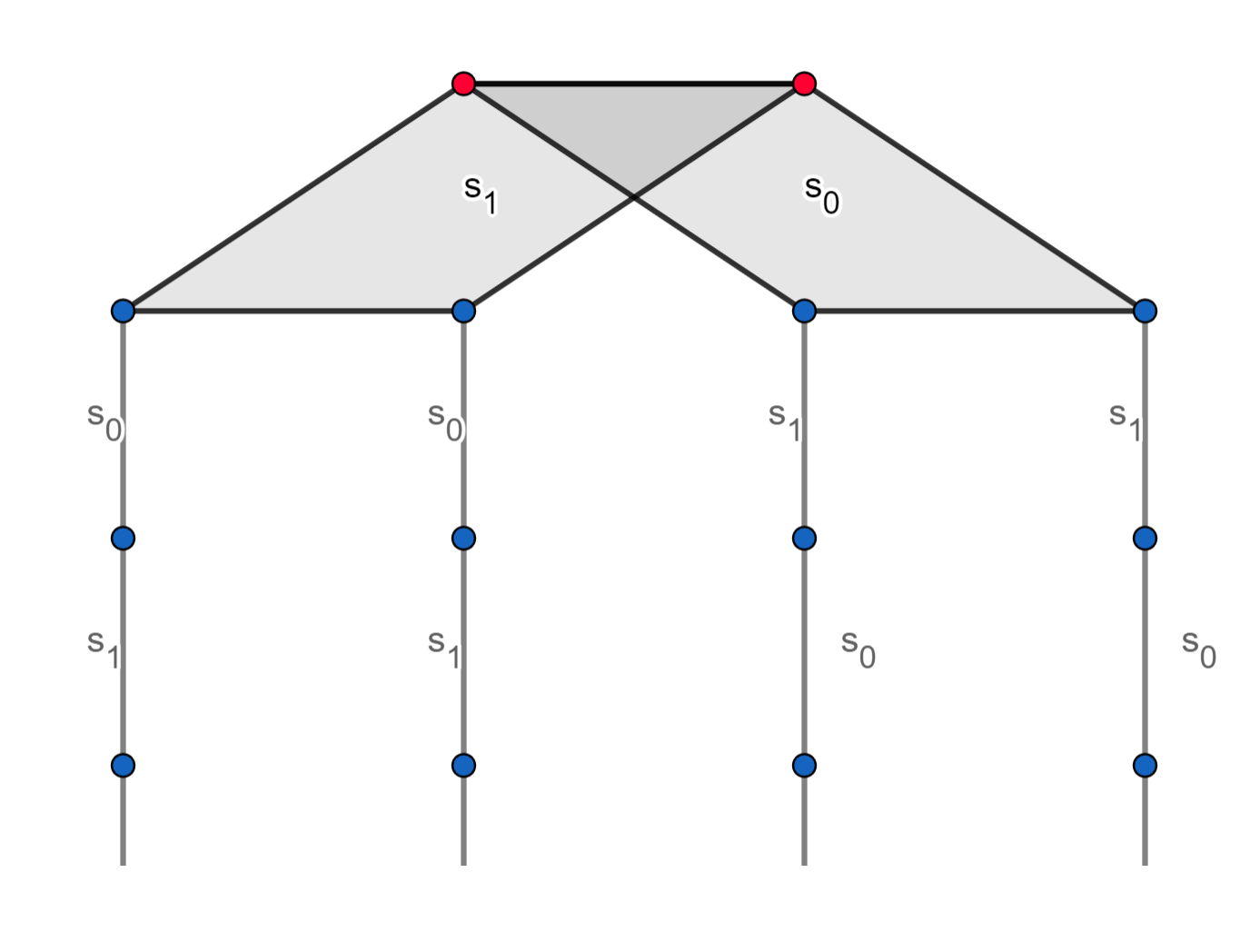}
        \caption{Part of the hyper-graph $\Gamma_X$ of the symmetric space $T\backslash SL_2$. Each parallelogram represents a hyper-edge with 4 vertices, each other hyper-edge contains exactly 2 vertices.}
        \label{fig:SL_2-T}
    \end{figure}
    
\end{itemize}

\end{prop}

Proposition \ref{8.1} follows from Proposition \ref{8.2} by taking the generators $z_0-z_1,z_0+z_1,x_1-x_{s_0}$ and $y_1-y_{s_0}$.

\subsection{$G=SL_{2n},H=Sp_{2n}$}

Let $G=SL_n(F)$, $H=Sp_{2n}(F)$ is a symmetric subgroup of $G$. The space $X=H\backslash G$ can be identified with antisymmetric matrices in $G$.

Let $T\subset G$ be the maximal torus of diagonal matrices. Embed $W_{aff}=N_G(T)/T^0$ into $G$ as the group of permutation matrices whose non zero entries are powers of the uniformizer $\pi$ (up to a sign).

\begin{prop}
\begin{itemize}
    \item There is a unique orbit for the action of $H$ on the $\sigma$ stable apartments in $\B_G$. The $I$ orbits on $X$ correspond to antisymmetric matrices whose non zero entries above the diagonal are powers of the uniformizer $\pi$. i.e. we can identify $X$ with the space of antisymmetric matrices and every $I$ orbit on $X$ contains a unique element of the mentioned form.

    Denote by $nor: AntiSymmetric\rightarrow X/I$ the map that takes an antisymmetric matrix and replaces every entry $a$ above the diagonal with $\pi^{\nu(a)}$ and every entry $a$ below the diagonal by $-\pi^{\nu(a)}$. Here, $\nu$ is the valuation of $F$.
    \item  The right action of $W_{aff}$ on $X/I$ is given by $x\times w=nor(w^txw)$.
    \item The length function $l_\sigma$ is the usual length function $l$ on $W_{aff}$ restricted to the antisymmetric matrices minus 1. $$l_\sigma(x)=l(x)-1$$

    \item The relations are the following:
    
    $$1_xT_s=1_{s^txs}\text{ if } l(s^txs)>l(x)$$
    
    $$1_xT_s=q1_{s^txs}+(q-1)1_x\text{ if } l(s^txs)<l(x)$$
    
    $$1_xT_s=q1_x\text{ if }s^txs=x$$
    
    \item In the case $n=2$, $W_{aff}$ is generated by four simple reflections $s_0,s_1,s_2$ and $s_3$. The hyper-graph $\Gamma_X$ is the graph appearing in Figure \ref{fig:SL_4-Sp_4} continued downward periodically.

    \clearpage

        \begin{figure}
        \centering
    \includegraphics[width=0.5\linewidth]{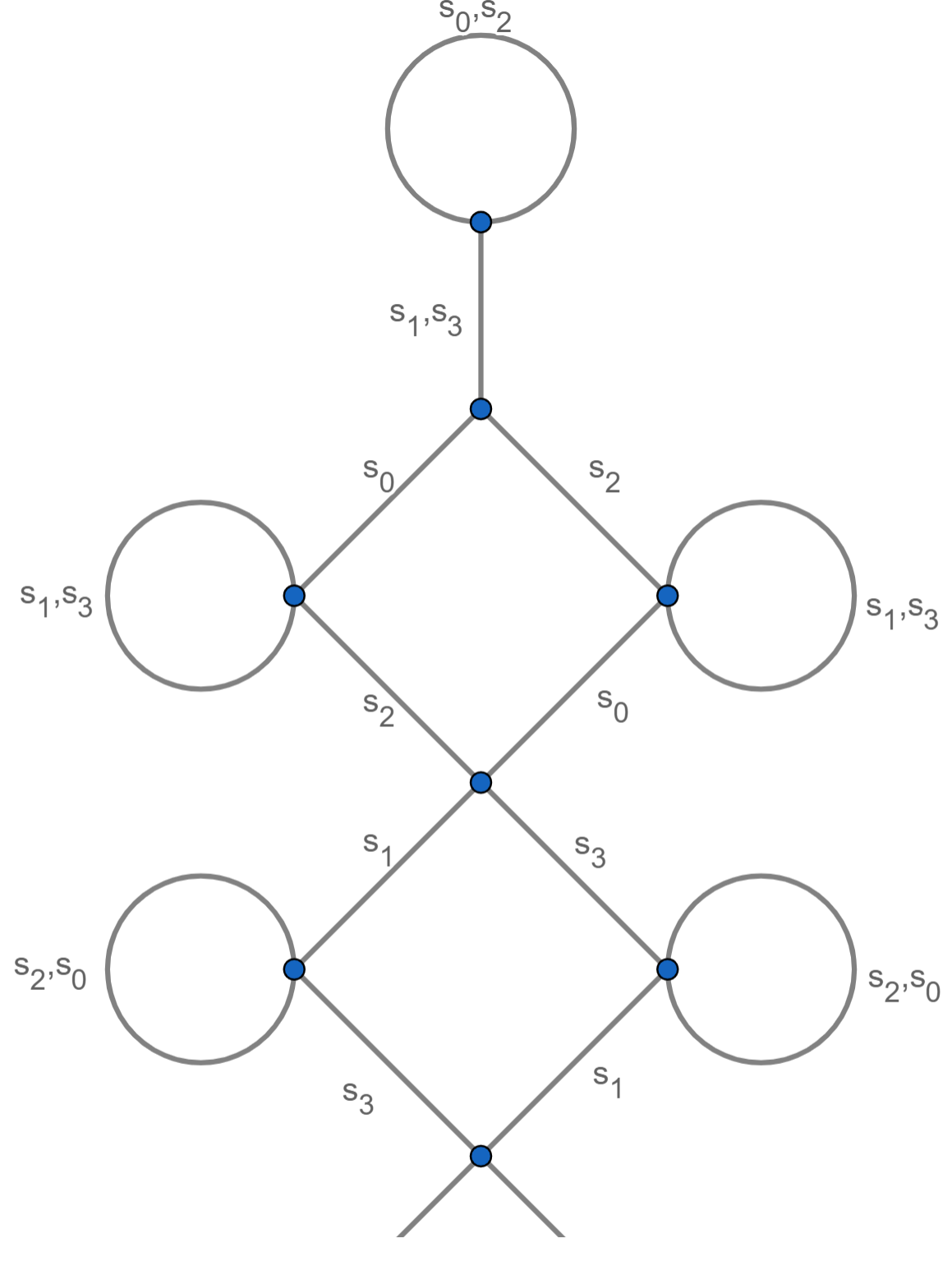}
        \caption{Part of the hyper-graph $\Gamma_X$ of the symmetric space $Sp_4\backslash SL_4$. Each hyper-edge is either of size 2 or 1.}
        \label{fig:SL_4-Sp_4}
    \end{figure}

\end{itemize}
\end{prop}

\bibliographystyle{alphaurl}
\bibliography{mybib}

@article {AGS,
    AUTHOR = {Aizenbud, Avraham and Gourevitch, Dmitry and Sayag, Eitan},
     TITLE = {Z-finite distributions on {$p$}-adic groups},
   JOURNAL = {Adv. Math.},
  FJOURNAL = {Advances in Mathematics},
    VOLUME = {285},
      YEAR = {2015},
     PAGES = {1376--1414},
      ISSN = {0001-8708},
   MRCLASS = {22E50 (17B08 20G25 22E35 46F99)},
  MRNUMBER = {3406530},
MRREVIEWER = {Ivan Mati\'{c}},
       DOI = {10.1016/j.aim.2015.07.033},
       URL = {https://doi.org/10.1016/j.aim.2015.07.033},
}

@book{Milne_2017, place={Cambridge}, series={Cambridge Studies in Advanced Mathematics}, title={Algebraic Groups: The Theory of Group Schemes of Finite Type over a Field}, publisher={Cambridge University Press}, author={Milne, J. S.}, year={2017}, collection={Cambridge Studies in Advanced Mathematics}}

@incollection {Benrstein_Center,
    AUTHOR = {Bernstein, J. N.},
     TITLE = {Le ``centre'' de {B}ernstein},
 BOOKTITLE = {Representations of reductive groups over a local field},
    SERIES = {Travaux en Cours},
     PAGES = {1--32},
    EDITOR = {Deligne, P.},
 PUBLISHER = {Hermann, Paris},
      YEAR = {1984},
      ISBN = {2-7056-5989-7},
   MRCLASS = {22E50},
  MRNUMBER = {771671},
MRREVIEWER = {Fran\c cois\ Rodier},
}

@article{PMIHES_1965__25__5_0,
     author = {Iwahori, Nagayoshi and Matsumoto, Hideya},
     title = {On some Bruhat decomposition and the structure of the Hecke rings of $p$-adic Chevalley groups},
     journal = {Publications Math\'ematiques de l'IH\'ES},
     pages = {5--48},
     publisher = {Institut des Hautes \'Etudes Scientifiques},
     volume = {25},
     year = {1965},
     zbl = {0228.20015},
     mrnumber = {32 \#2486},
     language = {en},
     url = {http://www.numdam.org/item/PMIHES_1965__25__5_0/}
}

@article{Borel1976,
author = {Borel, Armand},
journal = {Inventiones mathematicae},
pages = {233-260},
title = {Admissible Representations of a Semi-Simple Group over a Local Field with Vector Fixed under an Iwahori Subgroup.},
url = {http://eudml.org/doc/142406},
volume = {35},
year = {1976},
}

@article {Helminck,
    AUTHOR = {Helminck, A. G. and Wang, S. P.},
     TITLE = {On rationality properties of involutions of reductive groups},
   JOURNAL = {Adv. Math.},
  FJOURNAL = {Advances in Mathematics},
    VOLUME = {99},
      YEAR = {1993},
    NUMBER = {1},
     PAGES = {26--96},
      ISSN = {0001-8708,1090-2082},
   MRCLASS = {20G15 (20G20 22E15)},
  MRNUMBER = {1215304},
MRREVIEWER = {James\ E.\ Humphreys},
       DOI = {10.1006/aima.1993.1019},
       URL = {https://doi.org/10.1006/aima.1993.1019},
}

@article{Edixhoven1992,
author = {Edixhoven, Bas},
journal = {Compositio Mathematica},
keywords = {Néron models of abelian varieties; tamely ramified extensions of discrete valuation rings; Weil restriction},
language = {eng},
number = {3},
pages = {291-306},
publisher = {Kluwer Academic Publishers},
title = {Néron models and tame ramification},
url = {http://eudml.org/doc/90141},
volume = {81},
year = {1992},
}

@book {BruhatTitsTA,
    AUTHOR = {Kaletha, Tasho and Prasad, Gopal},
     TITLE = {Bruhat-{T}its theory---a new approach},
    SERIES = {New Mathematical Monographs},
    VOLUME = {44},
 PUBLISHER = {Cambridge University Press, Cambridge},
      YEAR = {2023},
     PAGES = {xxx+718},
      ISBN = {978-1-108-83196-3},
   MRCLASS = {20E42 (11F70 20G25 22E50)},
  MRNUMBER = {4520154},
MRREVIEWER = {Corina\ Ciobotaru},
}

@article {delorm,
    AUTHOR = {Delorme, Patrick and S\'{e}cherre, Vincent},
     TITLE = {An analogue of the {C}artan decomposition for {$p$}-adic
              symmetric spaces of split {$p$}-adic reductive groups},
   JOURNAL = {Pacific J. Math.},
  FJOURNAL = {Pacific Journal of Mathematics},
    VOLUME = {251},
      YEAR = {2011},
    NUMBER = {1},
     PAGES = {1--21},
      ISSN = {0030-8730},
   MRCLASS = {22E35},
  MRNUMBER = {2794612},
MRREVIEWER = {Dubravka Ban},
       DOI = {10.2140/pjm.2011.251.1},
       URL = {https://doi.org/10.2140/pjm.2011.251.1},
}

@article{McNinch2010LeviDO,
  title={Levi decompositions of a linear algebraic group},
  author={George J. McNinch},
  journal={Transformation Groups},
  year={2010},
  volume={15},
  pages={937-964},
  url={https://api.semanticscholar.org/CorpusID:119170586}
}

@inproceedings{Tits1979ReductiveGO,
  title={Reductive groups over local fields},
  author={Jacques Tits},
  year={1979},
booktitle = {Automorphic forms, representations and L-functions (Proc. Sympos. Pure Math., Oregon State Univ., Corvallis, Ore., 1977), Part 1, Proc. Sympos. Pure Math., XXXIII, Amer.
Math. Soc., Providence, R.I., 1979, pp. 29–69.} }

@article{AGB-120006486,
author = {Aloysius G. Helminck and Ling Wu},
title = {CLASSIFICATION OF INVOLUTIONS OF SL(2, k)},
journal = {Communications in Algebra},
volume = {30},
number = {1},
pages = {193--203},
year = {2002},
publisher = {Taylor \& Francis},
doi = {10.1081/AGB-120006486},


URL = { 
    
        https://doi.org/10.1081/AGB-120006486
    
    

},
eprint = { 
    
        https://doi.org/10.1081/AGB-120006486
    
    

}

}

@article {Delormefinite,
    AUTHOR = {Delorme, Patrick},
     TITLE = {Constant term of smooth {$H_\psi$}-spherical functions on a
              reductive {$p$}-adic group},
   JOURNAL = {Trans. Amer. Math. Soc.},
  FJOURNAL = {Transactions of the American Mathematical Society},
    VOLUME = {362},
      YEAR = {2010},
    NUMBER = {2},
     PAGES = {933--955},
      ISSN = {0002-9947},
   MRCLASS = {22E50},
  MRNUMBER = {2551511},
       DOI = {10.1090/S0002-9947-09-04925-3},
       URL = {https://doi.org/10.1090/S0002-9947-09-04925-3},
}

@article{Lusztig1983,
author = {Lusztig, George and Vogan, David A., Jr.},
journal = {Inventiones mathematicae},
keywords = {action of group on flag variety},
pages = {365-380},
title = {Singularities of Closures of K-orbits on Flag Manifolds.},
url = {http://eudml.org/doc/142995},
volume = {71},
year = {1983},
}

@article{10.2969/jmsj/05130553,
author = {Yumiko Hironaka},
title = {{Spherical functions and local densities on hermitian forms}},
volume = {51},
journal = {Journal of the Mathematical Society of Japan},
number = {3},
publisher = {Mathematical Society of Japan},
pages = {553 -- 581},
keywords = {hermitian forms, Local densities, Spherical functions},
year = {1999},
doi = {10.2969/jmsj/05130553},
URL = {https://doi.org/10.2969/jmsj/05130553}
}

@article{sakellaridis2013spherical,
 ISSN = {00029327, 10806377},
 URL = {http://www.jstor.org/stable/23525623},
 author = {Yiannis Sakellaridis},
 journal = {American Journal of Mathematics},
 number = {5},
 pages = {1291--1381},
 publisher = {The Johns Hopkins University Press},
 title = {SPHERICAL FUNCTIONS ON SPHERICAL VARIETIES},
 volume = {135},
 year = {2013}
}

@article{knop,
author = {Knop, Friedrich},
year = {1995},
month = {12},
pages = {285-309},
title = {On the set of orbits for a Borel subgroup},
volume = {70},
journal = {Commentarii Mathematici Helvetici},
doi = {10.1007/BF02566009}
}

@book{LAG,
  title={Linear Algebraic Groups, Second Edition},
  author={Borel, A.},
  isbn={9780387973708},
  lccn={97218634},
  series={Graduate Texts in Mathematics},
  url={https://books.google.co.il/books?id=R31Z75Yvaj8C},
  year={1991},
  publisher={Springer New York}
}

@article {ChaSav18,
    AUTHOR = {Chan, Kei Yuen and Savin, Gordan},
     TITLE = {Iwahori component of the {G}elfand-{G}raev representation},
   JOURNAL = {Math. Z.},
  FJOURNAL = {Mathematische Zeitschrift},
    VOLUME = {288},
      YEAR = {2018},
    NUMBER = {1-2},
     PAGES = {125--133},
      ISSN = {0025-5874},
   MRCLASS = {11F25 (20C08)},
  MRNUMBER = {3774407},
MRREVIEWER = {Ivan Mati\'{c}},
       DOI = {10.1007/s00209-017-1882-3},
       URL = {https://doi.org/10.1007/s00209-017-1882-3},
}

@book{Landvogt1995ACO,
  title={A Compactification of the Bruhat-Tits 
  Building},
  author={Landvogt, E.},
  isbn={9783540604273},
  lccn={lc95044956},
  series={Lecture Notes in Mathematics},
  url={https://books.google.co.il/books?id=SMwZAQAAIAAJ},
  year={1995},
  publisher={Springer Berlin Heidelberg}
}

@article {Mao-Rallis09,
    AUTHOR = {Mao, Zhengyu and Rallis, Stephen},
     TITLE = {A {P}lancherel formula for {${\rm Sp}_{2n}/{\rm Sp}_n\times
              {\rm Sp}_n$} and its application},
   JOURNAL = {Compos. Math.},
  FJOURNAL = {Compositio Mathematica},
    VOLUME = {145},
      YEAR = {2009},
    NUMBER = {2},
     PAGES = {501--527},
      ISSN = {0010-437X},
   MRCLASS = {22E50 (11F72 22E35 43A90)},
  MRNUMBER = {2501427},
MRREVIEWER = {Volker J. Heiermann},
       DOI = {10.1112/S0010437X08003904},
       URL = {https://doi.org/10.1112/S0010437X08003904},
}

@article {Off04,
    AUTHOR = {Offen, Omer},
     TITLE = {Relative spherical functions on {$\wp$}-adic symmetric spaces
              (three cases)},
   JOURNAL = {Pacific J. Math.},
  FJOURNAL = {Pacific Journal of Mathematics},
    VOLUME = {215},
      YEAR = {2004},
    NUMBER = {1},
     PAGES = {97--149},
      ISSN = {0030-8730},
   MRCLASS = {11F70 (22E50 43A85 43A90)},
  MRNUMBER = {2060496},
MRREVIEWER = {Solomon Friedberg},
       DOI = {10.2140/pjm.2004.215.97},
       URL = {https://doi.org/10.2140/pjm.2004.215.97},
}

@article {Sak12,
    AUTHOR = {Sakellaridis, Yiannis},
     TITLE = {Spherical varieties and integral representations of
              {$L$}-functions},
   JOURNAL = {Algebra Number Theory},
  FJOURNAL = {Algebra \& Number Theory},
    VOLUME = {6},
      YEAR = {2012},
    NUMBER = {4},
     PAGES = {611--667},
      ISSN = {1937-0652},
   MRCLASS = {11F67 (11F66 11F70 14M27 22E55)},
  MRNUMBER = {2966713},
MRREVIEWER = {Farrell Brumley},
       DOI = {10.2140/ant.2012.6.611},
       URL = {https://doi.org/10.2140/ant.2012.6.611},
}

@article {Solleveld2021,
    AUTHOR = {Solleveld, Maarten},
     TITLE = {Affine {H}ecke algebras and their representations},
   JOURNAL = {Indag. Math. (N.S.)},
  FJOURNAL = {Koninklijke Nederlandse Akademie van Wetenschappen.
              Indagationes Mathematicae. New Series},
    VOLUME = {32},
      YEAR = {2021},
    NUMBER = {5},
     PAGES = {1005--1082},
      ISSN = {0019-3577},
   MRCLASS = {20C08 (20G05)},
  MRNUMBER = {4310011},
       DOI = {10.1016/j.indag.2021.01.005},
       URL = {https://doi.org/10.1016/j.indag.2021.01.005},
}

@book {Springer1998,
    AUTHOR = {Springer, T. A.},
     TITLE = {Linear algebraic groups, 2nd ed.},
    SERIES = {Modern Birkh\"auser Classics},
   EDITION = {second},
 PUBLISHER = {Birkh\"auser Boston, Inc., Boston, MA},
      YEAR = {2009},
     PAGES = {xvi+334},
      ISBN = {978-0-8176-4839-8},
   MRCLASS = {20G15 (14L10)},
  MRNUMBER = {2458469},
}

@article{PMIHES_1984__60__5_0,
     author = {Bruhat, Fran\c{c}ois and Tits, Jacques},
     title = {Groupes r\'eductifs sur un corps local : II. Sch\'emas en groupes. Existence d'une donn\'ee radicielle valu\'ee},
     journal = {Publications Math\'ematiques de l'IH\'ES},
     pages = {5--184},
     publisher = {Institut des Hautes \'Etudes Scientifiques},
     volume = {60},
     year = {1984},
     mrnumber = {756316},
     zbl = {0597.14041},
     language = {fr},
     url = {http://www.numdam.org/item/PMIHES_1984__60__5_0/}
}

@Inbook{Brown1989,
author="Brown, Kenneth S.",
title="Buildings",
bookTitle="Buildings",
year="1989",
publisher="Springer New York",
address="New York, NY",
pages="76--98",
abstract="The definition of ``building'', to be given in the first section below, involves three axioms which are quite easy to state. It is not so easy, however, to motivate this definition. In particular, you will probably wonder how someone (namely, Tits) came up with these axioms. I will not attempt to answer this question now, but I will make some historical remarks in the next chapter ({\textsection}V.4) which should make the definition seem less mysterious.",
isbn="978-1-4612-1019-1",
doi="10.1007/978-1-4612-1019-1_4",
url="https://doi.org/10.1007/978-1-4612-1019-1_4"
}

@misc{chen2025singularitiesorbitclosuresloop,
      title={Singularities of orbit closures in loop spaces of symmetric varieties}, 
      author={Tsao-Hsien Chen and Lingfei Yi},
      year={2025},
      eprint={2310.20006},
      archivePrefix={arXiv},
      primaryClass={math.RT},
      url={https://arxiv.org/abs/2310.20006}, 
}

@article{RichardsonSpringer1990,
  author    = {R. W. Richardson and T. A. Springer},
  title     = {The Bruhat order on symmetric varieties},
  journal   = {Geometriae Dedicata},
  volume    = {35},
  pages     = {389--436},
  year      = {1990},
  doi       = {10.1007/BF00147354},
  url       = {https://doi.org/10.1007/BF00147354}
}

@misc{paul2024orbitssymmetricsubgroupbruhattits,
      title={Orbits of a symmetric subgroup in the Bruhat-Tits building}, 
      author={Paul Broussous},
      year={2024},
      eprint={2406.17459},
      archivePrefix={arXiv},
      primaryClass={math.RT},
      url={https://arxiv.org/abs/2406.17459}, 
}

@article{RESSAYRE20101784,
title = {Spherical homogeneous spaces of minimal rank},
journal = {Advances in Mathematics},
volume = {224},
number = {5},
pages = {1784-1800},
year = {2010},
issn = {0001-8708},
doi = {https://doi.org/10.1016/j.aim.2010.01.014},
url = {https://www.sciencedirect.com/science/article/pii/S0001870810000332},
author = {N. Ressayre},
keywords = {Spherical varieties, Schubert varieties and generalizations},
abstract = {Let G be a connected reductive algebraic group over an algebraically closed field K of characteristic zero. Let G/B denote the complete flag variety of G. A G-homogeneous space G/H is said to be spherical if H has finitely many orbits in G/B. A class of spherical homogeneous spaces containing the tori, the complete homogeneous spaces and the group G (viewed as a G×G-homogeneous space) has particularly nice properties. Namely, the pair (G,H) is called a spherical pair of minimal rank if there exists x in G/B such that the orbit H.x of x by H is open in G/B and the stabilizer Hx of x in H contains a maximal torus of H. In this article, we study and classify the spherical pairs of minimal rank.}
}

@article {Bernshtein1976REPRESENTATIONSOT,
    AUTHOR = {Bernstein, I. N. and Zelevinsky, A. V.},
     TITLE = {Representations of the group {$GL(n,F),$} where {$F$} is a
              local non-{A}rchimedean field},
   JOURNAL = {Uspehi Mat. Nauk},
  FJOURNAL = {Akademija Nauk SSSR i Moskovskoe Matemati\v ceskoe Ob\v s\v
              cestvo. Uspehi Matemati\v ceskih Nauk},
    VOLUME = {31},
      YEAR = {1976},
    NUMBER = {3(189)},
     PAGES = {5--70},
      ISSN = {0042-1316},
   MRCLASS = {22E50},
  MRNUMBER = {425030},
MRREVIEWER = {G.\ I.\ Ol\cprime shanski\u i},
}

@article{polar_decomposition,
    author = {Benoist, Yves and Oh, Hee},
    title = "{Polar Decomposition for P-adic Symmetric Spaces}",
    journal = {International Mathematics Research Notices},
    volume = {2007},
    year = {2007},
    month = {01},
    abstract = "{Let G be the group of k-points of a connected reductive k-group and H a symmetric subgroup associated to an involution σ of G. We prove a polar decomposition G = KAH for the symmetric space G/H over any local field k of characteristic not 2. Here K is a compact subset of G and A is a finite union of groups Ai. These groups Ai are the k-points of maximal (k, σ)-split tori, one for each H-conjugacy class.This decomposition is analogous to the well-known polar decomposition G = KAH for a real symmetric space G/H.}",
    issn = {1073-7928},
    doi = {10.1093/imrn/rnm121},
    url = {https://doi.org/10.1093/imrn/rnm121},
    note = {rnm121},
    eprint = {https://academic.oup.com/imrn/article-pdf/doi/10.1093/imrn/rnm121/2000290/rnm121.pdf},
}

\end{document}